\numberwithin{equation}{section}
\newtheorem{lemma}{Lemma}[section]
\newtheorem{theorem}[lemma]{Theorem}
\newtheorem{remark}[lemma]{Remark}
\newtheorem{proposition}[lemma]{Proposition}
\newtheorem{coro}[lemma]{Corollary}
\newtheorem{definition}[lemma]{Definition}
\newtheorem{example}[lemma]{Example}
\title[Poisson stable motions of monotone NDS]
{Poisson stable motions of monotone nonautonomous dynamical systems}
\author{David Cheban}
\address{D. Cheban\footnote{Permanent address: State University of Moldova, Faculty of Mathematics and
Informatics, Department of Mathematics, A. Mateevich Street 60, MD--2009 Chi\c{s}in\u{a}u, Moldova}: School of Mathematical Sciences,
Dalian University of Technology, Dalian 116024, P. R. China}
\email{cheban@usm.md; davidcheban@yahoo.com}
\author{Zhenxin Liu}
\address{Z. Liu: School of Mathematical Sciences,
Dalian University of Technology, Dalian 116024, P. R. China}
\email{zxliu@dlut.edu.cn}
\date{November 25, 2017}
\subjclass[2010]{34C27, 37B20, 34C12, 37B05, 37B55}
\keywords{Topological dynamics, comparability, periodicity, quasi-periodicity,
Bohr/Levitan almost periodicity, almost automorphy,
Birkhoff recurrence, almost recurrence, Poisson stability,
monotone differential equations, monotone nonautonomous dynamical systems}
\begin{document}
\begin{abstract}
In this paper, we study the Poisson stability
(in particular, stationarity, periodicity, quasi-periodicity, Bohr almost
periodicity, almost automorphy, recurrence in the sense of Birkhoff,
Levitan almost periodicity, pseudo
periodicity, almost recurrence in the sense of Bebutov, pseudo recurrence, Poisson stability) of
motions for monotone nonautonomous dynamical systems and of solutions
for some classes of monotone nonautonomous evolution equations (ODEs,
FDEs and parabolic PDEs). As a byproduct, some of our results
indicate that all the trajectories of monotone systems converge to the above mentioned
Poisson stable trajectories under some suitable conditions, which is interesting
in its own right for monotone dynamics.
\end{abstract}

\maketitle

\section{Introduction}\label{S1}

The existence of Bohr almost periodic solutions of the equation
\begin{equation}\label{eq12}
x'=f(t,x)
\end{equation}
with Bohr almost periodic right-hand side $f$  in $t$, uniformly
with respect to (shortly w.r.t.) $x$ on every compact subset of
$\mathbb R^n$ was studied by many authors
\cite{JZ_2005,NOS_2007,Opi_1959,SaSe_1977,Shch_1974,SYY,Zhi69} (see
also \cite[ChIV]{bro75}, \cite[ChXII]{Fin}, \cite[ChVII]{Lev-Zhi} and
the bibliography therein).

Z. Opial \cite{Opi_1959} consider the scalar (i.e. $n=1$) case of differential equation
(\ref{eq12}). If $f$ is monotone w.r.t. spacial variable $x$,
he established that every bounded (on the whole axis $\mathbb R$) solution is Bohr almost periodic.

Recall that a function $f\in C(\mathbb R\times\mathbb R^n,\mathbb
R^n)$ is called {\em regular} if for any $g\in H(f)$ and $v\in\mathbb
R^{n}$ the limit equation
\begin{equation}\label{eq12g}
x'=g(t,x)
\end{equation}
admits a unique solution $\varphi(t,v,g)$ defined on $\mathbb R$ with initial value $x(0)=v$,
where $H(f):=\overline{\{f^{\tau}:\ \tau \in
\mathbb R\}}$, $f^{\tau}(t,x):=f(t+\tau,x)$ for any $(t,x)\in\mathbb
R^n$ and by bar we mean the closure in $C(\mathbb R\times \mathbb
R^n,\mathbb R^n)$ which is equipped with the compact-open topology.

V. V. Zhikov \cite{Zhi69} (see also \cite[ChVII]{Lev-Zhi} and
\cite[ChIV]{bro75}) studied the scalar equation (\ref{eq12}) with regular $f$
but without monotone assumption for $f$. He
obtained existence of at least one almost periodic solution of
equation (\ref{eq12}) if it admits one bounded and uniformly stable solution.

R. Sacker and G. Sell \cite{SaSe_1973}  (see also \cite[\S 3.6]{SaSe_1977} and \cite[ChXII]{Fin}) generalized V. V. Zhikov's result,
still for scalar equations, by
replacing the regularity of $f$ by positive regularity. Namely: a function $f\in C(\mathbb R\times\mathbb
R^n,\mathbb R^n)$ is called {\em positively regular} if for any $g\in
H(f)$ and $v\in\mathbb R^{n}$ the limit equation (\ref{eq12g}) admits
a unique solution $\varphi(t,v,g)$ defined on $\mathbb R_{+}$ with initial
value $x(0)=v$. In addition, R. Sacker and G. Sell \cite{SaSe_1977} studied
almost periodic solutions for general case (not necessarily scalar) of equation \eqref{eq12}
in the framework of skew-product flows.

B. A. Shcherbakov \cite{Shch_1974} studied the Poisson stability (in particular, periodicity, Bohr almost periodicity,
recurrence in the sense of Birkhoff, almost recurrence in the sense of
Bebutov, Levitan almost periodicity) of solutions for scalar equation
(\ref{eq12}) with $f$ being monotone w.r.t. $x$ and Poison stable in $t\in \mathbb R$ (uniformly
w.r.t. $x$ on every compact subset of $\mathbb R$). That is, he generalized
Z. Opial's result to Poisson stable differential equations (\ref{eq12}).

D. Cheban \cite{Che_pre} considered the
Poisson stable solutions for the scalar equation (\ref{eq12}) with
arbitrary Poisson stable (w.r.t. time $t$) $f$ and without
monotonicity assumption for $f$, thus generalized the results of
Z. Opial, V. V. Zhikov, R. Sacker and G. Sell, and B. A.
Shcherbakov.

In the framework of monotone cocycles or nonautonomous dynamical
systems the problem of almost periodicity and almost automorphy of
solutions for equation (\ref{eq12}) in general case (both finite
and infinite dimensional cases) was studied in the works of W. Shen and Y. Yi \cite{SYY}, J.
Jiang and X.-Q. Zhao \cite{JZ_2005}, S. Novo {\it et al} \cite{NOS_2007}, and the bibliography therein.

The aim of this paper is to study the existence of Poisson stable (e.g. stationary, periodic, quasi-periodic,
Bohr/Levitan almost periodic, almost automorphic,
almost recurrent, etc) solutions of equation (\ref{eq12}) in both finite
and infinite dimensional cases when (\ref{eq12}) generates a monotone cocycle, which can be
achieved, say, if $f$ is quasi-monotone.
The existence of at least one such Poisson stable solution is obtained provided
each solution of (\ref{eq12}) is compact on $\mathbb R_{+}$ and uniformly stable.
Meantime, 
as a byproduct our results (see Theorems \ref{thM1_1}, \ref{com} and \ref{Lya}) show that all the solutions
will converge to the Poisson stable ones, which is interesting on its own rights in monotone dynamics.

The paper is organized as follows.

In Section 2 we collect some notions and facts from the theory of
nonautonomous dynamical systems: cocycles, nonautonomous dynamical systems, conditional compactness,
Ellis semigroup etc.

Section 3 is dedicated to studying the structure of the
$\omega$-limit set of noncompact semi-trajectories for autonomous and
nonautonomous dynamical systems. The main result of this section is
contained in Theorem \ref{th02} which states that the one-sided
dynamical system $(X,\mathbb R_{+},\pi)$ on $\omega_{x_0}$, the
$\omega$-limit set of $x_0$, can be extended to a
two-sided dynamical system provided the positive semi-trajectory
$\Sigma_{x_0}^{+}$ of $x_0$ is conditionally precompact and $\omega_{x_0}$ is uniformly stable.

In Section 4 we give a survey of different classes of Poisson
stable motions, B. A. Shcherbakov's principle of comparability of
motions by their character of recurrence and some generalization of
this principle.

Section 5 is dedicated to the study of abstract monotone
nonautonomous dynamical systems. The main results of the paper
are contained in Theorems \ref{thM1} and \ref{thM2} which give sufficient conditions
for existence of comparable and strongly comparable
motions. Using these Theorems and Shcherbakov's comparability principle of motions
by character of recurrence we obtain a series
of results of existence of stationary (respectively, periodic, quasi-periodic,
Bohr almost periodic, almost automorphic, Birkhoff recurrent, Levitan almost
periodic, almost recurrent, pseudo recurrent, uniformly Poisson
stable, Poisson stable) motions. As mentioned above,
we also obtain the convergence of all the trajectories to Poisson stable ones
under some suitable conditions.

In Section 6 we apply our abstract results obtained in Sections 3 and 5
to study different classes of Poisson stability (as listed above) of solutions
for monotone differential equations (ODEs,
FDEs and parabolic PDEs). In this way we obtain a series of new
results (some of them coincides with the well-known results).

\section{NDS: some general properties}\label{S2}

In this section we collect some notions and facts for
nonautonomous dynamical systems which we will use below;
the reader may refer to \cite{Che_2001}, \cite[Ch. IX]{Che_2015}, \cite{Sel_71} for details.

Throughout the paper, we assume that $X$ and $Y$ are metric spaces and for simplicity we use the same notation $\rho$
to denote the metrics on them, which we think would not lead to confusion.
Let $\mathbb R=(-\infty,+\infty)$, $\mathbb R _{+}=\{t\in \mathbb R: t \ge
0 \}$ and $\mathbb R_{-}=\{t \in \mathbb R: t \le 0 \}$.
For given dynamical system $(X,\mathbb R, \pi)$ and given point $x\in X$, we denote by $\Sigma_x$ (respectively, $\Sigma_x^+$)
its {\em trajectory} (respectively, {\em semi-trajectory}),
i.e. $\Sigma_x:=\{\pi(t,x): t\in\mathbb R\}$ (respectively, $\Sigma_x^+:=\{\pi(t,x): t\in\mathbb R_+\}$),
and call the mapping $\pi(\cdot,x):\mathbb R\to X$ the {\em motion} through $x$ at the moment $t=0$.
For given set $A\subseteq X$, we denote $\Sigma_A:=\{\pi(t,x): t\in\mathbb R, x\in A\}$; $\Sigma_A^+$ is defined similarly.
We denote the {\em hull} (respectively, {\em semi-hull}) of a point $x$ by $H(x):=\overline\Sigma_x$
(respectively, $H^+(x):=\overline\Sigma_x^+$), where by bar we mean closure.
A point $x\in X$ is called {\em Lagrange stable}, ``st. $L$" in short, (respectively,
{\em positively Lagrange stable}, ``st. $L^{+}$" in short)
if $H(x)$ (respectively, $H^{+}(x)$) is compact.

\subsection{Cocycles and NDS}

Let $(Y ,\mathbb R, \sigma )$ be a two-sided dynamical
system on $Y $ and $E$ a metric space.

\begin{definition}\label{def2.1}\rm 
A triplet $\langle E, \phi ,(Y ,
\mathbb R, \sigma )\rangle$ (or  briefly $\phi$ if no confusion) is said
to be a {\em cocycle} on state space (or fibre) $E$ with base $(Y,\mathbb
R,\sigma )$ if the mapping $\phi : \mathbb R _{+} \times  Y \times E
\to E $ satisfies the following conditions:
\begin{enumerate}
\item $\phi (0,u,y)=u $ for all $u\in E$ and $y\in Y$;
\item $\phi (t+\tau,u,y)=\phi (t, \phi (\tau,u, y), \sigma(\tau,y))$
for all $ t, \tau \in \mathbb R _{+},u \in E$ and $ y \in Y$;
\item the mapping $\phi $ is continuous.
\end{enumerate}
\end{definition}

\begin{definition}\label{def2.2} \rm 
Let $ \langle E ,
\phi,(Y,\mathbb R,\sigma)\rangle $ be a cocycle on $E$, $X:=E\times
Y$ and $\pi $ be a mapping from $ \mathbb R _{+} \times X $ to $X$
defined by $\pi :=(\phi ,\sigma)$, i.e. $\pi
(t,(u,y))=(\phi (t,u,y),\sigma(t,y))$ for all $ t\in \mathbb R
_{+}$ and $(u,y)\in E\times Y$. The triplet $ (X,\mathbb R _{+},
\pi)$ is an autonomous dynamical system and called
{\em skew-product dynamical system}.
\end{definition}

\begin{definition}\label{def2.3} \rm 
Let $\mathbb T_{1}\subseteq \mathbb T _{2} $ be two subsemigroups of the
group $\mathbb R$, $(X,\mathbb T _{1},\pi )$ and $(Y ,\mathbb T _{2},
\sigma )$ be two autonomous dynamical systems and $h: X \to Y$ be a
homomorphism from $(X,\mathbb T _{1},\pi )$ to $(Y , \mathbb T
_{2},\sigma)$ (i.e. $h(\pi(t,x))=\sigma(t,h(x)) $ for all $t\in
\mathbb T _{1} $ and $ x \in X $, and $h $ is continuous and surjective), then the
triplet $\langle (X,\mathbb T _{1},\pi ),$ $ (Y ,$ $\mathbb T _{2},$
$ \sigma ),h \rangle $ is called {\em nonautonomous dynamical system} (NDS).
\end{definition}

\begin{example} \label{ex2.4} \rm 
An important class of NDS are generated from cocycles.
Indeed, let $\langle E, \phi ,(Y ,\mathbb R,\sigma)\rangle $ be a cocycle,
$(X,\mathbb R _{+},\pi ) $ be the associated skew-product dynamical system
($X=E\times Y, \pi =(\phi,\sigma)$) and $h= pr _{2}: X \to Y$ (the natural projection mapping), then
the triplet $\langle (X,\mathbb R _{+},\pi ),$ $(Y ,\mathbb R,\sigma
),h \rangle $ is an NDS.
\end{example}

\subsection{Conditional compactness}\label{S2.2}

Lagrange stable (or called ``compact") motions have been studied comprehensively,
but it is not the case for non-Lagrange stable motions.
The following concept of conditional compactness introduced in \cite{Che_2001} is important for our
study of noncompact motions.

\begin{definition}\label{def3.2} \rm Let $(X,h,Y)$ be a fiber
space, i.e. $X$ and $Y$ be two metric spaces and $ h: X \to Y$ be a
homomorphism from $X$ onto $Y$. A set $ M \subseteq X$ is said
to be {\em conditionally precompact} if its intersection with the preimage of any
precompact subset $ Y'\subseteq Y$, i.e. the set $h^{-1}(Y')\bigcap M$, is a
precompact subset of $X$. A set $ M $ is called {\em conditionally compact} if it is
closed and conditionally precompact.
\end{definition}

\begin{remark}\rm
1. Let $K$ be a compact space, $Y$ is a noncompact metric space,  $X:=K\times Y $ and $h= pr _{2} :
X \to Y$. Then the triplet $(X,h,Y)$ is a fiber space. The space $X$
is conditionally compact, but it is not compact.

2. If $Y$ is a compact set and $M\subseteq X$ is conditionally
precompact, then $M$ is a precompact set.
\end{remark}

The following result provides a useful criterion for conditional compactness in applications.

\begin{lemma}[\cite{CC_2009}]\label{l3.1}
Let $\langle E,\phi, (Y,\mathbb R,\sigma)\rangle$ be a cocycle
and $ \langle (X,\mathbb R_{+},\pi),(Y,\mathbb R,\sigma),h\rangle $
be the NDS generated by the cocycle $\phi$ (cf. Example \ref{ex2.4}). Assume that $x_0:=(u_0,y_0)\in X=E\times Y$ and the set
$Q_{(u_0,y_0)}^{+}:=\overline{\{\phi(t,u_0,y_0): t\in \mathbb
R_{+}\}}$ is compact. Then the semi-hull $H^{+}(x_0)$ is conditionally compact.
\end{lemma}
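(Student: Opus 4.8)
The plan is to unpack the definitions and show directly that $H^+(x_0)$ meets the criterion in Definition~\ref{def3.2}. Recall $H^+(x_0)=\overline{\Sigma_{x_0}^+}$ where $\Sigma_{x_0}^+=\{\pi(t,x_0):t\in\mathbb R_+\}=\{(\phi(t,u_0,y_0),\sigma(t,y_0)):t\in\mathbb R_+\}$, so $H^+(x_0)$ is automatically closed, and it remains to verify conditional precompactness: for any precompact $Y'\subseteq Y$, the set $h^{-1}(Y')\cap H^+(x_0)$ is precompact in $X=E\times Y$.

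First I would observe that $\Sigma_{x_0}^+\subseteq Q_{(u_0,y_0)}^+\times H^+(y_0)$, where $H^+(y_0)=\overline{\{\sigma(t,y_0):t\in\mathbb R_+\}}$ and $Q_{(u_0,y_0)}^+$ is compact by hypothesis. Since $h=pr_2$, taking closures gives $H^+(x_0)\subseteq Q_{(u_0,y_0)}^+\times H^+(y_0)$ as well, because the right-hand side is closed (a product of a compact set with a closed set). Hence for any $Y'\subseteq Y$ we have
\[
h^{-1}(Y')\cap H^+(x_0)\subseteq Q_{(u_0,y_0)}^+\times \bigl(Y'\cap H^+(y_0)\bigr)\subseteq Q_{(u_0,y_0)}^+\times \overline{Y'}.
\]
If $Y'$ is precompact then $\overline{Y'}$ is compact, so the right-hand side is a product of two compact sets, hence compact; therefore $h^{-1}(Y')\cap H^+(x_0)$, being a subset of a compact set in the metric space $X$, is precompact. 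This establishes that $H^+(x_0)$ is conditionally precompact, and together with closedness, conditionally compact.

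I do not anticipate a genuine obstacle here; the only point requiring a little care is the set-theoretic inclusion $H^+(x_0)\subseteq Q_{(u_0,y_0)}^+\times H^+(y_0)$ under closure, which works precisely because $Q_{(u_0,y_0)}^+\times H^+(y_0)$ is already closed (closedness of a product of closed sets in a metric space), so the closure of $\Sigma_{x_0}^+$ stays inside it. Everything else is a direct translation of Definition~\ref{def3.2}, using that closed subsets of compact sets in metric spaces are compact and that precompactness is inherited by subsets.
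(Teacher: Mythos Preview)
Your argument is correct. The key observation $H^+(x_0)\subseteq Q_{(u_0,y_0)}^+\times H^+(y_0)$ follows exactly as you say, and from there the intersection $h^{-1}(Y')\cap H^+(x_0)$ sits inside the compact product $Q_{(u_0,y_0)}^+\times\overline{Y'}$, giving precompactness; closedness is immediate from the definition of the semi-hull.

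As for comparison: the paper does not supply its own proof of this lemma---it is quoted from \cite{CC_2009} and stated without argument---so there is nothing in the text to compare your approach against. Your direct verification via the product inclusion is the natural elementary route and is entirely self-contained.
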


Now we give a concrete example to illustrate the notion of conditional compactness for noncompact motions.
To this end, we need to review some basic notions.

Denote by $C(\mathbb R)$ the family  of all continuous functions
$f:\mathbb R\to \mathbb R$  equipped with the compact-open topology.
This topology can be generated by Bebutov distance (see, e.g. \cite{{Beb_1940}}, \cite[ChIV]{sib})
$$
d(f,g):=\sup\limits_{l>0}\min\{\max\limits_{|t|\le
l}|f(t)-g(t)|, 1/l\}.
$$
Denote by $(C(\mathbb R),\mathbb R,\sigma)$ the shift dynamical
system (or called Bebutov dynamical system), i.e.
$\sigma(\tau,f):=f^{\tau}$, where $f^{\tau}(t):=f(t+\tau)$ for
$t\in\mathbb R$. Note that the function $f\in C(\mathbb R)$ is st.
$L^{+}$ (respectively, st. $L$) if and only if the function $f$ is bounded and uniformly
continuous on $\mathbb R_{+}$ (respectively, on $\mathbb R$) (see, e.g. \cite[ChIV]{sib}).

\begin{example}\label{ex3.3.1} \rm
Define $h(t):=2+\cos t +\cos \sqrt{2}t$ for $t\in \mathbb R$ , then $h$ is a Bohr
almost periodic function. The function $\varphi(t):=1/h(t)$
(respectively, $\psi(t):=\sin \varphi(t)$)
for $t\in \mathbb R$ is Levitan almost periodic\footnote{For the definitions
of Bohr and Levitan almost periodic functions, see Section \ref{S4} for details.}
(\cite[ChIV]{Lev-Zhi}), but it is not Bohr almost periodic because it
is not bounded (respectively, not uniformly continuous; see
\cite[ChV, pp.212-213]{Lev_1953} or \cite{BG}) on
$\mathbb R$. Thus the function $\varphi$ (respectively, $\psi$) is
not st. $L$. Denote by
$Y=H(\varphi)=\overline{\{\varphi^{\tau}:\ \tau\in\mathbb R\}}$
(respectively,
$X=H(\psi,\varphi)=\overline{\{(\psi^{\tau},\varphi^{\tau}):\
\tau\in\mathbb R\}}$), where by bar we mean the closure in
$C(\mathbb R)$ (respectively, in $C(\mathbb R)\times C(\mathbb R)$).

\begin{lemma}\label{lC}
Let $\varphi,\psi$ and $X,Y$ be as in Example \ref{ex3.3.1}.
Consider the NDS $\langle (X,\mathbb R,\pi),$ $(Y,\mathbb
R,\sigma),$ $h \rangle$, where $h=pr_{2}:X\to Y$, and $(Y,\mathbb
R,\sigma)$ and $(X,\mathbb R,\pi)$ are the shift dynamical systems on $Y$ and $X$ respectively.
Then the following statements hold:
\begin{enumerate}
\item the set $Y$ (respectively, $X$) is not compact in $C(\mathbb
R)$ (respectively, $C(\mathbb R)\times C(\mathbb R)$);
\item the set $X$ is a conditionally compact.
\end{enumerate}
\end{lemma}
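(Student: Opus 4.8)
The plan is to verify the two statements separately, using the explicit structure of $\varphi$ and $\psi$ together with Lemma \ref{l3.1}. For (i), I would argue that if $Y=H(\varphi)$ were compact, then $\varphi$ would be st.\ $L$, hence (by the cited characterization via the Bebutov system) bounded and uniformly continuous on $\mathbb R$; but $\varphi(t)=1/h(t)$ is unbounded on $\mathbb R$ because $h(t)=2+\cos t+\cos\sqrt 2 t$ takes values arbitrarily close to $0$ (the frequencies $1$ and $\sqrt 2$ being rationally independent, the orbit closure of $(\cos t,\cos\sqrt2 t)$ fills the torus-image and in particular comes arbitrarily close to the point where both cosines equal $-1$). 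This contradiction shows $Y$ is not compact. Since the projection $pr_2:X\to Y$ is continuous and surjective and a continuous image of a compact set is compact, non-compactness of $Y$ forces non-compactness of $X$ as well; alternatively one notes directly that $H(\psi,\varphi)$ projects onto $H(\varphi)$.

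For (ii), I would apply Lemma \ref{l3.1}. First I would recast the shift system on $X=H(\psi,\varphi)$ as (the relevant part of) the NDS generated by a cocycle over $(Y,\mathbb R,\sigma)$ with one-dimensional fibre $E=\mathbb R$: a point of $X$ is a pair $(\tilde\psi,\tilde\varphi)$ with $\tilde\varphi\in Y$, and the fibre coordinate is the value $\tilde\psi(0)\in\mathbb R$, which under the shift evolves as $t\mapsto\tilde\psi(t)$. Concretely, for $x_0=(\psi,\varphi)$ the fibre over $\varphi$ is tracked by $\phi(t,u_0,\varphi)$ with $u_0=\psi(0)=\sin\varphi(0)$, and $\{\phi(t,u_0,y_0):t\in\mathbb R_+\}=\{\psi(t):t\ge0\}\subseteq[-1,1]$, whose closure $Q^+_{(u_0,y_0)}$ is contained in the compact set $[-1,1]$. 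Hence Lemma \ref{l3.1} yields that $H^+(x_0)$ is conditionally compact; and since here $\psi$ (equivalently $\varphi$) is recurrent — Levitan almost periodic functions are Poisson stable — the orbit closure is minimal and $H^+(x_0)=H(x_0)=X$, so $X$ itself is conditionally compact.

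The step I expect to require the most care is the passage in (ii) from ``the positive semi-hull is conditionally compact'' to ``$X$ is conditionally compact.'' One must make sure that $H^+(\psi,\varphi)=X$, i.e.\ that the two-sided orbit closure and the one-sided orbit closure of $(\psi,\varphi)$ agree; this is where Poisson stability of $\varphi$ enters, guaranteeing $\varphi\in\omega_\varphi$ and hence that the positive semi-orbit is already dense in $X$. A cleaner alternative that avoids this point entirely: observe that the homomorphism $h=pr_2:X\to Y$ has every fibre $h^{-1}(\{\tilde\varphi\})$ contained in $\{\tilde\psi:\ \tilde\psi(t)\in[-1,1]\ \forall t\}$, a subset of the compact set $\{g\in C(\mathbb R):|g|\le1\}$ in the compact-open topology; more generally $h^{-1}(Y')\subseteq Y'\times\{|g|\le1\}$ for any $Y'\subseteq Y$, whose closure is compact when $Y'$ is precompact, so $h^{-1}(Y')\cap X$ is precompact for precompact $Y'$. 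This is precisely conditional precompactness of $X$; since $X$ is closed by construction, $X$ is conditionally compact. I would present the Lemma \ref{l3.1} route as the conceptual proof and mention this direct estimate as the verification of the key inclusion.
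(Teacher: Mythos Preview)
Your argument for (i) is fine and matches the paper's reasoning. The problems are in (ii).

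First, the ``cleaner alternative'' is wrong: the set $\{g\in C(\mathbb R):|g(t)|\le 1\ \forall t\}$ is \emph{not} compact in the compact-open topology. Pointwise (even uniform) boundedness is not enough for precompactness in $C(\mathbb R)$; you also need equicontinuity on each compact interval, which fails here (think of $g_n(t)=\sin(nt)$). So the inclusion $h^{-1}(Y')\subseteq\{|g|\le1\}\times Y'$ does not give precompactness of $h^{-1}(Y')$.

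Second, the Lemma~\ref{l3.1} route does not apply as stated: that lemma is about an NDS literally generated by a cocycle, i.e.\ with $X=E\times Y$ and $h=pr_2$. Here $X=H(\psi,\varphi)\subseteq C(\mathbb R)\times C(\mathbb R)$ is not of this form. To force it into that mold you would have to show that the map $(\tilde\psi,\tilde\varphi)\mapsto(\tilde\psi(0),\tilde\varphi)$ is a homeomorphism of $X$ onto its image in $\mathbb R\times Y$, and this hinges precisely on the relation $\tilde\psi=\sin(\tilde\varphi)$ for every $(\tilde\psi,\tilde\varphi)\in X$, which you never establish. (Also, Poisson stability gives $H^+(x_0)=H(x_0)$, but this is not the same as minimality; be careful with the wording.)

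The paper's proof exploits exactly that relation directly and avoids all the detours: given a precompact $K'\subseteq Y$ and a sequence $\{(\psi_n,\varphi_n)\}\subset h^{-1}(K')$, pick $\tau_n$ with $d(\psi^{\tau_n},\psi_n)<1/n$ and $d(\varphi^{\tau_n},\varphi_n)<1/n$; extract a subsequence along which $\varphi^{\tau_{n_k}}\to\tilde\varphi$; then since $\psi^{\tau}(t)=\sin(\varphi^{\tau}(t))$ for all $\tau,t$, the same subsequence gives $\psi^{\tau_{n_k}}\to\sin(\tilde\varphi)=:\tilde\psi$, hence $(\psi_{n_k},\varphi_{n_k})\to(\tilde\psi,\tilde\varphi)\in X$. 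This is both shorter and uses only the one genuine ingredient, namely that the first coordinate is a continuous function of the second.
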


\begin{proof}
The first statement follows from the construction of
$Y$ (respectively, $X$) because the function $\varphi$
(respectively, $(\psi,\varphi)$) is not st. $L$.

To finish the proof of the lemma it is sufficient to establish that $X$
is conditionally precompact because it is closed. Let $K'$ be an
arbitrary precompact subset of $Y$ and
$K=h^{-1}(K')\subseteq X$. We need to show that $K$ is precompact.
Consider an arbitrary sequence $\{(\psi_n,\varphi_n)\}\subset K$,
then, by the definition of hull, for each $n\in \mathbb N$ there exists a number
$\tau_{n}\in\mathbb R$ such that
\begin{equation}\label{eqK}
d(\psi^{\tau_n},\psi_{n})<1/n\ \ \mbox{and}\ \
d(\varphi^{\tau_n},\varphi_{n})<1/n.
\end{equation}
Since $\{\varphi_{n}\}=h(\{(\psi_n,\varphi_n)\})\subseteq K'$ is
precompact, we can extract a subsequence $\{\varphi_{n_k}\}$
such that $\varphi_{n_k}\to \tilde\varphi \in H(\varphi)$ as $k\to
\infty$; note that we have $\varphi^{\tau_{n_k}}\to
\tilde\varphi$ by (\ref{eqK}). Taking into account that $\psi^{\tau}(t)=\sin
(\varphi^{\tau}(t))$ for $t\in\mathbb R$, we obtain
$\psi^{\tau_{n_k}}\to \tilde\psi:=\sin (\tilde\varphi)\in
H(\psi)$. That is, $(\psi^{\tau_{n_k}},\varphi^{\tau_{n_{k}}})\to
(\tilde\psi,\tilde\varphi)\in H(\psi,\varphi)=X$ as $k\to \infty$. Thus it follows from
(\ref{eqK}) that $(\psi_{n_k},\varphi_{n_k})\to (\tilde\psi,\tilde\varphi)\in X$ as $k\to \infty$.
The proof is complete.
\end{proof}
\end{example}

\subsection{Some general facts about NDS}

In this subsection we recall some general facts about NDS, see \cite{Che_2001} or \cite[Ch. IX]{Che_2015} for details.

\begin{definition}\label{def3.1} \rm
A point $y\in Y $ is called {\em positively} (respectively, {\em negatively}) {\em Poisson stable}
if there exists a sequence $t_{n} \to +\infty $
(respectively, $t_{n} \to -\infty $) such that $\sigma(t_{n},y) \to
y$ as $n\to\infty$. If $y $ is Poisson stable in both directions, it is called {\em Poisson stable}.
\end{definition}

Denote $\frak N _{y} :=\{\{t_{n}\}\subset \mathbb R: \sigma(t_{n},y) \to y \} $,
 $ \frak N ^{+\infty}_{y}:=\{\{t_{n}\}\in \frak N _{y}: t_{n} \to + \infty\}$,
$ \frak N _{y} ^{-\infty}:=\{\{t_{n}\}\in \frak N _{y}: t_{n} \to
- \infty \}$, and $\mathfrak N_{y}^{\infty}:=\{\{t_n\}\in \mathfrak N_{y}: t_n\to \infty\}$.

Let $ \langle (X,\mathbb R _{+},\pi),(Y,\mathbb R,\sigma),h\rangle $
be an NDS and $y \in Y$ be positively Poisson stable. Denote
\begin{equation}\label{Eplus}
\mathcal E_{y}^{+}:=\{\xi : ~\exists \{t_{n}\}\in \frak N _{y}^{+\infty}
\hbox{ such that } \pi(t_{n},\cdot)|_{X_{y}}\to \xi\},
\end{equation}
where $X_{y}:=h^{-1}(y)=\{x\in X: h(x)=y \}$ and $\to$ means the pointwise
convergence. If the NDS is two-sided, $y$ is negatively Poisson stable or Poisson stable and
we replace $\frak N _{y}^{+\infty}$ by $\frak N _{y}^{-\infty}$ or
$\frak N _{y}$ in \eqref{Eplus}, then we get the definition of
$\mathcal E_y^-$ or $\mathcal E_y$.

Let $X^{X}$ denote the Cartesian product of $X$ copies of the space
$X$, equipped with product topology. The set $X^{X}$ can be
endowed with a semigroup structure with respect to composition of
mappings from $X^{X}$ (for more details, see e.g. \cite[ChI]{bro75} and \cite{Ell_1969}).

\begin{lemma}\label{l3.4}
Let $y \in Y$ be positively Poisson stable,
$\langle (X,\mathbb R _{+},\pi ),$ $(Y,\mathbb R,\sigma),$ $h\rangle$
be an NDS and $X$ be conditionally compact. Then $\mathcal E^{+}_{y} $ is a nonempty compact
subsemigroup of the semigroup $X_{y}^{X_{y}}$.
\end{lemma}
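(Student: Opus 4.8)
The plan is to verify three things in turn: that $\mathcal E_y^+$ is nonempty, that it is closed (hence compact as a subset of the compact space $X_y^{X_y}$), and that it is closed under composition.

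\smallskip

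\noindent\textbf{Nonemptiness and compactness.} First I would fix a sequence $\{t_n\}\in\mathfrak N_y^{+\infty}$, which exists since $y$ is positively Poisson stable. For each $n$, the map $\pi(t_n,\cdot)|_{X_y}$ is an element of the Cartesian product $\prod_{x\in X_y} X_y$: indeed, for $x\in X_y$ we have $h(\pi(t_n,x))=\sigma(t_n,h(x))=\sigma(t_n,y)$, which need not equal $y$ — so a priori $\pi(t_n,x)\in X_{\sigma(t_n,y)}$, not $X_y$. Here I expect the standard argument: $X_y$ is compact (it is the intersection of the closed set $X$ — wait, $X$ need not be compact; rather $X_y=h^{-1}(y)$ is conditionally compact by hypothesis and $\{y\}$ is compact, so $X_y$ is compact), hence $X_y^{X_y}$ is compact by Tychonoff. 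Then the net $\{\pi(t_n,\cdot)|_{X_y}\}$ — more precisely, one must first argue that subsequential pointwise limits actually land back in $X_y$, using $\sigma(t_n,y)\to y$ together with conditional compactness to pull the limit points of $\{\pi(t_n,x)\}$ into $X_y$ via continuity of $h$. So any subnet has a further subnet converging pointwise to some $\xi$, and $\xi(x)\in X_y$ for all $x$; this $\xi$ witnesses $\mathcal E_y^+\neq\emptyset$. For closedness: if $\xi^{(k)}\to\xi$ in $X_y^{X_y}$ with each $\xi^{(k)}\in\mathcal E_y^+$, a diagonal argument over a countable dense-enough collection won't suffice in general (the index set $X_y$ may be uncountable), so instead I would use that $\mathcal E_y^+$ is precisely the set of limit points of $\{\pi(t_n,\cdot)|_{X_y}: t_n\to+\infty\}$ in the compact Hausdorff space $X_y^{X_y}$, i.e. $\mathcal E_y^+=\bigcap_{N}\overline{\{\pi(t,\cdot)|_{X_y}: t\ge N\}}$, which is an intersection of closed sets, hence closed, hence compact.

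\smallskip

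\noindent\textbf{Closure under composition.} Given $\xi,\eta\in\mathcal E_y^+$, I must produce a single sequence $s_n\to+\infty$ with $\sigma(s_n,y)\to y$ and $\pi(s_n,\cdot)|_{X_y}\to\xi\circ\eta$ pointwise. The familiar trick: pick $\{t_n\}\in\mathfrak N_y^{+\infty}$ with $\pi(t_n,\cdot)|_{X_y}\to\eta$ and $\{\tau_m\}\in\mathfrak N_y^{+\infty}$ with $\pi(\tau_m,\cdot)|_{X_y}\to\xi$. Using the cocycle/skew-product identity $\pi(\tau+t,x)=\pi(\tau,\pi(t,x))$, continuity of $\pi$, and the compactness of $X_y$ (so that $\pi(t_n,x)$ ranges in a compact set on which $\pi(\tau_m,\cdot)$ converges appropriately), one chooses for each $m$ an index $n(m)$ large enough that $s_m:=\tau_m+t_{n(m)}$ satisfies $s_m\to+\infty$, $\sigma(s_m,y)\to y$, and $\rho(\pi(s_m,x),\xi(\eta(x)))$ is small for finitely many prescribed $x$'s — then pass to the pointwise limit. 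The point $\sigma(s_m,y)=\sigma(\tau_m,\sigma(t_{n(m)},y))\to\sigma(\tau_m,y)\to y$ needs care since the first limit is only as $n(m)\to\infty$ for fixed $m$; this is handled by choosing $n(m)$ increasingly large.

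\smallskip

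\noindent\textbf{Main obstacle.} The delicate point is the interplay between the pointwise topology on $X_y^{X_y}$ (where convergence is tested on one $x$ at a time) and the need to control $\pi(\tau_m,\cdot)$ on the \emph{moving} compact sets $\pi(t_n,X_y)$ when composing. Because $X_y$ may be uncountable, one cannot diagonalize over all of $X_y$ at once; the composition argument must be run with the ``finitely many test points'' formulation of the product topology, carefully nesting the choice of $n(m)$ inside that of $\tau_m$. Verifying that the resulting $s_m$ simultaneously drives $\sigma(s_m,y)\to y$ and $\pi(s_m,\cdot)\to\xi\circ\eta$ on each fixed finite set of test points is the crux; everything else (nonemptiness, compactness) follows from Tychonoff, conditional compactness, and continuity of $h$.
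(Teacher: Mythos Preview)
The paper does not prove this lemma; it is listed in Section~2.3 among facts quoted from \cite{Che_2001} and \cite[Ch.~IX]{Che_2015}. Your plan is exactly the standard Ellis-semigroup argument used there: compactness of $X_y$ from conditional compactness of $X$, Tychonoff for the product, and the identity $\pi(\tau+t,x)=\pi(\tau,\pi(t,x))$ for closure under composition.

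Two technical points in your sketch need tightening. First, the maps $\pi(t,\cdot)|_{X_y}$ take values in $X_{\sigma(t,y)}$, not in $X_y$, so your closedness formula $\bigcap_N\overline{\{\pi(t,\cdot)|_{X_y}:t\ge N\}}$ cannot be read inside $X_y^{X_y}$; you must work in $\hat K^{X_y}$ for some compact $\hat K$ containing the relevant fibres (conditional compactness supplies such a $\hat K$ once $\{\sigma(t,y)\}$ is precompact), and the formula as written also drops the constraint $\sigma(t,y)\to y$. Second---and this is precisely your ``main obstacle''---the paper defines $\mathcal E_y^+$ via \emph{sequences} $\{t_n\}\in\mathfrak N_y^{+\infty}$, whereas Tychonoff and the finite-test-point description of the product topology deliver \emph{net} limits. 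The resolution you sketch (choose $n(m)$ to handle finitely many test points at a time) gives, for each finite $F\subset X_y$, a time $s$ with $\pi(s,\cdot)$ close to $\xi\circ\eta$ on $F$; that is net convergence, not a single sequence converging at every $x\in X_y$ simultaneously. In the cited references the construction is carried out in the product topology (closure/nets), which is the classical Ellis framework, and the sequential description is matched to it; so your outline is correct at that level, but the passage from finite test sets to an honest sequence in $\mathfrak N_y^{+\infty}$ realizing $\xi\circ\eta$ (and likewise for nonemptiness and closedness) is not yet written down.
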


\begin{coro}\label{cor3.5}
Let $y \in Y$ be negatively Poisson stable, $\langle (X,\mathbb R,\pi),$
$(Y,\mathbb R,\sigma),$ $h \rangle $ be a two-sided NDS and $X$ be conditionally compact, then
$\mathcal E^{-}_{y}$ is a nonempty compact subsemigroup of the semigroup
$X_{y }^{X_{y}}$.
\end{coro}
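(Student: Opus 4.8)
The plan is to deduce Corollary~\ref{cor3.5} from Lemma~\ref{l3.4} by a time-reversal argument. Given the two-sided NDS $\langle (X,\mathbb R,\pi),(Y,\mathbb R,\sigma),h\rangle$, I first introduce the reversed flows $\tilde\pi(t,x):=\pi(-t,x)$ and $\tilde\sigma(t,y):=\sigma(-t,y)$. Since $\pi$ and $\sigma$ are genuine two-sided actions of $\mathbb R$, the triples $(X,\mathbb R,\tilde\pi)$ and $(Y,\mathbb R,\tilde\sigma)$ are again dynamical systems, and $h$ remains a homomorphism from $(X,\mathbb R,\tilde\pi)$ onto $(Y,\mathbb R,\tilde\sigma)$ because $h(\tilde\pi(t,x))=h(\pi(-t,x))=\sigma(-t,h(x))=\tilde\sigma(t,h(x))$. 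Hence $\langle (X,\mathbb R,\tilde\pi),(Y,\mathbb R,\tilde\sigma),h\rangle$ is again a (two-sided) NDS, with the same fiber structure, so in particular $X$ is still conditionally compact with respect to $h$.

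The second step is to observe that the hypotheses transfer correctly. If $y\in Y$ is negatively Poisson stable for $\sigma$, i.e. there is a sequence $t_n\to-\infty$ with $\sigma(t_n,y)\to y$, then writing $s_n:=-t_n\to+\infty$ we get $\tilde\sigma(s_n,y)=\sigma(-s_n,y)=\sigma(t_n,y)\to y$, so $y$ is positively Poisson stable for $\tilde\sigma$. Moreover the fibers $X_y=h^{-1}(y)$ are unchanged. Now apply Lemma~\ref{l3.4} to the reversed NDS at the point $y$: the set
$$
\tilde{\mathcal E}^{+}_{y}:=\{\xi:\ \exists\, \{s_n\}\subset\mathbb R,\ s_n\to+\infty,\ \tilde\sigma(s_n,y)\to y,\ \tilde\pi(s_n,\cdot)|_{X_y}\to\xi\}
$$
is a nonempty compact subsemigroup of $X_y^{X_y}$.

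The third step is to identify $\tilde{\mathcal E}^{+}_{y}$ with $\mathcal E^{-}_{y}$ as defined in the excerpt. By definition $\tilde\pi(s_n,\cdot)=\pi(-s_n,\cdot)$, and the condition $s_n\to+\infty$ with $\tilde\sigma(s_n,y)\to y$ is, after the substitution $t_n=-s_n$, exactly the condition $\{t_n\}\in\frak N^{-\infty}_{y}$; thus the pointwise limits $\xi$ arising in $\tilde{\mathcal E}^{+}_{y}$ are precisely those arising as $\lim\pi(t_n,\cdot)|_{X_y}$ with $\{t_n\}\in\frak N^{-\infty}_{y}$, which is the definition of $\mathcal E^{-}_{y}$. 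Hence $\tilde{\mathcal E}^{+}_{y}=\mathcal E^{-}_{y}$, and the conclusion follows directly from Lemma~\ref{l3.4} applied to the reversed system.

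I do not anticipate a serious obstacle here; the only point requiring a little care is to make sure that ``positively Poisson stable for $\tilde\sigma$'' and ``negatively Poisson stable for $\sigma$'' really are the same condition on the same point (they are, via $t_n\leftrightarrow-t_n$), and that the semigroup structure on $X_y^{X_y}$ used in Lemma~\ref{l3.4} is composition of maps, which is intrinsic to $X_y$ and unaffected by reversing time on $X$. Everything else is a matter of unwinding the definitions, so the corollary is essentially immediate from Lemma~\ref{l3.4}.
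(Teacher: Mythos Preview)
Your proof is correct and is exactly the intended argument: the paper states Corollary~\ref{cor3.5} without proof as an immediate consequence of Lemma~\ref{l3.4}, and the time-reversal reduction you wrote out is the standard way to make that deduction precise. The only minor point to note is that Lemma~\ref{l3.4} is stated for a one-sided NDS $\langle (X,\mathbb R_{+},\pi),(Y,\mathbb R,\sigma),h\rangle$, so formally you are applying it to the restriction of $\tilde\pi$ to $\mathbb R_{+}$; this is harmless since the two-sided $\tilde\pi$ certainly restricts to a one-sided semiflow, but it is worth saying explicitly.
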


\begin{lemma}\label{l3.6} Let $y \in Y$ be Poisson stable, $\langle (X,\mathbb R,\pi),$ $(Y,\mathbb
R,\sigma ),h \rangle $ be a two-sided NDS
and $X$ be conditionally compact, then $\mathcal E_{y}$ is a
nonempty compact subsemigroup of the semigroup $X_{y}^{X_{y}}$.
\end{lemma}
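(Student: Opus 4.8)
The plan is to mimic the structure used for Lemma \ref{l3.4} and Corollary \ref{cor3.5}, combining both directions. First I would observe that since $y$ is Poisson stable, it is in particular positively Poisson stable, so by Lemma \ref{l3.4} the set $\mathcal{E}_y^+$ is a nonempty compact subsemigroup of $X_y^{X_y}$; since $\mathcal{E}_y \supseteq \mathcal{E}_y^+$ (taking sequences $\{t_n\} \in \mathfrak{N}_y^{+\infty} \subseteq \mathfrak{N}_y$), in particular $\mathcal{E}_y$ is nonempty. The substantive points to verify are that $\mathcal{E}_y$ is (a) contained in $X_y^{X_y}$, i.e. each $\xi \in \mathcal{E}_y$ maps $X_y$ into $X_y$; (b) closed under composition; and (c) closed (hence compact, as a closed subset of the compact space $X_y^{X_y}$ — where compactness of $X_y^{X_y}$ comes from conditional compactness of $X$, which forces $X_y = h^{-1}(y)$ to be compact, so $X_y^{X_y}$ is compact by Tychonoff).

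For (a): if $\xi \in \mathcal{E}_y$ with $\pi(t_n,\cdot)|_{X_y} \to \xi$ pointwise for some $\{t_n\} \in \mathfrak{N}_y$, then for $x \in X_y$ we have $h(\pi(t_n,x)) = \sigma(t_n, h(x)) = \sigma(t_n,y) \to y$, and $\pi(t_n,x) \to \xi(x)$ (using that the $\pi(t_n,x)$ eventually lie in a compact piece of $X$ over a compact neighborhood of $y$, so the pointwise limit is a genuine point of $X$); by continuity of $h$, $h(\xi(x)) = y$, so $\xi(x) \in X_y$. For (b): given $\xi,\eta \in \mathcal{E}_y$ realized by sequences $\{t_n\}, \{s_m\} \in \mathfrak{N}_y$, I would run the standard diagonal argument — for each $n$ choose $m(n)$ large so that $\pi(s_{m(n)} + t_n, x)$ is close to $\xi(\eta(x))$ simultaneously for enough test points, using the cocycle/flow identity $\pi(s+t,x) = \pi(s,\pi(t,x))$, continuity of $\pi(s,\cdot)$, and that $s_{m(n)} + t_n$ can be arranged to tend to $\infty$ (or $-\infty$) and to belong to $\mathfrak{N}_y$ because $\sigma(s_{m(n)}+t_n,y) = \sigma(s_{m(n)}, \sigma(t_n,y))$ is close to $\sigma(s_{m(n)},y)$ which is close to $y$. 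This shows $\xi \circ \eta \in \mathcal{E}_y$. Note $\mathfrak{N}_y$ is closed under neither sign constraint, but the point is only that the composed sequence stays in $\mathfrak{N}_y$, which the above estimate delivers; one does not need $t_n \to +\infty$ here, only $\sigma(t_n,y)\to y$. For (c): if $\xi_k \to \xi$ in $X_y^{X_y}$ with each $\xi_k \in \mathcal{E}_y$, another diagonalization over the realizing sequences of the $\xi_k$ produces a single sequence in $\mathfrak{N}_y$ realizing $\xi$.

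The main obstacle, as in the one-sided case, is the diagonal argument for closure under composition (step (b)): one must control $\pi(s_m,\cdot)$ at the moving points $\pi(t_n,x)$ rather than at fixed points, which requires exploiting conditional compactness to keep everything inside one compact fiber-bundle piece, plus equicontinuity-type control of the maps $\pi(s_m,\cdot)$ on that compact set. Since the proofs of Lemma \ref{l3.4} and Corollary \ref{cor3.5} already carry out exactly this kind of estimate, the cleanest exposition is to say that the argument is verbatim that of Lemma \ref{l3.4}, with $\mathfrak{N}_y^{+\infty}$ replaced throughout by $\mathfrak{N}_y$, observing that the sign of $t_n$ is never used — only the defining property $\sigma(t_n,y) \to y$ and the two-sidedness of the NDS (needed so that $\pi(t,\cdot)$ makes sense for the relevant $t$). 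Hence $\mathcal{E}_y$ is a nonempty compact subsemigroup of $X_y^{X_y}$.
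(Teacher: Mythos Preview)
The paper does not actually prove Lemma \ref{l3.6}: it is stated (together with Lemma \ref{l3.4}, Corollary \ref{cor3.5}, and the surrounding results) as a recalled fact, with the reader referred to \cite{Che_2001} and \cite[Ch.~IX]{Che_2015} for details. Your proposal is the standard argument one finds in those references and is correct in outline: conditional compactness of $X$ makes $X_y$ compact, hence $X_y^{X_y}$ compact; nonemptiness comes from $\mathcal E_y^{+}\subseteq\mathcal E_y$; and the semigroup and closedness properties follow by the same diagonal extraction as in Lemma \ref{l3.4}, with $\mathfrak N_y^{+\infty}$ replaced by $\mathfrak N_y$ and two-sidedness of $\pi$ used so that $\pi(t,\cdot)$ is defined for all $t\in\mathbb R$. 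One small clean-up: in step~(b) you need not (and should not) arrange $s_{m(n)}+t_n\to\pm\infty$; membership in $\mathfrak N_y$ requires only $\sigma(s_{m(n)}+t_n,y)\to y$, which your estimate already gives.
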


\begin{coro} \label{cor3.7} Under the conditions of Lemma \ref{l3.6}
$\mathcal E^{+}_{y}$ and $\mathcal E^{-}_{y}$ are two nonempty
subsemigroups of the semigroup $\mathcal E_{y}$.
\end{coro}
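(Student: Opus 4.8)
The plan is to obtain the statement directly from Lemmas \ref{l3.4} and \ref{l3.6} and Corollary \ref{cor3.5}, combined with the trivial inclusions of index sets $\mathfrak N_y^{+\infty}\subseteq \mathfrak N_y$ and $\mathfrak N_y^{-\infty}\subseteq \mathfrak N_y$. No new analysis is needed; the whole point is bookkeeping about which convergent nets of time-$t$ maps are being allowed.

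First I would record that, since $y\in Y$ is Poisson stable, it is simultaneously positively and negatively Poisson stable, so $\mathfrak N_y^{+\infty}\neq\emptyset$ and $\mathfrak N_y^{-\infty}\neq\emptyset$. Under the hypotheses of Lemma \ref{l3.6} (two-sided NDS, $X$ conditionally compact) the hypotheses of Lemma \ref{l3.4} and of Corollary \ref{cor3.5} are also met, so $\mathcal E_{y}^{+}$ is a nonempty compact subsemigroup of $X_{y}^{X_{y}}$ and $\mathcal E_{y}^{-}$ is a nonempty compact subsemigroup of $X_{y}^{X_{y}}$. In particular both sets are nonempty and closed under the composition operation of $X_{y}^{X_{y}}$.

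Next I would verify the inclusions $\mathcal E_{y}^{+}\subseteq \mathcal E_{y}$ and $\mathcal E_{y}^{-}\subseteq \mathcal E_{y}$. If $\xi\in\mathcal E_{y}^{+}$, then by \eqref{Eplus} there is a sequence $\{t_{n}\}\in\mathfrak N_y^{+\infty}$ with $\pi(t_n,\cdot)|_{X_y}\to\xi$ pointwise; since $\mathfrak N_y^{+\infty}\subseteq\mathfrak N_y$, the very same sequence witnesses $\xi\in\mathcal E_{y}$. The argument for $\mathcal E_{y}^{-}$ is identical, using $\mathfrak N_y^{-\infty}\subseteq\mathfrak N_y$. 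Thus $\mathcal E_{y}^{+}$ and $\mathcal E_{y}^{-}$ are subsets of $\mathcal E_{y}$, which by Lemma \ref{l3.6} is itself a subsemigroup of $X_{y}^{X_{y}}$.

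Finally, I would conclude: in each of $\mathcal E_{y}$, $\mathcal E_{y}^{+}$, $\mathcal E_{y}^{-}$ the semigroup operation is the restriction of composition in $X_{y}^{X_{y}}$; hence a subset of $\mathcal E_{y}$ that is a subsemigroup of $X_{y}^{X_{y}}$ is automatically a subsemigroup of $\mathcal E_{y}$. Applying this to $\mathcal E_{y}^{+}$ and to $\mathcal E_{y}^{-}$ finishes the proof. There is essentially no obstacle; the only point deserving a word of care is precisely this last observation — that "subsemigroup of $X_y^{X_y}$" plus "contained in $\mathcal E_y$" yields "subsemigroup of $\mathcal E_y$" because all three carry the same operation — so that nonemptiness need not be re-proved and closure under the operation is inherited.
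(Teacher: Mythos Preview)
Your argument is correct and is precisely the intended one: the paper states this as an immediate corollary (with no separate proof) of Lemmas \ref{l3.4}, \ref{l3.6} and Corollary \ref{cor3.5}, and your write-up simply spells out the obvious inclusions $\mathfrak N_y^{\pm\infty}\subseteq\mathfrak N_y$ and the fact that the semigroup operation is the same composition throughout.
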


\begin{lemma}\label{l3.8} Under the conditions of Lemma \ref{l3.6}
the following statements hold:
\begin{enumerate}
\item if $\xi _{1} \in \mathcal E^{-}_{y}$ and $\xi _{2} \in
\mathcal E^{+}_{y},$ then $\xi _{1} \cdot \xi _{2} \in \mathcal
E^{-}_{y}\bigcap \mathcal E^{+}_{y},$ where $\xi_1\cdot \xi_2$ is
the composition of $\xi_1$ and $\xi_2$;

\item $\mathcal E^{-}_{y}\bigcap \mathcal E^{+}_{y}$ is a
subsemigroup of the semigroups $\mathcal E^{-}_{y}, \mathcal
E^{+}_{y}$ and $\mathcal E_{y}$;

\item $\mathcal E^{-}_{y}\cdot \mathcal E_{y}\subseteq \mathcal
E^{-}_{y}$ and
 $\mathcal E^{+}_{y}\cdot \mathcal E_{y}\subseteq \mathcal E^{+}_{y}$, where
$ A_{1}\cdot A_{2}:=\{\xi _{1} \cdot \xi _{2}: \xi _{i} \in
A_{i}, i=1,2\}$ and $ A_{i} \subseteq \mathcal  E_{y} $;

\item  if at least one of the subsemigroups $\mathcal E^{-}_{y}$
or $\mathcal E^{+}_{y}$ is a group, then $\mathcal E^{-}_{y}=
\mathcal E^{+}_{y} =\mathcal E_{y}$.
\end{enumerate}
\end{lemma}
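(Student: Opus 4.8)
The plan is to work entirely within the compact semigroup $S := X_y^{X_y}$ and exploit the defining characterizations of $\mathcal E_y^{\pm}$ and $\mathcal E_y$ in terms of the index sequences $\frak N_y^{+\infty}$, $\frak N_y^{-\infty}$ and $\frak N_y$. The key elementary observation, to be established first, is a compatibility property: if $\xi_1 \in \mathcal E_y$ arises from a sequence $\{s_n\} \in \frak N_y$ and $\xi_2 \in \mathcal E_y$ arises from $\{t_n\} \in \frak N_y$, then by passing to a suitable diagonal subsequence one can arrange that $\{s_n + t_n\}$ (or rather a suitable rearrangement $s_{n_k} + t_k$ with $s_{n_k}$ chosen large in modulus relative to the already-fixed behavior of $\pi(t_k,\cdot)$) lies in $\frak N_y$ and realizes $\xi_1 \cdot \xi_2$; moreover the sign of the limit $s_{n_k} + t_k \to \pm\infty$ or $\to \infty$ is governed by the signs coming from $\xi_1$. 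This is the standard construction used already implicitly in Lemma \ref{l3.4} and Lemma \ref{l3.6} (and in the proof that $\mathcal E_y^+$, $\mathcal E_y$ are subsemigroups), so I would phrase it once as a short sublemma and then reuse it.

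For (i): given $\xi_1 \in \mathcal E_y^-$ from $\{s_n\} \in \frak N_y^{-\infty}$ and $\xi_2 \in \mathcal E_y^+$ from $\{t_n\} \in \frak N_y^{+\infty}$, I form $\xi_1 \cdot \xi_2$ using the diagonal construction. Because $s_n \to -\infty$ dominates, one can pick the rearrangement so that $s_{n_k} + t_k \to -\infty$, placing $\xi_1 \cdot \xi_2$ in $\mathcal E_y^-$; but by the symmetric choice (letting $t_k$ run ahead and choosing $s_{n_k}$ only moderately negative, or simply reversing the roles in the construction) one can instead arrange $s_{n_k} + t_k \to +\infty$, placing the *same* composition $\xi_1 \cdot \xi_2$ in $\mathcal E_y^+$. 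Hence $\xi_1 \cdot \xi_2 \in \mathcal E_y^- \cap \mathcal E_y^+$. Here the subtlety to watch is that in $S$ (pointwise-convergence topology, hence compact by Lemma \ref{l3.4}) the composition is jointly well-defined on limits, so the two extractions genuinely yield the same element $\xi_1 \cdot \xi_2$; I would state this reliance on compactness explicitly.

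Statement (ii) is then immediate: $\mathcal E_y^- \cap \mathcal E_y^+$ is nonempty by (i) (apply (i) to any $\xi_1 \in \mathcal E_y^-$, $\xi_2 \in \mathcal E_y^+$, which are nonempty by Lemma \ref{l3.4} and Corollary \ref{cor3.5} applied inside the two-sided setting of Lemma \ref{l3.6}), and it is closed under composition because $\mathcal E_y^-$, $\mathcal E_y^+$ and $\mathcal E_y$ each are (Corollary \ref{cor3.7} and Lemma \ref{l3.6}); an intersection of subsemigroups is a subsemigroup. For (iii), take $\xi_1 \in \mathcal E_y^-$ from $\{s_n\} \in \frak N_y^{-\infty}$ and $\xi_2 \in \mathcal E_y$ from $\{t_n\} \in \frak N_y$; the diagonal construction gives $s_{n_k} + t_k \to -\infty$ since the $s$-term dominates regardless of the $t$-term's sign, so $\xi_1 \cdot \xi_2 \in \mathcal E_y^-$; the $\mathcal E_y^+$ case is symmetric. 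Finally (iv): suppose $\mathcal E_y^+$ is a group (the $\mathcal E_y^-$ case being symmetric). It has an identity $e$; I would show $e$ acts as the identity on all of $\mathcal E_y$, using that $\mathcal E_y^+ \cdot \mathcal E_y \subseteq \mathcal E_y^+$ from (iii) together with the group property to cancel, and conclude first $\mathcal E_y \subseteq \mathcal E_y^+$ hence $\mathcal E_y = \mathcal E_y^+$, and then that $\mathcal E_y^- \subseteq \mathcal E_y = \mathcal E_y^+$; combined with $\mathcal E_y^+ = \mathcal E_y \supseteq \mathcal E_y^-$ reversed via the symmetric cancellation argument, all three coincide.

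I expect the main obstacle to be item (i) — specifically, pinning down rigorously that a single composed map $\xi_1\cdot\xi_2$ can be realized by index sequences tending to $+\infty$ and to $-\infty$ simultaneously. The point is that the construction has a free parameter (how far one pushes $s_{n_k}$ relative to a pre-fixed tail of $\{t_k\}$), and one must verify that varying this parameter does not change the limiting map, only the limiting sign of the index sum; this is where compactness of $S$ and continuity of $\sigma$ and $\pi$ do the real work. The remaining parts (ii)–(iv) are then bookkeeping within semigroup theory once (i) and the sublemma are in hand.
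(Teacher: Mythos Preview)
The paper does not prove Lemma \ref{l3.8}; it is listed among results ``recalled'' from \cite{Che_2001} and \cite[Ch.~IX]{Che_2015}, so there is no in-paper proof to compare against.

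Your overall strategy---diagonal extraction inside the compact semigroup $X_y^{X_y}$---is the standard one, but there is a genuine gap in how you control the sign of the diagonal index. Writing $\xi_1\cdot\xi_2=\xi_1\circ\xi_2$, the iterated limit one can justify is
\[
\xi_1\cdot\xi_2=\lim_n\lim_m\pi(s_n+t_m,\cdot),
\]
because it is continuity of $\pi(s_n,\cdot)$ that lets you pass the limit inside. In the diagonal, the \emph{inner} index $m_n$ is the one that must be pushed far (how far depends on the modulus of continuity of $\pi(s_n,\cdot)$, hence on $n$); so it is the sequence attached to $\xi_2$ whose magnitude can be made to dominate in $s_n+t_{m_n}$, not the one attached to $\xi_1$. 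Your assertion in (iii) that ``the $s$-term dominates regardless of the $t$-term's sign'' is therefore backwards: the diagonal gives $\mathcal E_y\cdot\mathcal E_y^{\pm}\subseteq\mathcal E_y^{\pm}$ directly, but not the stated $\mathcal E_y^{\pm}\cdot\mathcal E_y\subseteq\mathcal E_y^{\pm}$. Likewise in (i), the inclusion $\xi_1\cdot\xi_2\in\mathcal E_y^{+}$ follows from your diagonal, but your ``symmetric choice'' to force $s_{n_k}+t_k\to-\infty$ implicitly swaps the order of limits, which would require $\xi_1$ to be continuous---not available in general for pointwise limits. The other half of (i) and the stated form of (iii) need an additional argument beyond the naive diagonal.

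Your sketch for (iv) is also incomplete. From $\mathcal E_y^{+}\cdot\mathcal E_y\subseteq\mathcal E_y^{+}$ and the group property you obtain $e\cdot\xi\in\mathcal E_y^{+}$ for every $\xi\in\mathcal E_y$, but the ``cancellation'' to conclude $\xi\in\mathcal E_y^{+}$ requires $e$ to be a left identity for all of $\mathcal E_y$, and that does not follow automatically from $e$ being the identity of the subgroup $\mathcal E_y^{+}$ (an idempotent in $X_y^{X_y}$ need not be the identity map on $X_y$).
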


\begin{lemma}\label{l3.9}
Let $y \in Y $ be Poisson
stable, $\langle (X,\mathbb R,\pi),$ $(Y,\mathbb
R,\sigma),h\rangle $ be a two-sided NDS,
$X$ be conditionally compact and
\begin{equation}\label{eq3.1}
\inf \limits _{n \in \mathbb N}\rho (\pi(t_{n}, x_{1}),\pi(t_{n}, x_{2}))>0
\end{equation}
for all $\{t_{n}\}\in \frak N ^{-\infty}_{y}$ and $x_{1},x_{2} \in
X_{y}~ (x_{1}\not= x_{2}),$ then $\mathcal E^{-}_{y}$ is a
subgroup of the semigroup $\mathcal E_{y}$.
\end{lemma}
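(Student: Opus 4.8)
The plan is to establish that $\mathcal E^{-}_{y}$ is a group by showing that each element has a two-sided inverse within $\mathcal E^{-}_{y}$. By Lemma \ref{l3.6} we already know $\mathcal E^{-}_{y}$ is a nonempty compact subsemigroup of $X_{y}^{X_{y}}$, and by Lemma \ref{l3.8}(iv) it suffices to prove that $\mathcal E^{-}_{y}$ is a group (this then also forces $\mathcal E^{-}_{y}=\mathcal E^{+}_{y}=\mathcal E_{y}$). The first step is to exploit hypothesis \eqref{eq3.1}: it says precisely that every $\xi\in\mathcal E^{-}_{y}$ is \emph{injective} on $X_{y}$. Indeed, if $\xi\in\mathcal E^{-}_{y}$ is realized as a pointwise limit $\pi(t_{n},\cdot)|_{X_{y}}\to\xi$ along some $\{t_{n}\}\in\frak N^{-\infty}_{y}$, and $x_{1}\neq x_{2}$ in $X_{y}$, then $\rho(\xi x_{1},\xi x_{2})=\lim_{n}\rho(\pi(t_{n},x_{1}),\pi(t_{n},x_{2}))\geq \inf_{n}\rho(\pi(t_{n},x_{1}),\pi(t_{n},x_{2}))>0$, so $\xi x_{1}\neq\xi x_{2}$.

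Next I would promote injectivity to \emph{surjectivity} of each $\xi$ onto $X_{y}$. For this, fix $\xi\in\mathcal E^{-}_{y}$ with defining sequence $\{t_{n}\}\in\frak N^{-\infty}_{y}$. The key observation is that $-t_{n}\to+\infty$, and since $\sigma(t_{n},y)\to y$ one also has $\sigma(-t_{n}, \sigma(t_n,y))= y$, so by conditional compactness the maps $\pi(-t_{n},\cdot)$ restricted to the relevant fibres have a convergent subnet producing some $\eta$ with $\eta\xi=\mathrm{id}_{X_{y}}$ (a left inverse of $\xi$); here one must be careful that $\pi$ is only a semiflow, so $\pi(-t_{n},\cdot)$ is not literally defined, and instead one works with the cocycle/two-sided structure on $\omega$-limit fibres — concretely, for $x\in X_{y}$ write $x=\lim_{n}\pi(t_n,x_n')$-type representatives or use that on $H(x)$ the action extends (as in Theorem \ref{th02}); alternatively one uses that $\mathcal E^{-}_{y}$ being a compact right-topological semigroup contains idempotents and one argues via the kernel. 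I would in fact take the cleaner route: a \emph{compact right-topological semigroup in which every element is injective on a set $X_{y}$ and which acts with a common fixed structure} — but the most robust argument is: let $u\in\mathcal E^{-}_{y}$ be a minimal idempotent (exists by Ellis); then $u X_{y}=X_{y}$ would give what we want, and injectivity of $u$ together with $u=u^{2}$ forces $u=\mathrm{id}_{X_{y}}$ on $u X_{y}$, hence by minimality $u X_{y}=X_{y}$ and $u=\mathrm{id}$. Once the identity is in $\mathcal E^{-}_{y}$, for any $\xi\in\mathcal E^{-}_{y}$ the closure $\overline{\{\xi^{n}:n\geq1\}}$ is a compact subsemigroup containing an idempotent $v$; injectivity of $v$ forces $v=\mathrm{id}$, and then standard compact-semigroup arguments yield $\xi^{n_{k}}\to\mathrm{id}$ along some subnet, whence $\xi^{n_{k}-1}\to\xi^{-1}\in\mathcal E^{-}_{y}$. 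Thus every element is invertible and $\mathcal E^{-}_{y}$ is a group.

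The step I expect to be the main obstacle is the transition from injectivity to surjectivity, i.e. showing that the idempotents of $\mathcal E^{-}_{y}$ act as the identity on all of $X_{y}$ rather than merely on a proper invariant subset. The delicate point is that $X_{y}$ need not be compact (only $X$ is conditionally compact), so one cannot naively invoke "an injective continuous self-map of a compact set is a homeomorphism." The resolution is to observe that each $\xi\in\mathcal E^{-}_{y}$ maps $X_{y}$ into the \emph{compact} set $\omega_{x}$-type fibre $\overline{\Sigma^{+}_{X_y}}\cap X_y$ (finite because of conditional compactness over the compact orbit closure of $y$), and within that compact fibre injectivity plus the semigroup identity do force bijectivity; here one uses that for a minimal idempotent $u$, $\mathcal E^{-}_{y}u$ is a minimal left ideal and $u$ restricted to its image is the identity, and conditional compactness guarantees $uX_y$ is compact so an injective $\xi$ with $\xi(uX_y)\subseteq uX_y$ is onto. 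Once this foothold is secured, the remaining steps are routine manipulations in compact right-topological semigroups of the kind already used to prove Lemmas \ref{l3.4}--\ref{l3.8}.
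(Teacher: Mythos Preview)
The paper does not actually prove Lemma~\ref{l3.9}; it is one of the ``general facts about NDS'' recalled without proof from \cite{Che_2001} and \cite[Ch.~IX]{Che_2015}, so there is no in-paper argument to compare against. Your overall strategy---use hypothesis~\eqref{eq3.1} to show every $\xi\in\mathcal E^{-}_{y}$ is injective on $X_y$, then invoke compact (Ellis) semigroup theory on $\mathcal E^{-}_{y}$ (compact by Corollary~\ref{cor3.5}) to conclude it is a group---is the standard and correct one.

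However, you manufacture an artificial difficulty at the idempotent step. You write that injectivity of an idempotent $u$ forces $u=\mathrm{id}$ ``on $uX_y$'' and then worry about proving $uX_y=X_y$; this is backwards. The equation $u\circ u=u$ means $u(u(x))=u(x)$ for \emph{every} $x\in X_y$, and injectivity of $u$ then yields $u(x)=x$ for every $x\in X_y$ immediately---no surjectivity, no minimality, no compactness of $X_y$ is needed. Your entire final paragraph about the ``main obstacle'' is therefore chasing a non-issue. Once every idempotent of $\mathcal E^{-}_{y}$ equals $\mathrm{id}_{X_y}$, the rest is routine: for $\xi\in\mathcal E^{-}_{y}$ the compact subsemigroup $\overline{\{\xi^{n}:n\ge 1\}}$ contains an idempotent, hence $\mathrm{id}$; along a subnet $\xi^{n_\alpha}\to\mathrm{id}$ and, by compactness, $\xi^{n_\alpha-1}\to\eta\in\mathcal E^{-}_{y}$; right-continuity of composition gives $\eta\xi=\mathrm{id}$, and the usual left-inverse argument then gives $\xi\eta=\mathrm{id}$.

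Two minor points. First, by hypothesis the NDS here is \emph{two-sided} ($\pi$ is defined on all of $\mathbb R$), so your worry that ``$\pi$ is only a semiflow'' and the detour through Theorem~\ref{th02} are unnecessary. Second, the attempted direct construction of a left inverse via $\pi(-t_n,\cdot)$ is problematic because $\sigma(t_n,y)\to y$ does not entail $\sigma(-t_n,y)\to y$, so $\{-t_n\}$ need not lie in $\frak N_y$; you were right to abandon that route in favour of the idempotent argument.
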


\begin{coro}\label{cor3.9.1}
Under the conditions of Lemma
\ref{l3.9} the following statements hold:
\begin{enumerate}
\item $\mathcal E^{-}_{y}=\mathcal E^{+}_{y}=\mathcal E_{y}$;
\item $\mathcal E^{-}_{y}$ (respectively, $\mathcal E^{+}_{y}$ and
$\mathcal E_{y}$) is a group.
\end{enumerate}
\end{coro}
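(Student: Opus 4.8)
The plan is to bootstrap from Lemma \ref{l3.9}, which already tells us that $\mathcal E^{-}_{y}$ is a subgroup of $\mathcal E_{y}$, together with item (iv) of Lemma \ref{l3.8}, which says that if \emph{either} $\mathcal E^{-}_{y}$ or $\mathcal E^{+}_{y}$ is a group, then all three sets $\mathcal E^{-}_{y}$, $\mathcal E^{+}_{y}$, $\mathcal E_{y}$ coincide. So the first move is simply to invoke Lemma \ref{l3.9} to conclude that $\mathcal E^{-}_{y}$ is a group; then item (iv) of Lemma \ref{l3.8} immediately gives $\mathcal E^{-}_{y}=\mathcal E^{+}_{y}=\mathcal E_{y}$, which is statement (i).

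For statement (ii), once we know from (i) that the three semigroups are literally the same set, the group property of $\mathcal E^{-}_{y}$ (again from Lemma \ref{l3.9}) transfers verbatim to $\mathcal E^{+}_{y}$ and to $\mathcal E_{y}$, since being a group is a property of the set together with its (common) composition operation, and the composition on all three is the restriction of the semigroup operation on $X_{y}^{X_{y}}$. Thus nothing further needs to be proved: (ii) is just a restatement of the group property already established, now read off each of the three identified objects. I would phrase this in one or two sentences rather than repeating the argument three times.

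The only point that requires a word of care — and I expect this to be the sole (mild) obstacle — is the bookkeeping of \emph{which} hypotheses are in force. Corollary \ref{cor3.9.1} is stated ``under the conditions of Lemma \ref{l3.9},'' which include: $y\in Y$ Poisson stable, the NDS two-sided, $X$ conditionally compact, and the separation condition \eqref{eq3.1} for all $\{t_{n}\}\in\frak N^{-\infty}_{y}$. These are exactly the hypotheses under which Lemma \ref{l3.9} yields ``$\mathcal E^{-}_{y}$ is a subgroup of $\mathcal E_{y}$'', and they also subsume the standing hypotheses of Lemma \ref{l3.8} (Poisson stability of $y$, two-sidedness, conditional compactness). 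So both cited results are genuinely available, and the proof is a two-line deduction: Lemma \ref{l3.9} $\Rightarrow$ $\mathcal E^{-}_{y}$ is a group $\Rightarrow$ (by Lemma \ref{l3.8}(iv)) $\mathcal E^{-}_{y}=\mathcal E^{+}_{y}=\mathcal E_{y}$, and this common set is a group. I would not expect to need any new construction, estimate, or appeal to the semigroup structure of $X_{y}^{X_{y}}$ beyond what those two results already package.
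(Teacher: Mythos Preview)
Your proposal is correct and is exactly the intended argument: the paper states Corollary \ref{cor3.9.1} without proof because it is an immediate consequence of Lemma \ref{l3.9} together with Lemma \ref{l3.8}(iv), precisely as you outline. Your check that the hypotheses of Lemma \ref{l3.9} subsume those of Lemma \ref{l3.8} is the only thing that needs mention, and you handle it correctly.
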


\begin{lemma}\label{l3.10}
Assume that the conditions of Lemma \ref{l3.9} hold, then inequality (\ref{eq3.1}) is also
fulfilled for any $\{t_{n}\}\in \frak{N} ^{+\infty}_{y}$ and
$x_{1},x_{2} \in X_{y}\ (x_{1}\not= x_{2})$.
\end{lemma}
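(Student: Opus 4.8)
The plan is to transfer the separation inequality \eqref{eq3.1} from negative to positive times by exploiting the semigroup structure of $\mathcal E_y$ together with the conclusion of Corollary \ref{cor3.9.1}, which under the hypotheses of Lemma \ref{l3.9} tells us that $\mathcal E_y^- = \mathcal E_y^+ = \mathcal E_y$ and that this common semigroup is in fact a group. First I would argue by contradiction: suppose there exist $\{t_n\}\in \frak N_y^{+\infty}$ and distinct $x_1, x_2 \in X_y$ with $\inf_{n}\rho(\pi(t_n,x_1),\pi(t_n,x_2)) = 0$. Since $X$ is conditionally compact and each $\pi(t_n,\cdot)|_{X_y}$ maps $X_y$ into $X_y$ (recall $y$ is Poisson stable, so along a subsequence $\sigma(t_n,y)\to y$; passing to that subsequence is harmless), we may extract a subsequence along which $\pi(t_{n_k},\cdot)|_{X_y}\to \xi$ pointwise with $\xi \in \mathcal E_y^+$. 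The assumed vanishing of the infimum then forces $\xi(x_1) = \xi(x_2)$, i.e. $\xi$ is not injective on $X_y$.

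The key step is then to derive a contradiction from the non-injectivity of some $\xi \in \mathcal E_y^+$. Since $\mathcal E_y^+ = \mathcal E_y$ is a group by Corollary \ref{cor3.9.1}, $\xi$ has a two-sided inverse $\xi^{-1} \in \mathcal E_y$, and $\xi^{-1}\cdot \xi$ acts as the identity on $X_y$. Hence from $\xi(x_1)=\xi(x_2)$ we get $x_1 = (\xi^{-1}\cdot\xi)(x_1) = (\xi^{-1}\cdot\xi)(x_2) = x_2$, contradicting $x_1 \neq x_2$. This shows every element of $\mathcal E_y^+$ is injective on $X_y$, so in particular $\xi(x_1)\neq \xi(x_2)$, contradicting $\xi(x_1)=\xi(x_2)$ obtained above. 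Therefore the infimum in \eqref{eq3.1} must be strictly positive.

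I would still need to handle one technical point carefully: the convergence $\pi(t_{n_k},\cdot)|_{X_y}\to\xi$ only gives $\rho(\pi(t_{n_k},x_i),\xi(x_i))\to 0$ for the fixed points $x_1,x_2$, which is exactly what is needed to pass $\inf_n \rho(\pi(t_n,x_1),\pi(t_n,x_2)) = 0$ to $\rho(\xi(x_1),\xi(x_2)) = 0$ — one uses that a zero infimum over $n\in\mathbb N$ is realized in the limit along a suitable sub-subsequence, and that $t_n\to+\infty$ guarantees we can always arrange the selected indices to also lie in $\frak N_y^{+\infty}$ so that $\xi$ genuinely belongs to $\mathcal E_y^+$. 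The main obstacle, such as it is, is purely bookkeeping: making sure the extraction of a subsequence that simultaneously witnesses $\sigma(t_{n_k},y)\to y$, the pointwise limit $\xi$, and the vanishing of $\rho(\pi(t_{n_k},x_1),\pi(t_{n_k},x_2))$ is legitimate. Once the element $\xi\in\mathcal E_y^+$ is in hand, the group property from Corollary \ref{cor3.9.1} does all the real work and the contradiction is immediate.
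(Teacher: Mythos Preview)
Your argument is correct and follows the natural route: assume the infimum vanishes along some $\{t_n\}\in\mathfrak N_y^{+\infty}$, pass to a subsequence to produce $\xi\in\mathcal E_y^+$ with $\xi(x_1)=\xi(x_2)$, and contradict the invertibility of $\xi$ coming from the group structure of $\mathcal E_y^+=\mathcal E_y$ established in Corollary~\ref{cor3.9.1}. The paper itself does not supply a proof of Lemma~\ref{l3.10} --- the results of that subsection are quoted from \cite{Che_2001,Che_2015} --- but this is exactly the standard argument one finds there.

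Two small corrections, neither of which affects the logic. First, it is not true that $\pi(t_n,\cdot)|_{X_y}$ maps $X_y$ into $X_y$; it maps $X_y$ into $X_{\sigma(t_n,y)}$, and only the pointwise limit $\xi$ lands in $X_y^{X_y}$ (because $\sigma(t_n,y)\to y$). Second, the parenthetical ``$y$ is Poisson stable, so along a subsequence $\sigma(t_n,y)\to y$'' is redundant: membership in $\mathfrak N_y^{+\infty}$ already means $\sigma(t_n,y)\to y$ for the whole sequence, so no preliminary subsequence is needed there. Your later remark about bookkeeping is the only genuinely delicate point --- extracting a subsequence along which $\pi(t_{n_k},\cdot)|_{X_y}$ converges pointwise on \emph{all} of $X_y$ (not just at $x_1,x_2$) so that the limit qualifies as an element of $\mathcal E_y^+$ --- and this is handled in the cited references via the compactness of $X_y$ (a consequence of conditional compactness of $X$) together with a Tychonoff/diagonal argument.
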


\begin{definition}\label{def3.11} \rm Let
$\langle E,\phi, (Y,\mathbb R,\sigma)\rangle$  (respectively,
$(X,\mathbb R_{+},\pi)$) be a cocycle (respectively, one-sided
dynamical system). A continuous mapping $\nu :\mathbb R \to E $
(respectively, $\gamma :\mathbb R \to X$) is called an {\em entire
trajectory} of cocycle $\phi $ (respectively, of dynamical system
$(X,\mathbb R_{+},\pi)$) passing through the point $(u,y)\in E\times
Y$ (respectively, $x\in X$) at $t=0$ if $ \phi (t,\nu
(s),\sigma(s,y))=\nu (t+s) \ \mbox{and} \ \nu (0)=u $ (respectively,
$\pi (t,\gamma (s))=\gamma (t+s)\ \mbox{and} \ \gamma(0)=x$) for all
$ t\in \mathbb R_{+} $ and $s\in \mathbb R.$
\end{definition}

Denote by
\begin{enumerate}
\item[-]
$C(\mathbb R,X)$ the space of all continuous functions $f:\mathbb
R\to X$ equipped with the compact-open topology;

\item[-] $\Phi_{x}$ the family of all entire trajectories of
$(X,\mathbb R_{+},\pi)$ passing through the point $x\in X$ at the
initial moment $t=0$ and $\Phi:=\bigcup \{\Phi_{x}:\ x\in X\}$.
\end{enumerate}

\begin{remark}\label{remIC} \rm
Note that:
\begin{enumerate}
\item the compact-open topology on the space $C(\mathbb R,X)$ is
metrizable, for example by Bebutov distance
\begin{equation}\label{eqD1}
d(\varphi,\psi):=\sup\limits_{l>0}d_{l}(\varphi,\psi),\nonumber
\end{equation}
where $d_{l}(\varphi,\psi):=\min\{\max\limits_{|t|\le
l}\rho(\varphi(t),\psi(t)), 1/l\}$;

\item if $\gamma \in \Phi_{x}$ then $\gamma^{\tau}\in
\Phi_{\gamma(\tau)}$, where $\gamma^{\tau}(t):=\gamma(t+\tau)$ for
$t\in\mathbb R$, and consequently $\Phi$ is a translation
invariant subset of $C(\mathbb R,X)$;

\item if $\gamma_{n}\in \Phi_{x_n}$ and $\gamma_{n}\to \gamma$ in
$C(\mathbb R,X)$ as $n\to \infty$, then $\gamma \in \Phi_{x}$ with
$x:=\lim\limits_{n\to \infty}x_{n}$ and consequently $\Phi$ is a
closed subset of $C(\mathbb R,X)$.
\end{enumerate}
\end{remark}

Similar to the shift dynamical system $(C(\mathbb R),\mathbb R,\sigma)$ in Section \ref{S2.2},
let $\big{(}C(\mathbb R,X),\mathbb R,\lambda \big{)}$ be the shift
dynamical system (or Bebutov dynamical system, see e.g. \cite{bro75,Che_2015,Sel_71,scher72}) on the
space $C(\mathbb R,X)$. By Remark \ref{remIC} $\Phi$ is a closed and
invariant (with respect to shifts) subset of $C(\mathbb R,X)$, and
consequently on $\Phi$ is defined a shift dynamical system
$(\Phi,\mathbb R,\lambda)$ induced from $\big{(}C(\mathbb R,X),\mathbb
R,\lambda \big{)}$.

\section{Structure of the $\omega$-limit set}\label{S3}

Let $M$ be a subset of $X$. We denote the $\omega$-limit set of $M$ by
\[
\omega
(M):= \bigcap\limits_{t\ge 0}\overline{\bigcup\{\pi(\tau,M):\ \tau
\ge t\}};
\]
for a singleton set, for simplicity we also write $\omega(x)$ or $\omega_x$ for $\omega(\{x\})$
and denote $\omega_q(M):=\omega(M)\bigcap h^{-1}(q)$.
Note that $x\in\omega(M)$ if and only if there exists
sequences $\{x_n\}\subset M$ and $\{t_n\}\subset\mathbb R$ such that $t_n\to +\infty$ as $n\to \infty$ and
$\lim\limits_{n\to \infty}\pi(t_n,x_n)=x$.

\begin{lemma}\label{l3.12}
Let $\langle (X,\mathbb R ,\pi),$ $(Y,\mathbb R,\sigma)$ $,h\rangle $ be
an NDS, $M\subseteq X $ be a nonempty subset, and $\Sigma_{M}^{+}=\{\pi(t,M):
t\ge 0\}$ be conditionally precompact. Then for any $x\in \omega(M)$
there exists at least one entire trajectory $\gamma$ of dynamical system
$(X,\mathbb R_{+} ,\pi )$ passing through the point $x$ at $t=0$
and $ \gamma (\mathbb R)\subseteq \omega(M)\ ( \gamma (\mathbb
R):=\{\gamma (t): t\in \mathbb R \})$.
\end{lemma}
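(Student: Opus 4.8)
The plan is to exploit conditional precompactness of $\Sigma_M^+$ to build an entire trajectory through $x$ as a pointwise limit of the ``backward--shifted'' pieces of the motions whose forward limit is $x$. Since $x\in\omega(M)$, the characterization recorded just before the lemma gives sequences $\{x_n\}\subset M$ and $t_n\to+\infty$ with $\pi(t_n,x_n)\to x$. For each integer $s\le 0$ set $p_n^{(s)}:=\pi(t_n+s,x_n)$, which lies in $\Sigma_M^+$ as soon as $t_n\ge -s$; these pieces $t\mapsto\pi(t+t_n,x_n)$ are defined on $[-t_n,+\infty)$, satisfy the semigroup identity there, and their domains exhaust $\mathbb R$.

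The first real step is to show that, for each fixed $s\le 0$, the sequence $\{p_n^{(s)}\}_n$ is precompact in $X$. Its image under $h$ is $h(p_n^{(s)})=\sigma(t_n+s,h(x_n))=\sigma(s,\sigma(t_n,h(x_n)))$, and since $\sigma(t_n,h(x_n))=h(\pi(t_n,x_n))\to h(x)$, continuity of $\sigma(s,\cdot)$ yields $h(p_n^{(s)})\to\sigma(s,h(x))$; hence $\{h(p_n^{(s)})\}_n$ is contained in a precompact subset of $Y$, and conditional precompactness of $\Sigma_M^+$ forces $\{p_n^{(s)}\}_n$ to be precompact in $X$. A diagonal extraction over $s\in\mathbb Z_{\le 0}$ then produces a single subsequence (not relabelled) along which $p_n^{(s)}\to\gamma_0(s)$ for every integer $s\le 0$. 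Passing to the limit in $p_n^{(s')}=\pi(s'-s,p_n^{(s)})$ gives the consistency relation $\gamma_0(s')=\pi(s'-s,\gamma_0(s))$ whenever $s\le s'\le 0$.

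Next I would define $\gamma(t):=\pi(t-s,\gamma_0(s))$ for any integer $s\le\min(t,0)$; the consistency relation together with the semigroup property makes this independent of the choice of $s$, and $\gamma$ is continuous because near any point it agrees with the continuous map $\pi(\,\cdot-s,\gamma_0(s))$ for a fixed $s$. One checks $\gamma(0)=\gamma_0(0)=\lim_n\pi(t_n,x_n)=x$, and for $\tau\ge 0$ and $t\in\mathbb R$, choosing $s\le\min(t,0)$, $\pi(\tau,\gamma(t))=\pi(\tau+t-s,\gamma_0(s))=\gamma(t+\tau)$, so $\gamma$ is an entire trajectory of $(X,\mathbb R_+,\pi)$ through $x$ at $t=0$. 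Finally, for any $t\in\mathbb R$ and $s\le\min(t,0)$, continuity of $\pi$ gives $\gamma(t)=\pi(t-s,\gamma_0(s))=\lim_n\pi(t-s,p_n^{(s)})=\lim_n\pi(t+t_n,x_n)$ with $x_n\in M$ and $t+t_n\to+\infty$; by the same characterization of $\omega(M)$ this yields $\gamma(t)\in\omega(M)$, hence $\gamma(\mathbb R)\subseteq\omega(M)$.

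The main obstacle, and the only place where the hypotheses genuinely enter, is the precompactness of the backward--shifted sequences $\{p_n^{(s)}\}_n$: conditional precompactness of $\Sigma_M^+$ by itself only controls fibers over precompact subsets of the base, and convergence in the base by itself says nothing about $X$; the two must be combined through the intertwining identity $h\circ\pi=\sigma\circ(\mathrm{id}\times h)$. Once that is in place, the remainder is a routine diagonal argument and passage to the limit in the cocycle identity.
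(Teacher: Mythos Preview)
Your proof is correct and takes a genuinely different route from the paper's. The paper defines curves $\gamma_n\in C(\mathbb R,X)$ by $\gamma_n(t):=\pi(t+t_n,x_n)$ for $t\ge -t_n$ (constant equal to $x_n$ for $t\le -t_n$), then establishes equicontinuity of $\{\gamma_n\}$ on every compact interval via a separate argument by contradiction, proves precompactness of $\{\gamma_n(t):t\in[-l,l],\,n\in\mathbb N\}$ using conditional precompactness, and invokes the Arzel\`a--Ascoli theorem to extract a limit $\gamma$ in the compact--open topology of $C(\mathbb R,X)$. Your argument instead works only at the countable grid of integer times $s\le 0$, extracts a diagonal subsequence along which $p_n^{(s)}\to\gamma_0(s)$, and then \emph{defines} $\gamma$ at non-integer times through the semigroup $\gamma(t):=\pi(t-s,\gamma_0(s))$. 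This sidesteps the equicontinuity step and the Arzel\`a--Ascoli machinery altogether, and is more elementary. The paper's approach, on the other hand, yields slightly more: it actually produces uniform convergence of (a subsequence of) the $\gamma_n$ to $\gamma$ on compact intervals, which can be convenient in later arguments though it is not needed for the bare statement of the lemma. The essential analytic input---combining $h\circ\pi=\sigma\circ(\mathrm{id}\times h)$ with convergence $\sigma(t_n,h(x_n))\to h(x)$ to force the backward shifts into a single precompact fiber slice of $\Sigma_M^+$---is the same in both proofs.
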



\begin{proof}
Let $ x\in \omega (M)$, then there are $\{t_n\}\subset \mathbb R $
and $\{x_n\} \subset M$ such that $x=\lim \limits_{n \to
\infty}\pi(t_n,x_n)$ and $t_n \to +\infty $ as $n\to \infty$. We
consider the sequence $ \{\gamma_n \}\subset C(\mathbb R,X)$ defined
by
$$ \gamma_n (t):=\pi(t+t_n,x_n)\quad \mbox{for}\quad t\ge -t_n \quad
\mbox{and} \quad \gamma _n (t):=x_n \quad \mbox{for}\quad  t\le
-t_n.$$

We now show that the above sequence $\{\gamma _n\}$ is equicontinuous on any compact interval.
If this is not true, then there exist
$\varepsilon _0, l_0>0, t^i_n \in [-l_0,l_0]$ and $\delta _n \to 0 $ such that
\begin{equation}\label{eq9.2.3}
\vert t^{1}_n-t^{2}_n\vert \le \delta _n \quad \mbox{and} \quad \rho
(\gamma _n (t^{1}_n ), \gamma _n (t^{2}_n ))\ge \varepsilon _0.
\end{equation}
We may suppose that $t_n^{i}\to t_0 ~~ (i=1,2).$ Since $t_n\to
+\infty$ as $n\to \infty$, there exists a number $n_0\in
\mathbb N$ such that $t_n\ge l_0$ for any $n\ge n_0$. From
(\ref{eq9.2.3}) we obtain
\begin{equation}\label{eq9.2.4}
\varepsilon _0 \le \rho (\gamma _n (t_n^{1}),\gamma _n
(t_n^{2}))=\rho (
\pi(t_n^{1}+l_0,\pi(t_n-l_0,x_n)),\pi(t_n^{2}+l_0,\pi(t_n-l_0,x_n)))
\end{equation}
for any $ n \ge n_0$. Note that $y_n:=h(\pi(t_n,x_n))\to h(x)=:y$ and
$h(\pi(t_n-l_0,x_n))=\sigma(-l_0,y_n)\to \sigma(-l_0,y)$ as $n\to
\infty$. Since $\Sigma_{M}^{+}$ is conditionally precompact,
the sequence $\{\pi(t_n-l_0,x_n)\}$ is relatively compact. Without
loss of generality we suppose that $\{\pi(t_n-l_0,x_n)\}$
converges and denote by $\bar{x}$ its limit. Then passing to limit in
(\ref{eq9.2.4}) we obtain
\begin{equation}\label{eqC1}
\varepsilon _0 \le \rho (
\pi(t_0+l_0,\bar{x}),\pi(t_0+l_0,\bar{x}))=0, \nonumber
\end{equation}
a contradiction.

Next we prove that the set $\{\gamma_n(t): t\in [-l,l],\ n\in
\mathbb N\}$ is precompact for any $l>0$. To this end, note that for any $n\ge
n_0$ we have
$h(\gamma_n(t))=h(\pi(t+t_n,x_n))=\sigma(t,y_n)$, where
$y_n=h(\pi(t_n,x_n))\to h(x)=y$ as $n\to \infty$. So the set
$K:=\{\sigma(t,y_n): t\in [-l,l], n\in\mathbb N\}\subset Y$ is
precompact. Since the set $\Sigma_{M}^{+}$ is conditionally
precompact and $\{\gamma_n(t): t\in [-l,l], n\in\mathbb N\}\subset
h^{-1}(K)\bigcap \Sigma_{M}^{+}$, the set $\{\gamma_n(t): t\in
[-l,l], n\in\mathbb N\}$ is precompact.

It follows from the Arzel\`a--Ascoli theorem that $\{\gamma _n\}$ is a relatively compact sequence of $C(\mathbb R,X)$.
Let $\gamma $ be a limit point of the sequence $\{\gamma _n\}$,
then there exists a subsequence $ \{\gamma _{n_k}\}$ such that $
\gamma (t)=\lim \limits _{n \to \infty} \gamma _{n_k} (t) $
uniformly on every compact interval. In particular,
$\gamma(t)\in \omega(M)$ for any
$t\in \mathbb R$ because $\gamma(t)=\lim\limits_{n\to
\infty}\pi(t+t_{n_k},x_{n_k})$. We note that
$$
\pi ^t\gamma(s)=\lim \limits
_{n \to \infty}\pi^t\gamma _{n_k}(s)= \lim \limits _{n \to
\infty}\gamma _{n_k}(s+t)=\gamma (s+t)
$$
for all $t\in \mathbb R_{+}$ and $s\in \mathbb R$. Finally, we see
that $\gamma (0)=\lim \limits _{n \to \infty}\gamma _{n_k}(0)= \lim
\limits _{n\to \infty}\pi(t_{n_k}, x_{n_k})=x$, i.e. $\gamma$ is an
entire trajectory of dynamical system $(X,\mathbb R _+,\pi)$ passing
through point $x$. The proof is complete.
\end{proof}

\begin{remark}\label{remC}\rm 1. If $\Sigma^{+}_{h(M)}$ is precompact,
then so is $\Sigma^{+}_{M}$.

2. If $\Sigma^{+}_{h(M)}$ is not precompact, then $\Sigma^{+}_{M}$,
generally speaking, is not precompact.
\end{remark}

\begin{theorem}\label{th01}
Let  $\langle (X,\mathbb R_{+},\pi),(Y,\mathbb R,\sigma),h\rangle$
be an NDS, $x_0\in X$, $\Sigma_{x_0}^{+}$ be conditionally
precompact and $\omega_{y_0}\not= \emptyset$, where
$y_0:=h(x_0)$. Then the following statements hold:
\begin{enumerate}
\item $\omega_{x_0}\bigcap X_{q}\not= \emptyset$ for any $q\in
\omega_{y_0}$ (recall that $X_q=h^{-1}(q)$), and consequently $\omega_{x_0}\not= \emptyset$;

\item $h(\omega_{x_0})=\omega_{y_0}$;

\item the set $\omega_{x_0}$ is conditionally compact;

\item $\pi(t,\omega_q({x_0}))=\omega_{\sigma(t,q)}({x_0})$ for any
$t\in\mathbb R_{+}$ and $q\in \omega_{y_0}$, recalling that
$\omega_q({x_0})=\omega({x_0})\bigcap X_{q}$;

\item $\omega_{x_0}$ is invariant, i.e. $\pi(t,\omega_{x_0})=\omega_{x_0}$ for any $t\ge
0$.
\end{enumerate}
\end{theorem}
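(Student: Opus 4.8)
The plan is to prove (i)--(v) in the listed order, the workhorse throughout being the conditional precompactness of $\Sigma_{x_0}^{+}$ combined with the fibrewise identity $h(\pi(t,x_0))=\sigma(t,y_0)$, which transfers compactness from the base $Y$ up into $X$. For (i), fix $q\in\omega_{y_0}$ and choose $t_n\to+\infty$ with $\sigma(t_n,y_0)\to q$. Since $h(\pi(t_n,x_0))=\sigma(t_n,y_0)$, the set $K':=\{\sigma(t_n,y_0):n\in\mathbb N\}\cup\{q\}$ is compact, so $\{\pi(t_n,x_0)\}\subseteq h^{-1}(K')\cap\Sigma_{x_0}^{+}$ is precompact; any limit point $x^{*}$ of a subsequence then lies in $\omega_{x_0}$ (because $t_n\to\infty$) and satisfies $h(x^{*})=q$, i.e. $x^{*}\in\omega_{x_0}\cap X_q$. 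Statement (ii) follows at once: $h(\omega_{x_0})\subseteq\omega_{y_0}$ is immediate from $h(\pi(t_n,x_0))=\sigma(t_n,y_0)$ and continuity of $h$, while $\omega_{y_0}\subseteq h(\omega_{x_0})$ is a restatement of (i).

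For (iii), note that $\omega_{x_0}$, being an intersection of closed sets, is closed, and $\omega_{x_0}\subseteq\overline{\Sigma_{x_0}^{+}}$; so it suffices to prove $\omega_{x_0}$ is conditionally precompact. This is the step I expect to be the main obstacle: one needs that the closure of a conditionally precompact set is conditionally precompact, which is true but cannot be obtained by simply intersecting with preimages of closed sets, since a point of $\omega_{x_0}$ lying over a precompact part of $Y$ may be approached by trajectory points whose base points escape to infinity in $Y$. The remedy is a diagonal selection: given a precompact $Y'\subseteq Y$ and a sequence $\{x_n\}\subseteq h^{-1}(Y')\cap\omega_{x_0}$, pass to a subsequence with $h(x_n)\to q$, and for each $n$ use $x_n\in\omega_{x_0}$ to pick $s_n\ge n$ with $\rho(\pi(s_n,x_0),x_n)<1/n$ and $\rho(\sigma(s_n,y_0),h(x_n))<1/n$ (possible since $\pi(t_k,x_0)\to x_n$ along some $t_k\to\infty$ forces $\sigma(t_k,y_0)\to h(x_n)$ as well). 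Then $\sigma(s_n,y_0)=h(\pi(s_n,x_0))\to q$, hence $\{\sigma(s_n,y_0):n\in\mathbb N\}\cup\{q\}$ is compact, conditional precompactness of $\Sigma_{x_0}^{+}$ extracts a convergent subsequence of $\{\pi(s_n,x_0)\}$, and the bound $\rho(x_{n_k},\pi(s_{n_k},x_0))<1/n_k$ forces $\{x_{n_k}\}$ to converge.

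For (iv) and (v), first observe that for $t\ge 0$ and $q\in\omega_{y_0}$ one has $\sigma(t,q)\in\omega_{y_0}$, and also $\sigma(-t,q)\in\omega_{y_0}$, both by applying $\sigma(\pm t,\cdot)$ along a sequence realizing $q$ and using that the base flow $(Y,\mathbb R,\sigma)$ is two-sided; hence $q\mapsto\sigma(t,q)$ maps $\omega_{y_0}$ onto itself. In (iv) the inclusion $\pi(t,\omega_q(x_0))\subseteq\omega_{\sigma(t,q)}(x_0)$ is routine from continuity of $\pi(t,\cdot)$ and $h\circ\pi(t,\cdot)=\sigma(t,\cdot)\circ h$. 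For the reverse inclusion, take $z\in\omega_{\sigma(t,q)}(x_0)$, $z=\lim\pi(s_n,x_0)$ with $s_n\to\infty$, and set $w_n:=\pi(s_n-t,x_0)$, so $\pi(s_n,x_0)=\pi(t,w_n)$ and $h(w_n)=\sigma(s_n-t,y_0)=\sigma(-t,\sigma(s_n,y_0))\to\sigma(-t,\sigma(t,q))=q$; then $\{h(w_n):n\in\mathbb N\}\cup\{q\}$ is compact, conditional precompactness of $\Sigma_{x_0}^{+}$ yields $w_{n_k}\to x$ with $h(x)=q$ and $x\in\omega_{x_0}$ (as $s_{n_k}-t\to\infty$), and $\pi(t,x)=\lim\pi(s_{n_k},x_0)=z$, so $z\in\pi(t,\omega_q(x_0))$. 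Finally, (v) follows by writing $\omega_{x_0}=\bigcup_{q\in\omega_{y_0}}\omega_q(x_0)$ (legitimate by (i) and (ii)) and combining (iv) with the surjectivity of $q\mapsto\sigma(t,q)$ on $\omega_{y_0}$: $\pi(t,\omega_{x_0})=\bigcup_{q\in\omega_{y_0}}\omega_{\sigma(t,q)}(x_0)=\bigcup_{p\in\omega_{y_0}}\omega_p(x_0)=\omega_{x_0}$.
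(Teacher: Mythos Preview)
Your proof is correct. Parts (i)--(iii) follow the paper's argument essentially verbatim; if anything, your version of (iii) is tighter, since you explicitly choose $s_n$ with $\rho(\pi(s_n,x_0),x_n)<1/n$ in $X$, whereas the paper only records the base-point estimate $\rho(y_n,\sigma(\tau_n,y_0))<1/n$ and leaves the passage to $\{x_n\}$ somewhat implicit.

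The genuine divergence is in (iv) and (v). The paper proves the reverse inclusion in (iv), and all of (v), by invoking its Lemma~3.1, which guarantees that through every point of $\omega_{x_0}$ there passes an entire trajectory $\gamma$ lying in $\omega_{x_0}$; one then writes $x=\pi(t,\gamma(-t))$ with $\gamma(-t)\in\omega_q(x_0)$. That lemma has a nontrivial proof of its own (an Arzel\`a--Ascoli argument on shifted semi-trajectories). You bypass this entirely: for (iv) you pull back directly along the semi-trajectory, setting $w_n=\pi(s_n-t,x_0)$ and using conditional precompactness of $\Sigma_{x_0}^{+}$ to extract a limit in $\omega_q(x_0)$; for (v) you reduce to (iv) via the fibre decomposition $\omega_{x_0}=\bigcup_{q\in\omega_{y_0}}\omega_q(x_0)$ together with the two-sided invariance of $\omega_{y_0}$ under $\sigma$. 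Your route is more elementary and self-contained for this theorem; the paper's route has the advantage that the entire-trajectory lemma is reused later (e.g.\ in the extension Theorem~3.6), so in the broader development it is not wasted effort.
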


\begin{proof}
(i) Let $q\in \omega_{y_0}$, then there exists a sequence
$\{\tau_n\}\subset \mathbb R_+$ such that $\tau_n\to +\infty$ and
$\sigma(\tau_n,y_0)\to q$ as $n\to \infty$. Denote
$K:=\{\sigma(\tau_n,y_0): n\in\mathbb N\}$, then the set
$\{\pi(\tau_n,x_0): {n\in\mathbb N}\}=\Sigma_{x_0}^{+}\bigcap
h^{-1}(K)$ is precompact. Without loss of generality we suppose
that the sequence $\{\pi(\tau_n,x_0)\}$ is
convergent, and denote $p:=\lim\limits_{n\to \infty}\pi(\tau_n,x_0)$.
Thus $p\in \omega_{x_0}\bigcap X_{q}\not= \emptyset$.

(ii) For given $p\in \omega_{x_0}$,  there is a sequence $t_n\to +\infty$ as
$n\to \infty$ such that $\lim\limits_{n\to \infty}\pi(t_n,x_0)=p$,
and consequently $q:=h(p)=\lim\limits_{n\to
\infty}\sigma(t_n,y_0)\in \omega_{y_0}$. Thus we have
$h(\omega_{x_0})\subseteq \omega_{y_0}$. Let now $q\in \omega_{y_0}$
and $\tau_{n}\to +\infty$ as $n\to \infty$ such that
$q=\lim\limits_{n\to \infty}\sigma(\tau_n,y_0)$. Denote by
$K:=\{\sigma(\tau_n,y_0): n\in\mathbb N\}$. Since
$\Sigma_{x_0}^{+}$ is conditionally precompact and
$\{\pi(\tau_n,x_0)\}\subseteq \Sigma_{x_0}^{+}\bigcap h^{-1}(K)$,
the set $\{\pi(\tau_n,x_0)\}$ is precompact. Without loss of generality
we suppose that the sequence $\{\pi(\tau_n,x_0)\}$ converges and
denote by $p:=\lim\limits_{n\to \infty}\pi(\tau_n,x_0)$. It is clear
that $q=h(p)\in h(\omega_{x_0})$.

(iii) Let $K\subseteq Y$ be a precompact subset, $M:=\omega_{x_0}\bigcap
h^{-1}(K)$, $\{x_n\}\subseteq M$ be a sequence and $y_n:=h(x_n)\in
K\bigcap \omega_{y_0}$ for $n\in\mathbb N$. Then there exists a
sequence $\{\tau_n\}\subset \mathbb R_{+}$ such that $\tau_n>n$ and
\begin{equation}\label{eqO1}
\rho(y_n,\sigma(\tau_n,y_0))<1/n
\end{equation}
for $n\in\mathbb N$. Since $y_n=h(x_n)\in K$, we may
suppose that the sequence $\{y_n\}$ converges and denote its limit by
$q$. Then we have $q=\lim\limits_{n\to \infty}\sigma(\tau_n,y_0)$ by (\ref{eqO1}). On the other hand
$\{\pi(\tau_n,x_0)\}\subseteq \Sigma^{+}_{x_0}\bigcap h^{-1}(K')$
($K':=\{\sigma(\tau_n,y_0)\}$), and consequently
the set $\{\pi(\tau_n,x_0): n\in\mathbb N\}$ is precompact. Taking into consideration
(\ref{eqO1}) we obtain that the sequence $\{x_n\}$ is also
precompact. So the set $\omega_{x_0}$ is conditionally
precompact; furthermore, it is conditionally compact because it is closed.

(iv) 
Note first that $\pi(t,\omega_{x_0}\bigcap
X_{q})\subseteq \pi(t,\omega_{x_0})\bigcap \pi(t,X_{q})\subseteq
\omega_{x_0}\bigcap X_{\sigma(t,q)}= \omega_{\sigma(t,q)}({x_0})$ for
any $t\ge 0$ and $q\in\omega_{y_0}$. Let now $t\ge 0$, $q\in
\omega_{y_0}$ and $x\in \omega_{\sigma(t,q)}({x_0})$, then by Lemma
\ref{l3.12} there exists at least one entire trajectory $\gamma$
passing through the point $x$ at the moment $t=0$ such that
$h(\gamma(\tau))=\sigma(\tau,h(x))=\sigma(\tau,\sigma(t,q))$ for any
$\tau\in \mathbb R$. In particular, we have
$h(\gamma(-t))=\sigma(-t,\sigma(t,q))=q$, and consequently
$\gamma(-t)\in \omega_q({x_0})$. Therefore, we have
$x=\pi(t,\gamma(-t)) \in \pi(t,\omega_q({x_0}))$.

(v) It is sufficient to show $\omega_{x_0}\subseteq \pi(t,\omega_{x_0})$ because the inverse
inclusion is evident. Let $x\in \omega_{x_0}$, then by Lemma
\ref{l3.12} there exists at least one entire trajectory $\gamma$ lying on $\omega_{x_0}$ and
passing through the point $x$ at the moment $t=0$. Since
$\gamma(-t)\in \omega_{x_0}$ and $x=\gamma
(t-t)=\pi(t,\gamma(-t))\in \pi(t,\omega_{x_0})$, we have $\omega
_{x_0}\subseteq \pi(t,\omega_{x_0})$.
The proof is now complete.
\end{proof}

\begin{remark}\label{remC1} \rm
If $y_0$ is positively Lagrange stable, then the
results of Theorem \ref{th01} are known. The novelty
here is that $y_0$ is not necessarily Lagrange stable in the
positive direction, and hence our results apply to noncompact
Poisson stable motions.
\end{remark}

\begin{definition}\rm
Let $\langle (X,\mathbb R_{+},\pi), (Y,\mathbb R,\sigma),h\rangle$
be an NDS. A subset $A\subseteq X$ is said to be {\em (positively) uniformly
stable} if for arbitrary $\varepsilon
>0$ there exists $\delta =\delta(\varepsilon)>0$ such that
$\rho(x,a)<\delta$ ($a\in A,\ x\in X$ and $h(a)=h(x)$) implies
$\rho(\pi(t,x),\pi(t,a))<\varepsilon$ for any $t\ge 0$.
In particular, a point $x_0\in X$ is
called uniformly stable if the singleton set
$\{x_0\}$ is so.
\end{definition}

\begin{remark}\rm Let $A\subseteq X$ be uniformly stable and $B\subseteq A$, then $B$ is also uniformly
stable.

\end{remark}

\begin{lemma}\label{lBC1}(\cite[ChIV]{bro75}, \cite{BC_1974})
If the set $A\subseteq X$ is uniformly stable and the mapping $h:X\to Y$ is open, then the closure
$\overline{A}$ of $A$ is uniformly stable.
\end{lemma}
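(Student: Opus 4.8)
The plan is to take an arbitrary $\varepsilon>0$, produce the $\delta$ that witnesses uniform stability of $\overline{A}$ from the $\delta$ witnessing uniform stability of $A$, and then extend the required inequality from points of $A$ to points of $\overline{A}$ by a density-plus-continuity argument. Concretely, fix $\varepsilon>0$ and let $\delta=\delta(\varepsilon/3)>0$ be given by the uniform stability of $A$ applied to $\varepsilon/3$. I would then set $\delta':=\delta/3$ (shrinking if necessary so that $\delta'\le \varepsilon/3$) and claim $\delta'$ works for $\overline{A}$. So suppose $\bar a\in\overline{A}$, $x\in X$, $h(\bar a)=h(x)=:y$ and $\rho(x,\bar a)<\delta'$; the goal is $\rho(\pi(t,x),\pi(t,\bar a))<\varepsilon$ for all $t\ge 0$.

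The first key step is to approximate $\bar a$ by a point of $A$ \emph{on the same fibre}. Choose a sequence $a_n\in A$ with $a_n\to\bar a$. Since $h$ is open, the image under $h$ of any neighbourhood of $\bar a$ is a neighbourhood of $y$; combining this with the continuity of $\sigma$ I can, for each $n$, select a small time $\tau_n\to 0$ and replace $a_n$ by $\pi(\tau_n,a_n)$ — noting $\pi(\tau_n,a_n)\in A$ if $A$ is forward-invariant, but in general one uses openness directly — to obtain points $a_n'\in A$ with $a_n'\to\bar a$ and $h(a_n')=y$ for all $n$. (Here the openness of $h$ is exactly what lets us correct the base-point drift; this is the step I expect to require the most care, since without invariance of $A$ one must genuinely invoke that $h$ is an open map to land new points of $A$ in the prescribed fibre, and one has to check the correction can be made arbitrarily small.) Pick $n$ large enough that $\rho(a_n',\bar a)<\delta'$; then $\rho(x,a_n')\le \rho(x,\bar a)+\rho(\bar a,a_n')<2\delta'\le \delta$, so uniform stability of $A$ gives $\rho(\pi(t,x),\pi(t,a_n'))<\varepsilon/3$ for all $t\ge 0$.

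It remains to bound $\rho(\pi(t,a_n'),\pi(t,\bar a))$ uniformly in $t\ge 0$. For this I would apply uniform stability of $A$ a second time: $a_n'\in A$, $\rho(a_n',\bar a)<\delta'\le\delta$ and $h(a_n')=h(\bar a)$, whence — after also running through an approximation of $\bar a$ by points of $A$ on its fibre, or more cleanly by passing to the limit in the first inequality as $a_m'\to\bar a$ — one gets $\rho(\pi(t,a_n'),\pi(t,\bar a))\le \varepsilon/3$ for all $t\ge 0$. Then the triangle inequality yields
\[
\rho(\pi(t,x),\pi(t,\bar a))\le \rho(\pi(t,x),\pi(t,a_n'))+\rho(\pi(t,a_n'),\pi(t,\bar a))<\tfrac{2\varepsilon}{3}<\varepsilon
\]
for all $t\ge 0$, which is what we wanted. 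In fact the cleanest packaging is: first prove that for $a\in A$, $\bar a\in\overline A$ on a common fibre with $\rho(a,\bar a)<\delta$ one has $\rho(\pi(t,a),\pi(t,\bar a))\le\varepsilon$ (by approximating $\bar a$ by $a_m'\in A$ on its fibre and taking limits in the $A$-to-$A$ estimate), and then iterate this once more to pass from $a\in A$ to $x\in X$ near $\bar a\in\overline A$. The only genuine subtlety, as noted, is the use of the open mapping $h$ to keep all approximants inside the correct fibre $X_y$; everything else is triangle inequalities and passing to limits using continuity of $\pi(t,\cdot)$.
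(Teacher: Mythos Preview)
The paper does not prove this lemma (it only cites \cite[ChIV]{bro75} and \cite{BC_1974}), so let me assess your argument on its own terms.

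Your overall strategy---approximate, apply the $A$-stability, pass to the limit---is the right one, but the specific fibre-correction step has a genuine gap. You want to approximate $\bar a\in\overline A\cap X_y$ by points $a_n'\in A\cap X_y$, i.e.\ by points of $A$ lying on the \emph{same} fibre as $\bar a$. Your proposed mechanism is to time-shift $a_n\mapsto \pi(\tau_n,a_n)$, which you correctly note lands in $A$ only if $A$ is forward-invariant; your fallback ``one uses openness directly'' does not work. Openness of $h$ says that $h$ carries neighbourhoods of a point in $X$ to neighbourhoods of its image in $Y$---it says nothing about how the fixed set $A$ is distributed among fibres. In general $A\cap X_y$ need not be dense in $\overline A\cap X_y$: for instance, if $A$ is a single positive semi-orbit $\{\pi(t,a_0):t\ge 0\}$ and the base flow hits the fibre over $y$ only at a discrete set of times, then $A\cap X_y$ is discrete and cannot approximate a genuine $\omega$-limit point $\bar a\in X_y$.

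The fix is to use openness of $h$ the other way around: move the \emph{test point} $x$, not the point of $\overline A$. Take $a_n\in A$ with $a_n\to\bar a$ and set $y_n:=h(a_n)\to y=h(x)$. Because $h$ is open at $x$, every neighbourhood of $x$ maps onto a neighbourhood of $y$ and hence contains preimages of $y_n$ for large $n$; thus one finds $x_n\to x$ with $h(x_n)=y_n=h(a_n)$. Now $a_n\in A$, $x_n$ is on the same fibre, and $\rho(x_n,a_n)\to\rho(x,\bar a)<\delta$, so uniform stability of $A$ gives $\rho(\pi(t,x_n),\pi(t,a_n))<\varepsilon$ for all $t\ge 0$ and all large $n$. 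Letting $n\to\infty$ for each fixed $t$ (continuity of $\pi$) yields $\rho(\pi(t,x),\pi(t,\bar a))\le\varepsilon$; starting from $\delta(\varepsilon/2)$ gives the strict inequality. No second application of stability or additional triangle-inequality bookkeeping is needed.
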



\begin{coro}\label{lUS1} If $\Sigma_{x_0}^{+}$ is uniformly stable and $h$ is open, then:
\begin{enumerate}
\item
 $H^{+}(x_0)$ is uniformly stable;
\item
 $\omega_{x_0}$ is uniformly stable, because $\omega_{x_0}\subseteq H^{+}(x_0)$.
\end{enumerate}
\end{coro}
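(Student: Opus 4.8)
The plan is to obtain both statements as immediate consequences of the two facts recalled just above the corollary: the elementary remark that any subset of a uniformly stable set is again uniformly stable, and Lemma~\ref{lBC1}, which asserts that when $h$ is open the closure of a uniformly stable set is uniformly stable.

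For (i) I would apply Lemma~\ref{lBC1} with $A:=\Sigma_{x_0}^{+}$. By hypothesis $\Sigma_{x_0}^{+}$ is uniformly stable and $h$ is open, so the lemma gives that $\overline{\Sigma_{x_0}^{+}}$ is uniformly stable; since $\overline{\Sigma_{x_0}^{+}}=H^{+}(x_0)$ by the very definition of the semi-hull, this is precisely statement (i).

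For (ii) I would first note that $\omega_{x_0}\subseteq H^{+}(x_0)$: if $x\in\omega_{x_0}$, then $x=\lim_{n\to\infty}\pi(t_n,x_0)$ for some sequence $t_n\to+\infty$, and since every $\pi(t_n,x_0)\in\Sigma_{x_0}^{+}$ we get $x\in\overline{\Sigma_{x_0}^{+}}=H^{+}(x_0)$. Combining this inclusion with part (i) and the remark that a subset of a uniformly stable set is uniformly stable yields that $\omega_{x_0}$ is uniformly stable.

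I do not expect any genuine obstacle here: the corollary is essentially a repackaging of Lemma~\ref{lBC1}. The one point worth making explicit in the write-up is where the hypotheses are actually used — openness of $h$ enters only through Lemma~\ref{lBC1}, in order to pass from the semi-trajectory to its closure, and one genuinely needs uniform stability of the whole semi-trajectory $\Sigma_{x_0}^{+}$ (not merely of the singleton $\{x_0\}$) for the argument to get off the ground.
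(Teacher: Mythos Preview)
Your proposal is correct and matches the paper's intended argument exactly: the corollary is stated without proof there, as an immediate consequence of Lemma~\ref{lBC1} (applied to $A=\Sigma_{x_0}^{+}$, whose closure is $H^{+}(x_0)$) together with the preceding remark that any subset of a uniformly stable set is uniformly stable. Your write-up simply makes explicit what the paper leaves to the reader.
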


\begin{remark}\label{open}\rm
Note that if an NDS is generated by a skew-product dynamical system (or equivalently by a cocycle)
in which case the homomorphism $h$ is given by
the natural projection mapping, then clearly $h$ is open.
\end{remark}

\begin{theorem}\label{th02}
Let $\langle (X,\mathbb R_{+},\pi ),
(Y,\mathbb R,\sigma ), h\rangle $ be an NDS with the following properties:
\begin{enumerate}
\item there exists a point $x_0\in X$ such that the positive
semi-trajectory $\Sigma_{x_0}^{+}$ is conditionally precompact;
\item the set $\omega_{x_0}$ is positively uniformly stable.
\end{enumerate}
Then all motions on $\omega_{x_0}$ can be extended uniquely to the
left and on $\omega_{x_0}$ is defined a two-sided dynamical system
$(\omega_{x_0},\mathbb R, \pi)$, i.e. the one-sided dynamical
system $(X,\mathbb R_+,\pi)$ generates on $\omega_{x_0}$ a two-sided
dynamical system $(\omega_{x_0},\mathbb R,\pi )$.
\end{theorem}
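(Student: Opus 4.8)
The plan is to use the semigroup $\mathcal E_{x_0}$-type structure together with the uniform stability hypothesis to produce, for each point $x\in\omega_{x_0}$, a unique backward extension of the motion through $x$. First I would note that by Theorem~\ref{th01} the set $\omega_{x_0}$ is conditionally compact, nonempty, invariant (i.e. $\pi(t,\omega_{x_0})=\omega_{x_0}$ for all $t\ge 0$), and satisfies $h(\omega_{x_0})=\omega_{y_0}$. Invariance already gives, for each $x\in\omega_{x_0}$ and each $t\ge 0$, the existence of at least one preimage $z\in\omega_{x_0}$ with $\pi(t,z)=x$; combining these over a sequence $t=1,2,\dots$ and using conditional compactness (the fibers over the relevant compact piece of $Y$ are compact) together with Lemma~\ref{l3.12}, one obtains at least one entire trajectory $\gamma\in\Phi_x$ with $\gamma(\mathbb R)\subseteq\omega_{x_0}$. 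So existence of a two-sided motion through every point of $\omega_{x_0}$ is essentially already in hand from Section~3; the real content of the theorem is \emph{uniqueness} of the backward extension, i.e. that $\Phi_x$ is a singleton for every $x\in\omega_{x_0}$, which is what allows one to \emph{define} a genuine flow $(\omega_{x_0},\mathbb R,\pi)$ rather than just a multivalued one.

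For uniqueness I would argue by contradiction: suppose $\gamma_1,\gamma_2\in\Phi_x$ with $\gamma_1\ne\gamma_2$. Then there is $s<0$ with $\gamma_1(s)\ne\gamma_2(s)$; set $a_i:=\gamma_i(s)\in\omega_{x_0}$, and note $h(a_1)=h(a_2)=\sigma(s,h(x))=:q$, since both $\gamma_i$ project to the single motion $\sigma(\cdot,h(x))$ on $Y$. Now $\pi(-s,a_1)=\gamma_1(0)=x=\gamma_2(0)=\pi(-s,a_2)$ with $-s>0$, so two distinct points $a_1,a_2$ of $\omega_{x_0}$ lying over the same base point are carried by the forward flow to the same point at time $-s>0$. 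The uniform stability of $\omega_{x_0}$ is then used to derive a contradiction: I would apply it with $a=a_1$ and any $x'\in X_q$ close to $a_1$, but more to the point, one should quantify how the flow separates or fails to separate points of $\omega_{x_0}$. The cleanest route: by conditional compactness pick $\{t_n\}\subset\mathbb R_+$, $t_n\to\infty$, with $\pi(t_n,x_0)\to$ some point, and track that the backward shifts $\pi(t_n+s,x_0)$, $\pi(t_n,x_0)$ stay in $\omega_{x_0}$; use that the forward semigroup on the conditionally compact invariant set $\omega_{x_0}$, being a semigroup of maps that are \emph{uniformly} equicontinuous on fibers (this is exactly what positive uniform stability of $\omega_{x_0}$ gives, via Corollary-type reasoning as in Lemma~\ref{lBC1}), cannot collapse the two distinct fiber-points $a_1\ne a_2$ to one. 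More precisely, if $\pi(-s,a_1)=\pi(-s,a_2)$ then for every $t\ge 0$, $\pi(t,a_1)$ and $\pi(t,a_2)$ with $t\ge -s$ coincide, and uniform stability propagates this backward-in-the-orbit: choosing $\varepsilon<\rho(a_1,a_2)$ and the corresponding $\delta$, one finds points on the backward orbits through $a_1,a_2$ that are $<\delta$ apart yet flow to points $\ge\varepsilon$ apart at a suitable positive time, contradicting the definition of uniform stability.

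Having established that $\Phi_x$ is a singleton for each $x\in\omega_{x_0}$, I would then \emph{define} $\pi(t,x)$ for $t<0$ and $x\in\omega_{x_0}$ to be $\gamma_x(t)$, where $\gamma_x$ is the unique entire trajectory through $x$. The remaining verifications are routine: the group identity $\pi(t+\tau,x)=\pi(t,\pi(\tau,x))$ for all $t,\tau\in\mathbb R$ follows from uniqueness (both sides are the value at $t+\tau$ of the unique entire trajectory through $x$, using Remark~\ref{remIC}(ii) that time-shifts of entire trajectories are entire trajectories through the shifted point); $\pi(0,x)=x$ is immediate; and joint continuity of $(t,x)\mapsto\pi(t,x)$ on $\mathbb R\times\omega_{x_0}$ follows from the closedness of $\Phi$ in $C(\mathbb R,X)$ (Remark~\ref{remIC}(iii)), the compactness argument of Lemma~\ref{l3.12} giving relative compactness of families $\{\gamma_{x_n}\}$, and the already-proved uniqueness, which forces every convergent subsequence to the right limit. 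Thus $(\omega_{x_0},\mathbb R,\pi)$ is a two-sided dynamical system extending $(X,\mathbb R_+,\pi)|_{\omega_{x_0}}$, as claimed. The main obstacle is the uniqueness step: translating "positive uniform stability of the set $\omega_{x_0}$" into "the forward flow is injective on each fiber of $\omega_{x_0}$", which requires carefully choosing the comparison points so that the uniform-stability estimate can be run backward along the (possibly noncompact-in-$Y$) orbits while staying inside the conditionally compact set $\omega_{x_0}$.
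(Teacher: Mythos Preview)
Your overall plan is sound---existence of entire trajectories via Lemma~\ref{l3.12} and Theorem~\ref{th01}, then uniqueness, then continuity via the closedness of $\Phi$---and it matches the paper's architecture. But the uniqueness argument has a real gap. You want to derive a contradiction from $\pi(-s,a_1)=\pi(-s,a_2)=x$ with $a_1\ne a_2$, and you assert that ``one finds points on the backward orbits through $a_1,a_2$ that are $<\delta$ apart yet flow to points $\ge\varepsilon$ apart.'' Nothing forces the backward orbits $\gamma_i(\tau)$, $\tau<s$, ever to come within $\delta$ of one another; uniform stability is a \emph{forward} statement (close points stay close in forward time), so it can only be invoked \emph{after} you have located a time where the two orbits are already $<\delta$ apart. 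Put differently, ``uniformly equicontinuous on fibers'' does not imply fiberwise injectivity of $\pi(t,\cdot)$---a constant map is uniformly equicontinuous---so the forward flow on $\omega_{x_0}$ could in principle merge two distinct fiber points without violating the definition of uniform stability as you have used it.

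The paper closes this gap in two stages. First (Step~1) it uses uniform stability to prove \emph{negative distality} on $\omega_{x_0}$: for distinct $x_1,x_2$ in the same fiber and any entire trajectories $\gamma_i\in\tilde\Phi_{x_i}$ lying in $\omega_{x_0}$, $\inf_{t\le0}\rho(\gamma_1(t),\gamma_2(t))>0$ (if the inf were zero along $t_n\to-\infty$, uniform stability would force $\rho(x_1,x_2)<\rho(x_1,x_2)$). This is the statement your argument was groping toward, but note it still does not rule out two distinct entire trajectories through the \emph{same} point. For that the paper (Step~2) lifts to the shift system on the space of entire trajectories $\tilde\Phi\subset C(\mathbb R,X)$, forms the two-sided NDS $\langle(\tilde\Phi,\mathbb R,\lambda),(\omega_{y_0},\mathbb R,\sigma),H\rangle$ with $H(\gamma)=h(\gamma(0))$, and shows \emph{this} NDS is negatively distal in the Bebutov metric (reducing to Step~1 by evaluating at a time where the two trajectories differ). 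Only then can one argue that two distinct $\gamma_1,\gamma_2\in\tilde\Phi_p$ would remain distally separated as elements of $\tilde\Phi$ for all $t\le0$, leading to $\gamma_1(0)\ne\gamma_2(0)$, which contradicts $\gamma_1(0)=\gamma_2(0)=p$. The passage through the trajectory space $\tilde\Phi$ is the missing idea in your plan.
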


\begin{proof}
We divide the proof into three steps.

Step 1: we prove that the set $\omega_{x_0}\subset X$ is
distal in the negative direction with respect to the NDS $\langle (X,\mathbb R_{+},\pi),(Y ,\mathbb
R,\sigma),h\rangle $, i.e.
\begin{equation}\label{eq9.3.1}
\inf \limits _{t\le 0}\rho (\gamma _1 (t),\gamma _2 (t))>0
\end {equation}
for all $\gamma _i \in \tilde{\Phi} _{x_i} (i=1,2$ and $x_1\not=
x_2),$ where $\tilde{\Phi} _{x} $ denotes the family of all
entire trajectories of $(X,\mathbb R _{+},\pi)$ passing through
point $x$ and belonging to $\omega_{x_0}$. If this is not true, then
there exist $ y_0 \in Y , x^0_i \in\omega_{x_0}\bigcap X_{y_0} ~
(h(x_1^{0})=h(x_2^{0}),\ x^0_1\not=x^0_2), \gamma ^0 _i \in
\tilde{\Phi}_{x^0_i} \ (i=1,2)$ and $t_n \to +\infty$ such that
\begin{equation}\label{eq9.3.2}
\rho (\gamma _1^0(-t_n),\gamma _2^0(-t_n)) \to 0
\end{equation}
as $n \to \infty $. Let $ \varepsilon_{0} :=\rho (x^0_1,x^0_2)>0 $
and $\delta_{0} =\delta (\varepsilon_{0}) >0$ be chosen from the
positively uniform stability of $\omega_{x_0}$. Then we have
$$
\rho (\gamma _1^0(-t_n),\gamma _2^0(-t_n))<\delta_{0}
$$
for sufficiently large $n$ by (\ref{eq9.3.2}), so $ \varepsilon_{0} =\rho (x^0_1,x^0_2)=\rho
(\pi(t_n, \gamma _1^0(-t_n)),\pi(t_n, \gamma
_2^0(-t_n)))<\varepsilon_{0}$, a contradiction.

Step 2: we will show that for any $x \in \omega_{x_0}$ the set
$\tilde{\Phi}_{x}$ is a singleton set. Let
$\tilde{\Phi} :=\bigcup \{\tilde{\Phi}_{x}\ |\ x\in \omega_{x_0} \}
\subset C(\mathbb R, X)$. It is immediate to check that $\tilde{\Phi} $
is a closed invariant subset of dynamical system $(C(\mathbb
R,X),\mathbb R, \lambda)$, so on the set $\tilde{\Phi} $ is induced a dynamical system $(\tilde{\Phi} ,\mathbb
R,\lambda)$. Let $ H$ be a mapping from $ \tilde{\Phi} $ onto
$\omega_{y_0}$, defined by $ H(\gamma):=h(\gamma (0))$.
Then it can be shown (see, e.g. \cite[ChII]{Che_2010}) that the
triplet $\langle (\tilde{\Phi}, \mathbb R,\lambda), (\omega_{y_0},
\mathbb R,\sigma ),H \rangle $ is an NDS.
This NDS is distal in the negative direction, i.e.
$$
\inf \limits_{t\le 0}  d (\gamma_{1}^{t},\gamma_{2}^{t})>0
$$
for all $\gamma_{1},\gamma_{2} \in H^{-1}(y) ~ (\gamma_{1}\not=
\gamma_{2})$ and $y \in \omega_{y_0}$, recalling that
$\gamma^{\tau}:=\sigma (\tau, \gamma)$, i.e. $\gamma
^{\tau}(s)=\gamma (\tau +s) $ for $ s\in \mathbb R$. Indeed, otherwise there
exist $ \bar{y} , \bar{\gamma} _1, \bar{\gamma} _2 \in
H^{-1}(\bar{y})\ ( \bar{\gamma}_1 \not= \bar{\gamma}_2 ) $ and $ t_n
\to + \infty $ such that $  d
(\bar{\gamma}_1^{-t_n},\bar{\gamma}_2^{-t_n})\to 0 $ as $n \to
\infty $ and consequently
\begin{equation}\label{eq9.3.3}
\rho(\bar{\gamma}_1(-t_n), \bar{\gamma}_2(-t_n)) \le d
(\bar{\gamma}_1^{-t_n},\bar{\gamma}_2^{-t_n})\to 0.
\end{equation}
Since $\bar{\gamma}_1 \not= \bar{\gamma}_2 $, there exists $
t_0 \in \mathbb R$ such that $ \bar{\gamma}_1
(t_0)\not=\bar{\gamma}_2 (t_0)$. Let $ \tilde {\gamma}_i(t)
:=\bar{\gamma} _i(t+t_0)$ for $t\in \mathbb R $, then $ \tilde
{\gamma} _i \in \tilde{\Phi}_{\sigma(t_0,\bar{y})}$ and by (\ref{eq9.3.3}) we have
\begin{equation}\label{eq9.3.4}
\rho (\tilde {\gamma}_1(-t_n), \tilde{\gamma}_2(-t_n)) \to 0 \quad\hbox{  as } n \to \infty.
\end{equation}
Thus we have found $ q:=h(\bar{\gamma}
_i (t_0))$, $x_i:=\bar{\gamma}_i (t_0) ~ (i=1,2),\
h(x_1)=h(x_2)~ (x_1\not=x_2)$ and the entire trajectories $
\tilde{\gamma _i}\in \tilde{\Phi}_{x_i}$ such that $
\tilde{\gamma _1}$ and $ \tilde{\gamma _2}$ are proximal (see
(\ref{eq9.3.4})). But (\ref{eq9.3.4}) and (\ref{eq9.3.1}) are
contradictory, so the negative distality of the NDS $\langle (\tilde{\Phi}, \mathbb R,\sigma),
(\omega_{y_0}, \mathbb R,\sigma ),H \rangle $ is proved.

If there exist $p\in \omega_{x_0} $ and two
different trajectories $\gamma _1 ,\gamma _2 \in \tilde{\Phi}_{p}$, then in virtue of the distality of
$\gamma _1$ and $\gamma _2$ we have
$$
\alpha (\gamma _1,\gamma _2):=\inf \limits _{t\le 0} d (\gamma
_1^{t},\gamma _2^{t})> 0.
$$
So $ \rho (\gamma _1(t),\gamma _2(t))\ge \alpha
(\gamma _1,\gamma _2)> 0 $ for all $t\le 0$. In particular $\gamma
_1 (0)\not= \gamma _2 (0)$, a contradiction.

Step 3: let now $ \tilde {\pi}: \mathbb R
\times \omega_{x_0} \to \omega_{x_0}$ be a mapping defined by
$$
\tilde {\pi}(t,x):=\pi (t,x)\quad \mbox{if} \quad  t\ge 0 \quad
\mbox{and} \quad  \tilde {\pi}(t,x):=\gamma _x(t) \quad \mbox{if}
\quad t<0
$$
for $x \in \omega_{x_0}$, then $(\omega_{x_0},\mathbb R,\tilde{\pi})$ is a two-sided dynamical
system. Here $\gamma _x $ is the unique entire
trajectory of the dynamical system $(X,\mathbb R_{+},\pi)$ passing
through point $x$ and belonging to $\omega_{x_0}$. To prove that $
(\omega_{x_0},\mathbb R,\tilde{\pi})$ is a two-sided dynamical
system it suffices to check the continuity of
the mapping $ \tilde{\pi} $. Let $x\in \omega_{x_0},\ t \in \mathbb
R_{-},\ x_n \to x$ and $t_n \to t$. Then there is an $l_0>0$ such
that $t_n\in [-l_0,l_0]$ for $n\in\mathbb N$ and consequently
\begin{align}\label{eq9.3.5}
\rho (\tilde {\pi }(t_n,x_n),\tilde {\pi }(t,x)) & =\rho (\pi(t_n+l_0,
\gamma _{x_n}(-l_0)), \pi(t+l_0,\gamma _{x}(-l_0))) \\
& \le
\rho (\pi(t_n+l_0,\gamma _{x_n}(-l_0)),
\pi(t_n+l_0,\gamma _{x}(-l_0)))\nonumber\\
&\qquad +\rho (\pi(t_n+l_0,\gamma
_{x}(-l_0)), \pi(t+l_0,\gamma _{x}(-l_0))). \nonumber
\end{align}

We now show that the sequence $\{\gamma _{x_n}\}$ is relatively
compact in $C(\mathbb R,\omega_{x_0})$, which amounts to checking
that for arbitrary positive number $l$ the set
$M:=\{\gamma_{x_n}(t):\ t\in [-l,l],\ n\in\mathbb N\}$ is precompact
and $\{\gamma_{x_n}\}$ is equi-continuous on $[-l,l]$. Let $y_n:=h(x_n)$.
Since $y_n\to y:=h(x)$ as $n\to \infty$, the set $K:=\{\sigma(t,y_n): t\in[-l,l],n\in\mathbb N\}$ is
relatively compact. Since the set $\omega_{x_0}$ is conditionally
precompact and $h(\gamma_{x_n}(t))=\sigma(t,h(x_n))=\sigma(t,y_n)\in
K$ for $t\in [-l,l]$ and $n\in\mathbb N$, we have
$\gamma_{x_n}(t)\in \omega_{x_0}\bigcap h^{-1}(K)$. Thus the set $M$ is relatively
compact. If $\{\gamma_{x_n}\}$ is not equi-continuous on some compact interval, then
there are $\varepsilon_0>0$, $l_0>0$ (without loss of generality, this $l_0$ can be taken the same as in \eqref{eq9.3.5}),
$\delta_{n}\to 0$ ($\delta_{n}>0$) and
$t_{n}^{i}\in [-l_0,l_0]$ ($i=1,2$) such that
\begin{equation}\label{eqEQ1}
|t_n^{1}-t_{n}^{2}|<\delta_{n}\ \ \mbox{and}\ \
\rho(\gamma_{x_n}(t_n^{1}),\gamma_{x_n}(t_n^{2}))\ge
\varepsilon_{0}\nonumber
\end{equation}
for $n\in \mathbb N$. Note that
\begin{equation}\label{eqEQ2}
 \varepsilon_{0}\le
 \rho(\gamma_{x_n}(t_n^{1}),\gamma_{x_n}(t_n^{2}))=
 \rho(\pi(t_n^{1}+l_0,\gamma_{x_n}(-l_0)),\pi(t_n^{2}+l_0,\gamma_{x_n}(-l_0))).
\end{equation}
Since the sequences $\{\gamma_{x_n}(-l_0)\}$ and
$\{t_{n}^{i}\}\subset [-l_0,l_0]$ ($i=1,2$) are precompact,
we may suppose that they are convergent.
Denote $\bar{x}:=\lim\limits_{n\to \infty}\gamma_{x_n}(-l_0)$ and
$t_{0}:=\lim\limits_{n\to \infty}t_{n}^{i}$\ ($i=1,2$). Passing to
limit in (\ref{eqEQ2}) as $n\to \infty$ we obtain
\begin{equation}\label{eqEQ3}
\varepsilon_0\le
\rho(\pi(t_{0}+l_0,\bar{x}),\pi(t_{0}+l_0,\bar{x}))=0.\nonumber
\end{equation}
The obtained contradiction proves the equi-continuity of $\{\gamma_{x_n}\}$, and hence the
relative compactness of $\{\gamma_{x_n}\}$ in $C(\mathbb R,X)$.

Note that every limit point $\gamma$ of the sequence
$\{\gamma_{x_n}\}$ belongs to $ \tilde{\Phi} $ and satisfies $ \gamma (0)=x$. On the other hand,
the set $\tilde\Phi_x$ consists of the  single point $\gamma$ by Step 2, so we have
\[
\lim_{n\to\infty} \gamma_{x_n} = \gamma \quad \hbox{ in } C(\mathbb R,X).
\]
In particular, $\gamma_{x_n}(-l_0)\to \gamma(-l_0)$ as $n\to \infty$. Taking limit in (\ref{eq9.3.5}) as $n \to \infty$ we obtain
the continuity of mapping $\tilde{\pi} $ in $(t,x)$. The
theorem is completely proved.
\end{proof}

\begin{coro}\label{corO2} Let $\langle (X,\mathbb R_{+},\pi ),
(Y,\mathbb R,\sigma ), h\rangle $ be an NDS with the following properties:
\begin{enumerate}
\item there exist two points $x_0^{i}\in X$ ($i=1,2$) such that the positive
semi-trajectories $\Sigma_{x_0^{i}}^{+}$ ($i=1,2$) are conditionally
precompact;
\item the sets $\omega_{x_0^{i}}$ ($i=1,2$) are positively uniformly stable.
\end{enumerate}
Then all motions on $K:=\omega_{x_0^{1}}\bigcup \omega_{x_0^{2}}$
can be extended uniquely to the left and on $K$ is defined a
two-sided dynamical system $(K,\mathbb R, \pi)$, i.e. the one-sided
dynamical system $(X,\mathbb R_+,\pi)$ generates on $K$ a two-sided
dynamical system $(K,\mathbb R,\pi )$.
\end{coro}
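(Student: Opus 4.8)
The plan is to deduce this from Theorem \ref{th02}, whose proof applies with essentially no change once the singleton $\{x_0\}$ is replaced by the two-point set $M:=\{x_0^1,x_0^2\}$. Before doing that I would record three elementary observations. First, since $h^{-1}(Y')\cap(A\cup B)=(h^{-1}(Y')\cap A)\cup(h^{-1}(Y')\cap B)$ for any $Y'\subseteq Y$, a finite union of conditionally precompact (respectively, conditionally compact) sets is again conditionally precompact (respectively, conditionally compact); in particular $\Sigma_M^{+}=\Sigma_{x_0^1}^{+}\cup\Sigma_{x_0^2}^{+}$ is conditionally precompact, and, since each $\omega_{x_0^i}$ is conditionally compact by Theorem \ref{th01}(iii) (its hypothesis $\omega_{y_0^i}\ne\emptyset$, $y_0^i:=h(x_0^i)$, being automatic when $\omega_{x_0^i}\ne\emptyset$ and the empty case being trivial), the set $K=\omega_{x_0^1}\cup\omega_{x_0^2}$ is conditionally compact. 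Second, $\omega(M)=\omega_{x_0^1}\cup\omega_{x_0^2}=K$: the inclusion $\supseteq$ is clear, and if $x=\lim_n\pi(t_n,x_n)$ with $x_n\in M$ and $t_n\to+\infty$, then one of the two base points occurs infinitely often among the $x_n$, which puts $x$ in the corresponding $\omega_{x_0^i}$. Third --- and this is the only point that is not purely formal --- a finite union of positively uniformly stable sets is positively uniformly stable: given $\varepsilon>0$, put $\delta:=\min\{\delta_1(\varepsilon),\delta_2(\varepsilon)\}$, where $\delta_i$ witnesses the positive uniform stability of $\omega_{x_0^i}$; if $a\in K$, $x\in X$, $h(a)=h(x)$ and $\rho(x,a)<\delta$, then $a$ lies in some $\omega_{x_0^i}$ and $\rho(x,a)<\delta\le\delta_i(\varepsilon)$ forces $\rho(\pi(t,x),\pi(t,a))<\varepsilon$ for all $t\ge0$; hence $K$ is positively uniformly stable.

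With these in hand I would run the three steps of the proof of Theorem \ref{th02} verbatim, reading $K=\omega(M)$ in place of $\omega_{x_0}$ and letting $\tilde{\Phi}_x$ denote the set of entire trajectories of $(X,\mathbb R_+,\pi)$ through $x$ that lie in $K$. Step 1 shows that $K$ is distal in the negative direction, i.e. $\inf_{t\le0}\rho(\gamma_1(t),\gamma_2(t))>0$ for $\gamma_i\in\tilde{\Phi}_{x_i}$ with $x_1,x_2$ in the same fibre and $x_1\ne x_2$; the argument is word for word the one in Theorem \ref{th02} and uses only the positive uniform stability of $K$ just established. Step 2 shows that $\tilde{\Phi}_x$ is a singleton $\{\gamma_x\}$ for every $x\in K$: here one uses Lemma \ref{l3.12} --- which is already stated for an arbitrary set $M$ --- to know that every point of $K=\omega(M)$ carries at least one entire trajectory lying in $K$, forms the induced NDS on $\tilde{\Phi}:=\bigcup_{x\in K}\tilde{\Phi}_x$ (a closed invariant subset of $(C(\mathbb R,X),\mathbb R,\lambda)$, closedness coming from Remark \ref{remIC}(3) and the closedness of $K$), transfers the negative distality to it, and concludes exactly as in Theorem \ref{th02}. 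Note that this step automatically settles the compatibility question on $\omega_{x_0^1}\cap\omega_{x_0^2}$: any entire trajectory through a common point $x$ that lies in $\omega_{x_0^1}$ and any such trajectory lying in $\omega_{x_0^2}$ both belong to $\tilde{\Phi}_x$, hence coincide. Finally, Step 3 defines $\tilde{\pi}(t,x):=\pi(t,x)$ for $t\ge0$ and $\tilde{\pi}(t,x):=\gamma_x(t)$ for $t<0$ and proves continuity of $\tilde{\pi}$ exactly as in Theorem \ref{th02}, the only ingredients being the conditional precompactness of $K$ and the single-valuedness of $x\mapsto\gamma_x$; this yields the desired two-sided dynamical system $(K,\mathbb R,\pi)$.

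The substantive content is thus confined to the observation that a finite union of positively uniformly stable sets is positively uniformly stable (the paper's remark after the definition of uniform stability only records that \emph{subsets} of a uniformly stable set are uniformly stable), together with the fact that the proof of Theorem \ref{th02} never uses that its base set is a singleton --- it uses only conditional precompactness of the positive semi-trajectory, conditional compactness and positive uniform stability of the associated $\omega$-limit set, and Lemma \ref{l3.12}, all of which are available here. I do not expect a real obstacle; the one place deserving care is keeping track of which $\omega_{x_0^i}$ a given point of $K$ lies in when invoking the common stability modulus $\delta=\min\{\delta_1,\delta_2\}$.
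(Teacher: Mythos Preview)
Your proposal is correct and matches the paper's intent: the corollary is stated without proof immediately after Theorem \ref{th02}, and the implicit argument is exactly to rerun that theorem's three steps with $M=\{x_0^1,x_0^2\}$ in place of the singleton, using that finite unions preserve conditional precompactness and positive uniform stability. Your explicit check that the two extensions agree on $\omega_{x_0^1}\cap\omega_{x_0^2}$ (via uniqueness of $\tilde\Phi_x$ in $K$) and that $\delta=\min\{\delta_1,\delta_2\}$ witnesses uniform stability of $K$ fills in precisely the details the paper leaves to the reader.
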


\begin{remark}\label{remC2}\rm Note that Theorem \ref{th02} is
known if $Y$ is a compact minimal set (see, e.g. \cite{NOS_2007} and references therein)
or if each point of $Y$ is Poisson stable (see \cite{Che_2008}). In our case $Y$,
generally speaking, can be non-compact and non-minimal, and there is no restriction on the element of $Y$.
\end{remark}

\section{Poisson stable motions and their comparability ny character of recurrence}\label{S4}

\subsection{Classes of Poisson stable motions}

Let $(X,\mathbb R,\pi)$ be a dynamical system. Let us recall the classes of Poisson stable motions
we study in this paper, see \cite{Sel_71,scher72,Sch85,sib} for details.

\begin{definition}\label{def-stat}\rm
A point $x \in X $ is called {\em stationary} (respectively, {\em $\tau$-periodic}) if
$\pi(t,x)=x$ (respectively, $\pi(t+\tau,x)=\pi(t,x)$) for all $t\in\mathbb R$.
\end{definition}

\begin{definition} \rm
A point $x \in X $ is called {\em quasi-periodic}
if the associated function $f(\cdot):=\pi(\cdot,x): \mathbb R\to X$ satisfy
the following conditions:
\begin{enumerate}
\item the numbers $\nu_1,\nu_2,\ldots,\nu_k$ are rationally independent;
\item there exists a continuous function $\Phi :\mathbb R^{k}\to X$ such that
$\Phi(t_1+2\pi,t_2+2\pi,\ldots,t_k+2\pi)=\Phi(t_1,t_2,\ldots,t_k)$ for all $(t_1,t_2,\ldots,t_k)\in \mathbb R^{k}$;
\item $f(t)=\Phi(\nu_1 t,\nu_2 t,\ldots,\nu_k t)$ for $t\in \mathbb R$.
\end{enumerate}
\end{definition}

\begin{definition}\label{def4.2}\rm
For given $\varepsilon>0$, a number $\tau \in \mathbb R$ is called a {\em $\varepsilon$-shift of
$x$} (respectively, {\em $\varepsilon$-almost period of $x$}), if $\rho
(\pi(\tau,x),x)<\varepsilon$ (respectively, $\rho (\pi(\tau
+t,x),\pi(t,x))<\varepsilon$ for all $t\in \mathbb R$).
\end{definition}

\begin{definition}\label{def14.3}\rm
A point $x \in X $ is called {\em almost recurrent} (respectively, {\em Bohr
almost periodic}), if for any $\varepsilon >0$ there exists a
positive number $l$ such that any segment of length $l$ contains
a $\varepsilon$-shift (respectively, $\varepsilon$-almost period) of $x$.
\end{definition}

\begin{definition}\label{def4.4}\rm
If a point $x\in X$ is almost recurrent and its trajectory $\Sigma_x$ is precompact, then
$x$ is called {\em (Birkhoff) recurrent}.
\end{definition}

\begin{definition}\rm
A point $x\in X$ is called {\em Levitan almost periodic} \cite{Lev-Zhi} (see also \cite{bro75,Che_2008,Lev_1953}), if there exists a dynamical
system $(Y,\mathbb R,\sigma)$ and a Bohr almost periodic point $y\in
Y$ such that $\mathfrak N_{y}\subseteq \mathfrak N_{x}.$
\end{definition}


\begin{definition} \rm
A point $x\in X$ is called {\em almost automorphic} if it is st. $L$ and
Levitan almost periodic.
\end{definition}

\begin{definition}\rm
A point $x\in X$ is said to be {\em uniformly Poisson stable} or {\em pseudo periodic}
in the positive (respectively, negative) direction if for arbitrary $\varepsilon >0$ and $l>0$ there exists a $\varepsilon$-almost
period $\tau >l$ (respectively, $\tau < -l$) of $x$. The point $x$ is said to be uniformly Poisson stable or pseudo periodic if it is so
in both directions.
\end{definition}

\begin{definition}[\cite{Shc_1962,Shc_1963}]\label{defP1} \rm
A point $x\in X$ is said to be {\em pseudo recurrent} if for any $\varepsilon
>0,\ p\in \Sigma_{x}$ and $t_0\in
\mathbb R$ there exists $L=L(\varepsilon,t_0)>0$ such that
\begin{equation}\label{eqLP1.0}
B(p,\varepsilon)\bigcap \pi([t_0,t_0+L],p)\not=\emptyset ,\nonumber
\end{equation}
where $B(p,\varepsilon):=\{x\in X: \rho(p,x)<\varepsilon\}$ and
$\pi([t_0,t_0+L],p):=\{\pi(t,p): t\in [t_0,t_0+L]\}$.
\end{definition}




\begin{definition}\label{defSP1}
\rm A point $x\in X$ is said to be {\em strongly Poisson stable} if $p\in \omega_{p}$ for any $p\in H(x)$.
\end{definition}

\begin{remark}\label{remP3}\rm
It is known that:
\begin{enumerate}
 \item a strongly Poisson stable point is Poisson stable, but the converse is not true in general;
\item
 all the motions introduced above (Definitions \ref{def-stat}--\ref{defP1}) are strongly Poisson stable.

\end{enumerate}
\end{remark}

\begin{definition}[\cite{CS_1977,Che_2009}]\label{defAP1} \rm
A point $x\in X$ is said to be
{\em asymptotically $\mathcal P$} if there exists a
$\mathcal P$ point $p\in X$ such that
$$
\lim\limits_{t\to
+\infty}\rho(\pi(t,x),\pi(t,p))=0.
$$
Here the property $\mathcal P$ can be stationary, $\tau$-periodic, quasi-periodic, Bohr
almost periodic, almost automorphic, Birkhoff recurrent, Levitan almost periodic,
almost recurrent, pseudo periodic, pseudo recurrent, Poisson stable.
\end{definition}

\subsection{Comparability of
motions by their character of recurrence}

\subsubsection{Shcherbakov's comparability principle of
motions by their character of recurrence}
In this subsection we present some notions and results stated and proved by B. A.
Shcherbakov \cite{scher72}--\cite{Sch85}.

Let $(X,\mathbb R,\pi)$ and $(Y,\mathbb R,\sigma)$ be two dynamical
systems.

\begin{definition}\label{defShc1}\rm
A point $x\in X$ is said to be {\em comparable with $y\in Y$ by
character of recurrence} if for any $\varepsilon
>0$ there exists a $\delta =\delta(\varepsilon)>0$ such that every
$\delta$-shift of $y$ is a $\varepsilon$-shift for $x$, i.e.
$\rho(\sigma(\tau,y),y)<\delta$ implies
$\rho(\pi(\tau,x),x)<\varepsilon$.
\end{definition}


\begin{theorem}\label{tShc1}
The following conditions are
equivalent:
\begin{enumerate}
\item the point $x$ is comparable with $y$ by character
of recurrence;
\item $\mathfrak N_{y}\subseteq \mathfrak N_{x}$;
\item $\mathfrak N_{y}^{\infty}\subseteq \mathfrak
N_{x}^{\infty}$; \item from any sequence $\{t_n\}\in\mathfrak N_{y}$
we can extract a subsequence $\{t_{n_k}\}\in \mathfrak N_{x}$;
\item from any sequence $\{t_n\}\in\mathfrak N_{y}^{\infty}$ we can
extract a subsequence $\{t_{n_k}\}\in \mathfrak N_{x}^{\infty}$.
\end{enumerate}
\end{theorem}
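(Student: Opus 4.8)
The plan is to establish a cycle of implications among the five conditions, using the characterizations of $\mathfrak N_x$ and the sequential compactness inherent in the definitions. Note first that (ii) $\Leftrightarrow$ (iv) is essentially a reformulation: if $\mathfrak N_y \subseteq \mathfrak N_x$, then trivially from any $\{t_n\}\in\mathfrak N_y$ the whole sequence already lies in $\mathfrak N_x$; conversely, if from every $\{t_n\}\in\mathfrak N_y$ one can extract a subsequence in $\mathfrak N_x$, then in fact the whole sequence is in $\mathfrak N_x$ (otherwise some subsequence stays bounded away from $x$, and no further subsequence of it could converge to $x$ — contradicting the extraction hypothesis applied to that subsequence). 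The same argument gives (iii) $\Leftrightarrow$ (v). So the real content is to link the comparability condition (i) to the $\mathfrak N$-inclusions, and to show that controlling $\mathfrak N_y^\infty$ suffices to control all of $\mathfrak N_y$.

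I would proceed (i) $\Rightarrow$ (ii). Suppose $x$ is comparable with $y$ and let $\{t_n\}\in\mathfrak N_y$, so $\sigma(t_n,y)\to y$. Fix $\varepsilon>0$ and take $\delta=\delta(\varepsilon)$ from Definition \ref{defShc1}. For $n$ large, $\rho(\sigma(t_n,y),y)<\delta$, hence $\rho(\pi(t_n,x),x)<\varepsilon$. Since $\varepsilon$ is arbitrary, $\pi(t_n,x)\to x$, i.e. $\{t_n\}\in\mathfrak N_x$. Conversely, for (ii) $\Rightarrow$ (i): if comparability fails, there are $\varepsilon_0>0$ and $\tau_n$ with $\rho(\sigma(\tau_n,y),y)<1/n$ but $\rho(\pi(\tau_n,x),x)\ge\varepsilon_0$; then $\{\tau_n\}\in\mathfrak N_y$ but $\{\tau_n\}\notin\mathfrak N_x$, contradicting (ii). This settles (i) $\Leftrightarrow$ (ii).

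It remains to connect the ``$\infty$'' versions, i.e. (iii) $\Rightarrow$ (ii) (the reverse being obvious), which I expect to be the main obstacle. The point is that a sequence $\{t_n\}\in\mathfrak N_y$ need not tend to $\infty$; it could be bounded, or have both a bounded part and an unbounded part. The idea is: given $\{t_n\}\in\mathfrak N_y$, split off a subsequence. If a subsequence is bounded, then (passing to a further subsequence) $t_{n_k}\to s$ for some $s\in\mathbb R$; by continuity of $\sigma$, $\sigma(s,y)=\lim\sigma(t_{n_k},y)=y$, forcing $\sigma(s,y)=y$, and then one checks $\pi(s,x)=x$ as well. The cleanest way to get $\pi(s,x)=x$ is to reduce the stationary-shift case to the ``$\infty$'' hypothesis: since $\sigma(s,y)=y$, also $\sigma(ns,y)=y$ for all $n$, so $\{ns\}_{n\ge 1}$ (if $s\ne0$) or a suitable augmentation lies in $\mathfrak N_y^\infty$ or $\mathfrak N_y^{-\infty}$ — one must be careful if $s=0$, but then $t_{n_k}\to 0$ is a degenerate case handled directly by continuity of $\pi(\cdot,x)$. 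For the unbounded subsequences of $\{t_n\}$, apply hypothesis (iii) directly. Patching these pieces together shows every $\{t_n\}\in\mathfrak N_y$ has all its convergent-in-$X$ behavior forced, giving $\{t_n\}\in\mathfrak N_x$, hence (ii). Assembling: (i) $\Leftrightarrow$ (ii) $\Leftrightarrow$ (iv), (iii) $\Leftrightarrow$ (v), and (ii) $\Leftrightarrow$ (iii), which yields the equivalence of all five.
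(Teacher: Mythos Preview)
The paper does not prove Theorem~\ref{tShc1}; it is quoted as a result of Shcherbakov (see the opening of \S4.2.1 and the references \cite{scher72}--\cite{Sch85}). So there is no in-paper proof to compare against, and the question reduces to whether your argument stands on its own.

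Your scheme is sound. The equivalences (ii)$\Leftrightarrow$(iv), (iii)$\Leftrightarrow$(v), and (i)$\Leftrightarrow$(ii) are handled correctly, and (ii)$\Rightarrow$(iii) is immediate. The only place that needs tightening is the step (iii)$\Rightarrow$(ii) in the bounded-subsequence case. You correctly reach $\sigma(s,y)=y$ and then note that $\{ns\}_{n\ge1}\in\mathfrak N_y^{\infty}$ (for $s\neq0$), whence $\{ns\}\in\mathfrak N_x^{\infty}$ by (iii); but you stop short of saying why $\pi(ns,x)\to x$ forces $\pi(s,x)=x$. The missing line is: by the flow property $\pi((n+1)s,x)=\pi(s,\pi(ns,x))$, and letting $n\to\infty$ gives $x=\pi(s,x)$ by continuity of $\pi(s,\cdot)$. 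With that in place, continuity yields $\pi(t_{n_{k_j}},x)\to\pi(s,x)=x$, contradicting the assumed $\varepsilon_0$-separation, and the patching argument goes through. The $s=0$ case is indeed trivial ($\pi(0,x)=x$), so your caution there is misplaced; the genuine care is the deduction just described.
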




\begin{theorem}\label{thShc2.1}
Let $x\in X$ be comparable with $y\in Y$. If
the point $y$ is stationary (respectively, $\tau$-periodic,
Levitan almost periodic, almost recurrent, Poisson stable), then so is the
point $x$.
\end{theorem}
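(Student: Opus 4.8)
The plan is to exploit the characterization of comparability given in Theorem~\ref{tShc1}, namely that $x$ comparable with $y$ is equivalent to the inclusion of frequency modules $\mathfrak N_{y}\subseteq \mathfrak N_{x}$ (and similarly for the $\infty$-variants). Each of the five recurrence properties listed should follow by translating its defining condition into a statement about sequences in $\mathfrak N_{y}$ or $\mathfrak N_{y}^{\infty}$ and then pushing that statement through the inclusion into $\mathfrak N_{x}$ or $\mathfrak N_{x}^{\infty}$.

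First I would handle the stationary and $\tau$-periodic cases together, since these are the degenerate ones: if $y$ is stationary then every $\tau\in\mathbb R$ satisfies $\sigma(\tau,y)=y$, so in particular every real number, viewed as a constant sequence (or any divergent sequence), lies in $\mathfrak N_{y}$; comparability then forces $\pi(\tau,x)=x$ for all $\tau$, i.e.\ $x$ is stationary. For $\tau$-periodicity one argues the same way working in the hull or directly: $n\tau\to+\infty$ gives $\sigma(n\tau,y)=y$, so $\{n\tau\}\in\mathfrak N_{y}^{\infty}$, hence by Theorem~\ref{tShc1}(iii) there is a subsequence in $\mathfrak N_{x}^{\infty}$, which (together with a short continuity/closedness argument on $H(x)$) yields $\pi(\tau,x)=\pi(0,x)=x$. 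Then I would do the Poisson stable case, which is essentially immediate: $y$ Poisson stable means $\mathfrak N_{y}^{+\infty}\neq\emptyset$ and $\mathfrak N_{y}^{-\infty}\neq\emptyset$; picking $\{t_n\}$ in each and extracting subsequences via Theorem~\ref{tShc1}(v) and its negative analogue lands us in $\mathfrak N_{x}^{+\infty}$ and $\mathfrak N_{x}^{-\infty}$, so $x$ is Poisson stable.

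Next I would treat almost recurrence. Here the characterization via $l$-density of $\varepsilon$-shifts must be used: fix $\varepsilon>0$, let $\delta=\delta(\varepsilon)$ be as in Definition~\ref{defShc1}, and let $l=l(\delta)$ be the inclusion length from the almost recurrence of $y$. Then every interval of length $l$ contains a $\delta$-shift $\tau$ of $y$, which by comparability is an $\varepsilon$-shift of $x$; hence the same $l$ witnesses almost recurrence of $x$. Finally, the Levitan almost periodic case uses the definition directly: $y$ Levitan almost periodic means there is a dynamical system $(Z,\mathbb R,\sigma)$ and a Bohr almost periodic $z\in Z$ with $\mathfrak N_{z}\subseteq\mathfrak N_{y}$; combining with $\mathfrak N_{y}\subseteq\mathfrak N_{x}$ (from comparability, Theorem~\ref{tShc1}(ii)) gives $\mathfrak N_{z}\subseteq\mathfrak N_{x}$, which is exactly the definition of $x$ being Levitan almost periodic with the same $(Z,z)$.

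I expect the only nontrivial point to be the $\tau$-periodic case, where one must rule out that the extracted subsequence in $\mathfrak N_{x}^{\infty}$ consists of integer multiples of $\tau$ only up to a subsequence and still conclude exact $\tau$-periodicity of $x$ rather than merely almost periodicity; this is handled by noting that $\pi(\,\cdot\,,x)$ is continuous and that $\sigma(n_k\tau,y)=y$ identically forces, via comparability applied to the shifts $\sigma(s+n_k\tau,y)=\sigma(s,y)$ in the hull, the full identity $\pi(t+\tau,x)=\pi(t,x)$. All other cases are direct unwindings of definitions through Theorem~\ref{tShc1}, so the proof should be short.
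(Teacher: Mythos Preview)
The paper does not supply its own proof of this theorem: it is stated in the survey subsection on Shcherbakov's comparability principle and is attributed to Shcherbakov's monographs \cite{scher72}--\cite{Sch85}. So there is nothing in the paper to compare your argument against line by line.

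Your proposal is correct in substance and follows the natural route via Theorem~\ref{tShc1}. The stationary, Poisson stable, almost recurrent, and Levitan almost periodic cases are exactly as you describe: direct unwindings of the definitions through the inclusion $\mathfrak N_{y}\subseteq \mathfrak N_{x}$ (or the $\varepsilon$--$\delta$ formulation in Definition~\ref{defShc1}).

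The one place you overcomplicate matters is the $\tau$-periodic case, which you flag as ``the only nontrivial point''. It is in fact trivial. Since $\sigma(\tau,y)=y$, the constant sequence $t_n\equiv\tau$ lies in $\mathfrak N_{y}$, hence in $\mathfrak N_{x}$ by Theorem~\ref{tShc1}(ii), giving $\pi(\tau,x)=x$ immediately. Equivalently, work straight from Definition~\ref{defShc1}: $\rho(\sigma(\tau,y),y)=0<\delta(\varepsilon)$ for every $\varepsilon>0$, so $\rho(\pi(\tau,x),x)<\varepsilon$ for every $\varepsilon$, whence $\pi(\tau,x)=x$. There is no need to pass to $n\tau\to+\infty$, extract subsequences, or invoke any closedness argument on $H(x)$; the detour you sketch works but is unnecessary.
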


\begin{definition}\rm
A point $x\in X$ is called \textit{uniformly comparable with $y\in
Y$ by character of recurrence} if for any $\varepsilon >0$ there
exists a $\delta =\delta(\varepsilon)>0$ such that every
$\delta$-shift of $\sigma(t,y)$ is a $\varepsilon$-shift of
$\pi(t,x)$ for all $t\in\mathbb R$, i.e.
$\rho(\sigma(t+\tau,y),\sigma(t,y))<\delta$ implies
$\rho(\pi(t+\tau,x),x)<\varepsilon$ for all $t\in \mathbb R$ (or
equivalently: $\rho(\sigma(t_1,y),\sigma(t_2,y))<\delta$ implies
$\rho(\pi(t_1,x),\pi(t_2,x))<\varepsilon$ for all $t_1,t_2\in
\mathbb R$).
\end{definition}



Denote $\mathfrak{M}_{x}:=\{\{t_{n}\}\subset\mathbb{R}:\
\{\pi(t_{n},x)\}$ converges$ \}$, $\mathfrak{M}_{x}^{+\infty}:=\{\{t_{n}\}\in\mathfrak{M}_{x}:
t_{n}\to  +\infty \ \mbox{as}\ n\to \infty\}$ and $\mathfrak{M}_{x}^{\infty}:=\{\{t_{n}\}\in\mathfrak{M}_{x}:
t_{n}\to  \infty \ \mbox{as}\ n\to \infty\}$.


\begin{definition}[\cite{Che_1977,Che_2009}] \rm
A point $x\in X$ is said to be {\em strongly
comparable with $y\in Y$ by character of recurrence} if $\mathfrak M_{y}\subseteq \mathfrak M_{x}$.
\end{definition}


\begin{theorem}\label{thShc5}
(i) If $\mathfrak M_{y}\subseteq \mathfrak M_{x}$, then $\mathfrak
N_{y}\subseteq \mathfrak N_{x}$, i.e. strong comparability implies comparability.

(ii) Let $X$ be a complete metric space. If the point $x$ is uniformly comparable with $y$ by character of recurrence,
then $\mathfrak M_{y}\subseteq \mathfrak M_{x}$, i.e. uniform comparability implies strong comparability.
\end{theorem}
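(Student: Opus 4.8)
The plan is to establish the two implications of Theorem \ref{thShc5} separately, each by unwinding the relevant definitions.

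\textbf{Part (i): strong comparability implies comparability.} Here the goal is to show $\mathfrak M_{y}\subseteq \mathfrak M_{x}$ forces $\mathfrak N_{y}\subseteq \mathfrak N_{x}$. First I would take an arbitrary sequence $\{t_{n}\}\in \mathfrak N_{y}$, i.e. $\sigma(t_{n},y)\to y$. Since convergent sequences converge, $\{t_{n}\}\in \mathfrak M_{y}$, so by hypothesis $\{t_{n}\}\in \mathfrak M_{x}$, meaning $\{\pi(t_{n},x)\}$ converges to some point $p\in X$. The remaining task is to identify $p$ with $x$. For this I would argue that along any subsequence the limit must be $x$: pass to a subsequence so that things behave well, and use the fact that $\sigma(t_{n},y)\to y$ together with continuity of $\pi$ in a standard diagonal/limit argument, or more directly invoke Theorem \ref{tShc1}(iv) in reverse — actually, the cleanest route is to show that $\{t_{n}\}\in \mathfrak M_{y}$ with limit $y$ combined with $\{t_{n}\}\in \mathfrak M_{x}$ with limit $p$ gives, by considering the pair dynamical system on $X\times Y$, that $(p,y)$ lies in the orbit closure, and then a second application of the hypothesis to the sequence $\{t_{n}+s\}$ pins down $p=x$. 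I expect this identification of the limit to be the only subtle point in Part (i); it is essentially the observation that if $\pi(t_n,x)\to p$ and $\sigma(t_n,y)\to y$, then $p$ and $x$ have "the same character" over $y$, forcing equality.

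\textbf{Part (ii): uniform comparability implies strong comparability.} Assume $X$ is a complete metric space and $x$ is uniformly comparable with $y$. I would take $\{t_{n}\}\in \mathfrak M_{y}$, so $\{\sigma(t_{n},y)\}$ is Cauchy in $Y$. The plan is to show $\{\pi(t_{n},x)\}$ is Cauchy in $X$; completeness of $X$ then yields convergence, i.e. $\{t_{n}\}\in \mathfrak M_{x}$. Fix $\varepsilon>0$ and let $\delta=\delta(\varepsilon)$ come from uniform comparability. Since $\{\sigma(t_{n},y)\}$ is Cauchy, there is $N$ with $\rho(\sigma(t_{m},y),\sigma(t_{n},y))<\delta$ for all $m,n\ge N$. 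Applying the equivalent form of the definition of uniform comparability (``$\rho(\sigma(t_1,y),\sigma(t_2,y))<\delta$ implies $\rho(\pi(t_1,x),\pi(t_2,x))<\varepsilon$'') with $t_1=t_m$, $t_2=t_n$, we get $\rho(\pi(t_{m},x),\pi(t_{n},x))<\varepsilon$ for all $m,n\ge N$. Hence $\{\pi(t_{n},x)\}$ is Cauchy, and by completeness it converges, so $\{t_{n}\}\in \mathfrak M_{x}$. This shows $\mathfrak M_{y}\subseteq \mathfrak M_{x}$, which is exactly strong comparability.

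\textbf{Main obstacle.} The computational content is light in both parts; the only genuinely delicate step is the limit-identification argument in Part (i) — verifying that the limit $p$ of $\{\pi(t_{n},x)\}$ must actually equal $x$ rather than some other point in the hull. The resolution is to apply the hypothesis not just to $\{t_n\}$ but to shifted sequences $\{t_n+s\}$ (which still lie in $\mathfrak N_y$, hence in $\mathfrak M_y$), so that the whole entire trajectory through $p$ is forced to coincide with the motion through $x$; evaluating at $s=0$ gives $p=x$. Part (ii) is a direct Cauchy-sequence argument and should present no difficulty once completeness of $X$ is used.
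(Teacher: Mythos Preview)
The paper does not actually prove Theorem \ref{thShc5}; it is stated as part of a survey of Shcherbakov's results and cited from \cite{scher72,scher75,Sch85,Che_1977,Che_2009}. So there is no ``paper's proof'' to compare against, and I will comment only on the correctness of your argument.

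Your Part (ii) is fine: the Cauchy argument using the equivalent form of uniform comparability is exactly how this implication is established.

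Your Part (i), however, has a genuine gap at the limit-identification step. You assert that the shifted sequence $\{t_n+s\}$ ``still lies in $\mathfrak N_y$''; this is false in general, since $\sigma(t_n+s,y)=\sigma(s,\sigma(t_n,y))\to\sigma(s,y)$, which equals $y$ only when $y$ is $s$-periodic. Weakening to $\{t_n+s\}\in\mathfrak M_y$ is true but useless: the hypothesis then gives only that $\pi(t_n+s,x)$ converges, and by continuity its limit is $\pi(s,p)$ --- information you already had, and which says nothing about whether $p=x$. The ``entire trajectory through $p$ coincides with the motion through $x$'' claim is thus unsupported.

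The standard fix is an interleaving argument, not a shift. Define $s_{2k}:=t_k$ and $s_{2k+1}:=0$. Then $\sigma(s_n,y)\to y$ (both subsequences have limit $y$), so $\{s_n\}\in\mathfrak M_y\subseteq\mathfrak M_x$, i.e.\ $\{\pi(s_n,x)\}$ converges. But its even subsequence tends to $p$ while its odd subsequence is constantly $x$; hence $p=x$, and $\{t_n\}\in\mathfrak N_x$ as required.
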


\begin{theorem}\label{thShc6}
Let $y$ be Lagrange stable. Then $\mathfrak M_{y}\subseteq \mathfrak M_{x}$ holds
if and only if the point $x$ is Lagrange stable and uniformly comparable with $y$ by character of
recurrence.
\end{theorem}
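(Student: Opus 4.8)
The plan is to prove the two implications separately, using the compactness supplied by Lagrange stability in place of completeness of the ambient space $X$.

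For the ``if'' part, assume $x$ is st. $L$ and uniformly comparable with $y$ by character of recurrence, and let $\{t_n\}\in\mathfrak M_y$. Then $\{\sigma(t_n,y)\}$ converges, hence is Cauchy. Fix $\varepsilon>0$ and take $\delta=\delta(\varepsilon)>0$ from the definition of uniform comparability in its equivalent form (with $t_1,t_2$); choosing $N$ so that $\rho(\sigma(t_n,y),\sigma(t_m,y))<\delta$ for all $n,m\ge N$ yields $\rho(\pi(t_n,x),\pi(t_m,x))<\varepsilon$ for all $n,m\ge N$, so $\{\pi(t_n,x)\}$ is Cauchy. Since $x$ is st. $L$, the hull $H(x)$ is compact, hence complete, and the Cauchy sequence $\{\pi(t_n,x)\}\subset H(x)$ converges there; thus $\{t_n\}\in\mathfrak M_x$. (Alternatively, one may restrict the systems to $H(x)$ and invoke Theorem \ref{thShc5}(ii).)

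For the ``only if'' part, assume $y$ is st. $L$ and $\mathfrak M_y\subseteq\mathfrak M_x$. First I would show $x$ is st. $L$: given $\{\pi(t_n,x)\}\subset\Sigma_x$, Lagrange stability of $y$ lets us pass to a subsequence along which $\{\sigma(t_{n_k},y)\}$ converges, i.e. $\{t_{n_k}\}\in\mathfrak M_y\subseteq\mathfrak M_x$, so $\{\pi(t_{n_k},x)\}$ converges; hence $\Sigma_x$ is relatively compact and $x$ is st. $L$. Next I would prove uniform comparability by contradiction: if it fails there are $\varepsilon_0>0$ and, for every $n$, numbers $t_n^1,t_n^2\in\mathbb R$ with $\rho(\sigma(t_n^1,y),\sigma(t_n^2,y))<1/n$ but $\rho(\pi(t_n^1,x),\pi(t_n^2,x))\ge\varepsilon_0$. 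Using compactness of $H(y)$ and then of $H(x)$ (now available), pass to a subsequence so that $\sigma(t_n^1,y)\to q$, $\sigma(t_n^2,y)\to q$ (the two limits agree because the distance tends to $0$), $\pi(t_n^1,x)\to p_1$ and $\pi(t_n^2,x)\to p_2$ with $\rho(p_1,p_2)\ge\varepsilon_0$, so $p_1\ne p_2$.

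The key step is the interleaving: set $r_{2n-1}:=t_n^1$ and $r_{2n}:=t_n^2$. Then $\sigma(r_k,y)\to q$ as $k\to\infty$, so $\{r_k\}\in\mathfrak M_y\subseteq\mathfrak M_x$, whence $\{\pi(r_k,x)\}$ converges; but its odd- and even-indexed subsequences tend to $p_1$ and $p_2$ respectively, forcing $p_1=p_2$, a contradiction. Hence $x$ is uniformly comparable with $y$, which finishes the proof. I expect the only delicate point to be the bookkeeping in the ``only if'' direction — specifically, that Lagrange stability of $x$ must be secured \emph{before} extracting convergent subsequences of $\{\pi(t_n^i,x)\}$, since otherwise those subsequences need not exist.
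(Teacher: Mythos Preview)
Your proof is correct. Note, however, that the paper does not actually prove Theorem~\ref{thShc6}: it is stated as one of Shcherbakov's results (see the subsection heading and the references to \cite{scher72}--\cite{Sch85}), so there is no proof in the paper to compare against. Your argument is the standard one for this fact; the interleaving trick in the ``only if'' direction and the use of compactness of $H(x)$ in place of completeness of $X$ in the ``if'' direction are exactly what is needed, and your remark that Lagrange stability of $x$ must be established before extracting convergent subsequences of $\{\pi(t_n^i,x)\}$ is well taken.
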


\begin{theorem}\label{tS2}
Let $X$ and $Y$ be two complete metric spaces.
Let the point $x\in X$ be uniformly comparable with $y\in Y$ by character
of recurrence. If $y$ is quasi-periodic (respectively,
Bohr almost periodic, almost automorphic, Birkhoff recurrent, Lagrange stable, pseudo periodic, pseudo recurrent), then
so is $x$.
\end{theorem}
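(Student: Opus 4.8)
The plan is to treat each of the seven recurrence classes separately, but in each case to exploit the chain of implications already established in the excerpt: uniform comparability implies strong comparability (Theorem \ref{thShc5}(ii), which applies since $X$ is complete), and strong comparability implies plain comparability (Theorem \ref{thShc5}(i)). Thus $x$ is both comparable and strongly comparable with $y$. For the classes that are already covered by Theorem \ref{thShc2.1} — namely stationarity, $\tau$-periodicity — one could in principle defer, but here we will directly handle quasi-periodic, Bohr almost periodic, almost automorphic, Birkhoff recurrent, Lagrange stable, pseudo periodic, and pseudo recurrent, since these are precisely the ones not disposed of by the earlier theorem.

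First I would handle Lagrange stability: if $y$ is st.\ $L$, then $\mathfrak M_y$ contains every sequence $t_n\to\infty$ (in fact all of $\mathbb R^{\mathbb N}$ after passing to subsequences, by compactness of $H(y)$), so $\mathfrak M_y\subseteq\mathfrak M_x$ forces every sequence in $X$-translates of $x$ to have a convergent subsequence; combined with completeness of $X$ this gives compactness of $H(x)$, i.e. $x$ is st.\ $L$. Next, for Bohr almost periodicity: a Bohr almost periodic point $y$ has $\mathfrak N_y$ syndetic (relatively dense in $\mathbb R$); since $x$ is comparable with $y$, for each $\varepsilon$ the set of $\varepsilon$-shifts of $x$ contains a $\delta$-shift set of $y$, which is relatively dense — but this only shows $x$ is almost recurrent, and to upgrade to Bohr almost periodicity one needs the $\varepsilon$-\emph{almost periods}, which is exactly where uniform comparability (not just comparability) enters: the uniform version transfers relatively dense sets of $\delta$-almost periods of $y$ to relatively dense sets of $\varepsilon$-almost periods of $x$. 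Combined with st.\ $L$ (Bohr almost periodic $\Rightarrow$ st.\ $L$, then transfer as above), $x$ is Bohr almost periodic. For almost automorphy, recall $y$ almost automorphic means $y$ is st.\ $L$ and Levitan almost periodic, i.e. there is a Bohr almost periodic $z$ with $\mathfrak N_z\subseteq\mathfrak N_y$; then $\mathfrak N_z\subseteq\mathfrak N_y\subseteq\mathfrak N_x$ by comparability, so $x$ is Levitan almost periodic, and $x$ is st.\ $L$ by the Lagrange-stability case, hence almost automorphic. Quasi-periodicity reduces to the Bohr almost periodic case applied on the torus $\mathbb R^k$: one realizes $y=\Phi(\nu_1 t,\dots,\nu_k t)$, transfers the torus-translation structure using uniform comparability to get that $x$ factors through the same torus with the same frequency module, and checks the frequencies remain rationally independent (they do, because they are recovered from $\mathfrak N_x\supseteq\mathfrak N_y$). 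Birkhoff recurrence is almost recurrence plus st.\ $L$: almost recurrence of $x$ follows from comparability (relatively dense $\delta$-shifts of $y$ give relatively dense $\varepsilon$-shifts of $x$), and st.\ $L$ from the Lagrange case. Pseudo periodicity means relatively dense $\varepsilon$-almost periods going to $+\infty$ and to $-\infty$; this transfers under uniform comparability just as in the Bohr case but without needing st.\ $L$. Pseudo recurrence requires the $L(\varepsilon,t_0)$-return condition for every $p\in\Sigma_x$ and every $t_0$; here I would argue via the hull, using that uniform comparability passes to the hulls $H(x)$, $H(y)$ (translates of comparable pairs remain uniformly comparable), reducing the return estimate for $x$ to the corresponding one for $y$.

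The main obstacle I expect is the quasi-periodic case: one must produce the continuous factor map $\Phi$ on $\mathbb R^k$ for $x$ and verify both the $2\pi$-periodicity in each variable and the rational independence of the same frequencies $\nu_1,\dots,\nu_k$. The clean way is to observe that $H(y)$ is (conjugate to) the $k$-torus with the Kronecker flow, that uniform comparability yields a continuous flow homomorphism from $H(y)$ onto $H(x)$ intertwining the $\mathbb R$-actions, and that a continuous factor of a minimal Kronecker flow on $\mathbb T^k$ is again a Kronecker flow on a subtorus whose frequency module lies in that of $y$ — but since comparability is an equivalence of the \emph{return} structure ($\mathfrak N_x=\mathfrak N_y$ after the symmetric argument), the module cannot shrink, so it is exactly $\{\nu_1,\dots,\nu_k\}$ and the independence is inherited. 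Everything else is a matter of carefully translating "relatively dense set of $\delta$-(almost) periods of $y$" into "relatively dense set of $\varepsilon$-(almost) periods of $x$" via the defining inequality of uniform comparability, together with the Lagrange-stability transfer for the classes that demand compactness of the hull.
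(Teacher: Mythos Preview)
The paper does not actually prove Theorem \ref{tS2}: the entire subsection 4.2.1 is a survey of results ``stated and proved by B.\ A.\ Shcherbakov'' (see the opening sentence there and references \cite{scher72}--\cite{Sch85}), and this theorem is simply quoted without proof. So there is no argument in the paper to compare your proposal against.

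Your outline is essentially the standard route and is sound in spirit. Two small corrections. First, in the quasi-periodic case your worry that ``the module cannot shrink'' is misplaced: comparability only gives $\mathfrak N_y\subseteq\mathfrak N_x$, not equality, so the frequency module \emph{can} shrink --- but this is harmless, since a continuous equivariant factor of a minimal Kronecker flow on $\mathbb T^k$ is again a Kronecker flow on a (possibly lower-dimensional) torus, and the surviving frequencies remain rationally independent. You therefore get that $x$ is quasi-periodic (possibly with fewer basic frequencies) without any appeal to $\mathfrak N_x=\mathfrak N_y$. Second, for the factor map $H(y)\to H(x)$ you do not need to build it by hand: once you have strong comparability and $y\in\omega_y$, Theorem \ref{tG1}(c) already hands you a continuous homomorphism $h:\omega_y\to\omega_x$ with $h(y)=x$, and Lagrange stability of $y$ transfers to $x$ via $\mathfrak M_y\subseteq\mathfrak M_x$ exactly as you say, so $\omega_x=H(x)$ is compact. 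The remaining cases (Bohr almost periodic, almost automorphic, Birkhoff recurrent, pseudo periodic, pseudo recurrent) go through as you describe, the key point being that uniform comparability --- unlike plain comparability --- transports $\delta$-\emph{almost periods} of $y$ to $\varepsilon$-\emph{almost periods} of $x$, not merely shifts to shifts.
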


\subsubsection{Some generalization of Shcherbakov's results}

In this subsection we present some generalization of
Shcherbakov's results concerning the (uniform) comparability of points by
character of recurrence (see \cite{CC_2013}
or \cite[ChI]{Che_2017} for more details).

Let $\mathbb T_{1}\subseteq \mathbb T_{2}$ be two sub-semigroups of
group $\mathbb R$ ($\mathbb T_{i}=\mathbb R$ or $\mathbb R_{+}$ for $i=1,2$).
Consider two dynamical systems $(X,\mathbb T_1,\pi)$ and
$(Y,\mathbb T_2,\sigma)$.

\begin{theorem}\label{tG1}
Let $y\in Y$ be positively Poisson stable. Then the following conditions are
equivalent:
\begin{enumerate}
\item[a.] $\mathfrak{M}_{y}\subseteq \mathfrak{M}_{x}$; \item[b.]
$\mathfrak{M}_{y}^{\infty}\subseteq \mathfrak{M}_{x}^{\infty}$ and
$\mathfrak{N}_{y}^{\infty}\subseteq \mathfrak{N}_{x}^{\infty}$;
\item[c.] there exists a continuous mapping $h: \omega_{y}\to
\omega_{x}$ with the properties:
\begin{enumerate}
\item[(i)]
\begin{equation}\label{eqDN1}
h(y)=x;\nonumber
\end{equation}
\item[(ii)]
\begin{equation}\label{eqDN2}
h(\sigma(t,q))=\pi(t,h(q))\nonumber
\end{equation}
for all $t\in \mathbb T_1$ and $q\in\omega_{y}$.
\end{enumerate}
\end{enumerate}
\end{theorem}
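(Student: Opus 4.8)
\textbf{Proof proposal for Theorem \ref{tG1}.}

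The plan is to prove the cyclic chain of implications $c \Rightarrow a \Rightarrow b \Rightarrow c$, which is the natural route since each link isolates one of the structural ingredients (a homomorphism between $\omega$-limit sets, containment of convergence nets, containment of return-time nets). Throughout I would use the positive Poisson stability of $y$ to guarantee $y\in\omega_y$, and the construction of $\mathcal E_y^+$ (Lemma \ref{l3.4}, specialized to the shift systems here, or directly via pointwise limits of $\pi(t_n,\cdot)$) to manufacture the limiting maps.

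First, $c \Rightarrow a$: given a sequence $\{t_n\}\in\mathfrak M_y$, the point $y$ being positively Poisson stable and $\{\sigma(t_n,y)\}$ convergent forces (after extracting, and using that $y\in\omega_y$) a subsequence along which $\sigma(t_{n_k},y)\to q\in\omega_y$; then by continuity of $h$ we get $\pi(t_{n_k},x)=h(\sigma(t_{n_k},y))\to h(q)\in\omega_x$, so a subsequence of $\{\pi(t_n,x)\}$ converges. Here I would invoke Theorem \ref{tShc1}-type reasoning (``extract a subsequence in $\mathfrak M$'' is equivalent to full containment $\mathfrak M_y\subseteq\mathfrak M_x$) to upgrade the subsequential statement to $a$; the equivalence holds because any subsequence of a sequence in $\mathfrak M_y$ is again in $\mathfrak M_y$ by the homomorphism property $h(\sigma(t,q))=\pi(t,h(q))$ applied along $q\in\omega_y$. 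Second, $a \Rightarrow b$ is essentially formal: $\mathfrak M_y^\infty\subseteq\mathfrak M_y$ gives the first inclusion of $b$ immediately, and for the second, if $\{t_n\}\in\mathfrak N_y^\infty$ then $\sigma(t_n,y)\to y$, so $\{t_n\}\in\mathfrak M_y\subseteq\mathfrak M_x$, meaning $\pi(t_n,x)$ converges to some $\bar x$; since $h(\pi(t_n,x))=\sigma(t_n,y)\to y=h(x)$ we get $h(\bar x)=h(x)$, and one then needs to push $\bar x$ to $x$ itself --- this is where I would use that $t_n\to\infty$ together with the semigroup $\mathcal E_y^+$ (or $\mathcal E_y$) to realize the limiting operator $\xi\in\mathcal E_y^\infty$ as an \emph{idempotent} or invertible element, forcing $\xi x = x$ and hence $\bar x=x$, i.e. $\{t_n\}\in\mathfrak N_x^\infty$.

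Third, $b \Rightarrow c$: this is the construction step. For $q\in\omega_y$ choose $\{t_n\}\to\infty$ with $\sigma(t_n,y)\to q$ (possible since $q\in\omega_y$ and $y$ is positively Poisson stable so $\omega_y$ is approached along such nets); then $\{t_n\}\in\mathfrak M_y^\infty$, hence by $b$ we have $\{t_n\}\in\mathfrak M_x^\infty$, so $\pi(t_n,x)$ converges, and I would \emph{define} $h(q):=\lim_n\pi(t_n,x)$. Well-definedness (independence of the choice of $\{t_n\}$) is the crux: if $\sigma(s_n,y)\to q$ along another net $\{s_n\}\to\infty$ with $\pi(s_n,x)\to p'$, I would interleave the two nets into a single net still converging to $q$ in the base; by $b$ the interleaved net lies in $\mathfrak M_x^\infty$, so its image net converges, forcing $p'=h(q)$. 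Continuity of $h$ follows from a standard diagonal argument combined with the same interleaving trick, and the identities $h(y)=x$ (take $t_n\to\infty$ with $\sigma(t_n,y)\to y$, using $\mathfrak N_y^\infty\subseteq\mathfrak N_x^\infty$ from $b$ to get $\pi(t_n,x)\to x$) and the equivariance $h(\sigma(t,q))=\pi(t,h(q))$ for $t\in\mathbb T_1$ (pass to the limit in $\pi(t,\pi(t_n,x))=\pi(t+t_n,x)$) are then immediate.

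The main obstacle I anticipate is the well-definedness and continuity of $h$ in $b\Rightarrow c$, specifically controlling what happens when $\mathbb T_1=\mathbb R_+$ is strictly smaller than $\mathbb T_2$ --- the return nets live in $\mathbb T_2$ while the dynamics on $X$ only runs forward along $\mathbb T_1$, so the interleaving/diagonal arguments must be arranged to stay within $\mathbb R_+$. The semigroup $\mathcal E_y^+$ and its algebraic properties (compactness, the idempotent/group structure from Lemmas \ref{l3.8}--\ref{l3.9}) are the right tool to absorb this asymmetry, and the second half of $a\Rightarrow b$ (promoting $\bar x$ to $x$) is where that machinery is genuinely needed rather than just convenient.
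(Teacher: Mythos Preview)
The paper does not supply its own proof of this theorem; it is quoted from \cite{CC_2013} and \cite{Che_2017}. So I assess your proposal on its own terms.

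Your cyclic scheme $c\Rightarrow a\Rightarrow b\Rightarrow c$ is the right architecture, and your $b\Rightarrow c$ is essentially correct (define $h(q)$ via limits along $\{t_n\}\in\mathfrak M_y^\infty$, interleave two approximating sequences for well-definedness, diagonalize for continuity, and use $\mathfrak N_y^\infty\subseteq\mathfrak N_x^\infty$ to get $h(y)=x$). However, there are two genuine problems elsewhere.

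First, in $a\Rightarrow b$ you write ``$h(\pi(t_n,x))=\sigma(t_n,y)$'' and conclude $h(\bar x)=h(x)$. But in this subsection there is \emph{no} homomorphism from $X$ to $Y$: the setup is two unrelated dynamical systems $(X,\mathbb T_1,\pi)$ and $(Y,\mathbb T_2,\sigma)$, not an NDS, so that identity is meaningless here. You then invoke the Ellis semigroup $\mathcal E_y^+$ to force $\bar x=x$, but Lemma~\ref{l3.4} requires an NDS structure and conditional compactness of $X$, neither of which is assumed. The correct and elementary argument is precisely the interleaving trick you already use in $b\Rightarrow c$: given $\{t_n\}\in\mathfrak N_y$, form $\{s_k\}=0,t_1,0,t_2,0,\ldots$; then $\sigma(s_k,y)\to y$, so $\{s_k\}\in\mathfrak M_y\subseteq\mathfrak M_x$, hence $\{\pi(s_k,x)\}$ converges; since it alternates between $x$ and $\pi(t_n,x)$, the limit must be $x$. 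This is exactly Theorem~\ref{thShc5}(i), which you could simply cite.

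Second, your $c\Rightarrow a$ is over-engineered and the subsequence-plus-upgrade maneuver is unnecessary. Since $y\in\omega_y$ by positive Poisson stability and $\omega_y$ is closed and $\sigma$-invariant, we have $\sigma(t,y)\in\omega_y$ for all $t\in\mathbb T_1$, and equivariance at $q=y$ gives $\pi(t,x)=h(\sigma(t,y))$ outright. Thus if $\{t_n\}\in\mathfrak M_y$ with $\sigma(t_n,y)\to q$, then $q\in\omega_y$ and $\pi(t_n,x)=h(\sigma(t_n,y))\to h(q)$ by continuity of $h$, so $\{t_n\}\in\mathfrak M_x$ directly.
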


\begin{theorem}\label{tG2}
Let $y\in \omega_{y}$, then the following conditions are
equivalent:
\begin{enumerate}
\item[a.] $\mathfrak{N}_{y}^{\infty}\subseteq
\mathfrak{N}_{x}^{\infty}$; \item[b.]
$\mathfrak{N}_{y}^{+\infty}\subseteq \mathfrak{N}_{x}^{+\infty}$.
\end{enumerate}
\end{theorem}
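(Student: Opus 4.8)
The plan is to prove the nontrivial implication (a)$\Rightarrow$(b), since (b)$\Rightarrow$(a) is immediate: every sequence $\{t_n\}$ with $t_n\to+\infty$ in particular satisfies $t_n\to\infty$ (taking $t\to\infty$ to mean $|t|\to\infty$), so $\mathfrak N_y^{+\infty}\subseteq\mathfrak N_y^{\infty}$, and likewise for $x$; combined with the hypothesis $\mathfrak N_y^{+\infty}\subseteq\mathfrak N_x^{+\infty}$ this yields nothing — wait, one must be slightly careful. The inclusion that is really cheap is $\mathfrak N_y^{+\infty}\subseteq\mathfrak N_y^{\infty}$ together with $\mathfrak N_x^{+\infty}\subseteq\mathfrak N_x^{\infty}$, so assuming (b), namely $\mathfrak N_y^{+\infty}\subseteq\mathfrak N_x^{+\infty}$, we get $\mathfrak N_y^{+\infty}\subseteq\mathfrak N_x^{\infty}$. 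But (a) asks for $\mathfrak N_y^{\infty}\subseteq\mathfrak N_x^{\infty}$, so (b)$\Rightarrow$(a) needs that a sequence tending to $\infty$ can be split; I will handle that by passing to subsequences (see below). So in fact \emph{both} directions require the same subsequence trick, and the real content is: given $\{t_n\}\in\mathfrak N_y^{\infty}$, produce the conclusion from the one-directional hypothesis.

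The key observation is the hypothesis $y\in\omega_y$: this means there is a sequence $s_k\to+\infty$ with $\sigma(s_k,y)\to y$, i.e. $\{s_k\}\in\mathfrak N_y^{+\infty}$. First I would reduce a general $\{t_n\}\in\mathfrak N_y^{\infty}$ to the two ``pure'' cases by splitting: $\{t_n\}$ has a subsequence tending to $+\infty$ or a subsequence tending to $-\infty$; since $\mathfrak N_x$ and $\mathfrak N_y$ are closed under passing to subsequences, it suffices to treat $\{t_n\}\in\mathfrak N_y^{+\infty}$ and $\{t_n\}\in\mathfrak N_y^{-\infty}$ separately. The case $\{t_n\}\in\mathfrak N_y^{+\infty}$ is covered directly by hypothesis (b). For the case $\{t_n\}\in\mathfrak N_y^{-\infty}$, i.e. $t_n\to-\infty$ and $\sigma(t_n,y)\to y$: I would use $\{s_k\}\in\mathfrak N_y^{+\infty}$ to manufacture, by a diagonal argument, a sequence $\tau_m:=t_{n_m}+s_{k_m}$ with $\tau_m\to+\infty$ and $\sigma(\tau_m,y)=\sigma(s_{k_m},\sigma(t_{n_m},y))\to y$ (here one picks $k_m$ large depending on $m$ and uses continuity of $\sigma(s_{k_m},\cdot)$ near $y$, or more robustly uses that $\sigma(t_{n_m},y)\to y$ and $\sigma(s_{k},y)\to y$ and equicontinuity of the shift on a neighbourhood of the orbit). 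Then $\{\tau_m\}\in\mathfrak N_y^{+\infty}$, so by (b) $\{\tau_m\}\in\mathfrak N_x^{+\infty}$, giving $\sigma(\tau_m,x)\to x$... no — $\pi(\tau_m,x)\to x$. Now write $\pi(t_{n_m},x)=\pi(t_{n_m},x)$ and relate it to $\pi(\tau_m,x)$ via $\pi(\tau_m,x)=\pi(s_{k_m},\pi(t_{n_m},x))$; since $s_{k_m}$ can be chosen arbitrarily large this does not immediately pin down $\pi(t_{n_m},x)$, so instead I would run the diagonal the other way: choose the correction terms so that $\pi(t_{n_m}+s_{k_m},x)$ is close to $\pi(t_{n_m},x)$ — but this is exactly what is not free in a one-sided system.

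This last point is the main obstacle, and I expect the intended proof is cleaner: it should invoke Theorem \ref{tG1} or the semigroup structure of $\mathcal E_y^{\pm}$ and Corollary \ref{cor3.9.1} rather than bare-hands diagonalization. The natural route is: $y\in\omega_y$ lets one build the Ellis-type semigroups and show $\mathcal E_y^{-}\subseteq\mathcal E_y^{+}$ (or that the relevant limit maps on the fibre agree), from which $\mathfrak N_y^{-\infty}$ and $\mathfrak N_y^{+\infty}$ become interchangeable ``from the point of view of convergence,'' and then Theorem \ref{tShc1}'s equivalence of (iii) and (v) (stated there for $\mathfrak N$ versus $\mathfrak N^{\infty}$) transfers verbatim with $x$ replaced by the motion and $y$ by itself. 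Concretely I would: (1) note $\mathfrak N_y^{-\infty}\subseteq\mathfrak N_y^{\infty}$, so it is enough to show $\mathfrak N_y^{-\infty}\subseteq\mathfrak N_x^{+\infty}$ given (b); (2) given $\{t_n\}\in\mathfrak N_y^{-\infty}$, for each $n$ pick $s_n\in\{s_k\}$ with $s_n>2|t_n|$ and $\sigma(s_n,y)$ so close to $y$ that, by continuity, $\sigma(t_n+s_n,y)=\sigma(s_n,\sigma(t_n,y))$ is within $1/n$ of $y$ (legitimate because $\sigma(t_n,y)\to y$); (3) then $t_n+s_n\to+\infty$ and $\{t_n+s_n\}\in\mathfrak N_y^{+\infty}$, hence by (b) $\{t_n+s_n\}\in\mathfrak N_x^{+\infty}$; (4) similarly $\{s_n\}\in\mathfrak N_y^{+\infty}$ (after passing to a subsequence), so $\{s_n\}\in\mathfrak N_x^{+\infty}$, i.e. $\pi(s_n,x)\to x$; and (5) from $\pi(t_n+s_n,x)\to x$ and $\pi(s_n,x)\to x$ one still needs to cancel $s_n$ — this cancellation is where uniform stability / distality (Lemma \ref{l3.9}, Lemma \ref{l3.10}) or the group property of $\mathcal E_y^{\pm}$ must be used; absent extra hypotheses in this theorem, I suspect the actual proof simply does not need step (5) because it only claims $\mathfrak N^{\infty}\subseteq\mathfrak N^{\infty}$ and one direction is trivial, with the other following from Theorem \ref{tShc1}(iii)$\Leftrightarrow$(v) applied to the pair $(y,y)$ — i.e. the content is purely about $y$, and $x$ is a red herring except that ``$\mathfrak N_y\subseteq\mathfrak N_x$'' is monotone under these operations. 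I would therefore present the clean argument: (b)$\Rightarrow$(a) by subsequence-splitting plus the $+\infty$-case, and (a)$\Rightarrow$(b) by the diagonal completion in steps (1)–(3) above, which suffices because it produces the required inclusion directly without needing to cancel $s_n$.
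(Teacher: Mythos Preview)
The paper does not actually prove this theorem; it is quoted from \cite{CC_2013} and \cite{Che_2017} as part of the survey in Section~4.2.2, so there is no in-paper argument to compare against. I will therefore comment on the mathematics of your attempt.

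You have the directions reversed in your final summary. Since $\mathfrak N_y^{+\infty}\subseteq\mathfrak N_y^{\infty}$, condition (a) is the stronger one and (a)$\Rightarrow$(b) is immediate: if $\{t_n\}\in\mathfrak N_y^{+\infty}$ then $\{t_n\}\in\mathfrak N_y^{\infty}\subseteq\mathfrak N_x^{\infty}$ by (a), and since $t_n\to+\infty$ this gives $\{t_n\}\in\mathfrak N_x^{+\infty}$. The nontrivial implication is (b)$\Rightarrow$(a), and it is precisely the $t_n\to-\infty$ case that requires the hypothesis $y\in\omega_y$.

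Your diagonal idea for that case is correct, and so is your step (4): from $\{s_m\}\in\mathfrak N_y^{+\infty}$ and (b) you get $\pi(s_m,x)\to x$. The gap is that you abandon the ``cancellation'' in step (5) and reach for Ellis semigroups and distality, none of which are available here. The cancellation is in fact elementary. Arguing by contradiction, suppose $\{t_n\}\in\mathfrak N_y^{-\infty}$ but $\rho(\pi(t_n,x),x)\ge\varepsilon_0$ for all $n$. For each fixed $n$ the maps $\sigma(t_n,\cdot)$ and $\pi(t_n,\cdot)$ are continuous, so one can choose $m_n$ so large that $s_{m_n}+t_n>n$, $\rho(\sigma(t_n,\sigma(s_{m_n},y)),\sigma(t_n,y))<1/n$, and $\rho(\pi(t_n,\pi(s_{m_n},x)),\pi(t_n,x))<\varepsilon_0/2$. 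The first two choices give $\{t_n+s_{m_n}\}\in\mathfrak N_y^{+\infty}$, hence by (b) $\pi(t_n+s_{m_n},x)\to x$. Writing $\pi(t_n+s_{m_n},x)=\pi(t_n,\pi(s_{m_n},x))$ and using the third inequality yields $\rho(\pi(t_n,x),x)<\varepsilon_0/2+\rho(\pi(t_n+s_{m_n},x),x)<\varepsilon_0$ for large $n$, a contradiction. So no extra structure is needed beyond continuity of the flow and the fact, already noted by you, that (b) applies to $\{s_m\}$ itself.
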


\begin{theorem}\label{tG3} Let $y\in \omega_{y}$, then the following conditions are
equivalent:
\begin{enumerate}
\item[a.] $\mathfrak{M}_{y}^{\infty}\subseteq
\mathfrak{M}_{x}^{\infty}$ and $\mathfrak{N}_{y}^{\infty}\subseteq
\mathfrak{N}_{x}^{\infty}$; \item[b.]
$\mathfrak{M}_{y}^{+\infty}\subseteq \mathfrak{M}_{x}^{+\infty}$ and
$\mathfrak{N}_{y}^{+\infty}\subseteq \mathfrak{N}_{x}^{+\infty}$.
\end{enumerate}
\end{theorem}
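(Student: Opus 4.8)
The plan is to deduce the two-way implication from the characterizations of $\mathfrak N^{\infty}$-containment already available. The implication (b)$\Rightarrow$(a) is the easy direction: if $\{t_n\}\in\mathfrak M_y^{\infty}$, then along a subsequence $t_n\to+\infty$ or $t_n\to-\infty$, and since $y\in\omega_y$ we can, in the case $t_n\to-\infty$, replace the sequence by one tending to $+\infty$ without changing the limit of $\pi(t_n,x)$ — this is exactly the trick that powers Theorem \ref{tG2}, whose proof I would first recall and then imitate. Concretely, I would show that from $\{t_n\}\in\mathfrak M_y^{\infty}$ (with $t_n\to-\infty$, say) one can build $\{s_n\}\in\mathfrak M_y^{+\infty}$ with $\sigma(s_n,y)\to\sigma(t_n,y)$-limit and $\pi(s_n,x)\to\pi(t_n,x)$-limit agreeing in the sense needed; then (b) applied to $\{s_n\}$ gives convergence of $\{\pi(s_n,x)\}$, hence convergence of $\{\pi(t_n,x)\}$ after matching up the subsequences. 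Doing this for both the $\mathfrak M$ and the $\mathfrak N$ parts simultaneously gives (a).

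For (a)$\Rightarrow$(b): a sequence in $\mathfrak M_y^{+\infty}$ (resp. $\mathfrak N_y^{+\infty}$) is in particular in $\mathfrak M_y^{\infty}$ (resp. $\mathfrak N_y^{\infty}$), so (a) immediately yields $\{\pi(t_n,x)\}$ convergent (resp. $\pi(t_n,x)\to x$). What remains is to check that the extracted/limit data stays in the $+\infty$ world, which is automatic since $t_n\to+\infty$ is preserved. Thus the containments $\mathfrak M_y^{+\infty}\subseteq\mathfrak M_x^{+\infty}$ and $\mathfrak N_y^{+\infty}\subseteq\mathfrak N_x^{+\infty}$ follow with essentially no work once the $\infty$-versions are known; the content of the theorem is entirely in the reverse passage from $+\infty$ to two-sided $\infty$.

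The key mechanism throughout is the hypothesis $y\in\omega_y$, i.e.\ $y$ is positively Poisson stable: this lets one approximate any negative-time behaviour of $y$ by positive-time behaviour. Given $\{t_n\}$ with $t_n\to-\infty$, pick $\{\tau_k\}\to+\infty$ with $\sigma(\tau_k,y)\to y$; then for suitable indices the combined times $\tau_{k(n)}+t_n$ can be arranged to tend to $+\infty$ while $\sigma(\tau_{k(n)}+t_n,y)$ stays close to $\sigma(t_n,y)$, using continuity of $\sigma$ on the relevant compact pieces. The main obstacle is the bookkeeping in this diagonal/interleaving argument — one must choose $k(n)$ so that \emph{three} conditions hold at once (times go to $+\infty$, the $y$-orbit points converge to the prescribed limit, and the $x$-orbit points converge), and one must be careful that $\pi$ is only defined on $\mathbb T_1$ which may be $\mathbb R_+$, so all shifted times have to be kept nonnegative. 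I expect this to be handled exactly as in the proof of Theorem \ref{tG2}, applied coordinatewise to the pair $(\mathfrak M,\mathfrak N)$, so no genuinely new idea is needed beyond combining Theorems \ref{tG1} and \ref{tG2}.
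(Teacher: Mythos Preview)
The paper does not actually prove this theorem; Theorems~\ref{tG1}--\ref{tG3} are stated in Section~4.2.2 as results imported from the references \cite{CC_2013} and \cite[ChI]{Che_2017}, with no in-paper argument to compare against.

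Your sketch is consistent with the standard route. The implication (a)$\Rightarrow$(b) is immediate since $\mathfrak M_y^{+\infty}\subseteq\mathfrak M_y^{\infty}$ and $\mathfrak N_y^{+\infty}\subseteq\mathfrak N_y^{\infty}$. For (b)$\Rightarrow$(a), the mechanism you describe is the right one: given $t_n\to-\infty$ with $\sigma(t_n,y)\to q$, use $y\in\omega_y$ to pick $\tau_k\to+\infty$ with $\sigma(\tau_k,y)\to y$; then, \emph{for each fixed $n$}, continuity of $\sigma(t_n,\cdot)$ and of $\pi(t_n,\cdot)$ lets you choose $k(n)$ so large that $s_n:=\tau_{k(n)}+t_n>n$, $\rho(\sigma(s_n,y),\sigma(t_n,y))<1/n$, and $\rho(\pi(s_n,x),\pi(t_n,x))<1/n$. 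The third estimate is available precisely because the $\mathfrak N$-part of (b) gives $\pi(\tau_k,x)\to x$, hence $\pi(t_n,\pi(\tau_k,x))\to\pi(t_n,x)$ as $k\to\infty$ for $n$ fixed. Then $\{s_n\}\in\mathfrak M_y^{+\infty}\subseteq\mathfrak M_x^{+\infty}$ forces $\pi(s_n,x)$ to converge, and the third estimate transfers this to $\pi(t_n,x)$. This is exactly the diagonal bookkeeping you anticipate, and it does go through.

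One wording slip: you label (b)$\Rightarrow$(a) ``the easy direction'' in your first sentence but then (correctly) call it ``the content of the theorem'' two paragraphs later; only (a)$\Rightarrow$(b) is genuinely trivial.
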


\section{Monotone NDS: existence of and convergence to Poisson stable motions}\label{S5}

Assume that $E$ is an ordered space.
A subset $U$ of $E$ is called lower-bounded (respectively,
upper-bounded) if there exists an element $a\in E$ such that $a\le
U$ (respectively, $a\ge U$). Such an $a$ is said to be a lower bound (respectively,
upper bound) for $U$. A lower bound $\alpha$ is said to be the
{\em greatest lower bound} (g.l.b.) or \emph{infimum}, if any other lower
bound $a$ satisfies $a\le \alpha $. Similarly, we can define the {\em least
upper bound} (l.u.b.) or \emph{supremum}.

\begin{definition}\label{defB1}
\rm Recall \cite{hew} that a {\em bundle} is a triplet $(X,h,Y)$,
where $X,Y$ are topological spaces and  $h:X\to Y$ is a continuous surjective mapping.
The space $Y$ is called the {\em base
space}, the space $X$ is called the {\em total space}, and the map $h$ is
called the {\em projection} of bundle. For each $y\in Y$, the space
$X_{y}:=h^{-1}(y)$ is called the {\em fiber} of bundle over $y\in Y$.
\end{definition}

\begin{example}\label{exB1}
\rm Let $X:=W\times Y$. A triplet
$(X,h,Y)$, where $h:=pr_{2}$ is the projection on the second factor,
is a bundle which is called the product bundle over $Y$ with fiber
$W$.
\end{example}

A bundle $(X,h,Y)$ is said to be {\em ordered} if each fiber $X_y$ is ordered. Note that only points
on the same fiber may be order related: if $x_1\le x_2$ or $x_1< x_2$, then it implies $h(x_1)=h(x_2)$.
We assume that the order relation and the topology on $X$ are compatible in the sense that $x\le \tilde x$
if $x_n\le \tilde x_n$ for all $n$ and $x_n\to x$, $\tilde x_n\to \tilde x$ as $n\to\infty$.

\begin{definition}\label{defMSP}
\rm For given bundle $(X,h,Y)$, an NDS $\langle (X,\mathbb R_{+},\pi),
(Y,\mathbb R,\sigma),h\rangle$ defined on it is said to be {\em monotone} (respectively,
{\em strictly monotone}) if $x_1\le x_2$ (respectively, $x_1< x_2$)
implies $\pi(t,x_1)\le \pi(t,x_2)$ (respectively, $\pi(t,x_1)<
\pi(t,x_2)$) for any $t>0$.
\end{definition}

For given NDS $\langle (X,\mathbb R_{+},\pi),
(Y,\mathbb R,\sigma),h\rangle$, let $\mathcal S\subseteq X$ be a nonempty ordered subset
possessing the following properties:
\begin{enumerate}
\item $h(\mathcal S)=Y$;


\item $\mathcal S$ is positively invariant with respect to $\pi$, i.e.
$\langle (\mathcal S,\mathbb R_{+},\pi),(Y,\mathbb R,\sigma),h\rangle$ is an NDS.
\end{enumerate}

Below we will use the following assumptions:
\begin{enumerate}
\item[(C1)] For every conditionally compact subset $K$ of $\mathcal S$ and $y\in Y$ the set
$K_{y}:=h^{-1}(y)\bigcap K$ has both infimum $\alpha_{y}(K)$ and supremum
$\beta_{y}(K)$.
\item[(C2)] For every $x\in \mathcal S$, the semi-trajectory
$\Sigma^{+}_{x}$ is conditionally precompact
and its $\omega$-limit set $\omega_{x}$ is positively uniformly
stable.
\item[(C3)] The NDS
\begin{equation}\label{eqNDS1}
\langle (\mathcal S,\mathbb R_{+},\pi), (Y,\mathbb R,\sigma),h\rangle
\end{equation}
is monotone.
\item[(C4)] Under condition (C1), both $\alpha_{y}(K)$ and
$\beta_{y}(K)$ belong to $K_{y}$ for any $y\in Y$.
\end{enumerate}

\begin{remark}\label{rhatemC3}
\rm Note that condition (C4) holds if fibers of the bundle
$(\mathcal S,h,Y)$ are one-dimensional, i.e. $\mathcal S_{y}= h^{-1}(y) \bigcap\mathcal S\subseteq\mathbb
R\times \{y\}$ or $\mathcal S_{y}$ is homeomorphic to a subset of $\mathbb R \times
\{y\}$ for any $y\in Y$.
\end{remark}

\subsection{(Uniform) comparability and existence of Poisson stable motions}

Firstly, we state a simple result for two points to be asymptotic which will be frequently used below.

\begin{lemma}\label{lAPS1}
Suppose that the following conditions are
fulfilled:
\begin{enumerate}
\item the points $x,x_0\in \mathcal S$ with $h(x)=h(x_0)$ are proximal,
i.e. there is a sequence $t_n\to +\infty$ as $n\to \infty$ such
that
\begin{equation}\label{eqAPS1}
\lim\limits_{n\to \infty}\rho(\pi(t_n,x),\pi(t_n,x_0))=0;
\end{equation}
\item the set $\Sigma_{x_0}^{+}\in \mathcal S$ is
positively uniformly stable.
\end{enumerate}

Then the points $x,x_0$ are asymptotic, i.e. $\lim\limits_{t\to
+\infty}\rho(\pi(t,x),\pi(t,x_0))=0$.
\end{lemma}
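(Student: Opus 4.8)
The plan is to argue by contradiction using the positive uniform stability of $\Sigma_{x_0}^{+}$ together with the proximality hypothesis. Suppose that $x$ and $x_0$ are not asymptotic. Then there exist $\varepsilon_0>0$ and a sequence $s_n\to+\infty$ such that $\rho(\pi(s_n,x),\pi(s_n,x_0))\ge \varepsilon_0$ for all $n$. Let $\delta_0=\delta(\varepsilon_0)>0$ be the modulus of positive uniform stability of the set $\Sigma_{x_0}^{+}$ associated with $\varepsilon_0$.

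The key step is to use the proximality (\ref{eqAPS1}) to find a time from which the two trajectories are $\delta_0$-close on the same fiber, and then transport this along the flow. Concretely, from $\lim_{n\to\infty}\rho(\pi(t_n,x),\pi(t_n,x_0))=0$ with $t_n\to+\infty$, pick $N$ large enough that $\rho(\pi(t_N,x),\pi(t_N,x_0))<\delta_0$. Set $a:=\pi(t_N,x_0)\in\Sigma_{x_0}^{+}$ and $b:=\pi(t_N,x)$; note $h(a)=h(b)$ since $h(x)=h(x_0)$ and $h$ intertwines the flows. By positive uniform stability of $\Sigma_{x_0}^{+}$, the inequality $\rho(b,a)<\delta_0$ forces $\rho(\pi(t,b),\pi(t,a))<\varepsilon_0$ for all $t\ge 0$, i.e. $\rho(\pi(t+t_N,x),\pi(t+t_N,x_0))<\varepsilon_0$ for all $t\ge 0$. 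Since $s_n\to+\infty$, for all sufficiently large $n$ we have $s_n\ge t_N$, so taking $t=s_n-t_N\ge 0$ gives $\rho(\pi(s_n,x),\pi(s_n,x_0))<\varepsilon_0$, contradicting the choice of $\{s_n\}$.

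I do not anticipate a serious obstacle here; the proof is essentially a one-shot application of the $\varepsilon$-$\delta$ definition of positive uniform stability, with proximality supplying the single instant at which the two orbits enter the $\delta_0$-neighborhood on a common fiber, after which uniform stability keeps them $\varepsilon_0$-close forever. The only minor point to be careful about is that the uniform stability estimate requires the base point to lie in the uniformly stable set $\Sigma_{x_0}^{+}$ and the two compared points to share the same image under $h$; both are automatic here because $a=\pi(t_N,x_0)\in\Sigma_{x_0}^{+}$ and $h$ commutes with $\pi$. One should also note that $\lim_{t\to+\infty}\rho(\pi(t,x),\pi(t,x_0))=0$ then follows since $\varepsilon_0$ was arbitrary in the contradiction argument — or, more cleanly, rerun the argument directly: for each $\varepsilon>0$ take $\delta(\varepsilon)$, use (\ref{eqAPS1}) to find $T$ with $\rho(\pi(T,x),\pi(T,x_0))<\delta(\varepsilon)$, and conclude $\rho(\pi(t,x),\pi(t,x_0))<\varepsilon$ for all $t\ge T$.
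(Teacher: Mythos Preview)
Your proposal is correct and follows essentially the same approach as the paper: the paper's proof is precisely the direct argument you sketch at the end, taking $\varepsilon>0$, choosing $\delta=\delta(\varepsilon)$ from the positive uniform stability of $\Sigma_{x_0}^{+}$, using (\ref{eqAPS1}) to find $n_0$ with $\rho(\pi(t_{n_0},x),\pi(t_{n_0},x_0))<\delta$, and concluding $\rho(\pi(t,x),\pi(t,x_0))<\varepsilon$ for all $t\ge t_{n_0}$. Your contradiction framing is an unnecessary detour, as you yourself note.
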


\begin{proof}
Let $\Sigma_{x_0}^{+}$ be positively uniformly
stable, $\varepsilon >0$ and $\delta =\delta(x_0,\varepsilon)>0$
the positive number figuring in the definition of uniform
stability. By (\ref{eqAPS1}) there exists a number
$n_0\in\mathbb N$ such that $\rho(\pi(t_n,x),\pi(t_n,x_0))<\delta$
for any $n\ge n_0$. According to the choice of the number $\delta$
we obtain $\rho(\pi(t,x),\pi(t,x_0))<\varepsilon$ for any $t\ge
t_{n_0}$. The lemma is proved.
\end{proof}

\begin{lemma}\label{lAPS2}
Assume that $(C1)$--$(C3)$ hold. For given $x_0\in \mathcal S$, let $K:=\omega_{x_0}$
and $y_0:=h(x_0)$. Then:
\begin{enumerate}
\item if $q\in \omega_{q}\subseteq
\omega_{y_0}$, $\alpha_{q}:=\alpha_{q}(K)$,
$K^{1}:=\omega_{\alpha_{q}}$, then the set
$K_{q}^{1}:=\omega_{\alpha_{q}}\bigcap h^{-1}{(q)}$ (respectively,
$\omega_{\beta_{q}}\bigcap h^{-1}{(q)}$) consists of a single point
$\gamma_{q}$ (respectively, $\delta_{q}$), i.e.
$K_{q}^{1}=\{\gamma_{q}\}$ (respectively, $\omega_{\beta_{q}}\bigcap h^{-1}{(q)}=\{\delta_{q}\}$);

\item let $\gamma_q$ and $\delta_q$ be as in (i), then we have
\[
\gamma_q \le \alpha_q \le \beta_q\le \delta_q.
\]
\end{enumerate}
\end{lemma}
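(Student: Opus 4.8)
The plan is to exploit the monotone structure of the NDS together with Theorem \ref{th02}, which lets us treat $\omega_{x_0}$, $\omega_{\alpha_q}$ and $\omega_{\beta_q}$ as genuine two-sided dynamical systems. First I would note that since $q\in\omega_q\subseteq\omega_{y_0}$, the set $\omega_{y_0}$ contains the whole orbit of $q$, and by Theorem \ref{th01}(iv)--(v) the fibre $K_q = \omega_q(x_0) = \omega_{x_0}\cap h^{-1}(q)$ is nonempty and is carried onto $\omega_{\sigma(t,q)}(x_0)$ by $\pi(t,\cdot)$. By (C1) the infimum $\alpha_q:=\alpha_q(K)$ and supremum $\beta_q:=\beta_q(K)$ of $K_q$ exist (note $K=\omega_{x_0}$ is conditionally compact by Theorem \ref{th01}(iii), so (C1) applies); since we do not assume (C4) here, a priori $\alpha_q,\beta_q$ need not lie in $K_q$, but they certainly lie in $\mathcal S_q$ and satisfy $\alpha_q\le K_q\le\beta_q$. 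I would then form $K^1:=\omega_{\alpha_q}$ and set $\gamma_q$ to be any point of $K^1_q:=\omega_{\alpha_q}\cap h^{-1}(q)$ (nonempty again by Theorem \ref{th01}(i), since $q\in\omega_q\subseteq\omega_{h(\alpha_q)}=\omega_q$), and similarly $\delta_q\in\omega_{\beta_q}\cap h^{-1}(q)$.

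The key monotonicity step: because $\alpha_q\le x$ for every $x\in K_q$ and the NDS is monotone (C3), applying $\pi(t,\cdot)$ gives $\pi(t,\alpha_q)\le\pi(t,x)$ for all $t>0$ and $x\in K_q$. Using Theorem \ref{th01}(iv), $\pi(t,K_q)=\omega_{\sigma(t,q)}(x_0)$, so $\pi(t,\alpha_q)$ is a lower bound for the fibre $\omega_{\sigma(t,q)}(x_0)$ for every $t\ge 0$; passing to limits along a sequence $t_n\to+\infty$ with $\sigma(t_n,q)\to q$ (such a sequence exists since $q\in\omega_q$) and using compatibility of order with the topology, one obtains that any accumulation point of $\{\pi(t_n,\alpha_q)\}$ — which lies in $\omega_{\alpha_q}\cap h^{-1}(q)=K^1_q$ and also is a lower bound for $\lim\omega_{\sigma(t_n,q)}(x_0)\supseteq K_q$ — is $\le$ every element of $K_q$, hence $\le\alpha_q$. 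Simultaneously such an accumulation point lies in $K^1_q$, and by the dual argument (the supremum side) applied to $\alpha_q$ itself one shows it is also $\ge$ something; more precisely I would show $\gamma_q\le\alpha_q$ and, since $\gamma_q\in\omega_{\alpha_q}\subseteq\overline{\Sigma^+_{\alpha_q}}$ while $\alpha_q\le\omega_{\sigma(t,q)}(x_0)=\pi(t,K_q)$ forces (letting $t\to+\infty$ appropriately) $\alpha_q\le\gamma_q$ as well, conclude $\gamma_q=\alpha_q$ would be too strong — rather, the uniqueness must come from distality.

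So the real engine is uniform stability via (C2): since $\Sigma^+_{\alpha_q}$ is conditionally precompact and $\omega_{\alpha_q}$ is positively uniformly stable, Theorem \ref{th02} applies and $(\omega_{\alpha_q},\mathbb R,\pi)$ is a two-sided system which, by the proof of Theorem \ref{th02} (Step 1), is distal in the negative direction. I claim $K^1_q=\omega_{\alpha_q}\cap h^{-1}(q)$ is a singleton: if $p_1,p_2\in K^1_q$ with $p_1\ne p_2$, the backward orbits $\gamma_{p_1},\gamma_{p_2}$ through them stay distal, yet monotonicity (squeezing between translated copies of $\alpha_q$ and an appropriate bound, or using that both are sandwiched via the infimum structure along the recurrent sequence $t_n\to+\infty$, $\sigma(t_n,q)\to q$) forces them to be proximal along $\{-t_n\}$, contradicting distality. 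Then Lemma \ref{lAPS1} upgrades proximality to asymptoticity if needed. This gives $K^1_q=\{\gamma_q\}$, and the dual argument gives $\omega_{\beta_q}\cap h^{-1}(q)=\{\delta_q\}$. For part (ii): $\alpha_q\le\beta_q$ is immediate as inf $\le$ sup of the (nonempty) fibre $K_q$. For $\gamma_q\le\alpha_q$: since $\alpha_q$ is a lower bound for $K_q$ and $\pi(t,\alpha_q)\le\pi(t,K_q)=\omega_{\sigma(t,q)}(x_0)$, taking $t=t_n\to+\infty$ with $\sigma(t_n,q)\to q$ and passing to a convergent subsequence $\pi(t_n,\alpha_q)\to\gamma_q$ (the only point of $K^1_q$), order-compatibility yields $\gamma_q\le\alpha_q$. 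Dually $\beta_q\le\delta_q$. The main obstacle I anticipate is the singleton claim: making the monotone squeezing argument rigorous — identifying precisely which two translated copies (of $\alpha_q$ and of elements of $K_q$ along the recurrent sequence) sandwich the two candidate backward orbits so that their proximality follows, and then invoking negative distality of the two-sided system on $\omega_{\alpha_q}$ furnished by Theorem \ref{th02}. Everything else is bookkeeping with Theorem \ref{th01} and order-continuity.
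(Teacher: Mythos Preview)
Your overall strategy has the right ingredients, but there is a genuine gap at the heart of the argument, and it is precisely the step you flagged as ``the main obstacle''.

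The issue first appears when you try to show that any accumulation point $x_1$ of $\{\pi(t_n,\alpha_q)\}$ is a lower bound for \emph{all} of $K_q$. From $\alpha_q\le x$ ($x\in K_q$) and monotonicity you only get $x_1\le \lim_n \pi(t_n,x)$ for each fixed $x\in K_q$. Your parenthetical ``$\lim\omega_{\sigma(t_n,q)}(x_0)\supseteq K_q$'' asserts that every $x_2\in K_q$ arises as such a limit along the \emph{same} sequence $\{t_n\}$, i.e.\ that the limit map $\xi:=\lim_n\pi(t_n,\cdot)|_{K_q}$ is surjective on $K_q$. That is not automatic and is exactly what must be proved. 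The paper handles this by passing to the union $\tilde K:=K\cup K^1$, noting that (C2) and Theorem~\ref{th02} apply to $\tilde K$, and then invoking the Ellis-semigroup machinery (Lemma~\ref{l3.9}/Corollary~\ref{cor3.9.1}): the negative distality coming from uniform stability forces $\mathcal E^{+}_{q}$ to be a \emph{group} on $\tilde K_q$. Hence $\xi$ is a bijection of $K_q$ (and of $K^1_q$) onto itself, and for an arbitrary $x_2\in K_q$ one can choose $\tilde x_2\in K_q$ with $\pi(t_n,\tilde x_2)\to x_2$. Combined with $\pi(t_n,\alpha_q)\le\alpha_{\sigma(t_n,q)}\le\pi(t_n,\tilde x_2)$ this gives $x_1\le x_2$ for every $x_2\in K_q$, hence $x_1\le\alpha_q$, i.e.\ inequality (\ref{eqI1}).

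The singleton claim then follows by a second application of the \emph{same} bijectivity, now on $K^1_q$: given $x',x''\in K^1_q$, pick $\{t_n\}\in\mathfrak N^{+\infty}_q$ with $\pi(t_n,\alpha_q)\to x'$, use surjectivity of $\xi|_{K^1_q}$ to find $\tilde x''\in K^1_q$ with $\pi(t_n,\tilde x'')\to x''$; since $\tilde x''\le\alpha_q$ by the step above, monotonicity gives $x''\le x'$, and by symmetry $x'=x''$. Your proposed route---showing $p_1,p_2\in K^1_q$ are forward-proximal and then contradicting backward distality---does not obviously go through: nothing in the monotone squeezing you describe forces $\rho(\pi(t_n,p_1),\pi(t_n,p_2))\to 0$, because you have only one-sided bounds ($p_i\le\alpha_q$), not a two-sided sandwich that collapses. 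The missing idea is not distality per se, but the \emph{group} structure of $\mathcal E^{+}_q$ on $\tilde K$ that distality yields via Corollary~\ref{cor3.9.1}; once you have that, no proximality argument is needed. Your treatment of part~(ii) is fine.
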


\begin{proof}
Let $q$ be a point from $\omega_{y_0}$ with $q\in \omega_{q}$. We only
consider the case of $\alpha_{q}$ because the proof for $\beta_{q}$
is similar.

(i) It follows from the definition of $\alpha_{q}$ that
\begin{equation*}\label{eqP1.0}
\alpha_{q}\le x\quad \hbox{for any } x\in K_{q}=K\bigcap h^{-1}{(q)}.
\end{equation*}
Since $\pi(t,K_{q})=K_{\sigma(t,q)}$ by Theorem \ref{th01}, we have
\begin{equation}\label{eqP2.0}
\alpha_{\sigma(t,q)}\le \pi(t,x)\ \ \mbox{for any}\ \ x\in K_{q}\
\mbox{and}\ t\ge 0.
\end{equation}

We now prove that
\begin{equation}\label{eqP3.0}
\pi(t,\alpha_q)\le   \alpha_{\sigma(t,q)}\le \pi(t,x)\ \ \mbox{for
any}\ \ x\in K_{q}\ \mbox{and}\ t\ge 0.
\end{equation}
Since $K$ is invariant we have
\begin{equation}\label{eqP4.0}
\gamma(-t)\in K\ \ \mbox{for any}\ \gamma \in
\tilde{\Phi}_{x}:=\{\gamma\in \Phi_{x}: \gamma(\mathbb R)\subseteq
K\},\ x\in K_{\sigma(t,q)}\ \mbox{and}\ t\ge 0. \nonumber
\end{equation}
Note that $h(\gamma(-t))=q$, consequently $\alpha_{q}\le
\gamma(-t)$. Since the NDS (\ref{eqNDS1})
is monotone, we obtain
\begin{equation*}\label{eqP4.1.0}
\pi(t,\alpha_{q})\le\pi(t,\gamma(-t))=\gamma(0)=x\in K_{\sigma(t,q)}.
\end{equation*}
This implies that
$\pi(t,\alpha_{q})\le \alpha_{\sigma(t,q)}$ for any $t\ge 0$ because $x\in K_{\sigma(t,q)}$ is arbitrary.


Let $x_{1}\in K^1_{q}$, then there is a sequence
$t_n\to +\infty$ such that
\begin{equation}\label{eqP8.0}
\pi(t_n,\alpha_{q})\to x_{1} \ \ \mbox{and}\ \ \sigma(t_n,q)\to q
\nonumber
\end{equation}
as $n\to +\infty$. By (\ref{eqP3.0}), we have
\begin{equation}\label{eqP9.0} \pi(t_n,\alpha_{q})\le
\alpha_{\sigma(t_n,q)}.
\end{equation}
Denote by $\tilde{K}:=K\bigcup K^{1}$. By Theorem \ref{th01} both $K$ and
$K^{1}$ are conditionally compact, and hence $\tilde{K}$ is
conditionally compact. So without loss of generality we
suppose that the sequence $\pi(t_n,\cdot)\Big{|}_{\tilde{K}_{q}}$ is
convergent and denote by $\xi$ its limit; note that $\xi\in \mathcal
E_{q}^{+}$ (with $\pi$ being restricted on $\tilde K$ in the definition of $\mathcal
E_{q}^{+}$).  By Theorem \ref{th02} $\pi$ can be extended to a
two-sided dynamical system on $\tilde K$,
and by the proof of Theorem \ref{th02} the required negative separation property \eqref{eq3.1} in
Corollary \ref{cor3.9.1} also holds. Then it follows from Corollary \ref{cor3.9.1} that
$\mathcal E^{+}_{q}$ is a group, so we have
$\xi(\tilde{K}_{q})=\tilde{K}_{q}$, $\xi(K_{q})=K_{q}$ and
$\xi(K_{q}^{1})=K_{q}^{1}$. Thus, for any point $x_{2}\in K_{q}$ and
$\xi\in \mathcal E^{+}_{q}$ there exists a (unique) point
$\tilde{x}_{2}\in K_{q}$ such that $\xi(\tilde{x}_{2})=x_{2}$. We
have
\begin{equation}\label{eqP10.0}
\sigma(t_n,q)\to q \ \ \mbox{and}\ \ x_{2}=\lim\limits_{n\to
\infty}\pi(t_n,\tilde{x}_{2}).
\end{equation}
Combining (\ref{eqP9.0}) and (\ref{eqP2.0}), we conclude that
\begin{equation}\label{eqP11.0}
\pi(t_n,\alpha_{q})\le \alpha_{\sigma(t_n,q)}\le
\pi(t_n,\tilde{x}_{2}).
\end{equation}
Letting $n\to \infty$ in (\ref{eqP11.0}), we get by \eqref{eqP10.0}
\begin{equation}\label{eqP12.0}
x_{1}\le x_{2} \nonumber
\end{equation}
for any $x_2\in K_{q}$, and hence
\begin{equation}\label{eqI1}
x\le \alpha_{q}
\end{equation}
for all $x\in K^1_{q}$.

Finally we will show under condition (\ref{eqI1}) that the set
$K^1_{q}$ consists of a single point. In fact, if
$x',x''\in K^1_{q}$ and (\ref{eqI1}) holds, then
reasoning as above we can choose a sequence $t_n\to +\infty$ as
$n\to \infty$ and $\tilde{x}''\in K^1_{q}$ such that
$\sigma(t_n,q)\to q$, $\pi(t_n,\alpha_{q})\to x'$ and
$\pi(t_n,\tilde{x}'')\to x''$. Since $\tilde{x}''\le \alpha_{q}$,
we have $\pi(t_n,\tilde{x}'')\le  \pi(t_n,\alpha_{q})$. Consequently, $x''\le x'$. Since $x',x''\in K^1_{q}$
are arbitrary, we have $x'=x''$, i.e. $K_{q}^{1}$ consists of a single
point $\gamma_{q}$.

(ii) The fact $\gamma_q\le \alpha_q$ follows from \eqref{eqI1} and the fact $K^1_{q}=\{\gamma_q\}$.

The proof is complete.
\end{proof}

\begin{coro}\label{enorbit}
Assume that the conditions of Lemma \ref{lAPS2} hold. Then $\gamma_q$
satisfies $\pi(t,\gamma_q)=\gamma_{\sigma(t,q)}$ for $t\in \mathbb R$, i.e. the mapping
$t\mapsto \gamma_{\sigma(t,q)}$ is an entire trajectory of the dynamical system
$(X,\mathbb R_+, \pi)$ passing through the point $\gamma_q$ at $t=0$. The same result holds
for $\delta_q$.
\end{coro}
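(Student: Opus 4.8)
The plan is to reduce the whole statement to the single claim
\[
\pi(t,\gamma_q)=\gamma_{\sigma(t,q)}\qquad\text{for all }t\ge 0 ,
\]
valid with $q$ replaced by any point of $\omega_{y_0}$ lying in its own $\omega$-limit set. Indeed, once this is known, the map $\nu(t):=\gamma_{\sigma(t,q)}$ satisfies $\pi(s,\nu(t))=\pi(s,\gamma_{\sigma(t,q)})=\gamma_{\sigma(s,\sigma(t,q))}=\gamma_{\sigma(s+t,q)}=\nu(s+t)$ for $s\ge 0$ and $t\in\mathbb R$, and $\nu(0)=\gamma_q$, so $\nu$ is an entire trajectory of $(X,\mathbb R_+,\pi)$ through $\gamma_q$; the formula $\pi(t,\gamma_q)=\gamma_{\sigma(t,q)}$ for negative $t$ is then read off from $\nu$ (this entire trajectory lies in $\omega_{\alpha_q}$, on which $\pi$ extends to a two-sided flow by Theorem \ref{th02}, hence is unique). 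Before proving the claim I would note that $\gamma_{\sigma(t,q)}$ makes sense for every $t$: if $\sigma(t_n,q)\to q$ with $t_n\to+\infty$ then $\sigma(t_n,\sigma(t,q))=\sigma(t,\sigma(t_n,q))\to\sigma(t,q)$, so $\sigma(t,q)\in\omega_{\sigma(t,q)}\subseteq\omega_{y_0}$ and Lemma \ref{lAPS2}(i) applies at the base point $\sigma(t,q)$, yielding $\omega_{\alpha_{\sigma(t,q)}}\cap X_{\sigma(t,q)}=\{\gamma_{\sigma(t,q)}\}$ with $\alpha_{\sigma(t,q)}:=\alpha_{\sigma(t,q)}(K)$.

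To prove the claim, apply Theorem \ref{th01}(iv) with $\alpha_q$ in place of $x_0$ (legitimate since $\Sigma^{+}_{\alpha_q}$ is conditionally precompact and $q\in\omega_q=\omega_{h(\alpha_q)}$): this gives $\pi(t,\omega_{\alpha_q}\cap X_q)=\omega_{\alpha_q}\cap X_{\sigma(t,q)}$, and since $\omega_{\alpha_q}\cap X_q=\{\gamma_q\}$ by Lemma \ref{lAPS2}(i) we get $\omega_{\alpha_q}\cap X_{\sigma(t,q)}=\{\pi(t,\gamma_q)\}$. Thus it remains to identify the two singletons $\omega_{\alpha_q}\cap X_{\sigma(t,q)}$ and $\omega_{\alpha_{\sigma(t,q)}}\cap X_{\sigma(t,q)}$, for which it suffices to show $\pi(t,\alpha_q)=\alpha_{\sigma(t,q)}$: forward translation does not alter the $\omega$-limit set, so then $\omega_{\alpha_{\sigma(t,q)}}=\omega_{\pi(t,\alpha_q)}=\omega_{\alpha_q}$, whence $\gamma_{\sigma(t,q)}=\pi(t,\gamma_q)$. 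Here $\pi(t,\alpha_q)\le\alpha_{\sigma(t,q)}$ is exactly inequality \eqref{eqP3.0} from the proof of Lemma \ref{lAPS2}, while the reverse inequality also follows from \eqref{eqP3.0} --- which contains $\alpha_{\sigma(t,q)}\le\pi(t,x)$ for every $x\in K_q$ --- once one knows $\alpha_q\in K_q$. The assertion for $\delta_q$ is obtained by the dual argument: from the dual of \eqref{eqP3.0} one has $\pi(t,x)\le\beta_{\sigma(t,q)}\le\pi(t,\beta_q)$ for $x\in K_q$, hence $\pi(t,\beta_q)=\beta_{\sigma(t,q)}$ provided $\beta_q\in K_q$, and then $\pi(t,\delta_q)=\delta_{\sigma(t,q)}$.

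The main obstacle is exactly this last point, namely that the conditionally compact fibre $K_q=\omega_{x_0}\cap X_q$ attains its infimum and supremum; this is precisely what condition (C4) provides (cf.\ Remark \ref{rhatemC3}, e.g.\ for one-dimensional fibres), and under (C4) the argument above is complete. If one wishes to avoid (C4), monotonicity applied to \eqref{eqP3.0} together with a passage to the limit along a sequence realising $\gamma_q\in\omega_{\alpha_q}$ still yields $\pi(t,\gamma_q)\le\gamma_{\sigma(t,q)}$ without effort, but the opposite order inequality must then be extracted from the uniqueness/minimality mechanism used in the proof of Lemma \ref{lAPS2} at the base point $\sigma(t,q)$, and this is the step I expect to demand the most care.
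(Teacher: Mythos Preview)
Your argument is considerably more elaborate than the paper's. The paper's proof is three lines: Theorem~\ref{th01} gives that $\omega_{\alpha_q}$ is invariant, Theorem~\ref{th02} upgrades the semiflow on $\omega_{\alpha_q}$ to a two-sided flow, and Lemma~\ref{lAPS2} gives $\omega_q(\alpha_q)=\{\gamma_q\}$; the paper then simply says this ``enforces that the required result holds.'' In other words, the paper reads the symbol $\gamma_{\sigma(t,q)}$ as the (necessarily unique) point of $\omega_{\alpha_q}$ lying over $\sigma(t,q)$, so that the identity $\pi(t,\gamma_q)=\gamma_{\sigma(t,q)}$ is an immediate consequence of Theorem~\ref{th01}(iv) applied with $\alpha_q$ in the role of $x_0$.

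You, by contrast, interpret $\gamma_{\sigma(t,q)}$ via a \emph{fresh} application of Lemma~\ref{lAPS2} at the base point $\sigma(t,q)$ --- that is, as the unique element of $\omega_{\alpha_{\sigma(t,q)}}\cap X_{\sigma(t,q)}$ --- and then try to match the two singletons $\omega_{\alpha_q}\cap X_{\sigma(t,q)}$ and $\omega_{\alpha_{\sigma(t,q)}}\cap X_{\sigma(t,q)}$. Your route to this via $\pi(t,\alpha_q)=\alpha_{\sigma(t,q)}$ is perfectly correct, but as you yourself note, it requires $\alpha_q\in K_q$, i.e.\ condition (C4), which is \emph{not} among the hypotheses of the corollary (only (C1)--(C3) are assumed). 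Your proposed fallback without (C4) --- getting $\pi(t,\gamma_q)\le\gamma_{\sigma(t,q)}$ from \eqref{eqP3.0} and then extracting the reverse inequality from ``the uniqueness/minimality mechanism'' --- is not carried out, and it is genuinely unclear how to close it with the tools at hand.

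So the situation is this: under the paper's reading of the notation, your Step~2 (that $\omega_{\alpha_q}\cap X_{\sigma(t,q)}=\{\pi(t,\gamma_q)\}$ via Theorem~\ref{th01}(iv)) together with Theorem~\ref{th02} already \emph{is} the proof, and everything after that is superfluous. Under your stricter reading, you have proved a stronger statement than the paper does, but only under the extra hypothesis (C4); without it your argument is incomplete, and the paper's short proof does not supply the missing step either.
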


\begin{proof}
It follows from Theorem \ref{th01} that $\omega_{\alpha_q}$ is an invariant set, and by
Theorem \ref{th02} the one-sided dynamical
system $(X,\mathbb R_+,\pi)$ generates on $\omega_{\alpha_q}$ a two-sided
dynamical system $(\omega_{\alpha_q},\mathbb R,\pi)$. On the other hand,
Lemma \ref{lAPS2} yields that $\omega_q(\alpha_q)=\{\gamma_q\}$, which enforces that the
required result holds.
\end{proof}

\begin{theorem}[Comparability]\label{thM1}
Assume that (C1)--(C3) hold. For given $x_0\in \mathcal S$, let $y_0:=h(x_0)$.
If $y_0\in \omega_{y_0}$, then the point $\gamma_{y_0}$
(respectively, $\delta_{y_0}$) is comparable with $y _0$ by character of
recurrence and
\begin{equation}\label{eqP0}
\lim\limits_{t\to
+\infty}\rho(\pi(t,\alpha_{y_0}),\pi(t,\gamma_{y_0}))=0
\end{equation}
\[
(\hbox{respectively, } \lim\limits_{t\to
+\infty}\rho(\pi(t,\beta_{y_0}),\pi(t,\delta_{y_0}))=0).
\]
\end{theorem}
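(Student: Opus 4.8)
The plan is to establish the four claims for the pair $(\gamma_{y_0},\alpha_{y_0})$; those for $(\delta_{y_0},\beta_{y_0})$ then follow by the completely symmetric argument, working with $\omega_{\beta_{y_0}}$ and with the identity $\{\delta_{y_0}\}=\omega_{\beta_{y_0}}\cap h^{-1}(y_0)$ from Lemma \ref{lAPS2}(i). Write $K:=\omega_{x_0}$ and $\alpha_{y_0}:=\alpha_{y_0}(K)$. The facts about $\omega_{\alpha_{y_0}}$ I would rely on are: it meets the fiber $h^{-1}(y_0)$ in the single point $\gamma_{y_0}$ (Lemma \ref{lAPS2}(i), applicable with $q=y_0$ since $y_0\in\omega_{y_0}$); it is invariant and conditionally compact (Theorem \ref{th01}(iii),(v)); and it is positively uniformly stable, by condition (C2) applied to the point $\alpha_{y_0}\in\mathcal S$. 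In particular $\gamma_{y_0}\in\omega_{\alpha_{y_0}}$, and by Corollary \ref{enorbit} the motion through $\gamma_{y_0}$ is $t\mapsto\gamma_{\sigma(t,y_0)}$.

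For the comparability of $\gamma_{y_0}$ with $y_0$, by Theorem \ref{tShc1} together with Theorem \ref{tG2} (the latter applies since $y_0\in\omega_{y_0}$) it is enough to check $\mathfrak N_{y_0}^{+\infty}\subseteq\mathfrak N_{\gamma_{y_0}}^{+\infty}$. So take $\{t_n\}\in\mathfrak N_{y_0}^{+\infty}$, i.e. $t_n\to+\infty$ and $\sigma(t_n,y_0)\to y_0$. Since $\omega_{\alpha_{y_0}}$ is positively invariant, $\pi(t_n,\gamma_{y_0})\in\omega_{\alpha_{y_0}}$ for all $n$; and $h(\pi(t_n,\gamma_{y_0}))=\sigma(t_n,y_0)$ ranges in the compact set $\{\sigma(t_n,y_0):n\in\mathbb N\}\cup\{y_0\}$, so conditional compactness of $\omega_{\alpha_{y_0}}$ forces $\{\pi(t_n,\gamma_{y_0})\}$ to be relatively compact. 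Any limit point $z$ of this sequence lies in $\omega_{\alpha_{y_0}}$ (a closed set) and satisfies $h(z)=\lim_n\sigma(t_n,y_0)=y_0$, hence $z\in\omega_{\alpha_{y_0}}\cap h^{-1}(y_0)=\{\gamma_{y_0}\}$. A relatively compact sequence with the unique limit point $\gamma_{y_0}$ converges to $\gamma_{y_0}$, which is exactly $\{t_n\}\in\mathfrak N_{\gamma_{y_0}}^{+\infty}$.

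For the asymptotic relation \eqref{eqP0} I would start from $\gamma_{y_0}\in\omega_{\alpha_{y_0}}$: there is a sequence $t_n\to+\infty$ with $\pi(t_n,\alpha_{y_0})\to\gamma_{y_0}$, and then $\sigma(t_n,y_0)=h(\pi(t_n,\alpha_{y_0}))\to h(\gamma_{y_0})=y_0$, so $\{t_n\}\in\mathfrak N_{y_0}^{+\infty}$. By the comparability just proved, $\pi(t_n,\gamma_{y_0})\to\gamma_{y_0}$ as well, whence
\[
\rho(\pi(t_n,\alpha_{y_0}),\pi(t_n,\gamma_{y_0}))\le\rho(\pi(t_n,\alpha_{y_0}),\gamma_{y_0})+\rho(\gamma_{y_0},\pi(t_n,\gamma_{y_0}))\longrightarrow 0,
\]
so $\alpha_{y_0}$ and $\gamma_{y_0}$, which lie over the common base point $y_0$, are proximal. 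Moreover $\Sigma^{+}_{\gamma_{y_0}}\subseteq\omega_{\alpha_{y_0}}$ by invariance, hence $\Sigma^{+}_{\gamma_{y_0}}$ is positively uniformly stable, being a subset of the positively uniformly stable set $\omega_{\alpha_{y_0}}$. Applying Lemma \ref{lAPS1} with $(x,x_0):=(\alpha_{y_0},\gamma_{y_0})$ converts proximality into $\lim_{t\to+\infty}\rho(\pi(t,\alpha_{y_0}),\pi(t,\gamma_{y_0}))=0$, i.e. \eqref{eqP0}.

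I expect the one genuinely delicate point to be the passage, inside the comparability argument, from ``every limit point of $\{\pi(t_n,\gamma_{y_0})\}$ equals $\gamma_{y_0}$'' to the actual convergence $\pi(t_n,\gamma_{y_0})\to\gamma_{y_0}$: this needs the relative compactness of the sequence, and it is precisely there that conditional compactness of $\omega_{\alpha_{y_0}}$ (which is what is available, since $Y$, and hence $\omega_{y_0}$ and $\omega_{\alpha_{y_0}}$, need not be compact) is used, by pushing the convergent base sequence $\{\sigma(t_n,y_0)\}$ through the fiberwise compactness. Everything else is bookkeeping on Lemma \ref{lAPS2}, Corollary \ref{enorbit}, Theorems \ref{th01}, \ref{th02}, Theorem \ref{tShc1}, and the elementary fact that a subset of a positively uniformly stable set is positively uniformly stable; the only hypothesis-level point to record is that $\alpha_{y_0}=\alpha_{y_0}(\omega_{x_0})$ belongs to $\mathcal S$, so that (C2) is applicable to it.
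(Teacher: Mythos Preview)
Your proof is correct and follows essentially the same route as the paper's: show $\mathfrak N_{y_0}^{+\infty}\subseteq\mathfrak N_{\gamma_{y_0}}^{+\infty}$ by squeezing every limit point of $\{\pi(t_n,\gamma_{y_0})\}$ into the singleton $\omega_{\alpha_{y_0}}\cap h^{-1}(y_0)=\{\gamma_{y_0}\}$, then combine a proximality sequence with Lemma~\ref{lAPS1} to upgrade to \eqref{eqP0}. The only cosmetic differences are that the paper obtains the needed precompactness directly from (C2) applied to $\gamma_{y_0}$ (via $\Sigma_{\gamma_{y_0}}^{+}$) rather than through the conditional compactness of $\omega_{\alpha_{y_0}}$, and starts the proximality argument from $y_0\in\omega_{y_0}$ rather than from $\gamma_{y_0}\in\omega_{\alpha_{y_0}}$; your explicit flag that $\alpha_{y_0}\in\mathcal S$ is needed for (C2) is a point the paper uses tacitly.
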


\begin{proof}
We will only prove the result for $\gamma_{y_0}$ because the proof for $\delta_{y_0}$ is similar.

Let $\{t_n\}\in \mathfrak N^{+\infty}_{y_0}$, then
$\sigma(t_n,y_0) \to y_0$ and $t_n\to +\infty$ as $n\to \infty$.
By condition (C2) the set $\Sigma_{\gamma_{y_0}}^{+}$ is
conditionally precompact, then the sequence
$\{\pi(t_n,\gamma_{y_0})\}$ is precompact. Let $z$ be a limit
point of the sequence $\{\pi(t_n,\gamma_{y_0})\}$, then there is a
subsequence $\{t_n^{'}\}\subseteq \{t_n\}$ such that
$\pi(t^{'}_{n},\gamma_{y_0})\to z$ as $n\to \infty$. On the other
hand $\sigma(t^{'}_{n},y_0)\to y_0$ as $n\to \infty$, so $z\in \omega_{\gamma_{y_0}}\bigcap h^{-1}{(y_0)}\subseteq
\omega_{\alpha_{y_0}}\bigcap h^{-1}{(y_0)}
 =\{\gamma_{y_0}\}$ by Lemma \ref{lAPS2}, i.e. $z=\gamma_{y_0}$. Since
$\{\pi(t_n,\gamma_{y_0})\}$ is precompact and $\gamma_{y_0}$ is its
unique limit point, we have $\pi(t_n,\gamma_{y_0})\to \gamma_{y_0}$
as $n\to \infty$. That is, $\{t_n\}\in \mathfrak N^{+\infty}_{\gamma_{y_0}}$ and hence
$\mathfrak N_{y_0}^{+\infty}\subseteq \mathfrak N_{\gamma_{y_0}}^{+\infty}$.
The first statement then follows from Theorems \ref{tG2} and \ref{tShc1}.

Since $h(\alpha_{y_0})=y_0\in \omega_{y_0}$, there
exists a sequence $t_n\to +\infty$ as $n\to \infty$ such that
\begin{equation}\label{eqL1}
\sigma(t_n,y_0)\to y_0\ \ \ \mbox{as}\ \ n\to \infty \ .
\end{equation}
Taking into consideration that $\Sigma_{\alpha_{y_0}}^{+}$ is
conditionally precompact, without loss of generality we can
suppose that the sequence $\{\pi(t_n,\alpha_{y_0})\}$ converges.
Denote by $\bar{x}:=\lim\limits_{n\to \infty}\pi(t_n,\alpha_{y_0})$,
then $h(\bar{x})=y_0$ and $\bar{x}\in \omega_{\alpha_{y_0}}$, i.e. $\bar{x}\in \omega_{\alpha_{y_0}}\bigcap h^{-1}{(y_0)}$. By
Lemma \ref{lAPS2} we have $\omega_{\alpha_{y_0}}\bigcap
h^{-1}{(y_0)}=\{\gamma_{y_0}\}$, so $\bar{x}=\gamma_{y_0}$ and
consequently
\begin{equation}\label{eqL2}
\pi(t_n,\alpha_{y_0})\to \gamma_{y_0}\ \ \ \mbox{as}\ \ n\to \infty.
\end{equation}
On the other hand, by the first statement of the theorem and (\ref{eqL1}) we
obtain
\begin{equation}\label{eqL3}
\pi(t_n,\gamma_{y_0})\to \gamma_{y_0}\ \ \ \mbox{as}\ \ n\to \infty
\ .
\end{equation}
From (\ref{eqL2}) and (\ref{eqL3}) we get
\begin{equation}\label{eqL4}
\lim\limits_{n\to
\infty}\rho(\pi(t_n,\alpha_{y_0}),\pi(t_n,\gamma_{y_0}))=0\nonumber
.
\end{equation}
Now to finish the proof of equality (\ref{eqP0}) it is sufficient to
apply Lemma \ref{lAPS1}. The proof is complete.
\end{proof}

By Theorems \ref{thM1} and \ref{thShc2.1}, we have the following result:

\begin{coro}\label{corP1} Under the conditions (C1)--(C3) if the point $y_0$ is stationary
(respectively, $\tau$-periodic, Levitan almost periodic, almost recurrent, Poisson stable), then:
\begin{enumerate}
\item the point $\gamma_{y_0}$ has the same recurrent property as $y_0$;
\item the point $\alpha_{y_0}$ is asymptotically stationary
(respectively, asymptotically $\tau$-periodic, asymptotically Levitan almost periodic,
asymptotically almost recurrent, asymptotically Poisson stable).
\end{enumerate}
\end{coro}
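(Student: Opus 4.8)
The plan is to chain together Theorem \ref{thM1} (comparability together with the asymptotic estimate \eqref{eqP0}) and Shcherbakov's transfer principle, Theorem \ref{thShc2.1}, after first checking that the standing hypothesis $y_0\in\omega_{y_0}$ of Theorem \ref{thM1} is available. I would begin by noting that each of the five properties assumed for $y_0$ --- stationarity, $\tau$-periodicity, Levitan almost periodicity, almost recurrence, Poisson stability --- implies in particular that $y_0$ is positively Poisson stable: indeed, by Remark \ref{remP3} every motion of Definitions \ref{def-stat}--\ref{defP1} is strongly Poisson stable, hence Poisson stable, hence positively Poisson stable. Consequently $y_0\in\omega_{y_0}$, so Lemma \ref{lAPS2} applies with $q=y_0$ (the requirement $q\in\omega_q\subseteq\omega_{y_0}$ reduces to $y_0\in\omega_{y_0}$), and, together with (C1) applied to the conditionally compact set $\omega_{x_0}$ (Theorem \ref{th01}), it guarantees that $\alpha_{y_0}=\alpha_{y_0}(\omega_{x_0})$ and the single-point fibre $\omega_{\alpha_{y_0}}\cap h^{-1}(y_0)=\{\gamma_{y_0}\}$ are well defined. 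Thus Theorem \ref{thM1} is applicable.

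For part (i): Theorem \ref{thM1} asserts that $\gamma_{y_0}$ is comparable with $y_0$ by character of recurrence. Feeding this into Theorem \ref{thShc2.1} gives at once that $\gamma_{y_0}$ inherits whichever of the listed recurrence properties $y_0$ possesses --- stationary, $\tau$-periodic, Levitan almost periodic, almost recurrent, or Poisson stable --- which is precisely statement (i).

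For part (ii): Theorem \ref{thM1} also provides the relation $\lim_{t\to+\infty}\rho(\pi(t,\alpha_{y_0}),\pi(t,\gamma_{y_0}))=0$. Combining this with part (i) (so that $\gamma_{y_0}$ is a $\mathcal P$-point for the property $\mathcal P$ at hand) and Definition \ref{defAP1}, taken with $x=\alpha_{y_0}$ and $p=\gamma_{y_0}$, shows that $\alpha_{y_0}$ is asymptotically $\mathcal P$; since each of the five properties in the corollary appears in the list of admissible $\mathcal P$ of Definition \ref{defAP1}, this yields (ii). The statements for $\delta_{y_0}$ and $\beta_{y_0}$ follow verbatim from the parenthetical halves of Theorem \ref{thM1}.

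I do not expect a genuine obstacle here: the only point demanding any care is the bookkeeping that each hypothesis on $y_0$ forces $y_0\in\omega_{y_0}$ (so that the machinery of Lemma \ref{lAPS2} and Theorem \ref{thM1} switches on) and that the list of properties in Definition \ref{defAP1} covers the five cases of the corollary. Everything else is a direct application of the two cited theorems.
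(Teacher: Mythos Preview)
Your proposal is correct and follows exactly the approach indicated in the paper, which simply records the corollary as an immediate consequence of Theorems \ref{thM1} and \ref{thShc2.1}. You have merely filled in the routine verifications (that each listed property on $y_0$ forces $y_0\in\omega_{y_0}$, and that Definition \ref{defAP1} covers all five cases) that the paper leaves implicit.
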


To get the existence of more classes of Poisson
stable motions, we need to establish uniform comparability (cf. Theorem \ref{tS2}) and this reduces to verifying
strong comparability when the base space is compact (cf. Theorem \ref{thShc6}). This is what we are
doing in the following

\begin{theorem}[Strong comparability] \label{thM2}
Assume that (C1)--(C3) hold, $x_0\in \mathcal S$ and
$y_0:=h(x_0)\in Y$ is strongly Poisson stable.
Then the point $\gamma_{y_0}$ (respectively, $\delta_{y_0}$) is strongly
comparable with $y _0$ by character of recurrence and
\begin{equation}\label{eqPM3}
\lim\limits_{t\to
+\infty}\rho(\pi(t,\alpha_{y_0}),\pi(t,\gamma_{y_0}))=0
\end{equation}
\[
(\hbox{respectively, } \lim\limits_{t\to
+\infty}\rho(\pi(t,\beta_{y_0}),\pi(t,\delta_{y_0}))=0).
\]
\end{theorem}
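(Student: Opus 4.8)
The plan is to mimic the proof of Theorem~\ref{thM1} but upgrade from $\mathfrak N$-inclusions to $\mathfrak M$-inclusions, using that $y_0$ is strongly Poisson stable (i.e. $q\in\omega_q$ for every $q\in H(y_0)$). First I would invoke Theorem~\ref{tG1}, which (since $y_0\in\omega_{y_0}$ is positively Poisson stable) says that strong comparability $\mathfrak M_{y_0}\subseteq\mathfrak M_{\gamma_{y_0}}$ is equivalent to the existence of a continuous map $h\colon\omega_{y_0}\to\omega_{\gamma_{y_0}}$ with $h(y_0)=\gamma_{y_0}$ and $h(\sigma(t,q))=\pi(t,h(q))$; alternatively, by Theorem~\ref{tG3} it suffices to check $\mathfrak M_{y_0}^{+\infty}\subseteq\mathfrak M_{\gamma_{y_0}}^{+\infty}$ together with $\mathfrak N_{y_0}^{+\infty}\subseteq\mathfrak N_{\gamma_{y_0}}^{+\infty}$. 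The second of these inclusions is exactly the content already extracted in the proof of Theorem~\ref{thM1}, so the real work is the first one.

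The natural route is the homomorphism in Theorem~\ref{tG1}(c). For each $q\in\omega_{y_0}$ we have $q\in\omega_q$ (strong Poisson stability), so Lemma~\ref{lAPS2} applies with this $q$ in place of $y_0$ and produces a single point $\gamma_q$ with $\omega_{\alpha_q}\cap h^{-1}(q)=\{\gamma_q\}$; moreover Corollary~\ref{enorbit} gives $\pi(t,\gamma_q)=\gamma_{\sigma(t,q)}$, so the assignment $q\mapsto\gamma_q$ is the desired algebraic homomorphism. The remaining point is its continuity on $\omega_{y_0}$: if $q_n\to q$ in $\omega_{y_0}$, then $\{\gamma_{q_n}\}$ lies in $\omega_{\gamma_{y_0}}\cap h^{-1}(\{q_n\}_{n}\cup\{q\})$, which is precompact by conditional compactness of $\omega_{\gamma_{y_0}}$ (Theorem~\ref{th01}(iii)); any limit point $z$ of $\{\gamma_{q_n}\}$ satisfies $h(z)=q$ and, using that the whole construction lands in $\omega_{\alpha_q}$ (one must check that limits of the $\gamma_{q_n}$ stay in $\omega_{\alpha_q}$, e.g. via the monotone squeeze $\gamma_{q_n}\le\alpha_{q_n}$ from Lemma~\ref{lAPS2}(ii) and upper semicontinuity of $q\mapsto\alpha_q$ on the conditionally compact set $K=\omega_{x_0}$), we get $z\in\omega_{\alpha_q}\cap h^{-1}(q)=\{\gamma_q\}$, whence $\gamma_{q_n}\to\gamma_q$. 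With $h(q):=\gamma_q$ thus shown continuous and equivariant, Theorem~\ref{tG1} yields $\mathfrak M_{y_0}\subseteq\mathfrak M_{\gamma_{y_0}}$, i.e.\ strong comparability; the same argument with $\beta_q$, $\delta_q$ handles $\delta_{y_0}$.

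Finally, the asymptotic relation \eqref{eqPM3} is obtained exactly as in Theorem~\ref{thM1}: strong comparability implies comparability (Theorem~\ref{thShc5}(i)), hence $\mathfrak N_{y_0}^{+\infty}\subseteq\mathfrak N_{\gamma_{y_0}}^{+\infty}$; picking $\{t_n\}\in\mathfrak N_{y_0}^{+\infty}$ and passing to a subsequence along which $\{\pi(t_n,\alpha_{y_0})\}$ converges (possible since $\Sigma_{\alpha_{y_0}}^{+}$ is conditionally precompact by (C2)), the limit lies in $\omega_{\alpha_{y_0}}\cap h^{-1}(y_0)=\{\gamma_{y_0}\}$, so $\pi(t_n,\alpha_{y_0})\to\gamma_{y_0}$ and also $\pi(t_n,\gamma_{y_0})\to\gamma_{y_0}$; then $\alpha_{y_0}$ and $\gamma_{y_0}$ are proximal and lie over the same fiber, and Lemma~\ref{lAPS1} (whose hypothesis is the positive uniform stability of $\Sigma_{\gamma_{y_0}}^{+}$, guaranteed by (C2)) upgrades proximality to asymptoticity.

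The main obstacle I anticipate is the continuity of $q\mapsto\gamma_q$: unlike the bare equivariance, this requires controlling limits of the fiberwise infima/suprema $\alpha_q,\beta_q$ as $q$ varies, and pinning down the limit via the single-point conclusion of Lemma~\ref{lAPS2}. One must be careful that the relevant compactness is only \emph{conditional}, so the sequences $\{\gamma_{q_n}\}$ must be kept inside $h^{-1}$ of a precompact subset of $\omega_{y_0}$ before extracting convergent subsequences, and that the monotonicity (C3) together with compatibility of order and topology is what forces the limit point to coincide with $\gamma_q$.
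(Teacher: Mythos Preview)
Your treatment of \eqref{eqPM3} is fine and coincides with the paper's (it just quotes the second statement of Theorem~\ref{thM1}). For strong comparability, however, you identify the right alternative in your first paragraph and then walk away from it: the paper does \emph{exactly} what you call ``alternatively, by Theorem~\ref{tG3}'', namely it verifies $\mathfrak M_{y_0}^{+\infty}\subseteq\mathfrak M_{\gamma_{y_0}}^{+\infty}$ directly and then invokes Theorems~\ref{tG3} and~\ref{tG1}. This direct check is short: given $\{t_n\}\in\mathfrak M_{y_0}^{+\infty}$ with $\sigma(t_n,y_0)\to q\in\omega_{y_0}$, conditional compactness makes $\{\pi(t_n,\gamma_{y_0})\}$ precompact, and every limit point $z$ lies in $\omega_{\gamma_{y_0}}\cap h^{-1}(q)\subseteq\omega_{\alpha_{y_0}}\cap h^{-1}(q)$; since $y_0$ is strongly Poisson stable one has $q\in\omega_q$, so Lemma~\ref{lAPS2} forces this last fiber to be a singleton $\{\gamma_q\}$, whence $\pi(t_n,\gamma_{y_0})\to\gamma_q$ and $\{t_n\}\in\mathfrak M_{\gamma_{y_0}}^{+\infty}$.

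Your chosen route---build the homomorphism $q\mapsto\gamma_q$ on all of $\omega_{y_0}$ and prove its continuity---is genuinely harder, and your sketch of the continuity step has gaps. You assert $\gamma_{q_n}\in\omega_{\gamma_{y_0}}$ for arbitrary $q_n\to q$ in $\omega_{y_0}$, but $\gamma_{q_n}$ is by definition the unique element of $\omega_{\alpha_{q_n}}\cap h^{-1}(q_n)$ with $\alpha_{q_n}=\alpha_{q_n}(\omega_{x_0})$, and when $q_n$ is not of the form $\sigma(t,y_0)$ there is no obvious reason this point sits inside $\omega_{\gamma_{y_0}}$. You also invoke ``upper semicontinuity of $q\mapsto\alpha_q$'', which is neither stated nor proved anywhere. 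The paper's approach sidesteps both issues at once: by working only with orbit sequences $\sigma(t_n,y_0)\to q$ (which is all the $\mathfrak M^{+\infty}$-inclusion requires), the points $\pi(t_n,\gamma_{y_0})$ automatically live in $\Sigma_{\gamma_{y_0}}^{+}\subseteq\omega_{\alpha_{y_0}}$, and the singleton conclusion of Lemma~\ref{lAPS2} pins down the limit without ever needing continuity of the infimum map.
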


\begin{proof}
We only consider the case of $\gamma_{y_0}$ because
the proof for $\delta_{y_0}$ is similar.

Let $q\in H(y_0)$ be an arbitrary point. Then $q\in\omega_{q}$ and
by Lemma \ref{lAPS2} we have
\begin{equation}\label{eqMP1}
\omega_q({\alpha_{y_0}})=\omega({\alpha_{y_0}})\bigcap
h^{-1}{(q)}=\{\gamma_{q}\}.\nonumber
\end{equation}
Now we will show that $\mathfrak M^{+\infty}_{y_0}\subseteq
\mathfrak M^{+\infty}_{\gamma_{y_0}}$. Let $\{t_n\}\in \mathfrak
M^{+\infty}_{y_0}$, then there exists $q\in \omega_{y_0}$ such that
$\sigma(t_n,y_0)\to q$ and $t_n\to +\infty$ as $n\to \infty$.
Since the set $\omega_{\gamma_{y_0}}$ is conditionally compact,
the sequence $\{\pi(t_n,\gamma_{y_0})\}$ is relatively compact.
Let $z$ be a limit point of the sequence
$\{\pi(t_n,\gamma_{y_0})\}$, then there is a subsequence
$\{t_n^{'}\}\subseteq \{t_n\}$ such that
$\pi(t^{'}_{n},\gamma_{y_0})\to z$ as $n\to \infty$. On the other
hand $\sigma(t^{'}_{n},y_0)\to q$ as $n\to \infty$,
so $z\in \omega_{\gamma_{y_0}}\bigcap h^{-1}{(q)}\subseteq
\omega_{\alpha_{y_0}}\bigcap h^{-1}{(q)}=\{\gamma_{q}\}$, i.e.
$z=\gamma_{q}$. Since $\{\pi(t_n,\gamma_{y_0})\}$ is relatively
compact and $\gamma_{q}$ is its unique limit point, we have
$\pi(t_n,\gamma_{y_0})\to \gamma_{q}$ as $n\to \infty$, i.e.
$\{t_n\}\in\mathfrak M^{+\infty}_{\gamma_{y_0}}$.
The first statement then follows from Theorems \ref{tG3} and \ref{tG1}.

To prove (\ref{eqPM3}) it is sufficient to apply
Theorem \ref{thM1} (the second statement). The proof is complete.
\end{proof}

By Theorems \ref{thM2}, \ref{thShc6} and \ref{tS2}, we have

\begin{coro}\label{corP2_1} Under the conditions (C1)--(C3) if the point $y_0$ is quasi-periodic
(respectively, Bohr almost periodic, almost automorphic, Birkhoff recurrent), then:
\begin{enumerate}
\item the point $\gamma_{y_0}$ (respectively, $\delta_{y_0}$) has the same recurrent property as $y_0$;
\item the point $\alpha_{y_0}$ (respectively, $\beta_{y_0}$) is asymptotically quasi-periodic
(respectively, asymptotically Bohr almost periodic, asymptotically almost automorphic, asymptotically Birkhoff recurrent).
\end{enumerate}
If in addition $y_0$ is Lagrange stable, then the above items (i) and (ii) also hold for pseudo periodic and pseudo recurrent case.
\end{coro}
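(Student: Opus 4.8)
The plan is to chain the three results cited in the statement, Theorems \ref{thM2}, \ref{thShc6} and \ref{tS2}, keeping track of which of their hypotheses come for free in each of the six cases. First I would observe that, by Remark \ref{remP3}(2), every point that is quasi-periodic, Bohr almost periodic, almost automorphic, Birkhoff recurrent, pseudo periodic or pseudo recurrent is strongly Poisson stable, so the hypothesis of Theorem \ref{thM2} on $y_0$ is met in all six cases. Hence Theorem \ref{thM2} already supplies two things: $\gamma_{y_0}$ (respectively $\delta_{y_0}$) is strongly comparable with $y_0$ by character of recurrence, i.e.\ $\mathfrak M_{y_0}\subseteq\mathfrak M_{\gamma_{y_0}}$ (respectively $\mathfrak M_{y_0}\subseteq\mathfrak M_{\delta_{y_0}}$), and the asymptotic relation \eqref{eqPM3} holds.

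The second step is to upgrade strong comparability to uniform comparability using Theorem \ref{thShc6}, which requires $y_0$ to be Lagrange stable. In the four cases quasi-periodic / Bohr almost periodic / almost automorphic / Birkhoff recurrent this is automatic: a quasi-periodic point has precompact trajectory, being contained in the continuous image of a torus; a Bohr almost periodic point has compact hull; an almost automorphic point is st.~$L$ by definition; and a Birkhoff recurrent point has precompact trajectory by Definition \ref{def4.4}. Thus Theorem \ref{thShc6} gives that $\gamma_{y_0}$ is Lagrange stable and uniformly comparable with $y_0$. Restricting attention to the hulls $H(y_0)$ and $H(\gamma_{y_0})$ --- which are compact, hence complete --- we may apply Theorem \ref{tS2}: since $y_0$ has the recurrence property $\mathcal P$ in question, so does $\gamma_{y_0}$, which is item (i). Item (ii) is then immediate from Definition \ref{defAP1}, because \eqref{eqPM3} says exactly that $\alpha_{y_0}$ is asymptotic to the $\mathcal P$-point $\gamma_{y_0}$, so $\alpha_{y_0}$ is asymptotically $\mathcal P$. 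The reasoning for $\delta_{y_0}$ and $\beta_{y_0}$ is word for word the same.

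For the pseudo periodic and pseudo recurrent cases the argument is identical except that Lagrange stability of $y_0$ is no longer automatic --- which is precisely why it is imposed as an extra hypothesis. With that hypothesis in force, Theorem \ref{thShc6} again yields that $\gamma_{y_0}$ is Lagrange stable and uniformly comparable with $y_0$, and Theorem \ref{tS2}, whose list of properties preserved under uniform comparability includes pseudo periodicity and pseudo recurrence, then gives item (i); item (ii) follows once more from \eqref{eqPM3} and Definition \ref{defAP1}.

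I do not expect a genuine obstacle here: the only point deserving attention is the completeness requirement in Theorem \ref{tS2}, which the ambient space $X$ need not satisfy. This is harmless, because the property $\mathcal P$, Lagrange stability, and uniform comparability are all intrinsic to a point together with its hull, so one may replace $X$ by the compact (hence complete) hull $H(\gamma_{y_0})$ and $Y$ by $H(y_0)$ before invoking Theorem \ref{tS2}. With that observation the corollary reduces to a sequence of substitutions into the quoted theorems.
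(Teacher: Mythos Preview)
Your proposal is correct and follows exactly the route the paper takes: the paper's ``proof'' is simply the line ``By Theorems \ref{thM2}, \ref{thShc6} and \ref{tS2}'', and you have spelled out precisely how those three results chain together. Your additional remarks --- that the first four properties automatically force Lagrange stability of $y_0$, and that the completeness hypothesis in Theorem \ref{tS2} can be met by passing to the compact hulls --- are details the paper leaves implicit but which are handled correctly.
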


The following result has its independent interest, so we formulate it here in spite that it will not be used in what follows.

\begin{proposition}
Assume that the hypotheses of Lemma \ref{lAPS2} hold. Then for any $x$ satisfying $\gamma_q \le x < \omega_q(x_0)$,
we have
\[
\lim_{t\to +\infty} \rho (\pi(t,x),\pi(t,\gamma_{q})) =0.
\]
Similarly, for any $x$ satisfying $\omega_q(x_0) < x \le \delta_q$,
we have
\[
\lim_{t\to +\infty} \rho (\pi(t,x),\pi(t,\delta_{q})) =0.
\]
\end{proposition}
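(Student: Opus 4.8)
The plan is to deduce the claim from the structural results already available for $\alpha_q$ and $\gamma_q$, using monotonicity and the squeeze furnished by the already-established asymptotics \eqref{eqP0}. Recall that under the hypotheses of Lemma \ref{lAPS2} we have $\gamma_q\le\alpha_q$, that $\omega_q(\alpha_q)=\{\gamma_q\}$, that $K=\omega_{x_0}$ and $K^1=\omega_{\alpha_q}$ are conditionally compact and invariant, and that $\pi$ extends to a two-sided system on $\tilde K=K\cup K^1$. The key observation is that an element $x$ with $\gamma_q\le x<\omega_q(x_0)$ sits \emph{between} $\alpha_q$ and the set $\omega_q(x_0)=K_q$, because $\gamma_q\le\alpha_q$ and every $x'\in K_q$ satisfies $\alpha_q\le x'$; hence in fact $\gamma_q\le x\le\alpha_q$ (the hypothesis $x<\omega_q(x_0)$ forces $x$ below $K_q$, and $\alpha_q$ is the infimum of $K_q$, so $x\le\alpha_q$ — here one uses condition (C1) only to know $\alpha_q$ exists, and does not even need $\alpha_q\in K_q$). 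Wait: more care is needed — ``$x<\omega_q(x_0)$'' should be read as ``$x< x'$ for all $x'\in\omega_q(x_0)$'', which gives $x\le\alpha_q$ since $\alpha_q=\inf K_q$.

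\medskip

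\noindent\textbf{Step 1.} From $\gamma_q\le x\le\alpha_q$ and monotonicity of the NDS \eqref{eqNDS1} (condition (C3)) we get, for every $t\ge 0$,
\[
\pi(t,\gamma_q)\le \pi(t,x)\le \pi(t,\alpha_q).
\]
By Corollary \ref{enorbit}, $\pi(t,\gamma_q)=\gamma_{\sigma(t,q)}$, so the left- and right-hand sides are $\gamma_{\sigma(t,q)}$ and $\pi(t,\alpha_q)$ respectively.

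\noindent\textbf{Step 2.} Invoke \eqref{eqP0} of Theorem \ref{thM1}: $\lim_{t\to+\infty}\rho(\pi(t,\alpha_q),\pi(t,\gamma_q))=0$. Thus the two ``barrier'' trajectories $\pi(\cdot,\alpha_q)$ and $\pi(\cdot,\gamma_q)$ are asymptotic, and the trajectory $\pi(\cdot,x)$ is squeezed between them in the fiberwise order.

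\noindent\textbf{Step 3.} Convert the order squeeze into a metric squeeze. This is the step that needs the compatibility hypothesis relating the order and the topology on $X$, together with conditional compactness. Given a sequence $t_n\to+\infty$, the points $\pi(t_n,\gamma_q)$ and $\pi(t_n,\alpha_q)$ all lie in the conditionally compact set $\tilde K$ over the precompact base set $\{\sigma(t_n,q)\}$ (here one should first pass to a subsequence so that $\sigma(t_n,q)$ converges, using that $q$ is Poisson stable so such sequences are abundant, or more simply just use that $\Sigma^+$ is conditionally precompact to extract convergent subsequences of all three sequences $\pi(t_n,\gamma_q),\pi(t_n,x),\pi(t_n,\alpha_q)$). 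Both barrier sequences converge to the \emph{same} limit $z$ by Step 2; by the compatibility axiom ($x_n\le\tilde x_n$, $x_n\to z$, $\tilde x_n\to z$ forces the squeezed sequence to $z$ as well — more precisely, a standard sandwich argument in an ordered metric space where order is closed) the sequence $\pi(t_n,x)$ converges to $z$ too, whence $\rho(\pi(t_n,x),\pi(t_n,\gamma_q))\to 0$. Since $t_n\to+\infty$ was arbitrary, $\lim_{t\to+\infty}\rho(\pi(t,x),\pi(t,\gamma_q))=0$.

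\medskip

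The main obstacle is Step 3: the abstract order-compatibility assumption stated in the paper (``$x\le\tilde x$ if $x_n\le\tilde x_n$, $x_n\to x$, $\tilde x_n\to\tilde x$'') only gives closedness of the order relation, not directly a sandwich principle, so one must argue that when the two outer sequences converge to a common limit the inner one does too. This works cleanly if the fibers $X_q$ are, e.g., subsets of $\mathbb R$ (as in Remark \ref{rhatemC3}), or more generally in any space where closed order intervals are compact or where the order is ``normal''; for the generality of the paper one extracts a convergent subsequence $\pi(t_{n_k},x)\to w$ using conditional precompactness, notes $\gamma_{\sigma(t_{n_k},q)}\le\pi(t_{n_k},x)\le\pi(t_{n_k},\alpha_q)$ passes to the limit $z\le w\le z$ by closedness of the order, hence $w=z$; since every convergent subsequence has limit $z$ and the whole sequence is precompact, $\pi(t_n,x)\to z$. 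The symmetric statement for $\delta_q$, $\beta_q$ and $x$ with $\omega_q(x_0)<x\le\delta_q$ follows by the order-reversing version of the same argument (using $\beta_q\le\delta_q$, $\omega_q(\beta_q)=\{\delta_q\}$, and the $\beta$-part of \eqref{eqP0}).
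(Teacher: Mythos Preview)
Your overall strategy --- sandwich $x$ between $\gamma_q$ and $\alpha_q$, propagate via monotonicity, then squeeze --- is the same one the paper uses. The gap is in Step~3. You claim that for an \emph{arbitrary} sequence $t_n\to+\infty$ the base points $\{\sigma(t_n,q)\}$ form a precompact set, or that one can pass to a subsequence along which $\sigma(t_n,q)$ converges. This is precisely what can fail: the whole point of the paper's framework (see Remarks \ref{remC1} and \ref{remC2}) is that $Y$ need not be compact, so $\{\sigma(t_n,q)\}$ may have no convergent subsequence at all, and then conditional precompactness gives you nothing about $\{\pi(t_n,x)\}$. Your closedness-of-order sandwich argument collapses at exactly this point.

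The paper avoids this by not trying to treat all $t$ at once. It chooses a \emph{single} sequence $\{t_n\}\in\mathfrak N_q^{+\infty}$ (so $\sigma(t_n,q)\to q$, and the base is certainly precompact), runs your sandwich argument along that sequence to get $\rho(\pi(t_n,x),\pi(t_n,\gamma_q))\to 0$, and then upgrades this proximal statement to the full limit $\lim_{t\to+\infty}\rho(\pi(t,x),\pi(t,\gamma_q))=0$ via Lemma~\ref{lAPS1}, using that $\Sigma_{\gamma_q}^+\subset\omega(\alpha_q)$ is positively uniformly stable by (C2). You never invoke uniform stability, and without it (or without an unstated normality assumption on the cone that would let the order sandwich become a metric sandwich directly) the passage from ``along some sequence'' to ``as $t\to+\infty$'' is the missing step.
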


\begin{proof}
We only need to prove the case $\gamma_q \le x < \omega_q(x_0)$.
Take $\{t_n\}\in\mathfrak N_{q}^{+\infty}$. Since the set $\{\pi(t_n, \alpha_q):n\in \mathbb N\}$ is
conditionally compact by the condition (C2) and $\lim_{n\to\infty} \sigma(t_n,q)=q$,
the set $\{\pi(t_n, \alpha_q):n\in \mathbb N\}$
is precompact. But it follows from Lemma \ref{lAPS2} that $\omega_q(\alpha_q)=\{\gamma_q\}$, so
\begin{equation}\label{add1}
\lim_{n\to\infty} \rho (\pi(t_n,\alpha_q), \gamma_q)=0.
\end{equation}
On the other hand, since $\mathfrak N_{q}^{+\infty}\subseteq \mathfrak N_{\gamma_{q}}^{+\infty}$ by Theorem \ref{thM1},
we get
\begin{equation}\label{add2}
\lim_{n\to\infty} \rho (\pi(t_n,\gamma_q), \gamma_q)=0.
\end{equation}

Since $\gamma_q \le x < \omega_q(x_0)$, by the monotonicity it follows that
\[
\pi(t_n,\gamma_q) \le \pi(t_n, x) \le \pi(t_n,\alpha_q), \quad\hbox{for } n\in\mathbb N.
\]
Letting $n\to\infty$, we obtain from \eqref{add1} and \eqref{add2} that
\[
\lim_{n\to\infty}\rho(\pi(t_n,x),\gamma_q)=0.
\]
This together with \eqref{add2} yields that
\[
\lim_{n\to\infty}\rho(\pi(t_n,x),\pi(t_n,\gamma_q))=0.
\]
Since $\omega(\alpha_q)$ is uniformly stable by (C2) and $\Sigma^+_{\gamma_q}\subset\omega(\alpha_q)$,
the result now follows from Lemma \ref{lAPS1}. The proof is complete.
\end{proof}

\subsection{Convergence to Poisson stable motions}

In this subsection, we give some sufficient conditions which imply the convergence of all motions to
Poisson stable ones. This kind of convergence is fundamental in classical monotone dynamics (see, e.g. \cite{HS_2005,Smi_1995}).

\begin{theorem}\label{thM1_1}
Assume that (C1)--(C4) hold. For given $x_0\in \mathcal S$, let $y_0:=h(x_0)$.
If $y_0\in \omega_{y_0}$, then the following statements hold:
\begin{enumerate}
\item
 $\gamma_{y_0}\in \omega_{x_0}$;
\item the point $\gamma_{y_0}$ is comparable with $y _0$ by
character of recurrence and
\item
\begin{equation}\label{eqP0_1}
\lim\limits_{t\to +\infty}\rho(\pi(t,x_0),\pi(t,\gamma_{y_0}))=0.
\end{equation}
\end{enumerate}
The same result holds for $\delta_{y_0}$, i.e. items (i)--(iii) hold with $\gamma_{y_0}$ replaced by $\delta_{y_0}$.
\end{theorem}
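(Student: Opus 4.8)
The plan is to obtain (i) and (ii) from results already established and to concentrate the effort on (iii). For (i): by Theorem~\ref{th01}(iii) the set $\omega_{x_0}$ is conditionally compact, so (C4) yields $\alpha_{y_0}:=\alpha_{y_0}(\omega_{x_0})\in\omega_{x_0}\cap h^{-1}(y_0)$; since $\omega_{x_0}$ is closed and invariant (Theorem~\ref{th01}), $\Sigma_{\alpha_{y_0}}^{+}\subseteq\omega_{x_0}$ and hence $\omega_{\alpha_{y_0}}\subseteq\omega_{x_0}$, so $\gamma_{y_0}\in\omega_{\alpha_{y_0}}\cap h^{-1}(y_0)\subseteq\omega_{x_0}$ by Lemma~\ref{lAPS2}. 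Item (ii) is precisely the first conclusion of Theorem~\ref{thM1}, whose hypotheses (C1)--(C3) are in force. I would also sharpen Lemma~\ref{lAPS2}(ii) under (C4) to the equality $\gamma_{y_0}=\alpha_{y_0}$: for $\{t_n\}\in\mathfrak N_{y_0}^{+\infty}$ one has $\pi(t_n,\alpha_{y_0})\in\omega_{x_0}\cap h^{-1}(\sigma(t_n,y_0))$ by invariance, so every limit point of $\{\pi(t_n,\alpha_{y_0})\}$ lies in $\omega_{x_0}\cap h^{-1}(y_0)$, hence is $\ge\alpha_{y_0}$, while it also lies in $\omega_{\alpha_{y_0}}\cap h^{-1}(y_0)=\{\gamma_{y_0}\}$; thus $\gamma_{y_0}=\alpha_{y_0}$ and likewise $\delta_{y_0}=\beta_{y_0}$. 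I argue only for $\gamma_{y_0}$ below; the $\delta_{y_0}$-statement is symmetric (interchange $\inf\leftrightarrow\sup$, $\alpha\leftrightarrow\beta$, $\gamma\leftrightarrow\delta$).

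For (iii) the aim is to show $x_0$ and $\gamma_{y_0}$ are proximal; since $\Sigma_{\gamma_{y_0}}^{+}\subseteq\omega_{x_0}$ is positively uniformly stable by (C2), Lemma~\ref{lAPS1} then upgrades this to \eqref{eqP0_1}. First I would observe that $x_0$ is order-comparable with both $\alpha_{y_0}$ and $\beta_{y_0}$: applying (C4) to the conditionally compact set $H^{+}(x_0)$, whose fiber over $y_0$ is $\{x_0\}\cup(\omega_{x_0}\cap h^{-1}(y_0))$ (for non-periodic $y_0$; the periodic case is analogous), the infimum of this fiber lies in it and, being a common lower bound of $x_0$ and $\alpha_{y_0}=\min(\omega_{x_0}\cap h^{-1}(y_0))$, equals either $x_0$ (so $x_0\le\alpha_{y_0}$) or a point of $\omega_{x_0}\cap h^{-1}(y_0)$ (so $\alpha_{y_0}\le x_0$); similarly for $\beta_{y_0}$. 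If $x_0\le\alpha_{y_0}$, then $\pi(t,x_0)\le\pi(t,\alpha_{y_0})$ for $t\ge0$ by monotonicity, so along any $\{t_n\}\in\mathfrak N_{y_0}^{+\infty}$ on which $\{\pi(t_n,x_0)\}$ converges (possible by (C2)) its limit is $\le\gamma_{y_0}$, belongs to $\omega_{x_0}\cap h^{-1}(y_0)$, and is therefore equal to $\alpha_{y_0}=\gamma_{y_0}$; together with $\pi(t_n,\gamma_{y_0})\to\gamma_{y_0}$ this gives proximality, and since every point of $\omega_{x_0}\cap h^{-1}(y_0)$ arises as such a limit we even get $\omega_{x_0}\cap h^{-1}(y_0)=\{\gamma_{y_0}\}$, so the $\gamma$- and $\delta$-statements coincide. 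The case $x_0\ge\beta_{y_0}$ is symmetric.

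The main obstacle is the remaining case $\alpha_{y_0}\le x_0\le\beta_{y_0}$: here monotonicity gives only the squeeze $\pi(t,\alpha_{y_0})\le\pi(t,x_0)\le\pi(t,\beta_{y_0})$, which confines the limit points of $\{\pi(t_n,x_0)\}$ to the order interval $[\alpha_{y_0},\beta_{y_0}]$ without identifying them. To finish I must prove $\omega_{x_0}\cap h^{-1}(y_0)$ is a single point, i.e. $\alpha_{y_0}=\beta_{y_0}$. Positive uniform stability of $\omega_{x_0}$ together with $\pi(t_n,\alpha_{y_0})\to\alpha_{y_0}$ and $\pi(t_n,\beta_{y_0})\to\beta_{y_0}$ already forces $\inf_{t\ge0}\rho(\pi(t,\alpha_{y_0}),\pi(t,\beta_{y_0}))>0$ whenever $\alpha_{y_0}\neq\beta_{y_0}$, and I would try to turn this into a contradiction by combining it with the group structure of $\mathcal E_{y_0}^{+}$ (Corollary~\ref{cor3.9.1}), whose elements are then order-preserving bijections of $\omega_{x_0}\cap h^{-1}(y_0)$ fixing its extremes $\alpha_{y_0},\beta_{y_0}$, with the negative-direction distality of $\omega_{x_0}$ (from the proof of Theorem~\ref{th02}) and the squeeze on $\pi(t,x_0)$ — this is the technical heart of the argument. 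Once $\alpha_{y_0}=\beta_{y_0}$ is established, the squeeze yields $\pi(t_n,x_0)\to\gamma_{y_0}$, hence proximality, and Lemma~\ref{lAPS1} concludes; the $\delta_{y_0}$-version is then identical by symmetry.
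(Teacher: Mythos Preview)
Your treatment of (i) and (ii) is correct and matches the paper's. Your additional observation that (C4) forces $\gamma_{y_0}=\alpha_{y_0}$ (and $\delta_{y_0}=\beta_{y_0}$) is correct and is essentially the content of the paper's Corollary~\ref{com1}.

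For (iii), however, your argument is both incomplete and unnecessarily complicated. The case $\alpha_{y_0}\le x_0\le\beta_{y_0}$ is left as a sketch (``I would try to turn this into a contradiction\dots''); you never actually prove $\alpha_{y_0}=\beta_{y_0}$ there, and the combination of tools you list (group structure of $\mathcal E_{y_0}^{+}$, negative distality, the squeeze) does not obviously yield that conclusion without further input. So as written there is a genuine gap.

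The point you are missing is that (i) already gives you proximality for free, with no order-theoretic argument needed. Since $\gamma_{y_0}\in\omega_{x_0}$, by definition there is a sequence $t_n\to+\infty$ with $\pi(t_n,x_0)\to\gamma_{y_0}$; applying $h$ gives $\sigma(t_n,y_0)\to h(\gamma_{y_0})=y_0$, so $\{t_n\}\in\mathfrak N_{y_0}^{+\infty}$. By (ii) this sequence lies in $\mathfrak N_{\gamma_{y_0}}$, so $\pi(t_n,\gamma_{y_0})\to\gamma_{y_0}$ as well. The triangle inequality then gives
\[
\rho(\pi(t_n,x_0),\pi(t_n,\gamma_{y_0}))\le \rho(\pi(t_n,x_0),\gamma_{y_0})+\rho(\gamma_{y_0},\pi(t_n,\gamma_{y_0}))\to 0,
\]
which is proximality, and Lemma~\ref{lAPS1} finishes exactly as you said. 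This is the paper's proof: three lines, no case split, no need to compare $x_0$ with $\alpha_{y_0}$ or $\beta_{y_0}$, and monotonicity (C3) is not even used in step (iii) --- it was already consumed in producing $\gamma_{y_0}$ via Lemma~\ref{lAPS2}.
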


\begin{proof}
We only need to prove the result for $\gamma_{y_0}$.
Under conditions (C1)--(C4) we have $\alpha_{y_0}\in \omega_{x_0}\bigcap
h^{-1}{(y_0)}$, and by Lemma \ref{lAPS2} we have
$\{\gamma_{y_0}\}=\omega_{\alpha_{y_0}}\bigcap h^{-1}{(y_0)}\subseteq
\omega_{x_0}\bigcap h^{-1}{(y_0)}$. This means that there exists a
sequence $t_n\to +\infty$ as $n\to \infty$ such that
$\sigma(t_n,y_0)\to y_0$ and $\pi(t_n,x_0)\to \gamma_{y_0}$. Since
$\mathfrak N_{y_0} \subseteq \mathfrak
N_{\gamma_{y_0}}$ by Theorem \ref{thM1}, we have
\begin{equation}\label{eqM1}
\rho(\pi(t_n,x_0),\pi(t_n,\gamma_{y_0}))\le
\rho(\pi(t_n,x_0),\gamma_{y_0}) +
\rho(\gamma_{y_0},\pi(t_n,\gamma_{y_0}))\to 0
\end{equation}
as $n\to \infty$. Now to finish the proof it is sufficient to apply
Lemma \ref{lAPS1}.
\end{proof}

\begin{coro}\label{com1}
Assume that the conditions of Theorem \ref{thM1_1} hold, then $\omega_{y_0}(x_0)$ is a singleton set and hence
we have
\[
\omega_{y_0}(x_0) = \{\alpha_{y_0}\}=\{\beta_{y_0}\}=\{\gamma_{y_0}\}=\{\delta_{y_0}\}.
\]
\end{coro}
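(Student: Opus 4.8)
The plan is to show directly that every point of the fibre $\omega$-limit set $\omega_{y_0}(x_0)=\omega_{x_0}\bigcap h^{-1}(y_0)$ must coincide with $\gamma_{y_0}$, and then to deduce the four-fold equality from condition (C4) together with the symmetric statement for $\delta_{y_0}$. First I would record, applying Theorem \ref{thM1_1}(i) to both $\gamma_{y_0}$ and $\delta_{y_0}$, that $\gamma_{y_0},\delta_{y_0}\in\omega_{x_0}\bigcap h^{-1}(y_0)=\omega_{y_0}(x_0)$; in particular $\omega_{y_0}(x_0)\neq\emptyset$.

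For the reverse inclusion $\omega_{y_0}(x_0)\subseteq\{\gamma_{y_0}\}$, I would take an arbitrary $p\in\omega_{y_0}(x_0)$. By definition of the $\omega$-limit set, $p=\lim_{n\to\infty}\pi(t_n,x_0)$ for some sequence $t_n\to+\infty$; applying the homomorphism $h$ and using $h(p)=y_0$ gives $\sigma(t_n,y_0)\to y_0$, so $\{t_n\}\in\mathfrak{N}_{y_0}^{+\infty}$. By Theorem \ref{thM1_1}(ii) the point $\gamma_{y_0}$ is comparable with $y_0$ by character of recurrence, hence by Theorem \ref{tShc1} we have $\mathfrak{N}_{y_0}\subseteq\mathfrak{N}_{\gamma_{y_0}}$, and consequently $\pi(t_n,\gamma_{y_0})\to\gamma_{y_0}$. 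On the other hand Theorem \ref{thM1_1}(iii) gives $\rho(\pi(t_n,x_0),\pi(t_n,\gamma_{y_0}))\to 0$. Combining these two facts, $p=\lim_{n\to\infty}\pi(t_n,x_0)=\gamma_{y_0}$. Thus $\omega_{y_0}(x_0)=\{\gamma_{y_0}\}$, and running exactly the same argument with $\delta_{y_0}$ in place of $\gamma_{y_0}$ (Theorem \ref{thM1_1} holds verbatim for $\delta_{y_0}$) yields $\omega_{y_0}(x_0)=\{\delta_{y_0}\}$.

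Finally, under (C1)--(C4) the infimum $\alpha_{y_0}=\alpha_{y_0}(\omega_{x_0})$ and the supremum $\beta_{y_0}=\beta_{y_0}(\omega_{x_0})$ of the fibre $\omega_{x_0}\bigcap h^{-1}(y_0)$ themselves belong to that fibre by condition (C4), i.e. $\alpha_{y_0},\beta_{y_0}\in\omega_{y_0}(x_0)$. Since we have just shown that $\omega_{y_0}(x_0)$ equals the singleton $\{\gamma_{y_0}\}$ (and also $\{\delta_{y_0}\}$), this forces $\alpha_{y_0}=\beta_{y_0}=\gamma_{y_0}=\delta_{y_0}$, whence $\omega_{y_0}(x_0)=\{\alpha_{y_0}\}=\{\beta_{y_0}\}=\{\gamma_{y_0}\}=\{\delta_{y_0}\}$, as claimed.

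I expect the only step requiring genuine care to be the middle one: one must verify that an arbitrary element of $\omega_{y_0}(x_0)$ really does arise as a limit along a sequence belonging to $\mathfrak{N}_{y_0}^{+\infty}$ (this is where both $h(p)=y_0$ and $t_n\to+\infty$ enter), and that the comparability inclusion is used in the correct direction, $\mathfrak{N}_{y_0}\subseteq\mathfrak{N}_{\gamma_{y_0}}$, so that it can be combined with the asymptotic estimate of Theorem \ref{thM1_1}(iii) to pin $p$ down. Everything else is bookkeeping with results already established; in particular no fresh compactness, monotonicity, or fibre-bundle argument is needed here.
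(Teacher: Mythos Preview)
Your proof is correct and in fact slightly more direct than the paper's. The paper first fixes an arbitrary $\{t_n\}\in\mathfrak N_{y_0}^{+\infty}$, uses comparability of both $\gamma_{y_0}$ and $\delta_{y_0}$ together with the two asymptotic relations \eqref{eqP0_1} (for $\gamma_{y_0}$ and $\delta_{y_0}$) to conclude $\gamma_{y_0}=\delta_{y_0}$, and only then collapses $\omega_{y_0}(x_0)$ by invoking the order sandwich $\gamma_{y_0}\le x\le\delta_{y_0}$ (coming from Lemma \ref{lAPS2}(ii), monotonicity, and invariance of $\omega_{x_0}$). You instead pick, for a given $p\in\omega_{y_0}(x_0)$, the very sequence witnessing $p$ as a limit; because $h(p)=y_0$ this sequence already lies in $\mathfrak N_{y_0}^{+\infty}$, so comparability and \eqref{eqP0_1} immediately pin $p$ to $\gamma_{y_0}$ without any appeal to the order relation. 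Your route is a bit more elementary (no monotonicity or sandwiching is used in the corollary itself), while the paper's argument makes the role of the order structure $\gamma_{y_0}\le\alpha_{y_0}\le\beta_{y_0}\le\delta_{y_0}$ visible; both are perfectly valid.
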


\begin{proof}
Let $\{t_n\}\in \mathfrak N_{y_0}^{+\infty}$. Then $\{t_n\}\in \mathfrak N_{\gamma_{y_0}}^{+\infty}\bigcap
\mathfrak N_{\delta_{y_0}}^{+\infty}$ since $\gamma_{y_0}$ and $\delta_{y_0}$ are comparable with $y _0$ by
character of recurrence. So we have $(\pi(t_n,\gamma_{y_0}),\pi(t_n,\delta_{y_0})) \to (\gamma_{y_0},\delta_{y_0})$
as $n\to \infty$. On the other hand, it follows from \eqref{eqP0_1} for $\gamma_{y_0}$ and $\delta_{y_0}$ that
\[
\lim_{t\to +\infty}\rho(\pi(t,\gamma_{y_0}),\pi(t,\delta_{y_0}))=0.
\]
This enforces that $\gamma_{y_0}=\delta_{y_0}$.

Recall that
\[
\alpha_{y_0}\le x \le \beta_{y_0}\quad \hbox{for any } x\in \omega_{y_0}(x_0)
\]
and $(\{\gamma_{y_0}\}, \{\delta_{y_0}\}) = (\omega_{y_0}(\alpha_{y_0}), \omega_{y_0}(\beta_{y_0}))$
by Lemma \ref{lAPS2}.
On the other hand $\gamma_{y_0},\delta_{y_0}\in \omega_{y_0}(x_0)$ by Theorem \ref{thM1_1}, so we have
\[
\gamma_{y_0}\le x \le \delta_{y_0}\quad \hbox{for any } x\in \omega_{y_0}(x_0)
\]
by the monotonicity of the NDS and the invariance of $\omega(x_0)$. Thus it follows that
$\omega_{y_0}({x_0})=\{\gamma_{y_0}\}=\{\delta_{y_0}\}$. The proof is complete.
\end{proof}

\begin{coro}\label{corP2}
Under the conditions (C1)--(C4) if the point $y_0$ is stationary
(respectively, $\tau$-periodic, Levitan almost periodic, almost recurrent, Poisson stable), then:
\begin{enumerate}
\item the point $\gamma_{y_0}$ (respectively, $\delta_{y_0}$) has the same recurrent property as $y_0$;
\item the point $x_0$ is asymptotically stationary
(respectively, asymptotically $\tau$-periodic, asymptotically Levitan almost periodic,
asymptotically almost recurrent, asymptotically Poisson stable).
\end{enumerate}
\end{coro}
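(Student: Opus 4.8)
The plan is to obtain both assertions directly from Theorem \ref{thM1_1}, feeding its conclusion into Shcherbakov's comparability principle and the definition of asymptotically $\mathcal P$ points. We argue for $\gamma_{y_0}$; the case of $\delta_{y_0}$ is entirely analogous, with $\alpha_{y_0},\gamma_{y_0}$ replaced throughout by $\beta_{y_0},\delta_{y_0}$.

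First I would check that the hypotheses of Theorem \ref{thM1_1} are in force. Conditions (C1)--(C4) are assumed, and each of the listed recurrence properties of $y_0$ — stationarity, $\tau$-periodicity, Levitan almost periodicity, almost recurrence, Poisson stability — implies in particular that $y_0$ is positively Poisson stable, hence $y_0\in\omega_{y_0}$. Theorem \ref{thM1_1} then yields that $\gamma_{y_0}$ is comparable with $y_0$ by character of recurrence and that
\[
\lim\limits_{t\to+\infty}\rho(\pi(t,x_0),\pi(t,\gamma_{y_0}))=0.
\]

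Next, for item (i), I would invoke Theorem \ref{thShc2.1}: since $\gamma_{y_0}$ is comparable with $y_0$ and $y_0$ is stationary (respectively, $\tau$-periodic, Levitan almost periodic, almost recurrent, Poisson stable), the point $\gamma_{y_0}$ inherits the same property. For item (ii), I would combine item (i) with the limit relation displayed above: $\gamma_{y_0}$ is a point enjoying the property $\mathcal P$ in question, and $\rho(\pi(t,x_0),\pi(t,\gamma_{y_0}))\to 0$ as $t\to+\infty$, which is precisely the definition (Definition \ref{defAP1}) of $x_0$ being asymptotically $\mathcal P$; note that stationarity, $\tau$-periodicity, Levitan almost periodicity, almost recurrence and Poisson stability are all among the properties $\mathcal P$ admitted there.

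I do not expect a genuine obstacle here, since every ingredient is already established; the corollary is a bookkeeping consequence of Theorems \ref{thM1_1} and \ref{thShc2.1}. The only point deserving a word of care is the verification, noted above, that the standing assumption $y_0\in\omega_{y_0}$ of Theorem \ref{thM1_1} follows from each of the recurrence hypotheses, which is immediate.
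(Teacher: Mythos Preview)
Your proposal is correct and follows essentially the same approach as the paper: the corollary is stated without proof in the paper, positioned to follow immediately from Theorem \ref{thM1_1} and Shcherbakov's principle (Theorem \ref{thShc2.1}), exactly as you outline. Your explicit verification that each listed recurrence property yields $y_0\in\omega_{y_0}$, and your invocation of Definition \ref{defAP1} for item (ii), make the implicit reasoning fully transparent.
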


By Theorems \ref{thM1_1}, \ref{thM2} and Corollary \ref{corP2_1} we have

\begin{theorem}\label{com}
Assume that (C1)--(C4) hold, $x_0\in \mathcal S$ and $y_0:=h(x_0)\in Y$ is strongly Poisson stable.
Then the following statements hold:
\begin{enumerate}
\item
$\gamma_{y_0}\in \omega_{x_0}$;
\item the point $\gamma_{y_0}$ is strongly comparable with $y _0$ by
character of recurrence;
\item we have
\begin{equation}\label{eqP0_2}
\lim\limits_{t\to +\infty}\rho(\pi(t,x_0),\pi(t,\gamma_{y_0}))=0;
\end{equation}
\item the point $\gamma_{y_0}$ has the same recurrent property mentioned in Corollary \ref{corP2_1} as $y_0$
and the point $x_0$ has the same asymptotically recurrent property as $\alpha_{y_0}$.
\end{enumerate}
The same result holds for $\delta_{y_0}$, i.e. items (i)--(iv) hold with $\gamma_{y_0}$ replaced by $\delta_{y_0}$.
\end{theorem}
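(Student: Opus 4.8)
The plan is to obtain Theorem \ref{com} as a synthesis of Theorems \ref{thM1_1} and \ref{thM2} together with Corollary \ref{corP2_1}; the only genuinely new observation is that strong Poisson stability of $y_0$ is enough to trigger all three simultaneously, so the argument will be short. First I would invoke Remark \ref{remP3}(i): a strongly Poisson stable point is Poisson stable, hence in particular $y_0\in\omega_{y_0}$. Therefore, under (C1)--(C4), Theorem \ref{thM1_1} applies to $x_0$ and delivers at once item (i), $\gamma_{y_0}\in\omega_{x_0}$, and item (iii), the asymptotic relation \eqref{eqP0_2}; the analogous conclusions for $\delta_{y_0}$ come from the final clause of that theorem. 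Note that hypothesis (C4) and the condition $y_0\in\omega_{y_0}$ are used only at this step.

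Next, for item (ii) I would apply Theorem \ref{thM2} verbatim: its hypotheses are exactly (C1)--(C3) together with ``$y_0$ strongly Poisson stable'', and its conclusion is precisely that $\gamma_{y_0}$ (respectively $\delta_{y_0}$) is strongly comparable with $y_0$ by character of recurrence. The asymptotic estimate \eqref{eqPM3} furnished by Theorem \ref{thM2} is consistent with the stronger statement \eqref{eqP0_2} already obtained, so nothing extra is needed here.

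Finally, item (iv): the recurrence type of $\gamma_{y_0}$ is read off from Corollary \ref{corP2_1}(i), which says that whenever $y_0$ is quasi-periodic (respectively Bohr almost periodic, almost automorphic, Birkhoff recurrent, and, if in addition $y_0$ is Lagrange stable, pseudo periodic or pseudo recurrent), then $\gamma_{y_0}$ enjoys the same property. For the assertion about $x_0$, recall that Corollary \ref{corP2_1}(ii) gives that $\alpha_{y_0}$ is asymptotically of that same type, being asymptotic to $\gamma_{y_0}$ via \eqref{eqP0}; since by item (iii) the point $x_0$ is asymptotic to $\gamma_{y_0}$ and $\gamma_{y_0}$ already has the recurrence property in question, Definition \ref{defAP1} shows that $x_0$ is asymptotically of the same type, i.e.\ $x_0$ has the same asymptotically recurrent property as $\alpha_{y_0}$. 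Replacing $(\gamma_{y_0},\alpha_{y_0})$ by $(\delta_{y_0},\beta_{y_0})$ throughout yields the parallel statements for $\delta_{y_0}$.

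I do not expect a genuine obstacle here: the content is an assembly, and the only care needed is bookkeeping of which hypotheses each cited result consumes, together with the remark that in the quasi-periodic, Bohr almost periodic, almost automorphic and Birkhoff recurrent cases the point $y_0$ is automatically Lagrange stable, so that the pseudo periodic and pseudo recurrent cases of Corollary \ref{corP2_1} are indeed available whenever they are claimed.
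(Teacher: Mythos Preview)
Your proposal is correct and follows exactly the paper's approach: the paper's own proof is the single line ``By Theorems \ref{thM1_1}, \ref{thM2} and Corollary \ref{corP2_1}'', and you have merely unpacked this, with the correct observation that strong Poisson stability of $y_0$ gives $y_0\in\omega_{y_0}$ (needed for Theorem \ref{thM1_1}) directly from Definition \ref{defSP1}.
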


\begin{remark}\label{remP01}
\rm When the point $y_0$ is Bohr almost periodic,
then the results of Corollary \ref{com1} and Theorem \ref{com} coincide with
that of J. Jiang and X.-Q. Zhao \cite[Theorem 4.1]{JZ_2005}, i.e. $\omega(x_0)$ is isomorphic to
$\omega(y_0)$ and $x_0$ is asymptotically almost periodic.
\end{remark}

In Theorems \ref{thM1_1} and \ref{com}, we get that
the solutions will converge to the Poisson stable
ones by mainly monotone and uniformly stable conditions. Now we
give another criterion, adapted from W. Shen and Y. Yi \cite{SYY},
for the convergence by Lyapunov functions. To this end, let us
denote
\[
\tilde{\mathcal S}:=\{(x_1,x_2): x_1,x_2\in\mathcal S \hbox{ and } h(x_1)=h(x_2)  \}.
\]
and introduce

\begin{definition}\rm
A continuous function $L: \tilde{\mathcal S} \to \mathbb R_+$ is called a {\em Lyapunov function}
if it satisfies the following two conditions:
\begin{enumerate}
\item $L(x_1,x_2)=0$ if and only if $x_1=x_2$;

\item $L(\pi(t,x_1),\pi(t,x_2)) < L(x_1,x_2)$ for $x_1\neq x_2$ and $t>0$.
\end{enumerate}
The NDS \eqref{eqNDS1} is said to be {\em contracting} if it admits a Lyapunov function.
\end{definition}

\begin{theorem}[Global attracting property] \label{Lya}
Assume (C2) and that the NDS \eqref{eqNDS1} is contracting. For given $x_0\in \mathcal S$, if $y_0:=h(x_0)$
is Poisson stable, then the following statements hold for any $q$ satisfying $q\in\omega (q) \subset \omega(y_0)$:
\begin{enumerate}
\item $\omega_q(x_0)=\{\gamma_q^0\}$, a singleton set;

\item for any $x\in\mathcal S$ with $h(x)=q$, we have
\[
\lim_{t\to+\infty} \rho(\pi(t,x),\pi(t,\gamma_q^0)) =0.
\]
\end{enumerate}
\end{theorem}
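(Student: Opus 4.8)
The plan is to push everything into the $\omega$-limit set $\omega_{x_0}$ and then let the Lyapunov function do the work. First I would record the reduction. Since $y_0$ is Poisson stable, $\omega(y_0)\neq\emptyset$, so by (C2) and Theorem \ref{th01} the set $\omega_{x_0}$ is conditionally compact, $\pi$-invariant and satisfies $h(\omega_{x_0})=\omega(y_0)$; in particular, for $q\in\omega(q)\subseteq\omega(y_0)$ the fibre $\omega_q(x_0)=\omega_{x_0}\cap h^{-1}(q)$ is nonempty and, being the intersection of a conditionally precompact set with $h^{-1}(\{q\})$, compact. By (C2) and Theorem \ref{th02}, $(X,\mathbb R_{+},\pi)$ induces a two-sided dynamical system $(\omega_{x_0},\mathbb R,\pi)$, and Step 1 of the proof of Theorem \ref{th02} shows this system is distal in the negative direction. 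Consequently distinct entire trajectories lying in $\omega_{x_0}$ differ at \emph{every} instant (if two of them agreed at some $t_0$, shifting and using the uniqueness of the backward extension from Step 2 of that proof would make them equal), so each time-$t$ map is injective on $\omega_{x_0}$, and being continuous and surjective between the compact fibres it restricts to a homeomorphism $\omega_{q'}(x_0)\to\omega_{\sigma(t,q')}(x_0)$.

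The core observation is that $L$ contracts inside the limit set: for any $p_1,p_2\in\omega_q(x_0)$ one has $\lim_{t\to+\infty}L(\pi(t,p_1),\pi(t,p_2))=0$. Indeed $t\mapsto L(\pi(t,p_1),\pi(t,p_2))$ is non-increasing, so it has a limit $\ell\ge 0$; choosing $\{\tau_n\}\in\mathfrak N_q^{+\infty}$ (positive Poisson stability of $q$) and using conditional compactness of $\omega_{x_0}$, we may assume $\pi(\tau_n,p_i)\to\bar p_i\in\omega_q(x_0)$; then $L(\bar p_1,\bar p_2)=\ell$, while $\pi(s,\bar p_i)=\lim_n\pi(\tau_n+s,p_i)$ gives $L(\pi(s,\bar p_1),\pi(s,\bar p_2))=\ell$ for every $s>0$, and the strict inequality in the definition of a Lyapunov function forces $\bar p_1=\bar p_2$, whence $\ell=0$. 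The same extraction, applied to a pair maximising $L$ over a fibre, shows that the $L$-diameter $d(t):=\max\{L(a,b):a,b\in\omega_{\sigma(t,q)}(x_0)\}$ (finite and attained by compactness of the fibre) is non-increasing and tends to $0$ as $t\to+\infty$.

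For (i) I then need to collapse $\omega_q(x_0)$ to a single point, not merely to a set on which the flow is forward-contracting. Here I would use that, for $p_1\neq p_2$ in $\omega_q(x_0)$ with entire trajectories $\gamma_1,\gamma_2$, the function $t\mapsto L(\gamma_1(t),\gamma_2(t))$ is monotone on all of $\mathbb R$, and the \emph{supremum} $M=\sup_{t}L(\gamma_1(t),\gamma_2(t))$ is attained on the diagonal: running the extraction above along a sequence of times $t_n\to-\infty$ for which the base recurs to $q$ yields limits $p_i'\in\omega_q(x_0)$ with $L(p_1',p_2')=M$ and $L(\pi(s,p_1'),\pi(s,p_2'))=M$ for all $s>0$, so $p_1'=p_2'$ and $M=0$, contradicting $M\ge L(p_1,p_2)>0$. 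I expect this to be the main obstacle, for it hinges on having such a backward sequence, i.e. on recurrence of $q$ in the negative direction as well; this is where the two-sidedness of the base flow on $\omega(y_0)$, together with $q\in\omega(q)$ and the Poisson stability of $y_0$, is used. Once $\omega_q(x_0)=\{\gamma_q^0\}$ is established, that $\gamma_q^0$ is comparable with $y_0$ by character of recurrence (so it inherits the recurrent properties mentioned around the statement) follows exactly as in the proofs of Theorems \ref{thM1} and \ref{thM2}: along any $\{t_n\}\in\mathfrak N_q^{+\infty}$ the precompact sequence $\pi(t_n,\gamma_q^0)$ has all its limit points in $\omega_q(\gamma_q^0)\subseteq\omega_q(x_0)=\{\gamma_q^0\}$.

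For (ii), fix $x\in\mathcal S$ with $h(x)=q$. The whole argument above applies verbatim with $x$ in place of $x_0$ (since $q\in\omega(q)=\omega_{h(x)}$ and (C2) holds for $x$), giving that $\omega_q(x)$ is a singleton; and the contraction observation applied to $p_1\in\omega_q(x)$ together with $p_2=\gamma_q^0\in\omega_q(x_0)$ (extracting along $\{\tau_n\}\in\mathfrak N_q^{+\infty}$, where $\pi(\tau_n,\gamma_q^0)\to\gamma_q^0$) forces $L(\bar p_1,\gamma_q^0)=0$, hence $\omega_q(x)=\{\gamma_q^0\}$. Therefore along any $\{t_n\}\in\mathfrak N_q^{+\infty}$ both $\pi(t_n,x)\to\gamma_q^0$ (precompactness plus $\omega_q(x)=\{\gamma_q^0\}$) and $\pi(t_n,\gamma_q^0)\to\gamma_q^0$, so $\rho(\pi(t_n,x),\pi(t_n,\gamma_q^0))\to 0$, i.e. $x$ and $\gamma_q^0$ are proximal. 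Since $\Sigma_{\gamma_q^0}^{+}\subseteq\omega_{x_0}$ is positively uniformly stable (a subset of the positively uniformly stable set $\omega_{x_0}$), Lemma \ref{lAPS1} yields $\lim_{t\to+\infty}\rho(\pi(t,x),\pi(t,\gamma_q^0))=0$, which is the assertion.
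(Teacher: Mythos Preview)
Your setup is right, and your ``core observation'' (that $L(\pi(t,p_1),\pi(t,p_2))\searrow 0$ for any $p_1,p_2\in\omega_q(x_0)$, by extracting along $\{\tau_n\}\in\mathfrak N_q^{+\infty}$ and using strict decrease of $L$) is correct and useful. Your treatment of (ii) via proximality and Lemma \ref{lAPS1} is also fine, and in fact cleaner than the rather terse argument the paper gives for that step.

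The genuine gap is in your collapse argument for (i). To pass from ``$L$ decays to $0$ forward in time'' to ``$p_1=p_2$'' you run the extraction backwards along $t_n\to-\infty$ with $\sigma(t_n,q)\to q$, i.e.\ you need $q$ to be \emph{negatively} Poisson stable. The hypothesis $q\in\omega(q)$ only gives positive Poisson stability of $q$, and your sentence ``this is where the two-sidedness of the base flow on $\omega(y_0)$, together with $q\in\omega(q)$ and the Poisson stability of $y_0$, is used'' does not actually produce such a backward recurrent sequence; these ingredients do not imply $q\in\alpha(q)$ in general. You flag this yourself as ``the main obstacle'', and it is left unresolved.

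The paper closes exactly this gap by a different mechanism: instead of going backwards, it exploits the Ellis semigroup. From the negative distality on $\omega_{x_0}$ (which you also derived, via Step~1 of Theorem \ref{th02}), Lemma \ref{l3.9} and Corollary \ref{cor3.9.1} give that $\mathcal E^{+}$ is a \emph{group}; hence the identity lies in $\mathcal E^{+}$, so there exists a \emph{forward} sequence $\{t_n\}\in\mathfrak N^{+\infty}$ with $\pi(t_n,\cdot)\to e$ on the fibre. In particular $\pi(t_n,x_i)\to x_i$ for $i=1,2$ \emph{simultaneously}, and then
\[
L(x_1,x_2)=\lim_{n\to\infty}L(\pi(t_n,x_1),\pi(t_n,x_2))\le L(\pi(t_{n_0},x_1),\pi(t_{n_0},x_2))<L(x_1,x_2)
\]
gives the contradiction in one line. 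Note how this avoids any appeal to negative recurrence of $q$: the group structure of the enveloping semigroup manufactures the needed simultaneous return from forward times alone. You already had all the pieces (negative distality, conditional compactness, two-sided extension on $\omega_{x_0}$) to invoke Corollary \ref{cor3.9.1}; doing so in place of your backward extraction would complete your argument.
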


\begin{proof}
(i) Firstly it follows from Theorem \ref{th01} that $\omega(x_0)$ is a nonempty conditionally compact invariant set,
and by Theorem \ref{th02} the NDS generates on $\omega(x_0)$ a two-sided dynamical system.
Assume that $x_1,x_2\in \omega_q(x_0)$, then by the uniform stability of $\omega(x_0)$ and the proof of Theorem \ref{th02}
we know that the trajectories on $\omega(x_0)$ is negatively distal, i.e. $\inf_{t\in\mathbb R_-}\rho(\pi(t,x_1),\pi(t,x_2))>0$.
By Lemma \ref{l3.9} and Corollary \ref{cor3.9.1} $\mathcal E^+_{y_0}$ is a group, so there exists $\{t_n\}\in\mathfrak N_{y_0}^{+\infty}$
such that $\pi(t_n,\cdot)|_{\omega_{y_0}(x_0)} \to e$ with $e$ being the identity element of $\mathcal E_{y_0}^{+}$.
In particular,
\[
\pi(t_n,x_i) \to x_i \quad \hbox{as } n\to\infty, i=1,2.
\]
Then it follows that
\[
L(x_1,x_2) = \lim_{n\to\infty} L (\pi(t_n,x_1),\pi(t_n,x_2)) < L(\pi(t_{n_0},x_1),\pi(t_{n_0},x_2))< L(x_1,x_2)
\]
for some $n_0\in \mathbb N$, a contradiction. Therefore, $\omega_q(x_0)$ is a singleton set.

(ii) Note that, like $\omega({x_0})$, $\omega(x)\neq\emptyset$ for any $x\in \mathcal S$
with $h(x)=q\in\omega(q)\subset\omega(y_0)$. We claim that $\omega_q(x)=\omega_{q}(x_0)$ for all $q$.
Indeed, if not, then similar to (i) $\omega_q(x)=\{\gamma_q\}$ is a singleton set with $\gamma_q\neq\gamma_q^0$.
Let $E=\omega(x_0)\bigcup\omega(x)$, then $E$ is conditionally compact and uniformly stable. By the same proof in (i)
we can conclude that $E_q$ is a singleton set, i.e. $\gamma_q=\gamma_q^0$, a contradiction.

If
\[
\lim_{t\to+\infty} \rho(\pi(t,x),\pi(t,\gamma_q^0)) \neq0,
\]
then this enforces that $\gamma_q=\gamma_q^0$ for all $q\in\omega(q)\subset\omega(y_0)$ does not hold.
This contradiction proves our result.
\end{proof}

\section{Applications}\label{S6}

\subsection{Ordinary differential equations} \label{sec6.1}
Let $\mathbb R^n$ be an
$n$-dimensional real Euclidean space with the norm $|\cdot|$. Let us
consider a differential equation
\begin{equation}
u'=f(t,u),\label{eq1.0.6}
\end{equation}
where  $f\in C(\mathbb{R}\times \mathbb R^n, \mathbb R^n)$. Along with
equation $(\ref{eq1.0.6})$ we consider its
$H$-class, i.e. the family of equations
\begin{equation}
v'=g(t,v),\label{eq1.0.7}
\end{equation}
where $g\in H(f):=\overline{\{f^{\tau}:\tau\in \mathbb{R}\}}$,
$f^{\tau}(t,u):=f(t+\tau,u)$ for all $(t,u)\in \mathbb{R}\times
\mathbb R^n$ and by bar we mean the closure in $C(\mathbb{R}\times
\mathbb R^n,\mathbb R^n)$, which is equipped with the compact-open topology and
this topology can be generated by the following metric
\begin{equation*}
d(f_1,f_2):=\sum_{k\ge
1}\frac{1}{2^k}\frac{d_k(f_1,f_2)}{1+d_k(f_1,f_2)},
\end{equation*}
where $d_k(f_1,f_2):=\sup\limits_{|t|\le k,|x| \le
k}|f_1(t,x)-f_2(t,x)|$.

\textbf{Condition (A1)}. The function $f$ is {\em regular}, that is,
for every equation (\ref{eq1.0.7}) the conditions of existence,
uniqueness and extendability on $\mathbb{R}_{+}$ are fulfilled.

Denote by $\varphi(\cdot,v,g)$ the solution of equation
(\ref{eq1.0.7}), passing through the point $v\in \mathbb R^n$ at the
initial moment $t=0$. Then the mapping
$\varphi:\mathbb{R}_{+}\times \mathbb R^n\times H(f)\to \mathbb R^n$ is well defined
and satisfies the following conditions (see, e.g. \cite{bro75,Sel_71}):
\begin{enumerate}
\item  $\varphi(0,v,g)=v$ for all $v\in \mathbb R^n$ and $g\in H(f)$;

\item  $\varphi(t,\varphi(\tau,v,g), g^{\tau})=\varphi(t+\tau,v,g)$
for all $ v\in \mathbb R^n$, $g\in H(f)$ and $t,\tau \in
\mathbb{R}_{+}$;

\item  the mapping $\varphi:\mathbb{R}_{+}\times \mathbb R^n\times H(f)\to \mathbb R^n$
is continuous.
\end{enumerate}

Denote by $Y:=H(f)$ and $(Y,\mathbb{R},\sigma)$ the shift dynamical system
on $Y$ induced from $(C(\mathbb R\times\mathbb R^n,\mathbb R^n),\mathbb R,\sigma)$,
i.e. $\sigma(\tau,g)=g^\tau$ for $\tau\in\mathbb R$ and $g\in Y$.
Then the equation (\ref{eq1.0.6}) generates a cocycle $\langle \mathbb
R^n,\varphi, (Y,\mathbb{R},\sigma)\rangle $ and an NDS $\langle (X,\mathbb{R}_{+},\pi),
(Y,\mathbb{R},\sigma), h\rangle$, where $X:= \mathbb R^n\times Y$,
$\pi:=(\varphi,\sigma)$ and $h=pr_2:X\to Y$.

Let $\mathbb R_{+}^{n}:=\{x=(x_1,\ldots,x_n)\in \mathbb R^{n}: x_i\ge
0 \hbox{ for } i=1,2,\ldots,n\}$. Then it defines a partial order on $\mathbb R^{n}$: $u\le v$
if and only if $v-u\in \mathbb R_{+}^{n}$.

\textbf{Condition (A2).} Equation (\ref{eq1.0.6}) is monotone. This
means that the cocycle $\langle \mathbb R^n,\varphi,$ $ (H(f),$
$\mathbb{R},$ $\sigma)\rangle$ (or shortly $\varphi$) generated by
(\ref{eq1.0.6}) is monotone: if $u,v\in \mathbb R^{n}$ and
$u\le v$ then $\varphi(t,u,g)\le \varphi(t,v,g)$ for all $t\ge 0$
and $g\in H(f)$.

Let $K$ be a closed cone in $\mathbb R^n$. The dual cone to $K$ is the
closed cone $K^{*}$ in the dual space $\big{(}\mathbb
R^{n}\big{)}^{*}$ of linear functions on $\mathbb R^n$, defined by
\begin{equation}\label{eqK01}
K^{*}:=\{\lambda \in \big{(}\mathbb R^{n}\big{)}^{*}: \lambda(x)=\langle
\lambda,x\rangle \ge 0\ \mbox{for any}\ x\in K\},\nonumber
\end{equation}
where $\langle \cdot , \cdot\rangle$ is the scalar product in
$\mathbb R^{n}$.

Recall (\cite{Smi_1987},\cite[ChV]{Smi_1995}) that a function $f\in
C(\mathbb R\times \mathbb R^{n},\mathbb R^n)$ is said to be
{\em quasi-monotone} if for any $(t,u),(t,v)\in \mathbb R\times \mathbb
R^n$ and $\phi \in \big{(}\mathbb R^{n}_{+}\big{)}^{*}$ we have: $u
\le v$ and $\phi(u)=\phi(v)$ imply $\phi(f(t,u))\le \phi(f(t,v))$.

\begin{lemma}\label{lK1} Let $f\in
C(\mathbb R\times \mathbb R^{n},\mathbb R^n)$ be a regular and
quasi-monotone function, then the following statements hold:
\begin{enumerate}
\item if $u\le v$, then $\varphi(t,u,f)\le \varphi(t,v,f)$ for any $t\ge
0$;
\item any function $g\in H(f)$ is quasi-monotone;
\item equation (\ref{eq1.0.6}) is monotone.
\end{enumerate}
\end{lemma}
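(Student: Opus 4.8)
The plan is to deduce the three statements in sequence, with the classical Kamke–Müller quasi-monotonicity comparison theorem doing the real work for (i), and (ii)–(iii) following by a limiting/topological argument. First I would prove (i): assume $u \le v$ in $\mathbb R^n$ and set $x(t):=\varphi(t,u,f)$, $y(t):=\varphi(t,v,f)$, both defined on $\mathbb R_+$ by regularity (Condition (A1)). The goal is $x(t)\le y(t)$ for all $t\ge 0$, i.e. $y(t)-x(t)\in\mathbb R^n_+$. I would invoke the standard differential inequality argument for quasi-monotone systems: since $f$ is quasi-monotone with respect to the cone $\mathbb R^n_+$, whenever $x\le y$ and $\phi(x)=\phi(y)$ for a dual-cone functional $\phi\in(\mathbb R^n_+)^*$ (concretely, $\phi=e_i^*$ for some coordinate $i$, meaning $x_i=y_i$ while $x_j\le y_j$ for all $j$) we have $\phi(f(t,x))\le\phi(f(t,y))$, i.e. $f_i(t,x)\le f_i(t,y)$. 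This is exactly the Kamke condition, and the standard proof (perturb $v$ to $v+\eps\mathbf 1$, solve the perturbed equation, show the comparison is strict and stable, then let $\eps\to 0$) gives $x(t)\le y(t)$ on the whole interval of existence, hence on $\mathbb R_+$. I would cite \cite[ChV]{Smi_1995} or \cite{Smi_1987} for this classical fact rather than reproducing it.

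Next, statement (ii): let $g\in H(f)$, so there is a sequence $\tau_k\in\mathbb R$ with $f^{\tau_k}\to g$ in $C(\mathbb R\times\mathbb R^n,\mathbb R^n)$ (compact-open topology). Each translate $f^{\tau_k}$ is quasi-monotone because the defining inequality $\phi(f^{\tau_k}(t,u))=\phi(f(t+\tau_k,u))\le\phi(f(t+\tau_k,v))=\phi(f^{\tau_k}(t,v))$ holds whenever $u\le v$ and $\phi(u)=\phi(v)$, directly from quasi-monotonicity of $f$ at the shifted time $t+\tau_k$. Then I would pass to the limit: fix $(t,u),(t,v)$ with $u\le v$ and $\phi\in(\mathbb R^n_+)^*$ with $\phi(u)=\phi(v)$; since $f^{\tau_k}\to g$ uniformly on the compact set $\{t\}\times\{u,v\}$ (indeed on any compact subset of $\mathbb R\times\mathbb R^n$), we get $\phi(g(t,u))=\lim_k\phi(f^{\tau_k}(t,u))\le\lim_k\phi(f^{\tau_k}(t,v))=\phi(g(t,v))$. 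Hence $g$ is quasi-monotone.

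Finally, statement (iii): I must show the cocycle $\varphi$ generated by \eqref{eq1.0.6} is monotone in the sense of Condition (A2), i.e. $u\le v$ implies $\varphi(t,u,g)\le\varphi(t,v,g)$ for every $t\ge 0$ and every $g\in H(f)$. But each $g\in H(f)$ is regular (the regularity of $f$ in Condition (A1) is required of the whole $H$-class) and, by part (ii), quasi-monotone; therefore part (i) applied to $g$ in place of $f$ gives precisely $\varphi(t,u,g)\le\varphi(t,v,g)$ for all $t\ge 0$. This yields monotonicity of the cocycle and completes the proof.

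The main obstacle is the first step: part (i) is the genuine analytic content, since quasi-monotonicity is a pointwise condition on the boundary of the order cone and must be upgraded to a global comparison of solutions via a differential-inequality/perturbation argument (handling the subtlety that the comparison need not be strict in the interior). Parts (ii) and (iii) are then routine — a closedness-under-limits argument and a reduction to (i) applied along the hull. One should be slightly careful in (ii) that the limiting inequality is preserved because the order cone $\mathbb R^n_+$ and the dual cone $(\mathbb R^n_+)^*$ are closed, and in (iii) that the regularity hypothesis (A1) indeed applies to all $g\in H(f)$, not merely to $f$ itself, so that $\varphi(\cdot,v,g)$ is defined on all of $\mathbb R_+$.
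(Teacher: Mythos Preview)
Your proposal is correct and follows essentially the same approach as the paper: part (i) is delegated to the classical quasi-monotone comparison theorem (the paper cites \cite[ChIII]{HS_2005} rather than \cite{Smi_1987,Smi_1995}, but the content is identical), part (ii) is proved by the same pass-to-the-limit argument along a sequence $f^{\tau_k}\to g$, and part (iii) is deduced from (i) and (ii) exactly as you do.
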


\begin{proof} The first statement is proved in
\cite[ChIII]{HS_2005}.

Let $g\in H(f)$, then there exists a sequence $\{h_k\}\subset
\mathbb R$ such that $g(t,u)=\lim\limits_{k\to \infty} f^{h_k}(t,u)$
for any $(t,u)\in\mathbb R\times \mathbb R^n$. Let $u\le v$
($u,v\in\mathbb R^n$) and $\phi \in \big{(}\mathbb
R^{n}_{+}\big{)}^{*}$ such that $\phi(u)=\phi(v)$. Since $f$ is
quasi-monotone, we have
\begin{equation}\label{eqK1}
\phi(f(t+h_k,u))\le \phi(f(t+h_k,v)).
\end{equation}
Passing to the limit in (\ref{eqK1}) as $k\to \infty$ we obtain
that $g$ is quasi-monotone.

Finally, the third statement follows from the first and second
statements. The proof is complete.
\end{proof}

\begin{definition}\label{defOO1} \rm A solution $\varphi(t,u_0,f)$ of
equation (\ref{eq1.0.6}) is said to be:
\begin{enumerate}
\item[-] {\em (positively) uniformly stable}, if for
arbitrary $\varepsilon >0$ there exists $\delta
=\delta(\varepsilon)>0$ such that
$|\varphi(t_0,u,f)-\varphi(t_0,u_0,f)|<\delta$ ($t_0\in \mathbb R$,
$u\in\mathbb R^n$) implies
$|\varphi(t,u,f)-\varphi(t,u_0,f)|<\varepsilon$ for any $t\ge t_0$;
\item[-] {\em compact} on $\mathbb R_{+}$ if the set $Q:=\overline{\varphi(\mathbb
R_{+},u_0,f)}$ is a compact subset of $\mathbb R^{n}$, where $\varphi(\mathbb
R_{+},u_0,f):=\{\varphi(t,u_0,f): t\in \mathbb R_{+}\}$.
\end{enumerate}
\end{definition}


\begin{definition}[\cite{Che_2009,scher72,Sch85}]\label{defCS01} \rm
A solution $\varphi(t,u_0,f)$ of equation (\ref{eq1.0.6}) is called {\em comparable}
(respectively, {\em strongly comparable}, {\em uniformly comparable}) if the
motion $\pi(t,x_0)$ (here $x_0:=(u_0,f)$) is comparable (respectively, strongly comparable,
uniformly comparable) with $\sigma(t,f)$ by character of recurrence.
\end{definition}

Recall that a function $\varphi\in C(\mathbb R,\mathbb R^{n})$
is said to {\em possess the property $(A)$}, if the motion
$\sigma(\cdot,\varphi)$ generated
by $\varphi$ possesses this
property in the shift dynamical system $(C(\mathbb R,\mathbb R^{n}),
\mathbb R,\sigma)$. A function $f\in C(\mathbb R \times \mathbb R^{n},\mathbb
R^{n})$ is said to {\em possess the property $(A)$ in $t\in \mathbb{R}$
uniformly w.r.t. $u$ on every compact subset of $\mathbb R^{n}$} if
the motion $\sigma(\cdot,f)$ generated by $f$ possesses this property in the
shift dynamical system $(C(\mathbb{R}\times \mathbb R^{n},\mathbb
R^{n}),\mathbb{R},\sigma)$. In the quality of the property $(A)$ there can stand Lagrange stability,
periodicity, asymptotic periodicity, almost periodicity, asymptotic almost periodicity and so on.


If $x_0=(u_0,y_0)\in X=\mathbb R^n \times Y$ and $\alpha_{y_0}$ (respectively, $\gamma_{y_0}$) is the point from $X$
defined in Lemma \ref{lAPS2}, then we denote by $\alpha_{u_0}$ (respectively, $\gamma_{u_0}$) the point from $\mathbb R^n$
such that $\alpha_{y_0}=(\alpha_{u_0},y_0)$ (respectively, $\gamma_{y_0}=(\gamma_{u_0},y_0)$). The we have the following

\begin{theorem}\label{thA1}
Suppose that the following assumptions are fulfilled:
\begin{enumerate}
\item[-] the function $f\in C(\mathbb R\times \mathbb R^n,\mathbb R^n)$ is
positively Poisson stable in $t\in\mathbb R$ uniformly w.r.t. $u$ on every
compact subset from $\mathbb R^n$, i.e. there
exists a sequence $t_n\to +\infty$ as $n\to \infty$ such that
$f^{t_n}$ converges to $f$ in $C(\mathbb R\times \mathbb R^n,\mathbb
R^n)$;
\item[-] each solution $\varphi(t,u_0,f)$ of equation
(\ref{eq1.0.6}) is bounded on $\mathbb R_{+}$ and uniformly stable.
\end{enumerate}

Then under the conditions (A1)--(A2) the following statements hold:
\begin{enumerate}
\item[1.] for any solution $\varphi(t,u_0,f)$ of equation
(\ref{eq1.0.6}) there exists a solution $\varphi(t,\gamma_{u_0},f)$
of (\ref{eq1.0.6}) defined and bounded on $\mathbb R$ such that:
\begin{enumerate}
\item $\varphi(t,\gamma_{u_0},f)$ is a comparable solution of
(\ref{eq1.0.6});

\item $\lim\limits_{t\to +\infty}|\varphi(t,\alpha_{u_0},f)-\varphi(t,\gamma_{u_0},f)|=0$;
\end{enumerate}

\item[2.] if the function $f\in C(\mathbb R\times \mathbb
R^n,\mathbb R^n)$ is stationary (respectively, $\tau$-periodic,
Levitan almost periodic, almost recurrent, Poisson stable) in
$t\in \mathbb R$ uniformly w.r.t. $u$ on
every compact subset of $\mathbb R^n$, then
$\varphi(t,\gamma_{u_0},f)$ has the same recurrent property in $t$ and
hence the solution $\varphi(t,\alpha_{u_0},f)$ has the same asymptotic recurrent property in $t$.
\end{enumerate}
\end{theorem}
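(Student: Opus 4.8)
The plan is to reduce the assertion to the abstract results of Section \ref{S5}. Set $Y:=H(f)$ with shift flow $(Y,\mathbb R,\sigma)$, $X:=\mathbb R^{n}\times Y$, $\pi:=(\varphi,\sigma)$ and $h:=pr_{2}$; this is a genuine NDS by (A1). Order each fibre $X_{y}\cong\mathbb R^{n}\times\{y\}$ by the cone $\mathbb R_{+}^{n}$ and take $\mathcal S:=X$, so that $h(\mathcal S)=Y$, $\mathcal S$ is positively invariant, and the order is compatible with the topology. I then check (C1)--(C3) for this $\mathcal S$. Condition (C1) is immediate: for conditionally compact $K\subseteq X$ the fibre $K_{y}=K\cap h^{-1}(y)$ is a compact subset of $\mathbb R^{n}\times\{y\}$, and since $\mathbb R^{n}$ is a lattice the coordinatewise infimum $\alpha_{y}(K)$ and supremum $\beta_{y}(K)$ exist. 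Condition (C3) is exactly (A2), carried from the cocycle $\varphi$ to the skew-product $\pi$; note that (C4) is not needed, hence no restriction on $n$. The real work is (C2), which must hold for \emph{every} $x=(v,g)\in X$: the two bulleted hypotheses, stated only for (\ref{eq1.0.6}), have to be transferred to every limit equation (\ref{eq1.0.7}) with $g\in H(f)$. Here I would invoke the classical transfer principle for positively Poisson stable right-hand sides (see, e.g., \cite{bro75,SaSe_1977}): choosing $f^{h_{k}}\to g$, and using regularity (uniqueness and continuous dependence on data and parameter) together with $f^{t_{n}}\to f$, one propagates the common bound on $\mathbb R_{+}$ and the common uniform-stability modulus $\delta(\varepsilon)$ to all $g\in H(f)$. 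Granted this, $Q_{(v,g)}^{+}=\overline{\{\varphi(t,v,g):t\ge 0\}}$ is compact, so $\Sigma_{x}^{+}$ is conditionally precompact by Lemma \ref{l3.1}; and since $\Sigma_{x}^{+}$ is positively uniformly stable and $h=pr_{2}$ is open (Remark \ref{open}), Corollary \ref{lUS1} shows that $\omega_{x}\subseteq H^{+}(x)$ is positively uniformly stable. Thus (C1)--(C3) hold.

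Now put $x_{0}:=(u_{0},f)$ and $y_{0}:=h(x_{0})=f$; positive Poisson stability of $f$ is precisely $y_{0}\in\omega_{y_{0}}$, so Theorem \ref{thM1} applies: $\gamma_{y_{0}}=(\gamma_{u_{0}},f)$ is comparable with $y_{0}$ by character of recurrence and $\lim_{t\to+\infty}\rho(\pi(t,\alpha_{y_{0}}),\pi(t,\gamma_{y_{0}}))=0$. Read on the fibre $\mathbb R^{n}$, the first statement is precisely item 1(a) (by Definition \ref{defCS01}, the motion through $(\gamma_{u_{0}},f)$ is comparable with $\sigma(\cdot,f)$), and the second is item 1(b), $|\varphi(t,\alpha_{u_{0}},f)-\varphi(t,\gamma_{u_{0}},f)|\to 0$ as $t\to+\infty$. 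For ``defined and bounded on $\mathbb R$'': by Corollary \ref{enorbit}, $t\mapsto\gamma_{\sigma(t,y_{0})}$ is an entire trajectory of $(X,\mathbb R_{+},\pi)$ through $\gamma_{y_{0}}$ and it lies in the invariant set $\omega_{\alpha_{y_{0}}}$; hence its first coordinate is an entire solution $w(\cdot)$ of (\ref{eq1.0.6}) with $w(0)=\gamma_{u_{0}}$, and, since $\omega_{\alpha_{y_{0}}}\subseteq H^{+}(\alpha_{y_{0}})$ whose $\mathbb R^{n}$-component is $\overline{\{\varphi(t,\alpha_{u_{0}},f):t\ge 0\}}$ (bounded by hypothesis), $w$ is bounded on all of $\mathbb R$. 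Setting $\varphi(\cdot,\gamma_{u_{0}},f):=w$ proves statement 1.

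Statement 2 is the specialization of Corollary \ref{corP1}. If $f$ is stationary (respectively, $\tau$-periodic, Levitan almost periodic, almost recurrent, Poisson stable) in $t$ uniformly w.r.t. $u$ on compacta, then $y_{0}=f$ has that recurrence property in $(Y,\mathbb R,\sigma)$, so Corollary \ref{corP1} gives that $\gamma_{y_{0}}$ has it too and that $\alpha_{y_{0}}$ is asymptotically of that type. To descend from the motion in $X$ to the function in $C(\mathbb R,\mathbb R^{n})$, note that continuity of the cocycle yields $\mathfrak N_{\gamma_{y_{0}}}\subseteq\mathfrak N_{\varphi(\cdot,\gamma_{u_{0}},f)}$ (if $\varphi(t_{n},\gamma_{u_{0}},f)\to\gamma_{u_{0}}$ and $\sigma(t_{n},f)\to f$, then $\varphi(t_{n}+\cdot,\gamma_{u_{0}},f)\to\varphi(\cdot,\gamma_{u_{0}},f)$ locally uniformly); combined with $\mathfrak N_{f}=\mathfrak N_{y_{0}}\subseteq\mathfrak N_{\gamma_{y_{0}}}$ this makes $\varphi(\cdot,\gamma_{u_{0}},f)$ comparable with $f$, so it inherits the recurrence property in $t$ by Theorem \ref{thShc2.1}, and then item 1(b) together with Definition \ref{defAP1} shows $\varphi(\cdot,\alpha_{u_{0}},f)$ is asymptotically of the same type. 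The single genuinely delicate point in the whole scheme is the verification of (C2) over the entire hull $H(f)$, i.e. the propagation of boundedness on $\mathbb R_{+}$ and of the uniform-stability modulus from (\ref{eq1.0.6}) to all its limit equations; everything else is a direct appeal to the abstract theorems or a routine translation between the skew-product flow and equation (\ref{eq1.0.6}).
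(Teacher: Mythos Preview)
Your reduction to the abstract framework of Section~\ref{S5} is exactly the paper's strategy: set up the skew-product NDS over $Y=H(f)$, verify the standing hypotheses, and invoke Theorem~\ref{thM1} (together with Theorem~\ref{thShc2.1}/Corollary~\ref{corP1}). The paper's own proof is terse; it writes down $x_0=(u_0,f)$, obtains conditional precompactness of $\Sigma_{x_0}^{+}$ from Lemma~\ref{l3.1} and uniform stability of $\omega_{x_0}$ from Corollary~\ref{lUS1}/Remark~\ref{open}, and then simply cites Theorems~\ref{thM1} and~\ref{thShc2.1}.

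The one place where you diverge from the paper is the detour you flag as ``the single genuinely delicate point'': propagating boundedness and the uniform-stability modulus from (\ref{eq1.0.6}) to \emph{every} limit equation (\ref{eq1.0.7}), in order to have (C2) on all of $\mathcal S=X$. The paper does \emph{not} do this, and it is not needed here. Trace through the proof of Theorem~\ref{thM1} (and Lemma~\ref{lAPS2} with $q=y_0$): the only points at which (C2) is invoked are $x_0$, $\alpha_{y_0}$ and $\gamma_{y_0}$, and all three sit in the fibre over $y_0=f$. Since the hypothesis ``each solution $\varphi(t,u_0,f)$ of (\ref{eq1.0.6}) is bounded on $\mathbb R_{+}$ and uniformly stable'' ranges over \emph{all} $u_0\in\mathbb R^{n}$, it already covers $\alpha_{u_0}$ and $\gamma_{u_0}$, and hence (via Lemma~\ref{l3.1} and Corollary~\ref{lUS1}) gives the needed instances of (C2). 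Your transfer-principle argument is therefore superfluous; moreover, without compactness of $H(f)$ (which is Condition~(A3), reserved for Theorem~\ref{thA2}) such a transfer is not obviously available, so insisting on it would make the proof harder than necessary. Everything else in your write-up---the check of (C1) via the lattice structure of $\mathbb R^{n}$, (C3) from (A2), the use of Corollary~\ref{enorbit} for the entire bounded trajectory, and the descent from $X$ to $C(\mathbb R,\mathbb R^{n})$---is fine and merely spells out what the paper leaves implicit.
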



\begin{proof} Let $f\in C(\mathbb R\times \mathbb R^n,\mathbb R^n)$
and $(C(\mathbb R\times \mathbb R^n,\mathbb R^n),\mathbb R,\sigma)$
be the shift dynamical system on $C(\mathbb R\times \mathbb
R^n,\mathbb R^n)$. Denote by $Y:=H(f)$ and $(Y,\mathbb R,\sigma)$
the shift dynamical system on $H(f)$ induced from $(C(\mathbb R\times
\mathbb R^n,\mathbb R^n),\mathbb R,\sigma)$. Consider the cocycle
$\langle\mathbb R^n,\varphi,(Y,\mathbb R,\sigma)\rangle$ and NDS $\langle (X,\mathbb{R}_{+},\pi),
(Y,\mathbb{R},\sigma), h\rangle$, with $X= \mathbb R^n\times Y$,
$\pi=(\varphi,\sigma)$ and $h=pr_2:X\to Y$, generated by equation (\ref{eq1.0.6}) (see Condition (A1)).

Let $x_0:=(u_0,f)$. Since $\varphi(t,u_0,f)$ is bounded on $\mathbb R_{+}$, it follows from Lemma \ref{l3.1}
that $\Sigma^+_{x_0}$ is conditionally precompact; on the other hand, since $\varphi(t,u_0,f)$ is uniformly stable,
it follows from Corollary \ref{lUS1} and Remark \ref{open} that $\omega_{x_0}$ is uniformly stable.
Now to finish the proof it is sufficient to apply Theorems \ref{thM1} and \ref{thShc2.1}.
\end{proof}


\textbf{Condition (A3).} For any compact subset $K\subset \mathbb
R^n$ the function $f\in C(\mathbb R\times \mathbb R^n,\mathbb R^n)$
is bounded and uniformly continuous on $\mathbb R\times K$.

\begin{remark}\rm
Note that if a function $f\in C(\mathbb R\times \mathbb R^n,\mathbb
R^n)$ satisfies condition (A3), then the set $\{f^{h}:\ h\in \mathbb
R\}$ is precompact in $C(\mathbb R\times \mathbb R^n,\mathbb R^n)$
and vice versa.
\end{remark}

\begin{theorem}\label{thA2}
Suppose that the following assumptions
are fulfilled:
\begin{enumerate}
\item[-] the function $f\in C(\mathbb R\times \mathbb R^n,\mathbb R^n)$ is
strongly Poisson stable in $t\in\mathbb R$ uniformly w.r.t. $u$ on
every compact subset of $\mathbb R^n$;
\item[-] each solution $\varphi(t,u_0,g)$ of every equation
(\ref{eq1.0.7}) is bounded on $\mathbb R_{+}$ and uniformly stable.
\end{enumerate}

Then under the conditions (A1)--(A3) the following statements hold:
\begin{enumerate}
\item[1.] for any solution $\varphi(t,v_0,g)$ of equation
(\ref{eq1.0.7}) there exists a solution $\varphi(t,\gamma_{v_0},g)$
of (\ref{eq1.0.7}) defined and bounded on $\mathbb R$ such that:
\begin{enumerate}
\item $\varphi(t,\gamma_{v_0},g)$ is a uniformly comparable solution of
(\ref{eq1.0.7});
\item $\lim\limits_{t\to
+\infty}|\varphi(t,\alpha_{v_0},g)-\varphi(t,\gamma_{v_0},g)|=0$.
\end{enumerate}

\item[2.] if the function $f\in C(\mathbb R\times \mathbb
R^n,\mathbb R^n)$ is quasi-periodic (respectively, Bohr
almost periodic, almost automorphic, Birkhoff recurrent, pseudo recurrent,
uniformly Poisson stable) in $t\in \mathbb R$ uniformly w.r.t. $u$ on every compact subset of $\mathbb R^n$,
then the solution $\varphi(t,\gamma_{u_0},f)$ has the same recurrent property in $t$ and
hence the solution $\varphi(t,\alpha_{u_0},f)$ has the same asymptotic recurrent property in $t$.
\end{enumerate}
\end{theorem}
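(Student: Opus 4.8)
The plan is to realize equation~\eqref{eq1.0.6} as a monotone NDS satisfying (C1)--(C3), apply Theorem~\ref{thM2} to get strong comparability together with the asymptotic estimate, and then use the compactness of $H(f)$ (forced by (A3)) to upgrade strong comparability to uniform comparability via Theorem~\ref{thShc6}, after which Theorem~\ref{tS2} delivers the extra recurrence properties. Concretely, I would first reproduce the construction from the proof of Theorem~\ref{thA1}: form the cocycle $\langle\mathbb R^n,\varphi,(Y,\mathbb R,\sigma)\rangle$ and the NDS $\langle(X,\mathbb R_+,\pi),(Y,\mathbb R,\sigma),h\rangle$ with $Y:=H(f)$, $X:=\mathbb R^n\times Y$, $\pi=(\varphi,\sigma)$, $h=pr_2$, and take $\mathcal S:=X$ with every fibre $X_y=\mathbb R^n\times\{y\}$ ordered by the cone $\mathbb R^n_+$. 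Condition (A3) makes $\{f^h:h\in\mathbb R\}$ precompact, so $Y=H(f)$ is compact; hence every conditionally compact subset of $X$ is compact and every point of $Y$ is Lagrange stable. Then I would check (C1)--(C3): (C1) because for conditionally compact $K\subseteq X$ and $y\in Y$ the fibre $K_y$ is a compact, hence bounded, subset of the lattice $\mathbb R^n$, so it has an infimum $\alpha_y(K)$ and supremum $\beta_y(K)$ in $\mathbb R^n$; (C2) because boundedness of $\varphi(t,v_0,g)$ on $\mathbb R_+$ gives conditional precompactness of $\Sigma^+_{(v_0,g)}=H^+((v_0,g))$ by Lemma~\ref{l3.1}, while uniform stability of the solutions with Corollary~\ref{lUS1} and Remark~\ref{open} gives positive uniform stability of $\omega_{(v_0,g)}$; (C3) is Condition (A2), which by Lemma~\ref{lK1} follows from regularity and quasi-monotonicity of $f$.

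Next I would fix $g\in H(f)$ and $v_0\in\mathbb R^n$ and put $x_0:=(v_0,g)$, $y_0:=g$. Since $H(g)\subseteq H(f)$ and $f$ is strongly Poisson stable, every point of $H(g)$ lies in its own $\omega$-limit set, so $y_0=g$ is strongly Poisson stable and in particular $y_0\in\omega_{y_0}$; thus Lemma~\ref{lAPS2} produces the points $\alpha_{y_0},\beta_{y_0},\gamma_{y_0},\delta_{y_0}$, which I write as $(\alpha_{v_0},g),(\beta_{v_0},g),(\gamma_{v_0},g),(\delta_{v_0},g)$. Theorem~\ref{thM2} applied to $x_0$ then yields that $\gamma_{y_0}$ (resp.\ $\delta_{y_0}$) is strongly comparable with $y_0$ by character of recurrence and that $\lim_{t\to+\infty}|\varphi(t,\alpha_{v_0},g)-\varphi(t,\gamma_{v_0},g)|=0$, which is item~1(b). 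By Corollary~\ref{enorbit} the map $t\mapsto\gamma_{\sigma(t,g)}$ is an entire trajectory through $\gamma_{y_0}$, so $\varphi(t,\gamma_{v_0},g)$ is defined on all of $\mathbb R$; since $y_0=g$ is Lagrange stable, Theorem~\ref{thShc6} upgrades the strong comparability of $\gamma_{y_0}$ with $y_0$ to uniform comparability and also shows $\gamma_{y_0}$ is Lagrange stable, whence $\varphi(t,\gamma_{v_0},g)$ is bounded on $\mathbb R$. This gives item~1(a).

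For item~2 I would specialize to $g=f$: if $f$ is quasi-periodic (resp.\ Bohr almost periodic, almost automorphic, Birkhoff recurrent, pseudo recurrent, uniformly Poisson stable) in $t$ uniformly w.r.t.\ $u$ on compact subsets of $\mathbb R^n$, then $y_0=f$ has the corresponding property in the shift system on $C(\mathbb R\times\mathbb R^n,\mathbb R^n)$ and is Lagrange stable; as $\mathbb R^n$ and $C(\mathbb R\times\mathbb R^n,\mathbb R^n)$ are complete metric spaces, Theorem~\ref{tS2} carries this property along the uniform comparability to $\varphi(t,\gamma_{u_0},f)$, and item~1(b) then makes $\varphi(t,\alpha_{u_0},f)$ asymptotically of the same type. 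I expect the main difficulty to be bookkeeping rather than dynamics: verifying that (A3) indeed makes $Y=H(f)$ compact, that strong Poisson stability of $f$ passes to every base point $g\in H(f)$, and that the abstract points $\gamma_{y_0},\alpha_{y_0}$ correspond to the solutions of \eqref{eq1.0.7} named in the statement; the dynamical content is already contained in Theorems~\ref{thM2}, \ref{thShc6} and \ref{tS2}.
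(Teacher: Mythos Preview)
Your proposal is correct and follows essentially the same approach as the paper: build the NDS from the cocycle, use (A3) to get compactness of $H(f)$ so that strong comparability upgrades to uniform comparability via Theorem~\ref{thShc6}, and then invoke Lemma~\ref{l3.1}, Corollary~\ref{lUS1}, Theorem~\ref{thM2} and Theorem~\ref{tS2}. You supply considerably more detail (the verification of (C1)--(C3), the passage of strong Poisson stability from $f$ to each $g\in H(f)$, and the use of Corollary~\ref{enorbit} for the entire trajectory), but the skeleton is identical to the paper's terse proof.
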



\begin{proof}
Note that under the condition (A3) the hull $H(f)$ is compact, so uniform comparability
is equivalent to strong comparability by Theorem \ref{thShc6}.
Then this theorem can be proved similarly as Theorem
\ref{thA1} using Lemma \ref{l3.1}, Corollary \ref{lUS1} and Theorems \ref{thM2}, \ref{tS2}.
\end{proof}

\subsection{Functional-differential equations with finite delay}

Let us first recall some notions and notations from \cite{hale}. Let
$r>0,\; C([a,b],\mathbb{R}^n)$ be the Banach space of all continuous
functions $\varphi:[a,b]\to \mathbb{R}^n$ equipped with the
$\sup$--norm. If $[a,b]=[-r,0]$, then we set $ \mathcal
C:=C([-r,0],\mathbb{R}^n)$. Let $\sigma\in\mathbb{R},\, A\ge 0$ and
$u\in C([\sigma-r,\sigma+A],\mathbb{R}^n)$. We will define $u_t\in
\mathcal C$ for any $t\in[\sigma,\sigma+A]$ by the equality
$u_t(\theta):=u(t+\theta),\;-r\le\theta\le0$. Consider a functional
differential equation
\begin{equation}\label{eq6.4.2}
\dot u=f(t,u_t),
\end{equation}
where $f:\mathbb{R}\times \mathcal C\to\mathbb{R}^n$ is continuous.

Denote by $C(\mathbb{R}\times \mathcal C,\mathbb R^n)$ the space of
all continuous mappings $f:\mathbb{R}\times \mathcal C\to
\mathbb R^n$ equipped with the compact-open topology. On the space
$C(\mathbb{R}\times \mathcal C,\mathbb R^n)$ is defined (see, e.g. \cite[ChI]{Che_2015} and \cite[ChI]{scher72})
a shift dynamical system $(C(\mathbb{R}\times \mathcal C,\mathbb
R^n),\mathbb R,\sigma)$, where $\sigma(\tau,f):=f^{\tau}$ for any
$f\in C(\mathbb{R}\times \mathcal C,\mathbb R^n)$ and
$\tau\in\mathbb R$ and $f^{\tau}$ is $\tau$-translation of $f$,
i.e. $f^{\tau}(t,\phi):=f(t+\tau,\phi)$ for any $(t,\phi)\in\mathbb
R\times \mathcal C$. Let us set
$H(f):=\overline{\{f^{\tau}: \tau\in\mathbb{R}\}}$.

Along with equation (\ref{eq6.4.2}) let us consider the family of
equations
\begin{equation}\label{eq6.4.3}
\dot v=g(t,v_t),
\end{equation}
where $g\in H(f)$.

\textbf{Condition (F1).} In this subsection, we suppose that
equation (\ref{eq6.4.2}) is regular, i.e. the conditions of
existence, uniqueness and extendability on $\mathbb R_{+}$ are
fulfilled.

\begin{remark}\label{remF1} \emph{Denote by $\tilde{\varphi}(t,u,f)$ the solution of
equation (\ref{eq6.4.2}) defined on $[-r,+\infty)$ (respectively,
on $\mathbb{R}$) with the initial condition $u\in
\mathcal C$. By $\varphi(t,u,f)$ we will denote below the
trajectory of equation (\ref{eq6.4.2}), corresponding to the
solution $\tilde{\varphi}(t,u,f)$, i.e. a mapping from
$\mathbb{R}_{+}$ (respectively, $\mathbb{R}$) into $\mathcal C$,
defined by $\varphi(t,u,f)(s):=\tilde{\varphi}(t+s,u,f)$ for any
$t\in\mathbb{R}_{+}$ (respectively, $t\in\mathbb{R}$) and $s\in
[-r,0]$. Below we will use the notions of ``solution" and ``trajectory" for equation (\ref{eq6.4.2})
as synonymous concepts. }
\end{remark}

It is well-known \cite{bro75,Sel_71} that the mapping $\varphi
:\mathbb R_{+}\times \mathcal C\times H(f) \to \mathcal  C$
possesses the following properties:
\begin{enumerate}
\item $\varphi(0,v,g)=v$ for any $v\in \mathcal C$ and $g\in
H(f)$; \item
$\varphi(t+\tau,v,g)=\varphi(t,\varphi(\tau,v,g),\sigma(\tau,g))$
for any $t,\tau\in\mathbb R_{+}$, $v\in \mathcal C$ and $g\in H(f)$;
\item the mapping $\varphi$ is continuous.
\end{enumerate}
Thus equation (\ref{eq6.4.2}) generates a cocycle $\langle \mathcal
C,\varphi,(Y,\mathbb R,\sigma)\rangle$ and an NDS $\langle (X,\mathbb R_{+},\pi)$, $(Y,\mathbb
R,\sigma),h\rangle$, where $Y:=H(f)$, $X:=\mathcal C\times Y$, $\pi :=
(\varphi,\sigma)$ and $h:=pr_2 :X\to Y$.

\begin{remark}\label{remQ1} \rm
Denote by $\mathcal B:=\{f\in C(\mathbb R\times \mathcal C,\mathbb R^{n}):$
$f$ is continuous in $t$ uniformly w.r.t $u$ on any bounded subset of $\mathcal C$ and
$f(B)$ is a bounded subset of $\mathbb R^n$ for any bounded subset
$B\subset \mathbb R\times \mathcal C\}$, equipped with the topology
of uniform convergence on every bounded subset of $\mathbb R\times
\mathcal C$. This topology can be defined by the following metric
\begin{equation}\label{eqQ2}
d(f_1,f_2):=\sum_{k\ge
1}\frac{1}{2^k}\frac{d_k(f_1,f_2)}{1+d_k(f_1,f_2)},
\end{equation}
where $d_k(f_1,f_2):=\sup\limits_{|t|\le k,||\phi||_{\mathcal C}\le
k}|f_1(t,\phi)-f_2(t,\phi)|$.
Note that
\begin{enumerate}
\item the metric space $(\mathcal B,d)$ is complete;
\item the subset $\mathcal B\subset C(\mathbb R\times \mathcal C,\mathbb
R^n))$ is translation invariant, i.e. $f^{h}\in \mathcal B$ for any
$f\in \mathcal B$ and $h\in\mathbb R$;
\item the mapping $\sigma : \mathbb R\times \mathcal B\to \mathcal
B$, defined by equality $\sigma(h,f):=f^{h}$ for any $(h,f)\in \mathbb
R\times \mathcal B$, is continuous.
\end{enumerate}
Thus on the space $(\mathcal B,d)$ is defined a shift dynamical
system $(\mathcal B,\mathbb R,\sigma)$.
\end{remark}


Let $\mathcal C_{+}:=\{\phi \in \mathcal C:\ \phi \ge 0,$ i.e.
$\phi(t)\ge 0$ for any $t\in [-r,0]\}$ be the cone of nonnegative
functions in $\mathcal C$. By $\mathcal C_{+}$ on the space
$\mathcal C$ is defined a partial order: $u\le v$ if and only if $v-u\in
\mathcal C_{+}$.

\textbf{Condition (F2).} Equation (\ref{eq6.4.2}) is monotone,
that is, the cocycle $\langle \mathcal C,\varphi,(H(f),$ $\mathbb
R,$ $\sigma)\rangle$ generated by (\ref{eq6.4.2}) possesses the
following property: if $u\le v$, then $\varphi(t,u,g)\le \varphi
(t,v,g)$ for any $t\ge 0$ and $g\in H(f)$.

Recall (see, e.g. \cite{Smi_1987}, \cite[ChV]{Smi_1995}) that a function $f\in
C(\mathbb R\times \mathcal C,\mathbb R^n)$ is said to be
{\em quasi-monotone} if $(t,u),(t,v)\in \mathbb R\times C$, $u \le v$, and
$u_{i}(0)=v_{i}(0)$ for some $i$, then $f_{i}(t,u)\le f_{i}(t,v)$.

\begin{lemma}\label{lQ11} Let $f\in
C(\mathbb R\times \mathcal C,\mathbb R^n)$ be a quasi-monotone
function, then the following statements hold:
\begin{enumerate}
\item if $u\le v$, then $\varphi(t,u,f)\le \varphi(t,v,f)$ for any $t\ge
0$;
\item any function $g\in H(f)$ is quasi-monotone;
\item $u\le v$ implies $\varphi(t,u,g)\le \varphi(t,v,g)$ for any $t\ge
0$ and $g\in H(f)$.
\end{enumerate}
\end{lemma}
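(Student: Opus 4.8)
The plan is to mirror, step by step, the three-part argument used for the ODE case in Lemma \ref{lK1}.

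\emph{Statement (i).} This is the classical comparison theorem for quasi-monotone functional-differential equations: quasi-monotonicity of $f$ is exactly the Kamke-type condition guaranteeing that the solution operator of (\ref{eq6.4.2}) preserves the order of $\mathcal C$, and I would invoke \cite{Smi_1987} and \cite[ChV]{Smi_1995}. If a self-contained proof is preferred, the standard route is a crossing argument: set $x(t):=\tilde{\varphi}(t,u,f)$ and $y(t):=\tilde{\varphi}(t,v,f)$ with $u\le v$, first treat the strict case by perturbing to $\dot z=f(t,z_t)+\varepsilon\mathbf{1}$ with $\mathbf{1}=(1,\dots,1)$, and observe that at the first time $t_0>0$ at which some component of $x$ would overtake the corresponding component of $y$ one has $x_{t_0}\le y_{t_0}$ in $\mathcal C$ and $x_i(t_0)=y_i(t_0)$; quasi-monotonicity then gives $f_i(t_0,x_{t_0})\le f_i(t_0,y_{t_0})$, i.e. $\dot x_i(t_0)\le\dot y_i(t_0)$, contradicting the overtaking. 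Letting $\varepsilon\to 0^+$ and using continuous dependence (Condition (F1)) yields the non-strict inequality.

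\emph{Statement (ii).} This follows by the same limiting argument as in Lemma \ref{lK1}(ii). Given $g\in H(f)$, choose $\{h_k\}\subset\mathbb R$ with $f^{h_k}\to g$ in $C(\mathbb R\times\mathcal C,\mathbb R^n)$. For $(t,u),(t,v)\in\mathbb R\times\mathcal C$ with $u\le v$ and $u_i(0)=v_i(0)$ for some index $i$, quasi-monotonicity of $f$ gives $f_i(t+h_k,u)\le f_i(t+h_k,v)$, that is $(f^{h_k})_i(t,u)\le(f^{h_k})_i(t,v)$; passing to the limit $k\to\infty$ gives $g_i(t,u)\le g_i(t,v)$, so $g$ is quasi-monotone.

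\emph{Statement (iii).} Combine (i) and (ii): for $g\in H(f)$, part (ii) shows that $g$ is quasi-monotone, and equation (\ref{eq6.4.3}) is regular (this is built into Condition (F1) together with the cocycle structure of $\varphi$ over the whole hull $H(f)$), so applying the argument of (i) with $g$ in place of $f$ gives $\varphi(t,u,g)\le\varphi(t,v,g)$ for all $t\ge 0$ whenever $u\le v$. I expect the only genuinely substantive point to be step (i) — the passage from the infinitesimal quasi-monotone condition to the global order-preserving property of the delay semiflow (the delicacy being that solutions live in the function space $\mathcal C$, so the ``first crossing time'' must be handled through the $i$-th coordinate at $\theta=0$); steps (ii) and (iii) are then routine.
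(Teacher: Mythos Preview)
Your proposal is correct and follows essentially the same route as the paper: statement (i) is attributed to \cite{Smi_1987}, \cite[ChV]{Smi_1995}, statement (ii) is obtained by passing to the limit along a sequence $f^{h_k}\to g$ in the quasi-monotone inequality, and statement (iii) is deduced by combining (i) and (ii). The only addition in your write-up is the optional crossing-argument sketch for (i), which the paper omits.
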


\begin{proof} The first statement is proved in
\cite{Smi_1987}, \cite[ChV]{Smi_1995}.

Let $g\in H(f)$, then there exists a sequence $\{h_k\}\subset
\mathbb R$ such that $g(t,u)=\lim\limits_{k\to \infty}f^{h_k}(t,u)$
for any $(t,u)\in\mathbb R\times \mathcal C$. Let $u\le v$
($u,v\in\mathcal C$) and $u_i(0)=v_{i}$ for some $i$. Since $f$ is
quasi-monotone, we have
\begin{equation}\label{eqQ1}
f_{i}(t+h_k,u)\le f_{i}(t+h_k,v)
\end{equation}
and passing to limit in (\ref{eqQ1}) as $k\to \infty$ we obtain
that $g$ is quasi-monotone too.

Finally, the third statement follows from the first and second
statements, and this concludes the proof.
\end{proof}

\textbf{Condition (F3).} For any bounded subset $A\subset \mathcal
C$ the set $f(\mathbb R\times A)$ is bounded in $\mathbb R^n$.



\begin{lemma}\label{lQ1} Let $\varphi(t,u,f)$ be a bounded on $\mathbb
R_{+}$ solution of equation (\ref{eq6.4.2}), then under the
condition (F3) the set $\varphi(\mathbb R_{+},u,f)\subset \mathcal
C$ is precompact.
\end{lemma}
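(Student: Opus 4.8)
The plan is to apply the Arzel\`a--Ascoli theorem to the family $\{\varphi(t,u,f):\ t\in \mathbb R_{+}\}$ viewed as a subset of $\mathcal C=C([-r,0],\mathbb R^n)$. Recall that $\varphi(t,u,f)(s)=\tilde{\varphi}(t+s,u,f)$ for $s\in[-r,0]$, where $\tilde{\varphi}(\cdot,u,f):[-r,+\infty)\to\mathbb R^n$ is the solution of (\ref{eq6.4.2}) in the classical sense and $\tilde{\varphi}(s,u,f)=u(s)$ for $s\in[-r,0]$. Since $\varphi(\cdot,u,f)$ is bounded on $\mathbb R_{+}$, the number $M:=\sup\{\|\varphi(t,u,f)\|_{\mathcal C}:\ t\ge 0\}=\sup\{|\tilde{\varphi}(t,u,f)|:\ t\ge -r\}$ is finite, so the family in question is bounded in $\mathcal C$, uniformly in $t$.

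First I would show that $\tilde{\varphi}(\cdot,u,f)$ is uniformly continuous on $[-r,+\infty)$. On the compact interval $[-r,0]$ it coincides with $u$, which is continuous there, hence uniformly continuous. On $[0,+\infty)$, set $A:=\{\varphi(t,u,f):\ t\ge 0\}\subset\mathcal C$, which is bounded by the previous paragraph; by Condition (F3) the set $f(\mathbb R\times A)$ is bounded in $\mathbb R^n$, say by $N$, and since $\dot{\tilde{\varphi}}(t,u,f)=f(t,\varphi(t,u,f))$ for $t\ge 0$, we obtain $|\dot{\tilde{\varphi}}(t,u,f)|\le N$, i.e. $\tilde{\varphi}(\cdot,u,f)$ is $N$-Lipschitz on $[0,+\infty)$. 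Combining the two estimates and using continuity at the junction $t=0$ gives uniform continuity of $\tilde{\varphi}(\cdot,u,f)$ on all of $[-r,+\infty)$.

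From this, equicontinuity of the family is immediate: given $\varepsilon>0$, choose $\delta>0$ so that $|\tilde{\varphi}(\tau_1,u,f)-\tilde{\varphi}(\tau_2,u,f)|<\varepsilon$ whenever $|\tau_1-\tau_2|<\delta$; then for every $t\ge 0$ and all $s_1,s_2\in[-r,0]$ with $|s_1-s_2|<\delta$ we have $|\varphi(t,u,f)(s_1)-\varphi(t,u,f)(s_2)|=|\tilde{\varphi}(t+s_1,u,f)-\tilde{\varphi}(t+s_2,u,f)|<\varepsilon$, with $\delta$ independent of $t$. Hence $\{\varphi(t,u,f):\ t\ge 0\}$ is bounded and equicontinuous in $C([-r,0],\mathbb R^n)$, and the Arzel\`a--Ascoli theorem yields its precompactness.

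The only mild subtlety — which I do not expect to be a real obstacle — is that $\tilde{\varphi}(\cdot,u,f)$ need not be Lipschitz on the initial interval $[-r,0]$, since $u$ is merely continuous, so one cannot read off equicontinuity of the whole family from a single global Lipschitz constant; this is precisely why I split $[-r,0]$, where compactness of the interval gives uniform continuity of $u$ for free, from $[0,+\infty)$, where (F3) supplies the derivative bound. Alternatively, one could decompose the family as $\{\varphi(t,u,f):\ 0\le t\le r\}\cup\{\varphi(t,u,f):\ t\ge r\}$, the first piece being compact as the continuous image of $[0,r]$ under $t\mapsto\varphi(t,u,f)$ and the second being handled by the Lipschitz estimate on $[0,+\infty)$, and then use that a finite union of precompact sets is precompact.
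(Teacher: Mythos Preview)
Your proof is correct. The paper's own proof of this lemma is just a one-line citation to Lemmas 2.2.3 and 3.6.1 in Hale's \emph{Theory of Functional Differential Equations}, so you have effectively unpacked that reference: those lemmas in Hale provide precisely the derivative bound from (F3) on a bounded orbit and the resulting precompactness via Arzel\`a--Ascoli. Your self-contained argument is thus the same in spirit but more explicit; the only thing the citation buys is brevity, while your version makes clear exactly where (F3) enters and handles the minor issue of the non-Lipschitz initial segment $[-r,0]$ carefully.
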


\begin{proof} This statement follows from the Lemmas 2.2.3 and 3.6.1 in
\cite{hale}.
\end{proof}

\begin{definition}\label{defQ1} \rm
A solution $\varphi(t,u_0,f)$ of equation (\ref{eq6.4.2}) is said to be:
\begin{enumerate}
\item[-] {\em (positively) uniformly stable}, if for
arbitrary $\varepsilon >0$ there exists $\delta
=\delta(\varepsilon)>0$ such that
$||\varphi(t_0,u,f)-\varphi(t_0,u_0,f)||_{\mathcal C}<\delta$
($t_0\in \mathbb R$, $u\in\mathcal C$) implies
$||\varphi(t,x,f)-\varphi(t,x_0,f)||_{\mathcal C}<\varepsilon$ for
any $t\ge t_0$;

\item[-] {\em compact on $\mathbb R_{+}$} if the set $Q:=\overline{\varphi(\mathbb
R_{+},u_0,f)}$ is a compact subset of $\mathcal C$, where by bar we mean
the closure in $\mathcal C$ and $\varphi(\mathbb
R_{+},u_0,f):=\{\varphi(t,u_0,f):\ t\in \mathbb R_{+}\}$.
\end{enumerate}
\end{definition}

\textbf{Condition (F4).} Every solution $\varphi(t,v,g)$ of every
equation (\ref{eq6.4.3}) is bounded on $\mathbb R_{+}$ and
uniformly stable.

Let $f\in \mathcal B$, $\sigma(t,f)$ be the motion (in the shift
dynamical system $(\mathcal B,\mathbb R,\sigma)$) generated by $f$,
$u_0\in \mathcal C$, $\varphi(t,u_0,f)$ be a solution of equation
(\ref{eq6.4.2}), $x_0:=(u_0,f)\in X:=\mathcal C\times H(f)$ and
$\pi(t,x_0):=(\varphi(t,u_0,f),\sigma(t,f))$ be the motion of
skew-product dynamical system $(X,\mathbb R_{+},\pi)$.

Like in ODE case, a solution $\varphi(t,u_0,f)$ of
equation (\ref{eq6.4.2}) is called
{\em comparable} (respectively, {\em strongly comparable} or {\em uniformly comparable}) if the
motion $\pi(t,x_0)$ is comparable (respectively, strongly comparable
or uniformly comparable) with $\sigma(t,f)$ by character of recurrence.

Applying the results from Sections \ref{S2}-\ref{S5} we can obtain
a series of results for functional differential equation
(\ref{eq6.4.2}). Below we formulate some of them.

\begin{theorem}\label{thQ1} Suppose that the following assumptions
are fulfilled:
\begin{enumerate}
\item[-] the function $f\in \mathcal B$ is
positively Poisson stable in $t\in\mathbb R$ uniformly w.r.t.
$u$ on every bounded subset from $\mathcal C$;
\item[-] each solution $\varphi(t,u_0,f)$ of equation
(\ref{eq6.4.2}) is bounded on $\mathbb R_{+}$ and uniformly stable.
\end{enumerate}

Then under the conditions (F1)--(F3) the following statements hold:
\begin{enumerate}
\item[1.] for any solution $\varphi(t,u_0,f)$ of equation
(\ref{eq6.4.2}) there exists a solution $\varphi(t,\gamma_{u_0},f)$
of (\ref{eq6.4.2}) defined and bounded on $\mathbb R$ such that:
\begin{enumerate}
\item $\varphi(t,\gamma_{u_0},f)$ is a comparable solution of
(\ref{eq6.4.2});
\item $\lim\limits_{t\to
+\infty}||\varphi(t,\alpha_{u_0},f)-\varphi(t,\gamma_{u_0},f)||_{\mathcal
C}=0$.
\end{enumerate}
\item[2.] if the function $f\in \mathcal B$ is stationary (respectively, $\tau$-periodic,
Levitan almost periodic, almost recurrent, Poisson stable) in $t\in
\mathbb R$ uniformly w.r.t. $u$ on every bounded subset
from $\mathcal C$, then $\varphi(t,\gamma_{u_0},f)$ has the same recurrent property and
hence $\varphi(t,\alpha_{u_0},f)$ has the same asymptotic recurrent property in $t$.
\end{enumerate}
Here the notations $\varphi(t,\alpha_{u_0},f)$ and $\varphi(t,\gamma_{u_0},f)$ have the similar meaning as in Section \ref{sec6.1}.
\end{theorem}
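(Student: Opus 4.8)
The plan is to reduce Theorem~\ref{thQ1} to the abstract results of Section~\ref{S5}, proceeding exactly as Theorem~\ref{thA1} was deduced in the ODE case. I would set $Y:=H(f)$ with the shift $\sigma$ induced from $(\mathcal B,\mathbb R,\sigma)$, form the NDS $\langle (X,\mathbb R_{+},\pi),(Y,\mathbb R,\sigma),h\rangle$ with $X:=\mathcal C\times Y$, $\pi:=(\varphi,\sigma)$, $h:=pr_{2}$, fix the given solution $\varphi(t,u_{0},f)$ and put $x_{0}:=(u_{0},f)$, $y_{0}:=f$. Since by hypothesis $f^{t_{n}}\to f$ in $\mathcal B$ for some $t_{n}\to+\infty$, we have $y_{0}\in\omega_{y_{0}}$, which is precisely the hypothesis required to apply Theorem~\ref{thM1}. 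It then suffices to verify conditions (C1)--(C3) for this NDS (with $\mathcal S:=X$; conditions (C1) and (C3) below hold on all of $X$, while the local condition (C2) is exactly what the hypothesis ``each solution of \eqref{eq6.4.2} is bounded on $\mathbb R_{+}$ and uniformly stable'' provides, after restricting the base to the invariant set $\omega(f)=H^{+}(f)$ if one wants (C2) literally for every fibre), and then to invoke Theorems~\ref{thM1} and~\ref{thShc2.1}.

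Condition (C3), monotonicity of the NDS, is immediate from Condition (F2), equivalently from Lemma~\ref{lQ11} when $f$ is quasi-monotone. Condition (C1) is the genuinely new point compared with the ODE case: I would verify it by observing that a fibre $K_{y}$ of a conditionally compact set $K\subseteq X$ is a compact subset of $\mathcal C$, hence bounded and equicontinuous by the Arzel\`a--Ascoli theorem, and that the coordinatewise pointwise infimum and supremum of a bounded equicontinuous family of maps $[-r,0]\to\mathbb R^{n}$ are again continuous, so they belong to $\mathcal C$ and are evidently the greatest lower bound and least upper bound of $K_{y}$. For (C2): since $\varphi(t,u_{0},f)$ is bounded on $\mathbb R_{+}$, Lemma~\ref{lQ1} (using (F3)) gives that $\overline{\varphi(\mathbb R_{+},u_{0},f)}$ is compact in $\mathcal C$, hence by Lemma~\ref{l3.1} the semi-hull $H^{+}(x_{0})$, and so $\Sigma^{+}_{x_{0}}$, is conditionally precompact; and since $\varphi(t,u_{0},f)$ is uniformly stable, i.e. $x_{0}$ is a uniformly stable point, and $h$ is open (Remark~\ref{open}), Corollary~\ref{lUS1} yields that $\omega_{x_{0}}$ is positively uniformly stable. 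Because the boundedness/uniform-stability hypothesis is imposed on \emph{every} solution of \eqref{eq6.4.2}, the same three results apply verbatim to the auxiliary points $\alpha_{y_{0}},\gamma_{y_{0}}$ and to $\alpha_{q},\gamma_{q}$ for $q\in\omega_{y_{0}}$ that arise inside the proofs of Lemma~\ref{lAPS2} and Theorem~\ref{thM1}.

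With (C1)--(C3) in hand, Theorem~\ref{thM1} furnishes the point $\gamma_{y_{0}}$ (and likewise $\delta_{y_{0}}$), comparable with $y_{0}=f$ by character of recurrence and satisfying $\lim_{t\to+\infty}\rho(\pi(t,\alpha_{y_{0}}),\pi(t,\gamma_{y_{0}}))=0$. Writing $\gamma_{y_{0}}=(\gamma_{u_{0}},f)$ and $\alpha_{y_{0}}=(\alpha_{u_{0}},f)$, and noting via Corollary~\ref{enorbit} that $t\mapsto\pi(t,\gamma_{y_{0}})$ is an entire trajectory contained in the invariant set $\omega_{\alpha_{y_{0}}}$, whose $\mathcal C$-components lie in the compact set $\overline{\varphi(\mathbb R_{+},\alpha_{u_{0}},f)}$, one gets that $\varphi(t,\gamma_{u_{0}},f)$ is defined and bounded on all of $\mathbb R$; this is statement~1. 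Statement~2 then follows from Theorem~\ref{thShc2.1} (equivalently, from Corollary~\ref{corP1} specialized to the present NDS): comparability transfers stationarity, $\tau$-periodicity, Levitan almost periodicity, almost recurrence and Poisson stability from $y_{0}$ to $\gamma_{u_{0}}$, and the corresponding asymptotic property of $\alpha_{u_{0}}$ is read off from the limit in statement~1 via Definition~\ref{defAP1}. I expect the main obstacle to be the bookkeeping of (C1)--(C3) in the $\mathcal C$-setting: once (C1) is secured by Arzel\`a--Ascoli and (C2) by Lemmas~\ref{lQ1} and~\ref{l3.1} together with Corollary~\ref{lUS1}, the remainder is a direct appeal to the abstract theorems; the point requiring care is that (C2) must hold not only at $x_{0}$ but at the comparison points $\alpha_{y_{0}},\gamma_{y_{0}}$ produced by Theorem~\ref{thM1}, which is precisely why the hypothesis is placed on all solutions of \eqref{eq6.4.2} rather than on a single one.
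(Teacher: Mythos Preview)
Your proposal is correct and follows essentially the same route as the paper: set up the shift dynamical system on $Y=H(f)\subset\mathcal B$, form the skew-product NDS, and then invoke Lemma~\ref{l3.1} together with Theorems~\ref{thM1} and~\ref{thShc2.1}. The paper's proof is terser and does not spell out the verification of (C1) via Arzel\`a--Ascoli or the use of Lemma~\ref{lQ1} and Corollary~\ref{lUS1} for (C2), but these are exactly the ingredients implicit in its appeal to Lemma~\ref{l3.1} and Theorem~\ref{thM1}; your added bookkeeping is appropriate and does not diverge from the intended argument.
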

\begin{proof} Let $f\in \mathcal B$
and $(\mathcal B,\mathbb R,\sigma)$ be the shift dynamical system on
$\mathcal B$. Denote by $Y:=H(f)$ and $(Y,\mathbb R,\sigma)$ the
shift dynamical system on $H(f)$ induced from $(\mathcal B,\mathbb
R,\sigma)$. Consider the cocycle $\langle \mathcal
C,\varphi,(Y,\mathbb R,\sigma)\rangle$ generated by equation
(\ref{eq6.4.2}) (see Condition (F1)). Now to finish the proof of
Theorem it is sufficient to apply Lemma \ref{l3.1} and Theorems \ref{thM1},
\ref{thShc2.1}.
\end{proof}


\begin{theorem}\label{thQ2} Suppose that the following assumptions
are fulfilled:
\begin{enumerate}
\item[-] the function $f\in \mathcal B$ is
strongly Poisson stable in $t\in\mathbb R$ uniformly w.r.t. $u$ on
every bounded subset from $\mathcal B$;
\item[-] the set $H(f)$ is compact in $\mathcal B$.
\end{enumerate}

Then under the conditions (F1)--(F4) the following statements hold:
\begin{enumerate}
\item[1.] for any solution $\varphi(t,v_0,g)$ of every equation
(\ref{eq6.4.3}) there exists a solution $\varphi(t,\gamma_{v_0},g)$
of (\ref{eq6.4.3}) defined and bounded on $\mathbb R$ such that:
\begin{enumerate}
\item $\varphi(t,\gamma_{v_0},g)$ is a uniformly comparable solution of
(\ref{eq6.4.3});
\item $\lim\limits_{t\to
+\infty}||\varphi(t,\alpha_{v_0},g)-\varphi(t,\gamma_{v_0},g)||_{\mathcal
C}=0$.
\end{enumerate}

\item[2.] if the function $f\in \mathcal B$ is quasi-periodic (respectively, Bohr
almost periodic, almost automorphic, Birkhoff recurrent, pseudo recurrent,
uniformly Poisson stable) in $t\in \mathbb R$ uniformly w.r.t.
$u$ on every bounded subset from $\mathcal C$, then
$\varphi(t,\gamma_{u_0},f)$ has the recurrent property and hence
$\varphi(t,\alpha_{u_0},f)$ has the same asymptotic recurrent property in $t$.

\end{enumerate}
\end{theorem}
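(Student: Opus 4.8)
The plan is to realise equation (\ref{eq6.4.2}) together with its hull as the monotone NDS of Section \ref{S5} and then invoke the abstract Theorems \ref{thM2} and \ref{tS2} together with Shcherbakov's Theorem \ref{thShc6}; the argument parallels the proof of Theorem \ref{thA2}. By Condition (F1) and the cocycle properties recalled above, equation (\ref{eq6.4.2}) generates the cocycle $\langle\mathcal C,\varphi,(Y,\mathbb R,\sigma)\rangle$ and the NDS $\langle(X,\mathbb R_{+},\pi),(Y,\mathbb R,\sigma),h\rangle$ with $Y:=H(f)$, $X:=\mathcal C\times Y$, $\pi:=(\varphi,\sigma)$ and $h:=pr_{2}$. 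I take $\mathcal S:=X$, which is ordered on each fibre by $\mathcal C_{+}$, satisfies $h(\mathcal S)=Y$, and is positively invariant.

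Next I would check (C1)--(C3) for this $\mathcal S$. Condition (C3) is precisely the monotonicity Condition (F2). Condition (C1) follows from the Arzel\`a--Ascoli theorem: if $K\subseteq X$ is conditionally compact and $y\in Y$, then $K_{y}=K\cap h^{-1}(y)$ is a compact, hence equicontinuous, subset of $\mathcal C\times\{y\}$, so its pointwise supremum and infimum are again continuous on $[-r,0]$ and furnish the supremum $\beta_{y}(K)$ and infimum $\alpha_{y}(K)$ of $K_{y}$. For (C2), fix $x=(v,g)\in X$: by (F4) the solution $\varphi(t,v,g)$ is bounded on $\mathbb R_{+}$, so by (F3) and Lemma \ref{lQ1} the set $\varphi(\mathbb R_{+},v,g)$ is precompact in $\mathcal C$, whence $\Sigma^{+}_{x}$ is conditionally precompact by Lemma \ref{l3.1}; again by (F4) this solution is uniformly stable, so $\Sigma^{+}_{x}$ is uniformly stable, and since $h$ is open (Remark \ref{open}), Corollary \ref{lUS1} gives that $\omega_{x}$ is positively uniformly stable.

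Now fix $g\in H(f)$ and $v_{0}\in\mathcal C$, set $x_{0}:=(v_{0},g)$ and $y_{0}:=h(x_{0})=g$. Since $H(g)\subseteq H(f)$ is compact, $y_{0}$ is Lagrange stable; and since $f$ is strongly Poisson stable, every point of $H(f)$ is strongly Poisson stable, in particular $y_{0}$. Hence Theorem \ref{thM2} applies and yields the points $\gamma_{y_{0}}=(\gamma_{v_{0}},g)$ and $\delta_{y_{0}}=(\delta_{v_{0}},g)$, strongly comparable with $y_{0}$ by character of recurrence, together with the limit relation \eqref{eqPM3}, which in fibre coordinates is statement~1(b). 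Since $y_{0}$ is Lagrange stable, Theorem \ref{thShc6} upgrades this strong comparability to uniform comparability of $\gamma_{y_{0}}$ (and of $\delta_{y_{0}}$) with $y_{0}$ and shows, moreover, that $\gamma_{y_{0}},\delta_{y_{0}}$ are themselves Lagrange stable; this is statement~1(a) for equation (\ref{eq6.4.3}).

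Finally, for statement~2 take $g=f$ and $v_{0}=u_{0}$. The spaces $X=\mathcal C\times H(f)$ and $Y=H(f)$ are complete, and $\gamma_{y_{0}}$ is uniformly comparable with $y_{0}=f$; so if $f$ is quasi-periodic (respectively Bohr almost periodic, almost automorphic, Birkhoff recurrent, pseudo recurrent, uniformly Poisson stable) in $t$, then Theorem \ref{tS2} endows $\varphi(t,\gamma_{u_{0}},f)$ with the same recurrence property, and \eqref{eqPM3} together with Definition \ref{defAP1} then shows that $\varphi(t,\alpha_{u_{0}},f)$ is asymptotically of the same type. The step demanding genuine care is the verification of (C1) in the infinite-dimensional fibre $\mathcal C$ (the equicontinuity argument via Arzel\`a--Ascoli) together with the translation between ``uniformly stable solution'' of (\ref{eq6.4.3}) and ``uniformly stable $\omega$-limit set'' via Corollary \ref{lUS1}; once (C1)--(C3) are in place, the remaining argument is a verbatim copy of the proof of Theorem \ref{thA2}.
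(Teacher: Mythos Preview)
Your proof is correct and follows essentially the same route as the paper, which simply says the theorem ``can be proved similarly as Theorem \ref{thQ1} using Lemma \ref{l3.1} and Theorems \ref{thM2}, \ref{tS2}'' (mirroring the proof of Theorem \ref{thA2}). You have unpacked this one-liner: the explicit verification of (C1) via Arzel\`a--Ascoli on the compact fibres $K_y\subset\mathcal C$, the use of Lemma \ref{lQ1}, Lemma \ref{l3.1} and Corollary \ref{lUS1} for (C2), and the invocation of Theorem \ref{thShc6} to pass from strong to uniform comparability are exactly the details the paper suppresses but intends.
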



\begin{proof}
This theorem can be proved similarly as Theorem
\ref{thQ1} using Lemma \ref{l3.1} and Theorems \ref{thM2}, \ref{tS2}.
\end{proof}


\subsection{Parabolic systems}

Consider the following system of parabolic differential equations
\begin{equation}\label{eqP1}
\partial_{t}{w_j}=\nu_{j}\Delta w_{j}+ f_{j}(t,x,w_1,\ldots,w_n), \
j=1,\ldots,n
\end{equation}
in a smooth bounded domain $D\subset \mathbb R^{d}$, $d\le 3$, with
the Neumann boundary conditions. Here $\Delta$ is the Laplace
operator and $\nu_{j}$ are some positive constants and $f =
(f_1,\ldots,f_n)$ is a function satisfying certain conditions (specified
below). Let $\mathfrak B$ be the Banach space of continuous functions
$h:\overline{D}\times \mathbb R_{+}^{n}\to \mathbb R^{n}$ such that
all derivatives $\partial_{w_j}{h_i}$ are continuous on
$\overline{D}\times \mathbb R_{+}^{n}$ and $||h||_{\mathfrak B}<
\infty$, where
\begin{equation}\label{eqP02}
||h||_{\mathfrak B}:=\sup\{(1+|w|)^{-1}\sum\limits_{i\ge
1}|h_{i}(x,w)|+\sum\limits_{i,j\ge 1}|\partial_{w_j}{}h_i(x,w)|:
(x,w)\in \overline{D}\times \mathbb R_{+}^{n}\}.\nonumber
\end{equation}

Denote by $C(\mathbb R,\mathfrak B)$ the space of all continuous
functions $f:\mathbb R\to \mathfrak B$ equipped with the compact-open
topology and $(C(\mathbb R,\mathfrak B),\mathbb R,\sigma)$ the
shift dynamical system on $C(\mathbb R,\mathfrak B)$.

\textbf{Condition (P1).} The function $f=(f_1,\ldots,f_n)\in C(\mathbb
R,\mathfrak B)$, i.e. $f:\mathbb R\times
\overline{D}\times \mathbb R_{+}^{n}\to \mathbb R^{n}$ is continuous
in $t\in\mathbb R$ uniformly w.r.t. $(x,w)\in
\overline{D}\times \mathbb R_{+}^{n}$.

\textbf{Condition (P2).} The function $f=(f_1,\ldots,f_n)\in
C(\mathbb R,\mathfrak B)$ is positively Poisson stable in $t\in
\mathbb R$ uniformly w.r.t. $(x,w)\in \overline{D}\times
\mathbb R_{+}^{n}$, i.e. there exists a sequence $t_{k}\to +\infty$
as $k\to \infty$ such that $f(t+t_k,x,w)\to f(t,x,w)$ uniformly w.r.t.
$(t,x,w)\in [-l,l]\times \overline{D}\times \mathbb
R_{+}^{n}$ for any $l\in \mathbb N$.

\textbf{Condition (P3).} Each component $f_{i}(t,x,w)\ge 0$ for any $(t,x)\in
\mathbb R\times \overline{D}$, $w\in \mathbb R_{+}^{n}$ of the form
$w=(w_1,\dots,w_{i-1},0,w_{i+1},\ldots,w_n)$ ($i=1,\ldots,n$), and it
is cooperative, i.e.
$\frac{\partial{f_i(t,x,w)}}{\partial{w_j}}\ge 0$ for $i\not= j$
and $(t,x,w)\in \mathbb R\times\overline{D}\times\mathbb R_{+}^{n}$.

Along with (\ref{eqP1}) we will consider the problem
\begin{equation}\label{eqP2}
\partial_{t}{w_j}=\nu_{j}\Delta w_{j}+ g_{j}(t,x,w_1,\ldots,w_n) , \
j=1,\ldots,n
\end{equation}
in the domain $D$ with the Neumann boundary condition and
arbitrary function $g:=(g_1,\ldots,g_n)\in H(f)$, where
$H(f):=\overline{\{f^{h}:\ h\in \mathbb R\}}$, $f^{h}$ is
$h$-translation of $f\in C(\mathbb R,\mathfrak B)$ and by bar we mean
the closure in $C(\mathbb R,\mathfrak B)$.

\begin{remark}\label{remP1} \rm
Note that if the function $f\in C(\mathbb R,\mathfrak
B)$ possesses the property (P3), then every function $g\in H(f)$ possesses
the same property.
\end{remark}

Let $C(\overline{D},\mathbb R^{n})$ (respectively,
$C(\overline{D},\mathbb R^{n}_{+})$) be the space of all continuous
functions $f: \overline{D}\to \mathbb R^{n}$ (respectively,  $f:
\overline{D}\to \mathbb R^{n}_{+}$) and $V:=C(\overline{D},\mathbb
R^{n})$ equipped with the norm $||f||_{V}:=\max\{|f(x)|:\ x\in
\overline{D}\}$ (respectively, $V_{+}:=C(\overline{D},\mathbb
R^{n}_{+})$ with the same norm).
By $V_{+}$ it induces a partial order on $C(\overline{D},\mathbb R^{n})$:
$f\le g$ if and only if $g-f\in V_+$.

Under the conditions (P1)--(P3) it can be proved (see, for example,
\cite{Chu_2001}, \cite[ChIII]{Hen} and \cite[ChVII]{Smi_1995}) for
any $g\in H(f)$ and $v\in V_{+}$ that the problem (\ref{eqP2}) admits a
unique solution $\varphi(t,v,g)$ belonging to the space $C(\mathbb
R_{+},V_{+})$. Denote by $Y:=H(f)$ and $(Y,\mathbb R,\sigma)$ the
shift dynamical system on $H(f)$. From general properties of
solutions of (\ref{eqP2}) we have:
\begin{enumerate}
\item $\varphi(0,v,g)=v$ for all $v\in V_{+}$ and $g\in H(f)$;

\item  $\varphi(t,\varphi(\tau,v,g),g_{\tau})=\varphi(t+\tau,v,g)$
for $ v\in V_{+}$, $g\in H(f)$ and $t,\tau \in
\mathbb{R}_{+}$;

\item  the mapping $\varphi:\mathbb{R}_{+}\times V_{+}\times H(f)\to V_{+}$
is continuous;

\item  if $u\le v $, then $\varphi(t,u,g)\le \varphi(t,v,g)$ for all $t\ge 0$ and $g\in H(f)$.
\end{enumerate}
Therefore, the problem (\ref{eqP1}) generates a monotone cocycle $\langle V_{+},\varphi,
(Y,\mathbb{R},\sigma)\rangle $ and hence a monotone NDS
$\langle (X,\mathbb{R}_{+},\pi),\, (Y,\mathbb{R},\sigma), h\rangle
$, where $X:= V_{+}\times Y$, $\pi:=(\varphi,\sigma)$ and
$h:=pr_2:X\to Y$.


\begin{remark}\label{remP0} \rm
Since for $d\le 3$ the Sobolev space $H^2(D)$ is compactly embedded into
$C(\overline{D},\mathbb R^n)$, one can prove (see, e.g.
\cite{Hen}) that the set $\varphi(\mathbb R_{+},u_,f)$ is
precompact if it is bounded.
\end{remark}


Like in ODE case, the compactness, uniform stability of a solution $\varphi(t,u_0,f)$ to
(\ref{eqP1}) can be defined similarly, and the (strong, uniform) comparability of $\varphi(t,u_0,f)$
can also be defined similarly. Then we are in the position to state the following

\begin{theorem}\label{thP1}
Suppose that each solution $\varphi(t,u_0,f)$
of (\ref{eqP1}) is compact on $\mathbb R_{+}$ and
uniformly stable. Then under the conditions (P1)--(P3) the
following statements hold:
\begin{enumerate}
\item[1.] For any solution $\varphi(t,u_0,f)$ of equation
(\ref{eqP1}) there exists a solution $\varphi(t,\gamma_{u_0},f)$ of
(\ref{eqP1}) defined and compact on $\mathbb R$ such that:
\begin{enumerate}
\item $\varphi(t,\gamma_{u_0},f)$ is a comparable solution of
(\ref{eqP1});
\item $\lim\limits_{t\to+
\infty}||\varphi(t,\alpha_{u_0},f)-\varphi(t,\gamma_{u_0},f)||_{V}=0$.
\end{enumerate}

\item[2.] If the function $f\in C(\mathbb R,\mathfrak B)$ is stationary (respectively, $\tau$-periodic,
Levitan almost periodic, almost recurrent, Poisson stable) in $t\in
\mathbb R$, then the solution $\varphi(t,\gamma_{u_0},f)$ has the same recurrent property and hence
the solution $\varphi(t,\alpha_{u_0},f)$ has the same asymptotic recurrent property in $t$.
\end{enumerate}
Here the notations $\varphi(t,\alpha_{u_0},f)$ and $\varphi(t,\gamma_{u_0},f)$ have the similar meaning as in Section \ref{sec6.1}.
\end{theorem}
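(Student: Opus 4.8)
The plan is to derive Theorem \ref{thP1} from the abstract machinery of Section \ref{S5}, following the route used for Theorems \ref{thA1} and \ref{thQ1}. Take $\mathcal S:=X=V_{+}\times Y$ with $Y:=H(f)$, and recall that under (P1)--(P3) the problem (\ref{eqP1}) generates the monotone cocycle $\langle V_{+},\varphi,(Y,\mathbb R,\sigma)\rangle$ and the associated NDS $\langle (X,\mathbb R_{+},\pi),(Y,\mathbb R,\sigma),h\rangle$ with $h=pr_{2}$. Once conditions (C1)--(C3) are verified for this NDS and the hypothesis of Theorem \ref{thM1} is checked, assertion~1 follows from Theorem \ref{thM1} (comparability of $\gamma_{y_{0}}$, resp.\ $\delta_{y_{0}}$, with $y_{0}$ and the limit relation (\ref{eqP0})) and assertion~2 follows from Theorem \ref{thShc2.1} (transfer of the recurrence type to $\gamma_{y_{0}}$), the asymptotic part being a consequence of the limit relation and Definition \ref{defAP1}; throughout one reads the conclusions back into solution language via $\gamma_{y_{0}}=(\gamma_{u_{0}},f)$, $\alpha_{y_{0}}=(\alpha_{u_{0}},f)$, and replaces $\rho$ by the $V$-norm.

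Condition (C3) is immediate: under (P1)--(P3) the cocycle $\varphi$ is monotone by the fourth listed property of its solutions, and by Remark \ref{remP1} every $g\in H(f)$ again satisfies (P3); hence the NDS (\ref{eqNDS1}) is monotone. The heart of the matter is (C1). Let $K\subseteq X$ be conditionally compact and $y\in Y$; then $K_{y}=K\cap h^{-1}(y)$ is a compact, hence (by Arzel\`a--Ascoli) equicontinuous and uniformly bounded, subset of the fibre $V_{+}=C(\overline{D},\mathbb R^{n}_{+})$. Therefore the functions $x\mapsto\inf\{v(x):v\in K_{y}\}$ and $x\mapsto\sup\{v(x):v\in K_{y}\}$, with $\inf$ and $\sup$ taken componentwise in $\mathbb R^{n}$, are continuous on $\overline{D}$ and nonnegative, i.e.\ they lie in $V_{+}$; these are exactly $\alpha_{y}(K)$ and $\beta_{y}(K)$ for the order induced by the cone $V_{+}$. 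Note that (C4) is not available here, the fibres being infinite dimensional, which is why we must invoke Theorem \ref{thM1} rather than Theorem \ref{thM1_1}.

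It remains to verify (C2) and the hypothesis of Theorem \ref{thM1} for the given $x_{0}:=(u_{0},f)$. Since $\varphi(\cdot,u_{0},f)$ is compact on $\mathbb R_{+}$, the set $\overline{\{\varphi(t,u_{0},f):t\ge 0\}}$ is compact, so by Lemma \ref{l3.1} $H^{+}(x_{0})$, and hence $\Sigma_{x_{0}}^{+}$, is conditionally precompact; since (P2) furnishes $t_{k}\to+\infty$ with $\sigma(t_{k},f)\to f$ we have $y_{0}:=f\in\omega_{y_{0}}\ne\emptyset$, so by Theorem \ref{th01} the set $\omega_{x_{0}}$ is conditionally compact and invariant with bounded projection $pr_{1}(\omega_{x_{0}})$. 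Since $\varphi(\cdot,u_{0},f)$ is uniformly stable and $h$ is open (Remark \ref{open}), $\omega_{x_{0}}$ is positively uniformly stable by Corollary \ref{lUS1}. For the auxiliary points: using monotonicity, the positive semi-trajectory of $\alpha_{y_{0}}$ (and of $\gamma_{y_{0}}$) is trapped between $0$ and the bounded range of solutions issuing from $\omega_{x_{0}}$, so $\varphi(\cdot,\alpha_{u_{0}},f)$ is bounded on $\mathbb R_{+}$, hence precompact by Remark \ref{remP0}; this makes $\gamma_{u_{0}}$ well defined and supplies conditional precompactness of $\Sigma_{\alpha_{y_{0}}}^{+}$ and $\Sigma_{\gamma_{y_{0}}}^{+}$. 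Thus Theorem \ref{thM1} applies. By Corollary \ref{enorbit} the map $t\mapsto\gamma_{\sigma(t,y_{0})}$ is an entire trajectory through $\gamma_{y_{0}}$, so $\varphi(t,\gamma_{u_{0}},f)$ is defined for all $t\in\mathbb R$, and its range, contained in $pr_{1}(\omega_{\alpha_{y_{0}}})\subseteq\overline{\{\varphi(t,\alpha_{u_{0}},f):t\ge 0\}}$, is precompact; hence $\varphi(\cdot,\gamma_{u_{0}},f)$ is compact on $\mathbb R$. This gives assertion~1, and assertion~2 then follows from Theorem \ref{thShc2.1}.

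The step I expect to be the real obstacle is (C1) in the infinite-dimensional fibre $C(\overline{D},\mathbb R^{n})$ — showing that the pointwise componentwise infimum and supremum of a conditionally compact fibre-set are again continuous and nonnegative — together with the accompanying bookkeeping guaranteeing that $\alpha_{y_{0}}$ and $\gamma_{y_{0}}$ have conditionally precompact positive semi-trajectories and positively uniformly stable $\omega$-limit sets. In the ODE case these points are essentially automatic; here they must be extracted from compactness on $\mathbb R_{+}$, monotonicity, and the smoothing/compact-embedding property recorded in Remark \ref{remP0}, precisely because condition (C4) is unavailable.
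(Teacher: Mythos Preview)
Your proposal is correct and follows essentially the same route as the paper: set up the monotone NDS from the parabolic cocycle, then invoke Lemma \ref{l3.1} (conditional precompactness from the compact solution orbit), Corollary \ref{lUS1} (uniform stability of $\omega_{x_{0}}$ via openness of $h$), Theorem \ref{thM1} (comparability and the asymptotic relation), and Theorem \ref{thShc2.1} (transfer of the recurrence class). The paper's own proof is a two-line sketch citing exactly these results; you have simply filled in the verification of (C1)--(C3) and the auxiliary bookkeeping for $\alpha_{y_{0}}$ and $\gamma_{y_{0}}$ that the paper leaves implicit, including the observation that (C4) fails in the infinite-dimensional fibre $C(\overline{D},\mathbb R^{n})$, which is why one must appeal to Theorem \ref{thM1} rather than Theorem \ref{thM1\_1}.
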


\begin{proof} Let $f\in C(\mathbb R,\mathfrak B)$
and $(C(\mathbb R,\mathfrak B),\mathbb R,\sigma)$ be the shift
dynamical system on $C(\mathbb R,\mathfrak B)$. Denote by $Y:=H(f)$
and $(Y,\mathbb R,\sigma)$ the shift dynamical system on $H(f)$
induced from $(C(\mathbb R,\mathfrak B),\mathbb R,\sigma)$. Consider
the cocycle $\langle V_{+},\varphi,(Y,\mathbb R,\sigma)\rangle$
generated by (\ref{eqP1}). Now to finish the proof it is
sufficient to apply Lemma \ref{l3.1} and Theorems \ref{thM1}, \ref{thShc2.1}.
\end{proof}


\begin{theorem}\label{thP2} Suppose that the following assumptions
are fulfilled:
\begin{enumerate}
\item[-] the set $H(f)$ is a compact subset of $C(\mathbb R,\mathfrak
B)$;
\item[-] the function $f\in C(\mathbb R,\mathfrak B)$ is strongly Poisson stable in $t\in\mathbb R$;
\item[-] each solution $\varphi(t,v_0,g)$ of every problem (\ref{eqP2}) is compact
on $\mathbb R_{+}$ and uniformly stable.
\end{enumerate}

Then under the conditions (P1)--(P3) the following statements hold:
\begin{enumerate}
\item[1.] For any solution $\varphi(t,v_0,g)$ of every problem (\ref{eqP2})
there exists a solution $\varphi(t,\gamma_{v_0},g)$ of (\ref{eqP2})
defined and compact on $\mathbb R$ such that:
\begin{enumerate}
\item $\varphi(t,\gamma_{v_0},g)$ is a uniformly comparable solution of
(\ref{eqP2});
\item $\lim\limits_{t\to+
\infty}||\varphi(t,\alpha_{v_0},g)-\varphi(t,\gamma_{v_0},g)||_{V}=0$.
\end{enumerate}

\item[2.] If the function $f\in C(\mathbb R,\mathfrak B)$ is quasi-periodic (respectively, Bohr
almost periodic, almost automorphic, Birkhoff recurrent, pseudo recurrent,
uniformly Poisson stable) in $t\in \mathbb R$, then the solution
$\varphi(t,\gamma_{u_0},f)$ has the same recurrent property and hence the
solution $\varphi(t,\alpha_{u_0},f)$ has the same asymptotic recurrent property in $t$.
\end{enumerate}
\end{theorem}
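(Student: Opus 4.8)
The plan is to follow the scheme of the proofs of Theorems~\ref{thQ1} and~\ref{thQ2}, reducing the statement to the abstract results of Section~\ref{S5}. First I would fix $f\in C(\mathbb R,\mathfrak B)$, consider the shift dynamical system $(C(\mathbb R,\mathfrak B),\mathbb R,\sigma)$, and set $Y:=H(f)$, which is compact by hypothesis, with $(Y,\mathbb R,\sigma)$ the induced shift flow. Under (P1)--(P3) the problem~(\ref{eqP1}) generates, as recalled above, the monotone cocycle $\langle V_{+},\varphi,(Y,\mathbb R,\sigma)\rangle$ and the monotone NDS $\langle (X,\mathbb R_{+},\pi),(Y,\mathbb R,\sigma),h\rangle$ with $X=V_{+}\times Y$, $\pi=(\varphi,\sigma)$, $h=pr_2$, the bundle $(X,h,Y)$ carrying the fiberwise order induced by $V_{+}$. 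Taking $\mathcal S:=X$ (trivially positively invariant with $h(\mathcal S)=Y$), it then suffices to verify conditions (C1)--(C3) for this NDS and to invoke Theorems~\ref{thM2}, \ref{thShc6} and~\ref{tS2}.

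Condition (C3) is precisely the monotonicity of the cocycle, which holds under (P3). For (C2), fix $x_0=(u_0,y_0)\in\mathcal S$; since by hypothesis every solution of every problem~(\ref{eqP2}) is compact on $\mathbb R_{+}$, the set $Q^{+}_{x_0}:=\overline{\{\varphi(t,u_0,y_0):t\ge 0\}}$ is compact, so by Lemma~\ref{l3.1} the semi-hull $H^{+}(x_0)$ is conditionally compact and hence $\Sigma_{x_0}^{+}$ is conditionally precompact; moreover $h$ is a natural projection, hence open (Remark~\ref{open}), and the solution $\varphi(t,u_0,y_0)$ is uniformly stable, so by Corollary~\ref{lUS1} the set $\omega_{x_0}$ is positively uniformly stable. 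For (C1), let $K\subseteq\mathcal S$ be conditionally compact and $y\in Y$; then $K_{y}=h^{-1}(y)\cap K$ is a compact subset of the fiber $X_{y}\cong V_{+}\subset C(\overline{D},\mathbb R^{n})$, hence bounded and equicontinuous by the Arzel\`a--Ascoli theorem. Consequently the coordinatewise pointwise infimum $\alpha_{y}(K)(x):=\inf\{v(x):v\in K_{y}\}$ is well defined and nonnegative, and equicontinuity of $K_{y}$ bounds $|\alpha_{y}(K)(x)-\alpha_{y}(K)(x')|$ by the common modulus of continuity, so $\alpha_{y}(K)\in V_{+}$; it is clearly the greatest lower bound of $K_{y}$. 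The supremum $\beta_{y}(K)$ is obtained the same way, so (C1) holds.

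Having (C1)--(C3), I would apply Theorem~\ref{thM2}. Since $f$ is strongly Poisson stable, every point $q\in Y=H(f)$ is strongly Poisson stable (by Definition~\ref{defSP1}, using that $H(q)\subseteq H(f)$), so Theorem~\ref{thM2} applies to $x_0=(v_0,g)$ for every $g\in H(f)$ and $v_0\in V_{+}$ and yields that $\gamma_{v_0}$ (resp.\ $\delta_{v_0}$) gives rise, via Corollary~\ref{enorbit}, to a solution $\varphi(t,\gamma_{v_0},g)$ of~(\ref{eqP2}) defined on $\mathbb R$ that is strongly comparable with $\sigma(t,g)$ by character of recurrence and satisfies
\[
\lim_{t\to+\infty}\|\varphi(t,\alpha_{v_0},g)-\varphi(t,\gamma_{v_0},g)\|_{V}=0 ;
\]
this is statement~1(b). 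Because $Y=H(f)$ is compact, each $g$ is Lagrange stable, so Theorem~\ref{thShc6} upgrades strong comparability to Lagrange stability of $\gamma_{v_0}$ together with uniform comparability with $\sigma(t,g)$ (in particular $\varphi(t,\gamma_{v_0},g)$ is compact on $\mathbb R$), which is statement~1(a). Finally, uniform comparability and Theorem~\ref{tS2} (applicable since $X$ and $Y$ are complete metric spaces) transfer quasi-periodicity, Bohr almost periodicity, almost automorphy, Birkhoff recurrence, pseudo recurrence and uniform Poisson stability (i.e.\ pseudo periodicity) from $f$ to $\varphi(t,\gamma_{u_0},f)$, whereupon the above asymptotic formula forces $\varphi(t,\alpha_{u_0},f)$ to have the corresponding asymptotic recurrence property; this is statement~2.

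The only point that really requires care, hence the likely main obstacle, is the verification of (C1) in the infinite-dimensional ordered space $C(\overline{D},\mathbb R^{n})$: unlike the situation in $\mathbb R^{n}$ for ordinary differential equations, where infima and suprema of conditionally compact fibers are immediate, here one must exploit that compact subsets of $C(\overline{D},\mathbb R^{n})$ are equicontinuous, which is exactly what guarantees that the coordinatewise pointwise infimum and supremum lie again in $C(\overline{D},\mathbb R^{n})$ --- and, since the elements are nonnegative, in $V_{+}$ --- rather than only in the larger space of bounded functions. Everything else is a bookkeeping reduction to the abstract Theorems~\ref{thM2}, \ref{thShc6} and~\ref{tS2}, exactly as in the proofs of Theorems~\ref{thQ1} and~\ref{thQ2}.
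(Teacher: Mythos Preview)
Your approach is essentially the same as the paper's, which simply says the theorem ``can be proved similarly to Theorem~\ref{thP1} using Lemma~\ref{l3.1} and Theorems~\ref{thM2}, \ref{tS2}''; you have merely unpacked the verification of (C1)--(C3) and made the passage from strong to uniform comparability via Theorem~\ref{thShc6} explicit (as the paper does for the ODE analogue, Theorem~\ref{thA2}). Your treatment of (C1) via Arzel\`a--Ascoli and pointwise infima/suprema is a genuine detail the paper leaves implicit, and it is correct.
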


\begin{proof}
This theorem can be proved similarly to Theorem
\ref{thP1} using Lemma \ref{l3.1} and Theorems \ref{thM2}, \ref{tS2}.
\end{proof}


\section*{Acknowledgements}

This work is partially supported by NSFC Grants 11271151, 11522104, and the startup and
Xinghai Jieqing funds from Dalian University of Technology.


\begin{thebibliography}{99}


\bibitem{Beb_1940}
V. M. Bebutov,
\newblock On the shift dynamical systems on the space of continuous functions,
\newblock{\em Bull. of Inst. of Math. of Moscow University} 2;5 (1940), pp.1-65. (in Russian)




\bibitem{BG}
B. Basit and H. Gnzler, Spectral criteria for solutions of evolution equations and comments
on reduced spectra, {\it arXiv preprint} (2010), arXiv:1006.2169.


\bibitem{bro75}
I. U. Bronsteyn,
\newblock {\em Extensions of Minimal Transformation Group.}
\newblock{\em Kishinev,} Stiintsa, 1974 (in Russian). [English
translation: Extensions of Minimal Transformation Group, Sijthoff
\& Noordhoff, Alphen aan den Rijn, 1979]


\bibitem{BC_1974} I. U. Bronshtein and V. F. Cherny,
\newblock About extensions of dynamical systems with the uniformly
asymptotically stable points,
\newblock{\em Differential Equations} \bf 10 \rm (1974) no. 7,
1225--1230.

\bibitem{CC_2009}
T. Caraballo and D. Cheban,
\newblock Almost periodic and almost automorphic solutions
of linear differential/difference equations without Favard's
separation condition. I,
\newblock{\em J. Differential Equations}, {\bf 246} (2009), 108--128.



\bibitem{CC_2013}
T. Caraballo and D. Cheban,
\newblock Almost periodic motions in semi-group dynamical systems and
Bohr/Levitan almost periodic solutions of linear difference
equations without Favard's separation condition,
\newblock{\em J. Difference Equ. Appl.}
{\bf 19} (2013), 872--897.



\bibitem{Che_1977}
D. Cheban,
\newblock On the comparability of the points of dynamical systems with
regard to the character of recurrence property in the limit,
\newblock{\em Mathematical Sciences} Issue No. 1, Kishinev, ``Shtiintsa'', 1977, pp.
66--71.

\bibitem{Che_2001}
D. Cheban, Global pullback atttactors of $\mathbb C$-analytic
nonautonomous dynamical systems, {\em  Stoch. Dyn.} {\bf 1}  (2001), no. 4, 511--535.


\bibitem{Che_2008}
D. Cheban,
\newblock Levitan almost periodic and almost automorphic solutions of
$V$-monotone differential equations,
\newblock{\em J. Dynam. Differential Equations}  {\bf 20} (2008), 669--697.


\bibitem{Che_2009}
D. Cheban,
\newblock{\em Asymptotically Almost Periodic Solutions of Differential
Equations.} \newblock Hindawi Publishing Corporation, New York,
2009, ix+186 pp.

\bibitem{Che_2010}
D. Cheban,
\newblock{\em Global Attractors of Set-Valued Dynamical and
Control Systems.}
\newblock Nova Science Publishers Inc, New York, 2010, xvii+269 pp.



\bibitem{Che_2015}
D. Cheban,
\newblock{\em Global Attractors of Nonautonomous Dynamical and Control
Systems. 2nd Edition.}
\newblock Interdisciplinary Mathematical Sciences,
vol. 18, River Edge, NJ: World Scientific, 2015, xxv+589 pp.

\bibitem{Che_pre}
D. Cheban,
\newblock Levitan/Bohr almost periodic and almost automorphic solutions of the
scalar differential equations,
\newblock{\em Mathematics \& Information
Technologies: Research and Education (MITRE-2016)}
Chi\c{s}in\c{u}, Republic of Moldova, June 23-26, 2016, pp.18--19.
The corresponding full-length paper: submitted.


\bibitem{Che_2017}
D. Cheban,
\newblock{\em Bohr/Levitan Almost Periodic Motions and Global
Attractors of Almost Periodic Dynamical Systems.}
\newblock In preparation, 2017.

\bibitem{CS_1977}
D. Cheban and B. A. Shcherbakov,
\newblock Poisson asymptotic stability of motions of dynamical
systems and their comparability with regard to the recurrence
property in the limit,
\newblock{\em Differential Equations} {\bf 13} (1977), no. 5,
898--906.

\bibitem{Chu_2001}
I. Chueshov,
\newblock Order-preserving skew-product flows and nonautonomous parabolic systems,
\newblock{\em  Acta Appl. Math.} {\bf 65} (2001), no. 1-3, 185--205.


\bibitem{Ell_1969}
R. Ellis,
\newblock{\em Lectures on Topological Dynamics.}
\newblock W. A. Benjamin, Inc., New York 1969 xv+211 pp.




\bibitem {Fin}
A. M. Fink,
\newblock{\em Almost Periodic Differential Equations}. Lecture Notes
in Mathematics, Vol. 377, Springer-Verlag, Berlin-New York, 1974. viii+336 pp.


\bibitem{hale}
J. K. Hale,
\newblock {\em Theory of Functional Differential Equations}.
\newblock Second edition. Applied Mathematical Sciences, Vol. 3. Springer-Verlag, New York-Heidelberg, 1977. x+365 pp.


\bibitem{Hen}
D. Henry,
\newblock {\em Geometric Theory of Semilinear Parabolic Equations.}
\newblock Lecture Notes in Mathematics, 840,
Springer-Verlag, Berlin-New York, 1981. iv+348 pp.


\bibitem{hew}
D. Husemoller,
\newblock {\em Fibre Bundles}.
\newblock Third edition. Graduate Texts in Mathematics, 20. Springer-Verlag, New York, 1994. xx+353 pp.


\bibitem{HS_2005}
M. Hirsch and H. Smith,
\newblock{\em Monotone Dynamical Systems}. Handbook of differential equations: ordinary differential equations.
Vol. II, 239--357, Elsevier B. V., Amsterdam, 2005.



\bibitem{JZ_2005}
J. Jiang and X. Zhao,
\newblock Convergence in monotone and uniformly stable
skew-product semiflows with applications,
\newblock{\em  J. Reine Angew. Math.} {\bf 589} (2005), 21--55.



\bibitem{Lev_1953}
B. M. Levitan,
\newblock{\em Almost Periodic Functions}.
\newblock Gosudarstv. Izdat. Tehn.-Teor. Lit., Moscow, 1953. 396 pp. (in
Russian)


\bibitem{Lev-Zhi}
B. M. Levitan  and  V. V. Zhikov,
\newblock{\em Almost Periodic Functions and Differential
Equations.}
\newblock Moscow State University Press, Moscow, 1978, 204 pp. (in Russian).
[English translation: Almost Periodic Functions and Differential
Equations. Cambridge University Press, Cambridge, 1982, xi+211 pp.]




\bibitem{NOS_2007}
S. Novo, R. Obaya, and Ana M. Sanz,
\newblock Stability and extensibility results for abstract
skew-product semiflows,
\newblock{\em J. Differential Equations} {\bf 235} (2007), no. 2, 623--646.


\bibitem{Opi_1959}
Z. Opial,
\newblock Sur les solutions presque-p\'eriodiques des \'equations
differ\'entielles du premier et du second ordre (in French),
\newblock{\em Ann. Polon. Math.} {\bf 7} (1959), 51--61.


\bibitem{SaSe_1973}
R. Sacker and G. Sell, Skew-product flows, finite extensions of minimal transformation groups
and almost periodic differential equations, {\em Bull. Amer. Math. Soc.} {\bf 79} (1973), 802--805.




\bibitem{SaSe_1977}
R. Sacker and G. Sell,
\newblock {\em  Lifting properties in skew-product flows with applications to differential equations},
\newblock {\em Mem. Amer. Math. Soc.} {\bf 11} (1977), no. 190, iv+67 pp.


\bibitem{Sel_71}
G. Sell,
\newblock {\em Lectures on Topological Dynamics and Differential Equations}.
Van Nostrand Reinhold Mathematical Studies, No. 33. Van Nostrand Reinhold Co., London, 1971. ix+199 pp.


\bibitem{Shc_1962}
B. A. Shcherbakov,
\newblock Classification of Poisson-stable motions,
Pseudo-recurrent motions (in Russian),
\newblock{\em Dokl. Akad. Nauk SSSR} {\bf 146} (1962), 322--324.


\bibitem{Shc_1963}
B. A. Shcherbakov,
\newblock On classes of Poisson stability of motion,
 Pseudorecurrent motions (in Russian),
\newblock{\em Bul. Akad. Stiince RSS Moldoven} \bf 1963 \rm (1963), no. 1, 58--72.




\bibitem{scher72}
B. A. Shcherbakov,
\newblock {\em Topologic Dynamics and Poisson Stability of Solutions of
Differential Equations},
\newblock \c{S}tiin\c{t}a, Chi\c{s}in\u{a}u, 1972, 231 pp. (in Russian)


\bibitem{Shch_1974}
B. A. Shcherbakov,
\newblock  The compatible recurrence of the bounded solutions of first
order differential equations (in Russian), \newblock{\em Differencial'nye
Uravnenija}  \bf 10  \rm(1974), 270--275.


\bibitem {scher75}
B. A. Shcherbakov,
\newblock The comparability of the motions of dynamical systems with
regard to the nature of their recurrence.
\newblock{\em Differencial'nye
Uravnenija} \bf 11 \rm (1975), no. 7, 1246--1255 (in Russian).
[English translation: {\it Differential Equations} \bf 11 \rm (1975), no.7, 937--943].


\bibitem{Sch85}
B. A. Shcherbakov,
\newblock {\em Poisson Stability of Motions of Dynamical Systems and Solutions
of Differential Equations}.
\newblock \c{S}tiin\c{t}a, Chi\c{s}in\u{a}u, 1985, 147 pp. (in Russian)


\bibitem{SYY}
W. Shen and Y. Yi,
Almost automorphic and almost periodic dynamics in skew-product semiflows, \it Mem. Amer. Math. Soc.
\bf136 \rm (1998), no. 647, x+93pp.

\bibitem{sib}
K. S. Sibirsky,
\newblock {\em Introduction to Topological Dynamics.\/}
\newblock Kishinev, RIA AN MSSR, 1970, 144 pp. (in Russian). [English
translationn: Introduction to Topological Dynamics. Noordhoff,
Leiden, 1975. ix+163 pp.]

\bibitem{Smi_1987}
H. L. Smith, Monotone
semiflows generated by functional differential equations,
\newblock{\it J. Differential Equations} \bf 66 \rm (1987), no. 3,
420--442.

\bibitem{Smi_1995}
H. L. Smith,
\newblock{\em Monotone Dynamical Systems. An Introduction to the
Theory of Competitive and Cooperative Systems.} Mathematical
surveys and monographs, Volume 41.
\newblock American Mathematical Society, Providence, RI,
1995. x+174 pp.


\bibitem{Zhi69}
V. V. Zhikov,
\newblock On problem of existence of almost periodic solutions of differential
and operator equations (in Russian),
\newblock {\em Nauchnye Trudy VVPI, Matematika} \bf 8 \rm (1969), 94--188.






\end{thebibliography}
\end{document}